\documentclass[11pt,reqno]{amsart}

\usepackage[utf8]{inputenc}
\usepackage{lmodern}
\usepackage{amsmath,amssymb,amsthm,latexsym,cite,cancel}
\usepackage[small]{caption}
\usepackage{graphicx,wasysym,overpic,tikz,color}
\usepackage{subfigure}
\usepackage{cite}
\usepackage[colorlinks=true,urlcolor=blue,
citecolor=red,linkcolor=blue,linktocpage,pdfpagelabels,
bookmarksnumbered,bookmarksopen]{hyperref}
\usepackage[english]{babel}
\usepackage{units}
\usepackage[utf8]{inputenc}
\usepackage{enumitem}
\usepackage[left=2.1cm,right=2.1cm,top=2.71cm,bottom=2.71cm]{geometry}
\usepackage[hyperpageref]{backref}
\usepackage{float}
\usepackage[T1]{fontenc}

\usepackage[colorinlistoftodos]{todonotes}
\usepackage[mathlines]{lineno} 

\usepackage{aliascnt}

\usepackage[capitalise,noabbrev,nameinlink]{cleveref}

\theoremstyle{plain}
\newtheorem{theorem}{Theorem}[section]
\crefname{theorem}{Theorem}{Theorems}

\newaliascnt{lemma}{theorem}
\newtheorem{lemma}[lemma]{Lemma}
\aliascntresetthe{lemma}
\crefname{lemma}{Lemma}{Lemmas}

\newaliascnt{proposition}{theorem}
\newtheorem{proposition}[proposition]{Proposition}
\aliascntresetthe{proposition}
\crefname{proposition}{Proposition}{Propositions}

\newaliascnt{corollary}{theorem}

\aliascntresetthe{corollary}
\crefname{corollary}{Corollary}{Corollaries}

\theoremstyle{definition}
\newaliascnt{definition}{theorem}
\newtheorem{definition}[definition]{Definition}
\aliascntresetthe{definition}
\crefname{definition}{Definition}{Definitions}

\theoremstyle{remark}
\newaliascnt{remark}{theorem}
\newtheorem{remark}[remark]{Remark}
\aliascntresetthe{remark}
\crefname{remark}{Remark}{Remarks}

\newaliascnt{example}{theorem}

\aliascntresetthe{example}
\crefname{example}{Example}{Examples}

\crefname{subsection}{Subsection}{Subsections}
\usepackage{physics2}
\usephysicsmodule{ab}
\usephysicsmodule{diagmat}
\usephysicsmodule{xmat}

\def\abs#1{\vab{#1}}
\def\norm#1{\Vab{#1}}

\newcommand{\R}{\mathbb{R}}
\newcommand{\N}{\mathbb{N}}

\newcommand{\e}{\mathrm{e}}

\newcommand{\Z}{\mathbb{Z}}

\usepackage{mathtools}
\DeclareMathOperator{\diver}{div}
\DeclareMathOperator{\supp}{supp}

\usepackage{soul}

\usepackage{braket}

\makeatletter
\def\namedlabel#1#2{\begingroup
    #2%
    \def\@currentlabel{#2}%
    \phantomsection\label{#1}\endgroup
}
\makeatother

\tikzstyle{nodo}=[circle,draw,fill,inner sep=0pt,minimum size=%
1.5mm]
\tikzstyle{bnodo}=[circle,draw,fill,inner sep=0pt,minimum size=%
2mm]

\numberwithin{equation}{section}

\title[Normalized solutions of  NLS equations with periodic potentials]
{Normalized solutions of $L^2$-supercritical NLS equations with periodic potentials}
\author[Z. He]{Zhentao He}

\address[Z. He]{\newline\indent
	School of Mathematics
	\newline\indent
	East China University of Science and Technology
	\newline\indent
	Shanghai 200237, PR China }
\email{\href{mailto:hezhentao2001@outlook.com}{hezhentao2001@outlook.com}}

\author[N. Ikoma]{Norihisa Ikoma}

\address[N. Ikoma]{\newline\indent
	Department of Mathematics
	\newline\indent
    Faculty of Science and Technology
    \newline\indent
	Keio University
	\newline\indent
	Yagami Campus, 3-14-1 Hiyoshi, Kohoku-ku, Yokohama, Kanagawa 223-8522, Japan}
\email{\href{mailto:ikoma@math.keio.ac.jp}{ikoma@math.keio.ac.jp}}

\author[C. Ji]{Chao Ji}

\address[C. Ji]{\newline\indent
	School of Mathematics
	\newline\indent
	East China University of Science and Technology
	\newline\indent
	Shanghai 200237, PR China }
\email{\href{mailto:jichao@ecust.edu.cn}{jichao@ecust.edu.cn}}

\subjclass[2020]{35J20, 35J60, 35J91, 35Q55}

\keywords{Nonlinear Schr\"odinger equations, Periodic potentials, $L^2$-normalized solutions,  $L^2$-supercritical}

\begin{document}
\begin{abstract}
In this paper, we study normalized solutions  to the following $L^2$-supercritical nonlinear Schr\"{o}dinger equation with a periodic potential
\begin{equation*}
	\begin{dcases}
		-\Delta u +V (x)u + \lambda u=\chi_{{\Omega}}(x)f(u)\quad \text{in }\R^N,\\
		u>0 \quad \text{in} \ \R^N,\\
		\int_{\R^N}\abs{u}^2\, dx =\mu.
	\end{dcases}
\end{equation*}
Here $N \geq 1$, $\mu>0$ is prescribed, $\lambda \in \mathbb{R}$ is a Lagrange multiplier, $V\in C(\R^N)$ is $1$-periodic in $x_1,...,x_N$,  
$f \in C^1(\R)$ is a nonlinearity having $L^2$-supercritical growth at infinity, 
$\Omega\subset  \R^N$ is either a (nonempty) bounded open set with smooth boundary $\partial \Omega$ or the whole space \(\R^N\),
and $\chi_{{\Omega}}$ is the characteristic function of ${\Omega}$. 
In both cases, we prove the existence of normalized solutions of mountain pass type when $\mu>0$ is small
and $f$ behaves like a power function $|t|^{p-2}t$ with $2+4/N<p<2^*$ at infinity. 
Moreover, if $\Omega$ is bounded, $1 \leq N \leq 4$ and $f$ has $L^2$-supercritical growth near the origin, 
then the existence of normalized solutions is obtained for all $\mu>0$. 
On the other hand, when $\Omega=\R^N$, we prove the existence of solutions corresponding to local minimizers when $\mu>0$ is small. 
When $\Omega$ is bounded, the results are obtained via a monotonicity trick for mountain pass values and blow-up analysis with the Morse index estimates. 
In the case where $\Omega = \R^N$, we develop the concentration-compactness argument based on the Morse index for the existence of mountain pass type solutions. 
\end{abstract}
\maketitle
\tableofcontents

\section{Introduction}
In this paper, we study the existence of normalized solutions to the following $L^2$-supercritical nonlinear Schr\"{o}dinger (NLS for short) equation with a periodic potential
\begin{equation}\tag{$P_{\mu}$}\label{eqV}
    \begin{dcases}
    -\Delta u +V (x)u + \lambda u=\chi_{{\Omega}}(x)f(u)\quad \text{in }\R^N,\\
    u>0 \quad \text{in} \ \R^N,\\
    \int_{\R^N}\abs{u}^2\, dx =\mu,
\end{dcases}
\end{equation}
where $N \geq 1$, $\mu>0$ is prescribed, $\lambda \in \mathbb{R}$ is a Lagrange multiplier, $V\in C(\R^N)$ is $1$-periodic in $x_1,...,x_N$,  $f$ is a nonlinearity satisfying the assumptions specified below,  $\Omega\subset  \R^N$ is a (nonempty) bounded open set with smooth boundary $\partial \Omega$ or the whole space \(\R^N\), and $\chi_{{\Omega}}$ is the characteristic function of ${\Omega}$.

Formally, functions satisfying the differential equation in \eqref{eqV} correspond to standing waves of the following time-dependent NLS equation
\begin{equation}\label{time-dep}
i \dfrac{\partial \Psi}{\partial t}(t,x) = - \Delta_x \Psi(t,x) + V(x) \Psi(t,x) - \chi_{\Omega}(x) g(\abs{\Psi(t,x)})\Psi(t,x),  \quad (t,x) \in (0,T) \times \R^N, 
\end{equation}
via the ansatz $\Psi(t,x) = e^{i\lambda t} u(x)$, where $f(u)=g(\abs{u})u$. 
Such equations arise naturally in models of Bose--Einstein condensates  subject to periodic external potentials 
and of nonlinear photonic crystals with spatially periodic material parameters; see, e.g., Pankov \cite{Pa05}.  
The associated evolution conserves the $L^2$-mass, namely,
\[
\int_{\mathbb{R}^{N}}|\Psi(t,x)|^{2}\,dx
=
\int_{\mathbb{R}^{N}}|\Psi(0,x)|^{2}\,dx
\quad \text{for every $t \in [0,T)$}. 
\]
Accordingly, the constraint $\Vab{u}_{2}^{2}=\mu$ in \eqref{eqV} prescribes the mass of the standing waves. 
Depending on the physical setting, this conserved quantity may be interpreted as the optical power in nonlinear optics or as the total number of particles in a Bose--Einstein condensate.
We also point out that under some additional assumptions on $f$ (or $g$), the energy corresponding to \eqref{time-dep} is conserved:
\[
E_V(\Psi(t,x)) = E_V(\Psi(0,x)) \quad \text{for each $t \in [0,T)$},
\]
where 
\[
E_V(u) \coloneq \frac{1}{2} \int_{\R^N} \vab{\nabla u}^2 + V(x) \vab{u}^2 \, dx - \int_{\R^N} \chi_\Omega(x) F(\vab{u}) \, dx, \quad 
F(s) \coloneq \int_0^s f(t) \, dt.
\]
See, for instance, \cite{Ca03,CaHa98}. 
In view of the Lagrange multiplier method, with suitable assumptions on $f$, 
solving \eqref{eqV} is equivalent to finding a critical point $u$ of $E_V|_{S_\mu}$ with $u > 0$ in $\R^N$, where 
\[
S_\mu \coloneq \Set{ u \in H^1(\R^N):\int_{\R^N}\abs{u}^2\,dx=\mu }. 
\]
Thus, \eqref{eqV} is related to the two natural conserved quantities for \eqref{time-dep}, and 
is an interesting problem from the mathematical and physical point of view. 

\subsection{Literature review}
When \(V\equiv0\) and \(\Omega=\mathbb R^N\), there is a vast literature on the existence, multiplicity, and stability of normalized solutions to problem \eqref{eqV}. 
To describe the different variational regimes, recall that, for the power nonlinearity $f(u)=|u|^{p-2}u$,
the \(L^2\)-critical exponent is $2+\frac{4}{N}$ and the problem is referred to as \(L^2\)-subcritical, \(L^2\)-critical, or \(L^2\)-supercritical according as
\[
2<p< 2+\frac{4}{N},\quad
p=2+\frac{4}{N},\quad\text{or}\quad
2+\frac{4}{N} <p<2^*\coloneq \frac{2N}{(N-2)_+},
\]
respectively.

The \(L^2\)-subcritical regime was considered in \cite{St80,St82,CL} and it was revealed that 
$E_0$ is bounded below on $S_\mu$ and a solution to \eqref{eqV} is obtained by minimizing $E_0$ on $S_\mu$. 
In particular, Cazenave and Lions \cite{CL} proved the orbital stability of standing waves of \eqref{time-dep} 
by developing the argument of the concentration compactness lemma via the global minimizing problem.  
On the other hand,  the variational structure changes substantially in the \(L^2\)-supercritical regime: 
$E_0$ is no longer bounded from below on \(S_\mu\), and hence the global minimization problem does not work. 
The first existence result in this regime is due to Jeanjean \cite{jeanjean1} via the mountain pass theorem for $E_0|_{S_{{\rm rad},\mu}}$, 
where $S_{{\rm rad} ,\mu} \coloneq S_\mu \cap H^1_{\rm rad} (\R^N)$ and 
$H^1_{\rm rad} (\R^N)$ is the space of all radial functions in $H^1(\R^N)$. 
In this case, one of the difficulties is to show the boundedness of Palais--Smale sequences of $E_0|_{S_{{\rm rad} ,\mu}}$. 
To overcome this point, Jeanjean introduced the augmented functional
\[
\widetilde E_0(u,s)
\coloneq E_0 \ab( e^{Ns/2} u \ab( e^s \cdot ) )
= 
\frac{e^{2s}}{2}
\int_{\mathbb R^N}|\nabla u|^2\,dx
-
e^{-Ns}
\int_{\mathbb R^N}
F\left(e^{\frac{Ns}{2}}u(x)\right)\,dx 
\colon S_{{\rm rad} ,\mu}\times \R \to \R,
\]
and $\widetilde{E}_0$ is exploited to generate a special Palais--Smale sequence $\{u_n\} \subset S_{{\rm rad} ,\mu}$ of $E_0$ satisfying 
an additional property $P(u_n) \to 0$, where 
\[
P(u) \coloneq \partial_s \widetilde{E}_0(u,0) = \int_{\R^N} \vab{\nabla u}^2 \, dx - \frac{N}{2}\int_{\R^N} f(u) u - 2 F (u) \, dx .
\]
This additional property is useful to obtain the boundedness of Palais--Smale sequences 
as well as to extract a strongly convergent subsequence by combining the compact embedding $H^1_{\rm rad} (\R^N) \subset L^q(\R^N)$ for $2<q<2^*$.   
A different approach using the scaling $e^{Ns/2} u(e^s \cdot)$ was developed 
in Bartsch and Soave \cite{BaSo17}. 
On the other hand, in Bieganowski and Mederski \cite{BiMe21}, the existence of solutions was shown through a minimizing problem
\[
\inf \Set{ E_0(u) :  u \in H^1(\R^N), \Vab{u}_2^2 \leq \mu, P(u) = 0 }
\]
and the profile decomposition when $\mu$ is small. For further results concerning problem \eqref{eqV} with \(V\equiv0\) and \(\Omega=\mathbb R^N\), we refer to
\cite{IkTa19, JeLe22, JeLu20, So20a, So20b} and the references therein.

Next, we move to the problem with a nonconstant potential $V$, $f$ having an $L^2$-supercritical growth at infinity and $\Omega = \R^N$. 
In this case, the boundedness and convergence of Palais--Smale sequences are issues. 
As in the case $V\equiv 0$, under suitable assumptions on $V$, 
the functional $\widetilde{E}_V(u,s) = E_V( e^{Ns/2} u(e^s \cdot) )$ is of class $C^1$ on $H^1(\R^N) \times \R$ and 
\[
\partial_s \widetilde{E}_V(u,0) 
= \Vab{\nabla u}_2^2 - \frac{1}{2} \int_{\R^N} \ab( x \cdot \nabla V(x) ) u^2 \, dx 
- \frac{N}{2} \int_{\R^N} f(u) u - 2F(u) \, dx.
\]
If we can control the term $\int_{\R^N} (x \cdot \nabla V(x)) u^2 \, dx$, then 
it is still possible to prove the boundedness of Palais--Smale sequences with $\partial_s \widetilde{E}_V(u_n,0) \to 0$. 
Indeed, in \cite{BMRV21,LiZh24,MRV22,VeYu26}, this idea is used to obtain the existence of solutions to \eqref{eqV} 
when $\Omega = \R^N$ and $V(x) \to 0$ as $\vab{x} \to +\infty$ or $V(x) \to +\infty$ as $\vab{x} \to + \infty$.

Another approach to obtain a bounded Palais--Smale sequence is a monotonicity trick due to \cite{Str88,Je99} 
for unconstrained functionals. For functionals constrained on $S_\mu$, 
this technique together with information of the Morse index in the spirit of Fang and Ghoussoub \cite{FaGh92} 
was developed in Borthwick et al. \cite{BCJS}. 
Moreover, in \cite{BCJS23}, the result of \cite{BCJS} was applied to get an $L^2$-normalized solution to 
the following problem on a  noncompact metric graph $\mathcal{G}$:
\begin{equation}\label{eqG}
	-u''+\lambda u = \chi_{\mathcal{K}}(x) \vab{u}^{p-2} u \quad \text{on every edge $e$ of $\mathcal{G}$}, \quad 
	\int_{\mathcal{G}} u^2 \, dx = \mu, \quad 
	\sum_{e \ni v} u_e'(v) = 0,
\end{equation}
where $p>6$, $\mathcal{K}$ is a compact core of $\mathcal{G}$ and $u_e'(0)$ represents either $u'(0)$ or $-u'(\ell_\e)$ for $e = [0,\ell_\e]$. 
Indeed, it was proved in \cite{BCJS23} that for any $\mu>0$, \eqref{eqG} admits a positive solution $u$ with $\lambda > 0$ 
via the monotonicity trick with the Morse index information and a blow-up analysis. 
See Carrillo et al. \cite{CGJT} for further results on \eqref{eqG}, Carrillo et al. \cite{CDCGJT} for blow-up analysis of equations on metric graphs and Dovetta, Jeanjean and Serra \cite{DJS} for the existence results when $\mathcal{K} = \mathcal{G}$ (here the compactness of $\mathcal{K}$ is dropped). 
There are other results using the result of \cite{BCJS} and the blow-up analysis. 
Bartsch, Qi and Zou \cite{BQZ24} studied \eqref{eqV} with $\Omega = \R^N$, $f(u) = \vab{u}^{p-2} u + \vab{u}^{q-2} u$, 
$2<q<2+4/N<p<2^*$ and a general potential $V$ by imposing an integrability condition and estimates on $V$ and $x \cdot \nabla V(x)$. 
In particular, they found two solutions $u$ and $v$ with $E_V(u) < 0 < E_V(v)$ for a small $\mu>0$. 
On the other hand, for a bounded smooth domain $D$ and for any small $\mu>0$, 
Chang, R\u adulescu and Zhang \cite{CRZ} obtained the existence of solutions to 
\[
-\Delta u + \lambda u = f(u) \quad \text{in} \ D, \quad \frac{\partial u}{\partial \nu} = 0 \quad \text{on} \ \partial D, \quad \Vab{u}_2^2 = \mu. 
\]
Our assumptions below on $f$ are similar to \cite{CRZ}. 
We also refer to Carrillo and Jeanjean \cite{CaJe} for the problem with a radial potential.

Compared with the above-mentioned cases, 
the literature on normalized solutions for NLS equations with nonconstant periodic potentials remains relatively limited. 
Alves and Ji \cite{AlJi22} proved the existence of $L^2$-normalized solutions with periodic potentials in the $L^2$-subcritical regime 
via the minimization method. 
On the other hand, in \cite{AW}, 
Ackermann and Weth studied the existence and stability properties of multibump solutions with prescribed $L^2$-norm. In particular, in \cite[Corollary 1.6]{AW}, by assuming that $V \in C^2(\R^N)$ is $1$-periodic in all coordinates, positive, and has a nondegenerate critical point at some $x_0 \in \R^N$, the authors investigated the following problem
\begin{equation}\label{eqpav}
	\tag{$P_{\mu,\varepsilon}$}
	-\varepsilon^2 \Delta u + V(x)u + \lambda u =  |u|^{p-2}u, \quad u \in H^1(\mathbb{R}^N), \quad \Vab{u}_2^2 = \mu, 
\end{equation}
where $p\in(2,2^*)\setminus\{2+\frac{4}{N}\}$. 
They proved that for every $\mu>0$ there exist $n_\mu \in \mathbb{N}^+$ and a sequence $\varepsilon_n \to 0$ 
such that for every $n \geq n_\mu$ problem (\hyperref[eqpav]{$P_{\mu,\varepsilon_n}$}) has infinitely many geometrically distinct positive solutions 
with some specified properties. However, this problem is different from \eqref{eqV}. Indeed, 
by introducing $v(x) \coloneq \varepsilon^{ - \frac{2}{p-2} } u(x)$, \eqref{eqpav} is equivalent to 
\[
-\Delta v + \varepsilon^{-2} \ab( V + \lambda ) v = \vab{v}^{p-2} v, \quad v \in H^1(\R^N), \quad 
\Vab{v}_2^2 = \varepsilon^{ - \frac{4}{p-2} } \mu,
\]
which is a different setting from \eqref{eqV}.

This paper is motivated by \cite{AlJi22,BCJS23,CGJT,DJS,BQZ24,CRZ}, 
and our aim is to establish a counterpart to the above results in the $L^2$-supercritical regime in the presence of periodic potentials.

\subsection{Main results}

We first deal with the case where $\Omega$ is bounded. 
Since we are interested in positive solutions to \eqref{eqV}, 
we impose conditions on $f$ in $[0,\infty)$: 
\begin{enumerate}[label=(A\arabic*),ref=A\arabic*]
\item \label{A1} $f \in C([0,\infty))$ and $\displaystyle \lim_{t \to 0^+} \frac{f(t)}{t} = 0$; 

\item \label{A2} 
$f \in C^1([0,\infty))$ and there exist constants  $C',q_1,q_2 > 0$ and $\alpha \in (0, 1]$ such that 
$2+\alpha \leq q_1\leq q_2<+\infty$ for $N=1,2$ and $2+\alpha \leq q_1\leq q_2\leq 2^*$ for $N\geq 3$, and
\[
|f'(t) - f'(s)| \le C' |t - s|^\alpha \ab( |t|^{q_1-2-\alpha} + |s|^{q_1-2-\alpha} + |t|^{q_2-2-\alpha} + |s|^{q_2-2-\alpha} ) 
\quad \text{for all $t,s \in [0,\infty)$};
\]
in what follows, we use the convention $2^*=\infty$ when $N=1,2$; 

\item \label{A3} there exist constants $p \in \left(2+\frac{4}{N},2^*\right)$ and $a_0>0$ such that 
\[
\lim_{t\to +\infty}\frac{f(t)}{t^{p-1}}=a_0;
\]

\item \label{A4}
for the constants $p$ and $a_0>0$ in \eqref{A3}, 
\[
\liminf_{t\to+\infty}\frac{f'(t)}{t^{p-2}} \geq a_0(p-1);
\]
\end{enumerate}
Throughout this paper, by an odd extension, we may assume that 
\begin{equation}\label{eqodd}
    f(t)=-f(-t) \quad \text{for each $t \in \R$}.
\end{equation}
A simple example of $f$ satisfying \eqref{A1}--\eqref{A4} is 
\begin{equation}\label{exam-f}
	f(t) = \sum_{i=1}^k a_i t^{p_i-1} + t^{p_{k+1}-1}, \quad 2 < p_1 < p_2 < \dots < p_k < p_{k+1} < 2^*,\quad p_{k+1}>2+\frac{4}{N}, \quad a_1,\dots, a_k \in \R. 
\end{equation}
Notice that \eqref{A1}--\eqref{A4} are almost the same as the conditions in \cite{CRZ}.

Our first result is the existence of solutions to \eqref{eqV} when $\Omega$ is bounded and $\mu>0$ is small:

\begin{theorem}\label{th1}
     Let $N \geq 1$, $V\in C(\R^N)$ be $1$-periodic in $x_1,...,x_N$, 
     $\Omega\subset  \R^N$ be a (nonempty) bounded open set with smooth boundary $\partial \Omega$ and \eqref{A1}--\eqref{A4} hold. 
     Then, there exists $\mu_0>0$ such that, for every $\mu\in (0,\mu_0)$, problem \eqref{eqV} has a solution $(u,\lambda)\in H^1(\R^N) \times (-\sigma_0,+\infty)$, 
     where 
     \begin{equation}\label{bot-spec}
     \sigma_0 \coloneq 
     \inf \sigma (-\Delta+V(x))=\inf_{u \in H^1(\R^N)\backslash\{0\}}\frac{\int_{\R^N}\left(\abs{\nabla u}^2+ V(x)\abs{u}^2\right)\, dx}{\int_{\R^N}\abs{u}^2\, dx}.
     \end{equation}
\end{theorem}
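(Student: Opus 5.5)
The plan is to follow the monotonicity trick for constrained functionals with Morse index information from Borthwick et al.\ \cite{BCJS}, as adapted in \cite{BCJS23,CRZ}. For $\rho\in[\frac12,1]$ we introduce the family
\[
E_\rho(u)\coloneq \frac12\int_{\R^N}\ab(\vab{\nabla u}^2+V(x)u^2)\,dx-\rho\int_{\Omega}F(u)\,dx \quad\text{on } S_\mu .
\]
Because $\Omega$ is bounded, the nonlinear term is weakly continuous on $H^1(\R^N)$. We may also replace $V$ by $V+c$ with $c$ large, so that $\int(\vab{\nabla u}^2+Vu^2)$ is equivalent to $\Vab{u}_{H^1}^2$. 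Both $-\Delta+V$ and $\lambda$ then shift by $c$, and $\sigma_0$ shifts with them, so this normalization is harmless.

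\textbf{Step 1: uniform mountain pass geometry.} Fix $u_0\in S_\mu$ close to a ground state of $-\Delta+V$ restricted to a large cube, and set $k\coloneq\int(\vab{\nabla u_0}^2+Vu_0^2)\le C$. Using Gagliardo--Nirenberg together with \eqref{A1}--\eqref{A3}, for small $\mu$ the quantity $\int_\Omega F(u)$ is $o(\Vab{\nabla u}_2^2)$ on $\{u\in S_\mu:\Vab{\nabla u}_2^2\le 2K\}$, where $K$ is a fixed large constant. Hence $E_\rho\ge \frac{K}{2}-C$ on the sphere $\{\Vab{\nabla u}_2^2=2K\}$, while $E_\rho(u_0)\le C$.

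To reach negative energy we cannot simply dilate, since $\Omega$ is fixed. Instead we concentrate at a point $x_0\in\Omega$: by \eqref{A3} and $p>2+4/N$, the functions $t^{N/2}w(t(x-x_0))$ have $E_1\to-\infty$ as $t\to\infty$. This gives a path class $\Gamma$ with levels $c_\rho\ge \frac K4>\max\{E_\rho(u_0),E_\rho(u_1)\}$ for all $\rho\in[\frac12,1]$.

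\textbf{Step 2: critical points for almost every $\rho$.} Applying \cite{BCJS} to the map $\rho\mapsto c_\rho$, which is nonincreasing and hence differentiable almost everywhere, yields for a.e.\ $\rho$ a bounded Palais--Smale sequence $u_n\ge0$ at level $c_\rho$ with approximate Morse index at most $1$. Its Lagrange multipliers $\lambda_n$ are bounded.

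We then pass to the limit. The weak limit $u$ satisfies $\int_\Omega F(u_n)\to\int_\Omega F(u)$. To get strong convergence in $L^2$, we need to rule out loss of mass at infinity. Outside $\Omega$ the equation is linear, $-\Delta v+Vv+\lambda v=0$, so a vanishing or escaping profile would be a nonzero $L^2$ solution of a linear equation with $\lambda\ge-\sigma_0$. Such solutions are impossible for periodic operators in this range; the precise version should say that escaping mass forces $\lambda_n\to-\sigma_0$ and leads to the contradiction $c_\rho\le\sigma_0\mu/2$. We also show $\lambda>-\sigma_0$ via the energy level: since $E_\rho(u)=c_\rho$ is large while $\frac12\langle(-\Delta+V)u,u\rangle$ is controlled, testing the equation with $u$ gives $\lambda\mu=\rho\int f(u)u-\int(\vab{\nabla u}^2+Vu^2)$. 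Combined with a Pohozaev-type inequality, which holds because the Morse index is at most $1$, this is positive for small $\mu$. With $\lambda>-\sigma_0$ the operator $-\Delta+V+\lambda$ is coercive, and strong $H^1$ convergence follows. Positivity comes from the strong maximum principle.

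\textbf{Step 3: $\rho_n\to1$ (the main obstacle).} We obtain solutions $u_{\rho_n}$ at levels $c_{\rho_n}\le c_{1/2}$ with Morse index at most $1$. We need a uniform $H^1$ bound, which is equivalent to a bound on $\lambda_n$. We argue by blow-up. If $\Vab{u_n}_\infty\to\infty$, rescale around maximum points, which must lie in $\overline\Omega$ or nearby. Using \eqref{A2}--\eqref{A4}, the rescaled functions converge to a solution of $-\Delta w+w=a_0w^{p-1}$ on $\R^N$ or on a half-space with Neumann condition. A half-space limit is allowed because $\partial\Omega$ is smooth, and \eqref{A4} is what lets us control the Morse index of the limit. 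Counting mass gives $\mu\ge\Vab{u_n}_2^2\gtrsim \lambda_n^{\frac{N}{2}-\frac{2}{p-2}}\to\infty$ since $p>2+4/N$, a contradiction. The Morse index bound limits the number of bumps, which is needed to make this counting work. Once $\lambda_n$ and $u_n$ are bounded, Step 2's compactness argument gives $u_n\to u$, a solution with $\rho=1$ and $\lambda>-\sigma_0$.

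I expect the hardest part to be the exclusion of boundary blow-up and the proof that $\lambda>-\sigma_0$. The problem is that the nonlinearity is cut off by $\chi_\Omega$, which is discontinuous, so the equation is only $W^{2,q}$ across $\partial\Omega$ and the blow-up limit must be analyzed carefully.
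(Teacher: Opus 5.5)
Your overall architecture (monotonicity trick with Morse-index information, then blow-up along $\rho_n\to1$) is the paper's. However, the step that carries the real difficulty of your Step~2 is not established: proving that the limiting multiplier satisfies $\lambda_\rho>-\sigma_0$, which is exactly what gives $\|u_\rho\|_2^2=\mu$.

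Normalize $\sigma_0=0$. The Morse index bound only yields $\lambda_\rho\ge0$: if $\lambda_\rho<0$, translate a test function with $|\varphi|_V^2<|\lambda_\rho|/3$ by lattice vectors to three disjoint positions away from $\Omega$. This gives a $3$-dimensional negative space, contradicting the index bound (cf.\ \cref{lem42}). From $\lambda_\rho\ge0$ one then only gets $|u_n-u_\rho|_V^2+\lambda_\rho\|u_n-u_\rho\|_2^2\to0$.

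When $\lambda_\rho=0$, mass can still spread out at no energetic cost (think of $n^{-N/2}\eta(x/n)\psi_0$). Your profile argument only sees translated bumps, so it does not cover this. The energy gives your contradiction ($c_\rho\le0$ in this normalization) only when the weak limit is zero. The real obstruction is a nonzero weak limit $u_\rho>0$ solving the equation with $\lambda=0$, $\|u_\rho\|_2^2<\mu$ and energy $c_\rho$.

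Your proposed exclusion, testing with $u$ plus a ``Pohozaev-type inequality which holds because the Morse index is at most $1$'', has no basis. A Morse index bound gives no Pohozaev information. A Pohozaev identity is itself unavailable here: $V$ is only continuous and periodic, so $x\cdot\nabla V$ is neither defined nor small, and $\chi_\Omega$ produces boundary terms on $\partial\Omega$. Also, writing $\lambda\mu$ presupposes the mass conservation you are trying to prove.

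What is missing is a nonexistence result at the bottom of the spectrum, \cref{T:Liouv-musmall}: for small $\mu$ there is no positive solution with $\lambda=0$, $\|u\|_2^2\le\mu$ and energy $\ge\varepsilon$. The proof runs as follows.
\begin{itemize}
\item One first shows $\max u_n\to\infty$.
\item Blowing up at $\max_{\overline\Omega}u_n$ produces a bounded positive solution of $-\Delta U=a_0\tau\chi_D U^{p-1}$ with no linear term. This is excluded by Gidas--Spruck or the half-space Liouville result \cref{lemnelambda+0}, so $\max_{\overline\Omega}u_n/\max u_n\to0$.
\item A Moser iteration on truncations $(u_n-c\max_{\overline\Omega}u_n)_+$ outside $\Omega$ gives $\max u_n-2\max_{\overline\Omega}u_n\le C\|u_n\|_2\to0$, which is absurd.
\end{itemize}
For $N\le4$ one may instead use \cref{lemliou}, based on $v=u/\psi_0$. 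This is where the smallness of $\mu$ is used beyond the mountain pass geometry.

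Three further corrections.
\begin{itemize}
\item Your family $E_\rho=A-\rho\int_\Omega F$ is not admissible in \cite{BCJS} when $F$ changes sign, which \eqref{A1}--\eqref{A4} allow (e.g.\ \eqref{exam-f} with some $a_i<0$). The abstract theorem needs $B\ge0$, and without it $\rho\mapsto c_\rho$ need not be monotone. The paper splits $f=f_1+f_2$ with $f_1(t)t\ge0$ and $f_1=0$ for $|t|<R_0$, and puts $\rho$ only on $F_1$.
\item In Step~3, a boundary blow-up limit solves $-\Delta w+\tilde\lambda w=a_0\chi_H w^{p-1}$ on all of $\R^N$. There is no Neumann condition, since $u_n$ lives on $\R^N$. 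You need \cref{lemnelambda+0} to get $\tilde\lambda>0$ and \cref{lem25} (stability outside a compact set) to rule this limit out.
\item The mass count runs the other way. The correct local lower bound is $\lambda_n^{\frac{2}{p-2}-\frac N2}$, which tends to $0$ and gives nothing. The contradiction comes from the global exponential decay estimate around at most two bumps: $\|u_n\|_2^2\lesssim\lambda_n^{\frac{2}{p-2}-\frac N2}\to0$, which is incompatible with $\|u_n\|_2^2=\mu$.
\end{itemize}
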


Next, we deal with the existence of solutions for any $\mu>0$. For this purpose, we assume an additional condition on $f$: 
\begin{enumerate}[resume,label=(A\arabic*),ref=A\arabic*,resume]
	\item \label{A5} 
	there exist constants $q_3,C''>0$ such that $2+\frac{4}{N}<q_3<2^*$ and
	\[
	\abs{F(t)}\leq C'' t^{q_3} \quad \text{for any $t \in [0,1]$, where $F(t) \coloneq \int_0^t f(s) \,d s$}.
	\]
\end{enumerate}

\begin{remark}
	If $q_1 > 2+4/N$ holds in \eqref{A2}, then \eqref{A5} is derived from \eqref{A1}--\eqref{A3} by setting $q_3=q_1$. 
	Therefore, the function $f$ in \eqref{exam-f} enjoys \eqref{A1}--\eqref{A5} provided $p_1$ in \eqref{exam-f} satisfies $p_1 > 2+4/N$. 
\end{remark}

Our second result for bounded $\Omega$ reads as follows: 

\begin{theorem}\label{th2}
	Let $1 \leq N \leq 4$, $V\in C(\R^N)$ be $1$-periodic in $x_1,...,x_N$, 
	$\Omega\subset  \R^N$ be a (nonempty) bounded open set with smooth boundary $\partial \Omega$ and \eqref{A1}--\eqref{A5} hold. 
	Then, for every $\mu>0$, problem \eqref{eqV} has a solution $(u,\lambda)\in H^1(\R^N) \times (-\sigma_0,+\infty)$ where $\sigma_0$ is defined in \eqref{bot-spec}.
\end{theorem}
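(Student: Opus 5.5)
The plan is to follow the strategy of \cite{BCJS23,CRZ}: a monotonicity trick for constrained mountain pass values with Morse index information (the abstract result of \cite{BCJS}), combined with a blow-up analysis that excludes unbounded sequences of approximate solutions. For $\rho \in [1/2,1]$ we introduce the family
\[
E_{\rho}(u) \coloneq \frac12\int_{\R^N}\ab(\vab{\nabla u}^2+V(x)u^2)\,dx - \rho\int_{\Omega}F(u)\,dx \quad \text{on } S_\mu .
\]
For every $\mu>0$ we first verify a uniform mountain pass geometry. By \eqref{A5}, \eqref{A3} and the Gagliardo--Nirenberg inequality, for $u\in S_\mu$ with $\Vab{\nabla u}_2$ small we have $\int_\Omega F(u)\le C\Vab{u}_{q_3}^{q_3}+C\Vab{u}_{p}^{p}$. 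Both exponents are $L^2$-supercritical, so $E_\rho(u)-\frac12\sigma_0\mu$ is bounded below by a positive quantity on a sphere $\{\Vab{\nabla u}_2^2 = k\}$ with $k$ small. It must be strictly larger than at a low-energy point $u_0$ and at a point $u_1$ with $\Vab{\nabla u_1}_2$ large and $E_\rho(u_1)$ very negative. The point $u_1$ is obtained by concentrating the mass inside $\Omega$: take a bump $\varphi_t = t^{N/2}\varphi(t(x-x_0))$ with $x_0\in\Omega$, $t\to\infty$, and use \eqref{A3}. This step is exactly where the condition near the origin in \eqref{A5} is used for large $\mu$; without it, the mountain geometry would require $\mu$ small, as in \cref{th1}.

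The monotonicity theorem of \cite{BCJS} then yields, for almost every $\rho\in[1/2,1]$, a bounded Palais--Smale sequence at the level $c_\rho$ with approximate Morse index at most $1$. Because $\Omega$ is bounded, the nonlinear term only involves $L^q(\Omega)$ norms, and these converge along weakly convergent bounded sequences by Rellich. We also need the periodicity of $V$ to exclude vanishing. We translate by integer vectors only when the mass escapes: the nonlinearity is localized, so $\int_\Omega F(u_n)$ and $\int_\Omega f(u_n)u_n$ converge anyway. The quadratic part is handled by showing $\lambda_\rho>-\sigma_0$, which follows from the positivity of $c_\rho-\frac12\sigma_0\mu$ via the identity $E_\rho(u)+\frac{\lambda}{2}\mu = \frac{\rho}{2}\int_\Omega (f(u)u-2F(u))$, together with \eqref{A4}. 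Once $\lambda_\rho>-\sigma_0$, the operator $-\Delta+V+\lambda_\rho$ is coercive, so strong convergence in $H^1$ holds. This gives a critical point $u_\rho\ge0$ (take $\vab{u}$ in the paths), with $\Vab{u_\rho}_2^2=\mu$ and Morse index at most $1$. Positivity then follows from the strong maximum principle.

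The main obstacle is the passage $\rho_n\to1$, namely proving that $\{u_{\rho_n}\}$ stays bounded in $H^1$. We argue by contradiction: if $\Vab{\nabla u_n}_2\to\infty$, then $\lambda_n\to+\infty$ and $\Vab{u_n}_\infty\to\infty$ (via the Pohozaev-type identity and the upper bound $c_{\rho_n}\le c_{1/2}$). We rescale around a maximum point $x_n\in\overline{\Omega}$ (maxima lie in $\overline\Omega$, since outside $\Omega$ the function is subharmonic-decaying when $\lambda_n+V>0$) by $\varepsilon_n=\lambda_n^{-1/2}$, with \eqref{A2}--\eqref{A3} giving a limiting equation $-\Delta w + w = a_0 w^{p-1}$ in $\R^N$ or in a half-space. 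The Morse index bound at most $1$ forces a single bubble and, via Liouville-type results on the half-space (here $N\le4$ is used, for the Neumann/half-space Liouville theorem for subcritical exponents and for uniform decay estimates outside the concentration region), yields $\Vab{u_n}_2^2\approx \lambda_n^{\frac{2}{p-2}-\frac N2}\Vab{w}_2^2\to0$ because $p>2+4/N$. This contradicts $\Vab{u_n}_2^2=\mu$. Carrying out the blow-up near $\partial\Omega$, where $\chi_\Omega$ creates a transmission problem rather than a Neumann one, and obtaining the uniform exponential decay away from $x_n$ are the delicate parts. Boundedness then lets us repeat the compactness argument of the previous step at $\rho=1$, giving a solution $(u,\lambda)$ with $\lambda>-\sigma_0$.
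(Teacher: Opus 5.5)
There is a genuine gap in your proof of $\lambda_\rho>-\sigma_0$, and this is the central difficulty of the theorem.

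\textbf{The sign of $\lambda$.} Normalize $\sigma_0=0$. Your identity then reads $\lambda\mu=\rho\int_\Omega (f(u)u-2F(u))\,dx-2c_\rho$, so positivity of $c_\rho$ pushes $\lambda$ down, not up. Nothing guarantees $\rho\int_\Omega(f(u)u-2F(u))\,dx>2c_\rho$, and under \eqref{A1}--\eqref{A5} the quantity $f(t)t-2F(t)$ may even be negative near $0$. The identity is useful only in the opposite regime, to get $\lambda_n\to+\infty$ when $\int_\Omega F(u_n)\to+\infty$.

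The case $\lambda_\rho=0$ is exactly where compactness fails. There you only get $|u_n-u_\rho|_V\to0$, where $|u|_V^2=\int(|\nabla u|^2+Vu^2)$, so mass can leak to infinity. Translations cannot recover it, because the nonlinearity is pinned to $\Omega$.

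The paper handles this in two steps:
\begin{enumerate}
\item It proves $\lambda_\rho\ge0$ from the Morse information. If $\lambda_\rho<0$, take $\varphi\in C^\infty_c$ with $\|\varphi\|_2=1$ and $|\varphi|_V^2$ small. Three integer translates of $\varphi$, supported away from $\Omega$ and from each other, span a $3$-dimensional space on which $E_\rho''(u_n)+\lambda_n(\cdot,\cdot)_2$ is negative definite. This contradicts $\tilde m_{\rho,\zeta_n}(u_n)\le1$ (\cref{lem42}). Consequently $E(u_\rho)=c_\rho>0$ and $u_\rho>0$.
\item It excludes $\lambda_\rho=0$ by the Liouville result \cref{lemliou}. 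Write $u=\psi_0 v$ with $\psi_0$ the positive periodic ground state. Then $-\diver(\psi_0^2\nabla v)=-\lambda\psi_0 u\ge0$ outside a large ball. For $N\ge3$, comparison with the Green function gives $v\ge c|x|^{2-N}$, which fails to be in $L^2$ precisely when $N\le4$. For $N=1,2$ the same comparison contradicts $v\to0$ at infinity.
\end{enumerate}

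This is where $N\le4$ enters, not in the blow-up. \cref{lem62}, \cref{propbu} and the half-space Liouville results \cref{lemnelambda+0} and \cref{lem25} hold in every dimension. That is why \cref{th1} needs no restriction on $N$: it only needs $\mu$ small and \cref{T:Liouv-musmall} to rule out $\lambda=0$. If your argument were correct, it would prove \cref{th2} for all $N$.

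\textbf{The mountain pass geometry.} There is a second gap here when $V$ is nonconstant.
\begin{itemize}
\item On $\{\|\nabla u\|_2^2=k\}$ the quadratic part $\frac12\int(|\nabla u|^2+(V-\sigma_0)u^2)\,dx$ is merely nonnegative and admits no bound of the form $ck$. Your estimate therefore does not give a positive lower bound for $E_\rho-\frac12\sigma_0\mu$.
\item The natural low-energy points $\psi_0\phi$, with $\phi$ slowly varying, have $\|\nabla u\|_2^2$ of order $\mu\|\nabla\psi_0\|_{L^2(Q)}^2/\|\psi_0\|_{L^2(Q)}^2$. A small $\|\nabla u\|_2$-sphere therefore does not separate them from $u_1$.
\end{itemize}
With the standard Gagliardo--Nirenberg inequality one obtains the geometry only for small $\mu$; compare \cref{lemmps-g}.

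The paper instead uses the spheres $\{|u|_V^2=\alpha\}$ together with the inequality $\|u\|_q^q\le C\|u\|_2^{(1-\beta_q)q}|u|_V^{\beta_q q}$ of \cref{lemgn}. That inequality comes from the representation $|u|_V^2=\int\psi_0^2|\nabla(u/\psi_0)|^2$. Combined with \eqref{A5}, it gives the geometry for every $\mu>0$ (\cref{lemmpe}).

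\textbf{Two smaller points.}
\begin{itemize}
\item The theorem of \cite{BCJS} requires $E_\rho=A-\rho B$ with $B\ge0$. With a sign-changing $F$ you must split $f=f_1+f_2$ as in \eqref{eqdeff1f2}.
\item The Morse bound you actually inherit is $m_{\lambda,\rho}(u_\rho)\le 2$, since the index is at most $1$ only on $T_uS_\mu$. This allows up to two bubbles rather than one, which is harmless for the final $L^2$ contradiction.
\end{itemize}
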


On the restriction on $N$ in \cref{th2}, we refer to the discussion in the next subsection.

Next, we turn to the case $\Omega =\R^N$ and aim to find a local minimizer and mountain pass type solution. 
This case is not a direct consequence of the localized problem above, since the compactness produced by the bounded support of the nonlinearity is no longer available. Moreover, the local minimizer and the mountain pass solution are obtained under two different sets of assumptions and by different variational arguments.
For this purpose, we prepare the following conditions on $f$: 
\begin{enumerate}[label=(A\arabic*), ref=A\arabic*, resume]
	\item \label{nA6} 
	there exists $q_4 \in (2,2^*)$ such that $f(t) / t^{q_4-1} \to 0$ as $t \to +\infty$; 
	
	\item \label{nA7}
	for each $t \in (0,+\infty)$ and $\theta \in (1,+\infty)$, $F(\theta t) - \theta^2 F(t) > 0$; 

	\item \label{nA8}
	there exist $2 < q_5 < 2 + 4/N < q_6  < 2^*$ and $C_1,C_2>0$ such that 
	\[
	C_1 t^{q_5} - C_2 t^{q_6} \leq F(t)
	\quad \text{for any $t \in [0,+\infty)$};
	\]
	
	\item \label{nA9}
	there exists $C_3>0$ such that $f(t) t \leq C_3 t^{2+4/N}$ for any $t  \in [0,1]$;
	
	\item  \label{A6} 
	the function $(0,+\infty) \ni t \mapsto \frac{f(t)}{t}$ is strictly increasing.
\end{enumerate}

It is easily seen that $f$ in \eqref{exam-f} satisfies \eqref{A1}--\eqref{A4}, \eqref{nA6}--\eqref{nA8} and \eqref{A6} provided $a_1,\dots, a_k > 0$ and $2<p_1< 2+4/N$. 
On the other hand, \eqref{nA9} is a one-sided bound and $f$ in \eqref{exam-f} satisfies \eqref{nA9} 
if $a_i \leq 0$ holds for any $p_i < 2+4/N$. 
Condition \eqref{nA7} is taken from Yang, Qi and Zou \cite[($f5$)]{YQZ22} in which the $L^2$-subcritical nonlinearity was considered. 
We emphasize that \eqref{nA6}--\eqref{nA8} are used for the local minimization problem in \cref{T:whole} \ref{T:whole-i}, whereas the mountain pass solutions in \cref{T:whole} \ref{T:whole-ii} and \ref{T:whole-iii} are obtained under \eqref{A1}--\eqref{A4} together with \eqref{nA9} (respectively \eqref{A6}).

\begin{theorem}\label{T:whole}
	Let $N \geq 1$, $V \in C(\R^N)$ be $1$-periodic in $x_1,\dots,x_N$ and $\Omega = \R^N$. 
	\begin{enumerate}[label={\rm (\roman*)}]
		\item \label{T:whole-i}
		Suppose \eqref{A1} and \eqref{nA6}--\eqref{nA8}. Then there exists $\bar{\mu}_1 > 0$  such that 
		for each $\mu \in (0,\bar{\mu}_1)$, the minimization problem
        \begin{equation}\label{e:loc-min}
			m_{V,\mu} \coloneq \inf_{\mathcal{A}_\mu} E_V(u), \quad 
			\mathcal{A}_\mu \coloneq \Set{ u \in S_\mu \colon \int_{\R^N} \vab{\nabla u}^2 + (V(x)-\sigma_0) u^2 dx < 8 }
		\end{equation}
		admits a minimizer $u$ with $u>0$ in $\R^N$ and $E_V(u) = m_{V,\mu} < \sigma_0\mu/2$ and $m_{V,\mu} \to 0$ as $\mu \to 0^+$ where $\sigma_0$ is defined in \eqref{bot-spec}; 
		hence $(u,\lambda)$ is a solution to problem \eqref{eqV} with some $\lambda \in \R$. 
		
		\item \label{T:whole-ii}
		Assume \eqref{A1}--\eqref{A4} and \eqref{nA9}. 
		Then there exists $\bar{\mu}_2 >0$ such that for each $\mu \in (0,\bar{\mu}_2)$, 
		problem \eqref{eqV} has a solution $(u_\mu,\lambda) \in H^1(\R^N) \times (-\sigma_0,+\infty)$ with $E_V(u_\mu) > \sigma_0 \mu/2$ 
		where $\sigma_0$ is defined in \eqref{bot-spec}.
		Moreover, $E_V(u_\mu) \to +\infty$ as $\mu \to 0^+$.

		\item \label{T:whole-iii}
		Assume \eqref{A1}--\eqref{A4} and \eqref{A6}. Then there exists $\bar{\mu}_3 > 0$ such that 
		for any $\mu \in (0,\bar{\mu}_3)$, 
		problem \eqref{eqV} admits a solution $(u_\mu,\lambda) \in H^1(\R^N) \times (-\sigma_0,+\infty)$ with $E_V(u_\mu) > \sigma_0 \mu/2$.
		In addition, $E_V(u_\mu) \to +\infty$ as $\mu \to 0^+$.

	\end{enumerate}
\end{theorem}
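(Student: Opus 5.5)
The three parts of \cref{T:whole} rest on different mechanisms, and I treat them in turn: a local minimization on a small ball for \ref{T:whole-i}, and a mountain pass argument combined with a Morse-index-driven concentration-compactness analysis for \ref{T:whole-ii} and \ref{T:whole-iii}.

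For \ref{T:whole-i} I work with the shifted quadratic form
\[
Q(u)\coloneq \int_{\R^N}\vab{\nabla u}^2+(V(x)-\sigma_0)u^2\,dx\ge 0 ,
\]
and write $E_V(u)=\frac{\sigma_0\mu}{2}+\frac12 Q(u)-\int F(u)$ on $S_\mu$. Periodicity of $V$ gives $\norm{\nabla u}_2^2\le C(Q(u)+\mu)$. Combining \eqref{A1}, \eqref{nA6} and Gagliardo--Nirenberg yields
\[
\int F(u)\le \varepsilon\mu + C_\varepsilon\,\mu^{\gamma}(Q(u)+\mu)^{\delta}.
\]
The constants are chosen so that, for small $\mu$, $E_V>\sigma_0\mu/2+c$ on the shell $\{Q(u)\in[4,8)\}$.

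For the upper bound, I take the positive periodic ground state $\varphi$ of $-\Delta+V-\sigma_0$, localized and rescaled as $u_t=t^{N/2}\varphi(x)w(tx)$ with $t$ small and normalized in $L^2$. Then $Q(u_t)=O(t^2\mu)$. By \eqref{nA8}, with $q_5<2+4/N$, we get $\int F(u_t)\ge C_1\int u_t^{q_5}-\dots\gtrsim t^{N(q_5-2)/2}$, which beats $t^2$. Hence $m_{V,\mu}<\sigma_0\mu/2$ and $m_{V,\mu}\to0$ along with $\sigma_0\mu/2$.

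A minimizing sequence stays in $\{Q<4\}$. I apply concentration compactness modulo $\Z^N$-translations (using periodicity). Vanishing is excluded because it would force $\int F(u_n)\to0$, giving $E_V\ge\sigma_0\mu/2$. Dichotomy is excluded by the strict subadditivity $m_{V,\mu}<m_{V,\alpha}+m_{V,\mu-\alpha}$, which follows from \eqref{nA7} via $\theta$-scaling $u\mapsto\sqrt\theta u$, since $F(\theta t)>\theta^2F(t)$. Finally I replace $u$ by $\vab{u}$ and apply the strong maximum principle to get $u>0$ and the Lagrange multiplier.

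For \ref{T:whole-ii} and \ref{T:whole-iii} I use the monotonicity trick of \cite{BCJS} on the family $E_{V,s}(u)=\frac12\int(\vab{\nabla u}^2+Vu^2)-s\int F(u)$ with $s\in[1/2,1]$. Mountain pass geometry on $S_\mu$ holds uniformly for small $\mu$. The low level is of order $\sigma_0\mu/2+o(1)$ on the set $\{Q\le k\}$, with $k$ a fixed small constant, while paths to $\{E<\sigma_0\mu/2\}$ far out use the $L^2$-supercritical growth from \eqref{A3} through the fibers $t^{N/2}u(tx)$. The mountain pass level $c_\mu(s)$ satisfies $c_\mu(s)\ge\sigma_0\mu/2+c\mu^{-\beta}$, which tends to $+\infty$, obtained from Gagliardo--Nirenberg with $p>2+4/N$. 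For a.e.\ $s$ this gives bounded Palais--Smale sequences with approximate Morse index at most $1$, and hence critical points $u_s$ once compactness holds. We then let $s\to1$.

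Boundedness of $\{u_s\}$ in $H^1$ as $s\to1$ comes from a blow-up analysis. If $\norm{\nabla u_s}_2\to\infty$, rescaling by $\varepsilon_s=\vab{u_s}_\infty^{-(p-2)/2}$ and using \eqref{A3}--\eqref{A4} produces a solution of $-\Delta w+\lambda w=a_0w^{p-1}$ with Morse index at most $1$. Its mass equals $\varepsilon_s^{\,N-4/(p-2)}\mu\cdot(\dots)$, and since $p>2+4/N$ the exponent is negative. Together with uniform bounds on $\lambda_s$ this yields a contradiction.

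The main obstacle is compactness, that is, the concentration-compactness step based on the Morse index. For a bounded PS or critical sequence with $\lambda_n\to\lambda>-\sigma_0$ (which follows from the energy identity and the Pohozaev-type inequality), I decompose modulo $\Z^N$ translations as $u_n=u_0(\cdot)+\sum_j u_j(\cdot-y_n^j)+o(1)$. Each nontrivial profile solves the same periodic equation, and the Morse index bound $\le1$ forces at most one nontrivial profile to carry a negative direction. The remaining profiles must therefore be stable solutions, and I rule them out using \eqref{nA9} in case \ref{T:whole-ii}: stable solutions with small mass cannot exist because $f(t)t\le C_3t^{2+4/N}$ makes the Hessian positive, which forces the mass to vanish. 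In case \ref{T:whole-iii} I use \eqref{A6} instead, since the Nehari-type monotonicity makes any nontrivial solution unstable in the direction of $u$ itself. Mass splitting is then excluded, so a translate converges strongly in $L^2$ and, via $\lambda>-\sigma_0$, strongly in $H^1$. Positivity follows from the maximum principle applied to $\vab{u}$.
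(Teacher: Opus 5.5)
Your part \ref{T:whole-i} follows the paper's argument: the same test functions $\psi_0(x)\eta(x/n)$ from \cite{HKS92}, the shell bound, and strict subadditivity from \eqref{nA7}. The one step you omit is checking that $\theta u$ stays in $\mathcal A_{\theta^2\tau}$; the paper gets this from the shell bound together with $E_V(\theta u)<\theta^2E_V(u)$.

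The gaps are in \ref{T:whole-ii}--\ref{T:whole-iii}. First, $\lambda>-\sigma_0$ cannot come from a Pohozaev identity. For a merely continuous periodic $V$, dilation produces $\int(x\cdot\nabla V)u^2$, which is neither defined nor small. This is exactly why the paper does not use the augmented functional $\widetilde E_V$. Strictness is nevertheless indispensable. With $\lambda=-\sigma_0$, the limit $\int|\nabla r_n|^2+(V+\lambda)r_n^2\,dx\to0$ only says that the nonnegative form $\int|\nabla r_n|^2+(V-\sigma_0)r_n^2\,dx$ vanishes. That does not control $\Vab{r_n}_2$, so mass can escape even with a single profile. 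In the paper (normalized to $\sigma_0=0$), the two bounds come from different places:
\begin{itemize}
\item $\lambda_\tau\ge0$ comes from the index bound: if $\lambda_\tau<0$, three far-apart translates of a $\varphi$ with $\vab{\varphi}_V^2<|\lambda_\tau|/3$ span a three-dimensional negative space.
\item The strict bound comes from small-mass blow-up lemmas: $\lambda_\tau\ge1$ by \cref{lemb1} in \ref{T:whole-ii}, and $\lambda_\tau>\tilde M$ by \cref{l:est-locmin} in \ref{T:whole-iii}.
\end{itemize}

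Second, your profile counting does not close. The monotonicity trick gives $\tilde m_{\tau,\zeta_n}(u_n)\le1$ only on the codimension-one space $T_{u_n}S_\mu$, so two negative directions in $H^1(\R^N)$ are allowed.

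In \ref{T:whole-iii}, \eqref{A6} makes each profile unstable only along itself (\cref{l:Morse-index}). That direction is not tangent to $S_\mu$, so two profiles are compatible with the index bound; counting only excludes three. The paper excludes two profiles by the energy splitting $c_\tau=E_{V,\tau}(u^1_\tau)+E_{V,\tau}(u^2_\tau)$ with $c_\tau>2$, combined with \cref{l:est-locmin}, in a three-case analysis. That lemma says:
\begin{itemize}
\item small-mass solutions with $\tilde m_{\tau,0}=0$ have $\vab{E_{V,\tau}}\le1$ and $\lambda\le\tilde M$;
\item small-mass solutions with $\tilde m_{\tau,0}\ge1$ and $E_{V,\tau}\ge1$ have $\lambda>\tilde M$.
\end{itemize}

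In \ref{T:whole-ii}, counting works only because every positive small-mass solution has a negative direction \emph{inside} the tangent space, $\tilde m_{\tau,0}\ge1$ (\cref{lemb1}). The mechanism is as follows. By \eqref{nA9} and \cref{lemgn}, $\vab{u}_V^2\le C\Vab{u}_2^{4/N}\vab{u}_V^2(1+\Vab{u}_\infty^{p-2-4/N})$, so small mass forces $\Vab{u}_\infty\to\infty$. Blow-up together with \cite{GS} then forces $\lambda\to\infty$, with limit the ground state $U$ of $-\Delta U+U=U^{p-1}$. Finally, $Z=\frac{N}{2}U+x\cdot\nabla U$ satisfies $(U,Z)_2=0$ and $Q_U(Z)<0$ because $p>2+4/N$. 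Your claim that \eqref{nA9} ``makes the Hessian positive, which forces the mass to vanish'' is not this argument. It also points the wrong way: a positive Hessian is exactly the stability you must exclude.

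Lastly, three points in your boundedness step:
\begin{itemize}
\item The exponent $N-4/(p-2)$ is positive, not negative.
\item Unboundedness actually forces $\lambda_{\tau_n}\to+\infty$ via the energy identity, rather than being paired with uniform bounds on $\lambda$.
\item The contradiction needs the global exponential bound of \cref{propbu}, not just local convergence to a finite-index profile.
\end{itemize}
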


To the best of the authors' knowledge, \cref{th1,th2,T:whole} provide the first existence results of $L^2$-normalized solutions 
in the $L^2$-supercritical regime with general periodic potentials. 
It is worth pointing out that our results are new even for the simplest model $f(t) = |t|^{p-2} t$ with $2+4/N < p < 2^*$ 
for both cases: when $\Omega$ is bounded and when $\Omega = \R^N$. The existence results in \cref{th1,th2,T:whole} can be regarded as a higher-dimensional analogue of \cite{BCJS23,DJS}. 
On the other hand, by \cref{T:whole} \ref{T:whole-i} and \ref{T:whole-iii}, 
if \(f\) satisfies \eqref{A1}--\eqref{A4}, \eqref{nA6}--\eqref{nA8}, and \eqref{A6}, 
then problem \eqref{eqV} admits at least two solutions, one with energy below $\sigma_0\mu/2$ and the other with energy above $\sigma_0\mu/2$. 
Therefore, \cref{T:whole} \ref{T:whole-i} and \ref{T:whole-iii} are a counterpart of the multiplicity result in \cite[Theorems 1.6 and 1.8]{BQZ24} 
for periodic potentials. 
Finally, we mention that our proof of \cref{th1} can be extended to handle  more general bounded potentials, 
and this result is related to \cite{CaJe}. For the details, we refer to \cref{sec:generalV}.

\subsection{Difficulties and ideas of the proof}
To prove \cref{th1,th2,T:whole}, we first notice that without loss of generality, we may suppose 
\begin{equation}\label{inf-spect}
	\sigma_0 = \inf \sigma (-\Delta+V(x))=\inf_{u \in H^1(\R^N)\backslash\{0\}}\frac{\int_{\R^N}\left(\abs{\nabla u}^2+ V(x)\abs{u}^2\right)\, dx}{\int_{\R^N}\abs{u}^2\, dx}=0.
\end{equation}
Indeed, if $\sigma_0\neq 0$, then we can replace $-\Delta+V(x)$ by $-\Delta+V(x)-\sigma_0$. 
In the sequel, $\sigma_0 = 0$ is always assumed.

Among \cref{th1,th2,T:whole}, \cref{T:whole} \ref{T:whole-i} deals with local minimizers of $E_V|_{S_\mu}$, 
and others treat critical points of mountain pass type for $E_{V}|_{S_\mu}$. 
A basic idea of the proof of \cref{T:whole} \ref{T:whole-i} is similar to the subcritical case \cite{YQZ22,AlJi22}, 
and the strict subadditivity with $m_{V,\mu} < 0$ plays a role in ruling out dichotomy and vanishing. 
To obtain $m_{V,\mu} < 0$ for small $\mu>0$, test functions in Heinz, K\"upper and Stuart \cite{HKS92} are used, 
and allow us to remove the smallness condition on $V$ in \cite[Theorem 1.1]{AlJi22}.

On the other hand, to prove the existence of mountain pass type critical points, 
proving the boundedness of Palais--Smale sequences is difficult in general. 
Since the term $\int_{\R^N} (x \cdot \nabla V) u^2 \, dx$ is not small for periodic potentials, 
the use of $\widetilde{E}_V$ may not be helpful. 
Therefore, we adapt the scheme developed in \cite{BCJS23,CJS24,CRZ}, that is, 
the monotonicity trick with the estimate of Morse index in \cite{BCJS} and the blow-up analysis in Esposito and Petralla \cite{EP} and Pierotti and Verzini \cite{PV} (cf. \cite{CDCGJT}). 
For the case where $\Omega$ is bounded (\cref{th1,th2}), the implementation of this scheme requires three additional ingredients: 
\begin{enumerate}[label=(\Alph*), ref=\Alph*]
	\item \label{A}
	we rewrite $-\Delta+V$ in divergence form 
	and derive a Gagliardo--Nirenberg type inequality (GN type inequality for short) adapted to the periodic potential in \cref{subsec:-rewr,subsec:GNineq};
	
	\item \label{B}
	we establish a Liouville type result for \eqref{eqV} with $\lambda =0$, and it is fundamental to recover the compactness in $L^2(\R^N)$ 
	for bounded Palais--Smale sequences obtained via the monotonicity trick \cite{BCJS}; 
	this is discussed in \cref{subec:Liouville} and \cref{T:Liouv-musmall};
	
	\item \label{C}
	 the localized nonlinearity leads, after blow-up, to limit equations with nonlinearities supported on half-spaces, for which suitable nonexistence results are proved; 
	See \cref{subsec:half}. 
\end{enumerate}

With regard to \eqref{A}, the GN type inequality involving $V$ is a useful tool. 
In particular, in the setting of \cref{th2}, a mountain pass structure of $E_V|_{S_\mu}$ is found 
for every $\mu>0$ without a smallness condition on $V$, which is often imposed for problems with potentials. 
For this purpose, the GN type inequality plays a role. 
In addition, the GN type inequality is also useful in the proof of \cref{lemb1}, which is a key to prove \cref{T:whole} \ref{T:whole-ii}.

Concerning \eqref{B}, for \eqref{eqV} with $\lambda =0$, the effect of $V$ should be taken into account. 
Compared to \cite{BCJS23} in which ODE techniques are available and no $V$ appears, 
our argument is more involved. 
Here the rewriting of the operator $-\Delta + V$ in \eqref{A} is helpful for $1 \leq N \leq 4$ (see \cref{lemliou}), 
and the restriction on $N$ in \cref{th2} comes from this result. 
On the other hand, when $\mu>0$ is small, we may prove the nonexistence result in \cref{T:Liouv-musmall} under suitable conditions for $N \geq 1$.

As for \eqref{C}, due to the effect of $\chi_\Omega$ for the nonlinearity, 
the following limit equation appears after blow-up: 
\[
-\Delta u + \lambda u = \chi_H u^{p-1} \quad \text{in} \ \R^N,
\]
where $u$ is stable outside a compact set (see \cref{defstable}) and $H$ is a half-space. 
When $\lambda = 0$, it turns out that the proof of Chen and Li \cite{ChLi91} and Quittner and Souplet \cite[\S8.4]{QuSo19} 
for the result by Gidas and Spruck \cite{GS} works. 
On the other hand, when $\lambda > 0$, we combine ideas from Farina \cite{Fa} and Esteban and Lions \cite{EsLi82}. 
Since we deal with the case $N \geq 2$, the situation is more complicated. 
Here we mention that due to the result for $\lambda >0$, 
we may simplify the behavior of blowing up sequences in \cref{lem62}. 
(cf. \cite[Proposition 4.2]{BCJS23}, \cite[Theorem 4.2]{CJS24} and \cite[Lemma 4.3]{CRZ}.)

Now we turn to the case $\Omega = \R^N$ (\cref{T:whole} \ref{T:whole-ii} and \ref{T:whole-iii}). 
In this case, to recover the compactness of bounded Palais--Smale sequences, 
we need a concentration--compactness type argument. 
In order to avoid dichotomy, 
we exploit estimates on an approximate Morse index $\tilde{m}_{\tau,0} (u)$ which is considered in \cite{BCJS} (see \cref{d:Mor-apMor}), the Morse index $m_{\lambda,\tau} (u)$, the Lagrange multiplier and the energy $E_{V,\tau} (u)$ (see \eqref{eqdefevtau}), 
and this is different from \cite{BQZ24,DJS,MRV22}. 
In the setting of \cref{T:whole} \ref{T:whole-ii}, a key is \cref{lemb1}, in which we estimate the Lagrange multiplier $\lambda$ 
and the approximate Morse index $\tilde{m}_{\tau,0} (u)$ for small $\mu>0$ via the blow-up argument. 
On the other hand, for \cref{T:whole} \ref{T:whole-iii}, 
we derive estimates on the energy $E_{V,\tau} (u)$ and the Lagrange multiplier depending on the approximate Morse index 
for small $\mu>0$. 
These estimates together with the fact that the Morse index of solutions to \eqref{eqV} is at least one 
lead to the compactness.

This paper is organized as follows. In \cref{sec:Pre}, 
we rewrite $-\Delta + V$ into the divergence form and prove a Gagliardo--Nirenberg type inequality with $V$ 
and some Liouville type results. 
\cref{sec:mp} is devoted to finding the mountain pass structure of $E_{V,\tau}$ 
and proving the nonexistence result with $\lambda = 0$ and the small $L^2$-norm. 
In \cref{sec:comp-bPS}, the behavior of bounded Palais--Smale sequences is analyzed and in \cref{sec: Ex-MP-a.e.}, we prove the existence of critical points corresponding to the mountain pass value for almost all $\tau \in [1/2,1]$. 
\cref{sec:blow-up} treats the behavior of blow-up sequences and 
\cref{sec:pf-main} provides the proof of \cref{th1,th2}. 
In \cref{sec:the-whole-sp}, we prove \cref{T:whole}: part \ref{T:whole-i} by the local minimization and concentration--compactness argument, and parts \ref{T:whole-ii} and \ref{T:whole-iii} by combining the mountain pass construction with concentration--compactness and the estimates on the approximate Morse index, the Morse index, the Lagrange multiplier and 
the energy. 
In \cref{app:lemnelambda+0}, \cref{lemnelambda+0} is proved.  In \cref{sec:generalV}, we extend \cref{th1} to general potentials $V \in C(\R^N) \cap L^\infty (\R^N)$.

In the following, we denote by $C$ and $\tilde{C}$ positive constants that may change from line to line.

\section{Preliminaries}
\label{sec:Pre}

In the following sections, we always assume that $V \in C(\R^N)$ is $1$-periodic in $x_1,\dots,x_N$ and satisfies \eqref{inf-spect}. 

\subsection{Another expression of $-\Delta + V$ without lower order terms}
\label{subsec:-rewr}

In this subsection, we rewrite $-\Delta u + V(x) u$. This will be useful to derive a Gagliardo--Nirenberg type inequality 
involving $\int_{\R^N} \abs{\nabla u}^2 + V(x) u^2 \, dx$ as well as a Liouville type result (see \cref{lemgn,lemliou} below). 
To this end, we first recall \eqref{inf-spect}: $0 = \sigma_0 = \inf \sigma (-\Delta + V)$. 
Let \(Q=(0,1)^N\), and consider the periodic eigenvalue problem
for \(-\Delta+V\) on \(Q\). Denote its lowest eigenvalue by
\(\lambda_0\). By \cite[Theorem 3.1.2(i)]{Eastham} when \(N=1\), and by
\cite[Theorem 6.9.1]{Eastham} when \(N\geq 2\),
the eigenvalue \(\lambda_0\) admits a positive
\(1\)-periodic eigenfunction \(\psi_0\).
Moreover, by \cite[Theorem 2.4.1, Eq.~(3.2.1), and Theorem 5.3.2]{Eastham} 
when \(N=1\), and by \cite[Theorems 6.5.1 and 6.9.2]{Eastham} when 
\(N\geq 2\), the lowest eigenvalue \(\lambda_0\) coincides with $\sigma_0$. 
Consequently, 
\[
\lambda_0=\sigma_0=0,
\]
and the periodic extension of \(\psi_0\) is a positive \(1\)-periodic solution to
\[
-\Delta\psi_0+V(x)\psi_0=0
\qquad\text{in }\mathbb{R}^N.
\]
Since $V \in C(\R^N)$, elliptic regularity implies $\psi_0 \in W^{2,r}_{\rm loc} (\R^N)\ \cap C^1(\R^N)$ for any $r<\infty$. 
Using $\psi_0$, let us consider the following uniformly elliptic operator:
\[
L_0 v \coloneq - \diver \ab( \psi_0^2 \nabla v ).
\]
For any $u \in W^{2,1}_{\text{loc}}(\R^N)\cap W^{1,\infty}_{\text{loc}}(\R^N)$, 
set $v \coloneq u/\psi_0$. 
It is easily seen from  \cite[Proposition 9.4]{Br} that $v \in W^{2,1}_{\text{loc}}(\R^N)\cap W^{1,\infty}_{\text{loc}}(\R^N)$ and 
\begin{equation}\label{eqschpsiv}
    \begin{aligned}
(-\Delta + V) u &= -\Delta(\psi_0 v) + V \psi_0 v \\
&= -(\psi_0 \Delta v + 2 \nabla \psi_0 \cdot \nabla v + v \Delta \psi_0) + V \psi_0 v \\
&=\psi_0 (-\Delta v) - 2 \nabla \psi_0 \cdot \nabla v\\
&=-\frac{1}{\psi_0} \diver (\psi^2_0 \nabla v) = \frac{1}{\psi_0} L_0v. 
\end{aligned}
\end{equation}
Therefore, for any $u \in C^\infty_0(\R^N)$, we derive 
\begin{equation}\label{equpsi}
  \int_{\R^N}\ab(\abs{\nabla u}^2+ V(x)\abs{u}^2) dx =\int_{\R^N}\psi^2_0 \abs{\nabla (u/\psi_0)}^2 dx.
\end{equation}
By density of $C_0^\infty(\mathbb{R}^N)$ in $H^1(\mathbb{R}^N)$, \eqref{equpsi} holds for any $u \in H^1(\R^N)$. 

\subsection{A Gagliardo--Nirenberg type inequality}
\label{subsec:GNineq}

In this subsection, we derive a Gagliardo--Nirenberg type inequality involving 
\[
\abs{u}_V:=\ab[\int_{\R^N}\ab(\abs{\nabla u}^2+ V(x)\abs{u}^2) dx]^\frac{1}{2},
\]
which plays a role in finding a (uniform) mountain pass structure of $E_{V,\tau}|_{S_\mu}$ (see \eqref{eqdefevtau} for the definition of $E_{V,\tau}$). 
For this purpose, we recall the standard Gagliardo--Nirenberg inequality: for $q \in [2,2^*)$, 
\begin{equation}\label{stdGN}
	\norm{v}_q^q \leq C_{q,N} \norm{\nabla v}_2^{\beta_q q} \norm{v}_2^{(1-\beta_q) q} \quad \text{for every $ v \in H^1(\R^N)$},
\end{equation}
where $\beta_q \coloneq N \ab( \frac{1}{2} - \frac{1}{q} )$. 
By \eqref{equpsi} and the Gagliardo-Nirenberg inequality for $v=u/\psi_0$ with $u \in C^\infty_0(\R^N)$, we obtain
\begin{equation*}\label{eqgnc0i}
   	\norm{ u }_q^q \leq C \norm{ u/\psi_0 }_q^q \leq C  \norm{ u/\psi_0 }_2^{(1 - \beta_q)q} \norm{ \nabla (u/\psi_0) }_2^{\beta_q q}
	\leq C \norm{ u }_2^{{(1 - \beta_q)q}} \abs{u}_{V}^{\beta_q q}. 
\end{equation*}
From the density of $C^\infty_0(\R^N)$ in $H^1(\R^N)$, 
the following Gagliardo-Nirenberg type inequality is obtained: 

\begin{lemma}\label{lemgn}
Let $N \geq 1 $.  For any  $q \in [2, 2^*)$, there exists a constant $C_{q, N, V}>0$, depending only on  $q,N$ and $V$, such that
\begin{equation*}\label{eqgn}
    \|u\|_{q}^q \leq C_{q,N,V} \|u\|_{2}^{(1 - \beta_q)q} \abs{u}_{V}^{\beta_q q} \quad \text{for any $u \in H^1(\R^N)$},
\end{equation*}
where $\beta_q = N \left( \frac{1}{2} - \frac{1}{q} \right)$. 
Moreover, for $N \geq 3$, there exists a constant $C_{N, V}>0$, depending only on  $N$ and $V$, such that
\begin{equation*}\label{eqgn-critical}
    \|u\|_{2^*}^{2^*} \leq C_{N,V}   \abs{u}_{V}^{2^*} \quad \text{for any $u \in H^1(\R^N)$}.
\end{equation*}
\end{lemma}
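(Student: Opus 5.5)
The plan is to reduce the statement to the classical Gagliardo--Nirenberg and Sobolev inequalities by the substitution $v=u/\psi_0$ from \cref{subsec:-rewr}. The essential input is that $\psi_0$ is continuous, positive and $1$-periodic. Hence there are constants $0<m\le M<\infty$ with $m\le\psi_0\le M$ on $\R^N$, and since $\psi_0\in C^1(\R^N)$ is periodic, $\nabla\psi_0$ is bounded as well.

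\emph{Step 1.} Fix $u\in C_0^\infty(\R^N)$ and set $v=u/\psi_0$. Because $\psi_0\in C^1$ is bounded below by $m>0$, we have $v\in H^1(\R^N)$, and
\[
\norm{u}_q\le M\norm{v}_q,\qquad \norm{v}_2\le m^{-1}\norm{u}_2 .
\]
By \eqref{equpsi},
\[
\abs{u}_V^2=\int_{\R^N}\psi_0^2\abs{\nabla v}^2\,dx\ge m^2\norm{\nabla v}_2^2 .
\]
Inserting these bounds into \eqref{stdGN} gives
\[
\norm{u}_q^q\le M^q C_{q,N}\,m^{-(1-\beta_q)q}\,m^{-\beta_q q}\,\norm{u}_2^{(1-\beta_q)q}\abs{u}_V^{\beta_q q}.
\]
The constant depends only on $q$, $N$ and $\psi_0$, and $\psi_0$ is determined by $V$ once a normalization is fixed. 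For $N\ge3$ the critical inequality follows in the same way from the Sobolev inequality $\norm{v}_{2^*}\le S_N^{-1/2}\norm{\nabla v}_2$, since no $L^2$ factor is needed.

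\emph{Step 2 (density).} Take $u\in H^1(\R^N)$ and $u_n\in C_0^\infty(\R^N)$ with $u_n\to u$ in $H^1$. Then $\norm{u_n}_2\to\norm{u}_2$. Since $V$ is bounded (continuous and periodic), $\abs{\cdot}_V^2$ is a continuous quadratic form on $H^1$, so $\abs{u_n}_V\to\abs{u}_V$; it is nonnegative because $\sigma_0=0$. By the Sobolev embedding, $u_n\to u$ in $L^q$ for $q\in[2,2^*]$ (with $q<\infty$ when $N\le2$). Passing to the limit in the inequality for $u_n$ yields the claim.

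The only point that needs care is the calculation in Step 1. One must confirm that \eqref{equpsi} applies to $u\in C_0^\infty$ and that $v$ really lies in $H^1$. Both follow from the regularity $\psi_0\in C^1$ and $\psi_0\ge m>0$ established earlier, so I expect no real obstacle here: the lemma is a direct transfer of the classical inequalities through the uniformly elliptic weight $\psi_0^2$.
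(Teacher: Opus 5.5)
Your proposal is correct and follows the paper's argument: you set $v=u/\psi_0$, use the two-sided bounds on $\psi_0$ and the identity \eqref{equpsi} to transfer the classical Gagliardo--Nirenberg inequality (and the Sobolev inequality in the critical case $N\ge 3$), and conclude by density of $C_0^\infty(\R^N)$ in $H^1(\R^N)$. Your explicit constant $C_{q,N}(M/m)^q$ is also invariant under rescaling $\psi_0$, so it depends only on $q$, $N$ and $V$, as the lemma claims.
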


\subsection{Liouville type results for NLS equations with periodic potentials}
\label{subec:Liouville}

This subsection is devoted to proving a Liouville type result for \eqref{eqV} with $\lambda = 0$. 
This will be used to rule out the case $\lambda \leq 0$ for each nonnegative solution to \eqref{eqV} with a finite Morse index $m_{\lambda,\tau}(u) < \infty$. 
See \cref{d:Mor-apMor} for the definition of $m_{\lambda,\tau} (u)$. 

\begin{lemma}\label{lemliou}
  Suppose $0<s \leq N/(N-2)$ when $N \geq 3$ and $s=+\infty$ when $N=1,2$. 
 Assume that $v \in L^s(\R^N \setminus B_{R_0} ) \cap C(\R^N \setminus B_{R_0} ) \cap H^2_{\rm loc} (\R^N \setminus B_{R_0} ) $ is a nonnegative function satisfying 
 $v(x) \to 0$ as $|x| \to +\infty$ and 
 $L_0v = -\diver( \psi^2_0 \nabla v ) \geq 0$ in $\R^N \setminus B_{R_0}$ for some $R_0>0$. 
 Then $v \equiv 0$ in $\R^N \setminus B_{R_0}$. 
In particular, if $1 \leq N \leq 4$, $\Omega$ is bounded, and $(u,\lambda)\in H^1(\R^N) \times \R$ is a solution to \eqref{eqV}
 under \eqref{A1} and \eqref{A3}, then $\lambda > 0$ holds.
\end{lemma}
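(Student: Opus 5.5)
The plan is to argue by contradiction, using the fact that the rewritten operator $L_0=-\diver(\psi_0^2\nabla\cdot)$ is uniformly elliptic in divergence form. Indeed $0<\min\psi_0\le\psi_0\le\max\psi_0$ by periodicity and positivity. So a nonnegative supersolution of $L_0$ in an exterior domain obeys the same lower bounds as a superharmonic function: in $N\ge3$ it is bounded below by a multiple of the Green function $\sim|x|^{2-N}$, and in $N\le2$ by a positive constant. The first bound contradicts $v\in L^s$ with $s\le N/(N-2)$, the second contradicts $v\to0$.

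\textbf{Step 1 (positivity).} Suppose $v\not\equiv0$ in $\R^N\setminus B_{R_0}$. By the strong maximum principle (weak Harnack inequality for divergence-form supersolutions), $v>0$ on the connected component where it is not identically zero. When $N=1$ there are two half-lines, and we treat each separately. Fix $R>R_0$ and set $m\coloneq\min_{\partial B_R}v>0$ (restricted to the relevant component if $N=1$).

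\textbf{Step 2 (barrier).} For $R'>R$, let $h_{R'}$ solve $L_0h=0$ in $B_{R'}\setminus\overline{B_R}$ with $h=m$ on $\partial B_R$ and $h=0$ on $\partial B_{R'}$. By the weak comparison principle, $v\ge h_{R'}$ in the annulus, since $v\ge0$ on $\partial B_{R'}$ and $v\ge m$ on $\partial B_R$. The functions $h_{R'}$ increase in $R'$ to a limit $h_\infty$, which is $L_0$-harmonic in $|x|>R$ and satisfies $v\ge h_\infty$.
\begin{itemize}
\item For $N\le2$: the relative capacity of $B_R$ in $B_{R'}$ for $L_0$ is comparable to the one for $-\Delta$, by uniform ellipticity of the energy forms. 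That capacity tends to $0$ as $R'\to\infty$, which forces $h_\infty\equiv m$. For $N=1$ this can instead be read off from the explicit ODE. Hence $v\ge m>0$ everywhere, contradicting $v(x)\to0$.
\item For $N\ge3$: compare $h_\infty$ with the Green function of $L_0$, using the Littman--Stampacchia--Weinberger two-sided bound $G(x,y)\simeq|x-y|^{2-N}$. A standard maximum-principle argument in annuli then gives $h_\infty(x)\ge c|x|^{2-N}$ for $|x|\ge 2R$. Therefore $\int_{|x|>2R}v^s\,dx\ge c\int_{2R}^\infty r^{(2-N)s+N-1}\,dr=\infty$, because $(N-2)s\le N$. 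This contradicts $v\in L^s$.
\end{itemize}
This Green-function lower bound is the step I expect to be the main technical point. It is classical, but it must be invoked correctly for merely bounded measurable coefficients $\psi_0^2$.

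\textbf{Step 3 (application).} Let $(u,\lambda)$ solve \eqref{eqV} with $\Omega$ bounded, and suppose $\lambda\le0$. Choose $R_0$ with $\Omega\subset B_{R_0}$. Outside $B_{R_0}$ we have $-\Delta u+Vu=-\lambda u\ge0$. With $v=u/\psi_0$, \eqref{eqschpsiv} gives $L_0v=\psi_0(-\lambda u)\ge0$ there.
\begin{itemize}
\item Regularity: elliptic regularity gives $u\in H^2_{\rm loc}\cap C$.
\item Decay: local $L^\infty$ estimates on unit balls, bounded by $\|u\|_{L^2(B_2(x))}$, give $u(x)\to0$, since the coefficients $V$ and $\lambda$ are bounded outside $\Omega$ and $u\in L^2$. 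Hence $v\to0$.
\item Integrability: $v\in L^2$ because $\psi_0$ is bounded below. When $N\ge3$, the condition $s=2\le N/(N-2)$ holds exactly when $N\le4$. When $N=1,2$, $v$ is bounded, so $v\in L^\infty$ as required.
\end{itemize}
The first part then gives $v\equiv0$, i.e.\ $u\equiv0$ outside $B_{R_0}$. This contradicts $u>0$ in $\R^N$. Hence $\lambda>0$.
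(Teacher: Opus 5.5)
Your proof is correct. For $N\ge 3$ and for the application to \eqref{eqV}, it is essentially the paper's proof. There the paper compares $v$ directly with $c_0G_r(\cdot,0)$ on $B_r\setminus B_{R_0}$, where $G_r$ is the Dirichlet Green function of $L_0$ on $B_r$ with the Gr\"uter--Widman/Littman--Stampacchia--Weinberger bounds, and then lets $r\to\infty$. Your intermediate barrier $h_{R'}$ is not needed in that case, but it does no harm.

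The genuine difference is in $N\le 2$.
\begin{itemize}
\item For $N=1$, the paper integrates $-(\psi_0^2v')'\ge 0$ directly. A point with $v'(x_0)<0$ gives $\psi_0^2v'\le\psi_0^2(x_0)v'(x_0)<0$ on $(x_0,\infty)$, which is incompatible with $v\to 0$.
\item For $N=2$, the paper imports the fundamental solution $F$ of $L_0$ from Kenig--Ni, with $-K_1\log|x|\le F\le -K_2\log|x|$. It then runs a Hadamard-type three-circles comparison using affine combinations of $F$.
\end{itemize}
You replace both with a single parabolicity argument. By the Dirichlet principle and uniform ellipticity, $\int|\nabla h_{R'}|^2$ is at most $(\max\psi_0^2/\min\psi_0^2)\,m^2$ times the Laplacian capacity of $B_R$ relative to $B_{R'}$. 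That capacity tends to $0$ when $N\le 2$.

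To make the claim that this ``forces $h_\infty\equiv m$'' airtight, say one more thing explicitly. The functions $h_{R'}$, extended by $m$ on $B_R$, satisfy $\|\nabla h_{R'}\|_{L^2}\to 0$. By weak lower semicontinuity on each ball, the monotone limit therefore has zero gradient, so it equals its value $m$ on $B_R$.

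Your route is more elementary and treats $N=1,2$ uniformly, since it uses only uniform ellipticity and no fundamental-solution asymptotics. The paper's route is more explicit but relies on a deeper external estimate.

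Your version also has a small extra advantage. You take $m=\min_{\partial B_R}v$ with $R>R_0$, where positivity genuinely follows from the strong maximum principle. The paper instead uses minima over $\partial B_{R_0}$ itself, where $v$ is only known to be nonnegative.
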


\begin{proof}
We first prove the latter assertion and let $(u,\lambda) \in H^1(\R^N) \times \R$ be a solution to \eqref{eqV} 
and set $v \coloneq u / \psi_0$. By \eqref{A3} and the fact that $\psi_0\in C^1(\R^N)$ is positive and periodic, 
elliptic regularity leads to $u,v \in W^{2,\nu}_{\rm loc} (\R^N) \cap L^2(\R^N) \cap C^1(\R^N)$ for each $\nu< \infty$ and  $v(x) \to 0$ as $|x| \to +\infty$. 
By writing $a_\pm \coloneq \max \set{\pm a, 0} \geq 0$, since $u \geq 0$ and 
\[
-\Delta u + \ab[ V + \lambda - \chi_\Omega \frac{f(u)}{u}  ]_+ u \geq 0 \quad \text{in} \ \R^N,
\]
the strong maximum principle (e.g., \cite[Theorem 8.19]{GT}) implies $u>0$ in $\R^N$ and $v>0$ in $\R^N$. 
From \eqref{eqschpsiv}, the boundedness of $\Omega$ and $u>0$, 
there exists $R_0>0$ such that 
\[
L_0v = \psi_0 \ab( -\Delta u + V u ) = - \lambda \psi_0 u + \psi_0 \chi_{\Omega} f(u) = -\lambda \psi_0 u \quad \text{in} \ \R^N \setminus B_{R_0}. 
\]
Since $v \in L^2(\R^N)$, $v>0$ in $\R^N$ and $N/(N-2) \geq 2$ for $N=3,4$, the former assertion excludes the case $\lambda \leq 0$, 
and $\lambda > 0$ holds.

To prove the first assertion, we argue by contradiction and suppose that $v\not \equiv 0$ in $\R^N\setminus B_{R_0}$. 
The strong maximum principle, applied separately to each unbounded connected component when $N=1$, yields $v>0$ on at least one such component; 
for $N\geq2$, it yields $v>0$ in $\R^N\setminus B_{R_0}$.
In the sequel, we separate the cases $N=1$, $N=2$ and $N \geq 3$.

{\bf Case 1:} 
When $N=1$, we may assume that $v>0$ on $(R_0,+\infty)$. 
Hence,  $v\in C^1((R_0,+\infty))$ and $v(x)\to0$ as $x\to+\infty$ imply that there exists $x_0>R_0$ such that $v'(x_0)<0$.
Since $\psi_0^2 v' \in H^1_{\rm loc} (\R \setminus (-R_0,R_0))$, 
$- (\psi_0^2 v')' \geq 0$ a.e. $(R_0,+\infty)$ and $\psi_0^2(x_0) v'(x_0) < 0$, 
we have $\psi_0^2(x) v'(x) \leq \psi_0^2(x_0) v'(x_0) < 0$ for every $x \in (x_0,+\infty)$. 
However, since $\psi_0 > 0$ in $\R$ and $\psi_0$ is periodic, this contradicts $v(x) \to 0$ as $x \to +\infty$. 

{\bf Case 2:} 
We deal with the case $N=2$. By \cite[Theorem A.2]{KN}, there exists a unique fundamental solution 
$F\in H^1_{\text{loc}}(\R^2\setminus\{0\})$ for $L_0 $, with pole at the origin, and constants $K_1,K_2>0$ 
and $R_1>1$ such that
\begin{equation}\label{eqlem31n21}
    \int_{\R^2} \psi^2_0 \nabla F \cdot \nabla \varphi\, dx= \varphi(0) \quad \text{for each $\varphi\in C_0^\infty(\R^2)$}
\end{equation}    
and
\begin{equation}\label{eqlem31n22}
   -K_1\log\abs{x}\leq F(x)\leq -K_2\log\abs{x} \quad \text{for any $x \in \R^2$ with $\abs{x}> R_1$}. 
\end{equation}
Let $R_2:=\max\{R_0,R_1+1\}$. 
Then, by \eqref{eqlem31n21}, we have 
\[
L_0 F=0 \quad \text{in } \R^2\backslash \overline{B_{R_2}},
\]
which implies that $F \in C^2(\R^2\setminus \overline{B_{R_2}})\cap C(\overline{\R^2\setminus \overline{B_{R_2}}})$. 

We now repeat the argument in \cite[Lemma A.2]{I} and \cite[Theorem 29 in Chapter 2]{PW}. 
For every $r \geq R_2$, set $\displaystyle m(r):= \min_{\abs{x}=r}v(x)$ and $\displaystyle M(r):= \max_{\abs{x}=r}F(x)$. Then, by \eqref{eqlem31n22}, we know that, for $r>R_2$ large enough, $M(R_2)>M(r)$.
For $r>R_2$ large enough and $x \in \bar{B_r} \setminus B_{R_2}$, set
\[
\varphi_r(x) \coloneq \frac{M(R_2)m(r)-M(r)m(R_2)}{M(R_2)-M(r)}+\frac{m(R_2)-m(r)}{M(R_2)-M(r)}F(x). 
\]
Since $v>0$ in $\R^2 \setminus B_{R_0}$ and $v \in C(\R^2 \setminus B_{R_0})$, 
we find that $m(r)>0$ holds for all $r > R_0$. Moreover, the weak maximum principle gives 
\[
\min_{ R_2 \leq |x| \leq r } v = \min \Set{ m(R_2) , m(r) } .
\]
By $v(x) \to 0$ as $|x| \to +\infty$, 
we may take a sufficiently large $r_0>R_2$ so that $m(r) = \min_{|x| = r} v < m(R_2)$ for all $r \geq r_0$.
Therefore, for each $r>r_0$, we obtain
\begin{equation}\label{eqphi1r1}
   \varphi_r(x) \leq m(R_2) \quad \text{for } \abs{x}=R_2, \quad \text{and}\quad \varphi_r(x) \leq m(r) \quad \text{for } \abs{x}=r,
\end{equation}
and
\begin{equation}\label{eqlug}
    L_0 \ab( v - \varphi_r ) \geq 0 \quad \text{in } B_{r}\backslash \bar{B}_{R_2}.
\end{equation}
By \eqref{eqphi1r1}, for any $r>r_0$, we have
\[
v -\varphi_r \geq 0\quad \text{for } \abs{x}=R_2 \text{ or } \abs{x}=r
\]
and from \eqref{eqlug} and the maximum principle, it follows that
\[
v(x) \geq \varphi_r(x) \quad \text{for every  $x \in B_{r}\backslash \bar{B}_{R_2}$}.
\]
Since $v(x) \to 0$ as $|x| \to +\infty$, we know that $m(r) \to 0$ as $r \to +\infty$.
Thus, by \eqref{eqlem31n22}, we see that  $M(r) \to -\infty$, hence,
\[
\begin{aligned}
  v(x) &\geq \lim_{r\to+\infty}\left(\frac{M(R_2)m(r)-M(r)m(R_2)}{M(R_2)-M(r)}+\frac{m(R_2)-m(r)}{M(R_2)-M(r)}F(x)\right)\\
  &= \lim_{r\to+\infty}\frac{-M(r)m(R_2)}{M(R_2)-M(r)}\\
  &= m(R_2) > 0 \quad \text{for any } x\in \R^2\backslash \bar{B}_{R_2}, 
\end{aligned}
\]
which leads to a contradiction since $v(x) \to 0$ as $|x| \to \infty$.

  {\bf Case 3:} We treat the case $N \geq 3$.  
By \cite{GW} (see also \cite{LSW} and \cite[Theorem 3.8]{D}), 
for any $r>0$, there exist a unique Green function $G_r(x,y):B_r\times B_r\backslash\{x=y\}\to\R$ and two constants $K_1',K_2' >0$ depending only on $N$ and $\psi_0$ such that 
\begin{equation}\label{eqgreen}
  \int_{B_r} \psi^2_0 \nabla G_r (\cdot,y)\cdot\nabla \varphi \, dx=\varphi(y) \quad \text{for all $\varphi\in C_0^\infty(B_r)$},
\end{equation}
and
\begin{equation}\label{eqgreeng}
 K_1' \vab{x-y}^{2-N} \leq G_r(x,y) \leq K_2' \abs{x-y}^{2-N} \quad \text{for all $x,y \in B_r$ with $\abs{x-y} \leq\frac{1}{2}\operatorname{dist}(y,\partial B_r)$}.
\end{equation}
Set $G_r(x):=G_r(x,0)$. Then, by \eqref{eqgreen}, for any $r>1$, we have
\begin{equation*}
    L_0G_r  =0 \quad \text{in } B_r\backslash \bar{B}_1.
\end{equation*}
Notice that $G_r \in H^1( B_r \setminus B_{R_0} )\cap W^{1,1}_0(B_r)$ (see \cite[Eq. (1.3)]{GW}). 
	From the equation, $G_r \in C^2( B_{r} \setminus B_{R_0} ) \cap C( \overline{ B_r \setminus B_{R_0} } )$ 
	with $G_r = 0$ on $\partial B_r$. 
	By \eqref{eqgreeng} and $v>0$ in $\R^N \setminus B_{R_0}$, 
	there exists $c_0>0$ such that for all sufficiently large $r$, 
	\[
	c_0 \max_{ \partial B_{R_0} } G_r < \min_{ \partial B_{R_0} } v.
	\]
	On the other hand, notice that 
	\[
	\left\{\begin{aligned}
		&L_0 \ab( c_0 G_r ) = 0 \leq L_0 v \quad 
		\text{in $B_r \setminus B_{R_0}$}, 
		\\
		&c_0 G_r < v \quad \text{on} \ \partial B_{R_0}, \quad 
		c_0 G_r = 0 < v \quad \text{on} \ \partial B_r.
	\end{aligned}\right.
	\]
	The comparison theorem leads to 
	$c_0 G_r \leq v$ in $B_r \setminus B_{R_0}$ for all sufficiently large $r$. 
	By letting $r \to + \infty$, we see that 
	$c_1 |x|^{2-N} \leq v$ in $\R^N \setminus B_{R_0}$, however, this contradicts $v \in L^s(\R^N \setminus B_{R_0})$. 
\end{proof}

\subsection{A classification result for NLS equations with nonlinearities localized on half-spaces}
\label{subsec:half}

In this subsection, in order to apply blow-up analysis, we study the bounded solutions to the following equation
 \begin{equation}\label{eqneH}
    -\Delta u + \lambda u=\chi_{{H}}\abs{u}^{p-2}u \quad \text{in }\R^N,
 \end{equation}
 where  $\lambda\geq 0$, $p>2$ and $H$ is a half-space.

 We first consider the case where $\lambda=0$. The following lemma follows directly from  Liouville's theorem for $N=1,2$ 
 and the proof of \cite{ChLi91} and \cite[\S 8.4]{QuSo19} 
 for the result of \cite{GS}:
 \begin{lemma}\label{lemnelambda+0}
    Let $N \geq 1$,  $p\in (2,2^*)$ and $u \in C(\R^N) \cap H^1_{\rm loc} (\R^N)$  be a bounded nonnegative solution of 
    \begin{equation}\label{eqneH0}
    -\Delta u =\chi_{{H}}\abs{u}^{p-2}u \quad \text{in }\R^N. 
 		\end{equation} 
 Then $u \equiv 0$.
 \end{lemma}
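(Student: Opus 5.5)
For $N=1,2$ the plan is to use only superharmonicity. Since $u \geq 0$ and $p>2$, we have $-\Delta u = \chi_H u^{p-1} \geq 0$, so $u$ is a bounded-below superharmonic function on $\R^N$. For $N\le 2$ such a function is constant: when $N=1$, $u$ is concave and bounded on $\R$; when $N=2$, this is the classical Liouville property for superharmonic functions on the plane. Then $u\equiv c$, and plugging into the equation on $H$ gives $c^{p-1}=0$, so $u\equiv 0$.

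For $N\ge 3$ the plan is to follow the moving-plane argument of Chen--Li \cite{ChLi91} as presented in \cite[\S 8.4]{QuSo19}, applied to the Kelvin transform. After a rotation and translation, assume $H=\{x_N>0\}$. Suppose $u\not\equiv 0$. The strong maximum principle applied to $-\Delta u\ge 0$ gives $u>0$ in $\R^N$. Set
\[
w(x)=|x|^{2-N}u\bigl(x/|x|^2\bigr).
\]
Since the inversion preserves $H$, $w$ satisfies
\[
-\Delta w = \chi_H |x|^{-\gamma} w^{p-1}, \qquad \gamma \coloneq N+2-p(N-2)>0,
\]
where $\gamma>0$ because $p<2^*$. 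Moreover $w$ has the decay $w(x)\sim u(0)|x|^{2-N}$ at infinity and is possibly singular only at the origin.

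Next I move planes orthogonal to a direction $e$. For directions $e$ tangent to $\partial H$, i.e. $e\perp e_N$, the reflection $x\mapsto x^\lambda$ across $T_\lambda=\{x\cdot e=\lambda\}$ preserves $H$. For $\lambda<0$ the reflected point $x^\lambda$ is closer to $0$ than $x$ on the relevant side, so $|x^\lambda|^{-\gamma}\ge|x|^{-\gamma}$. This monotonicity of the weight has the right sign, which lets the standard comparison $w_\lambda - w\ge 0$ in $\Sigma_\lambda$ start near $\lambda=-\infty$ (using the decay) and continue up to $\lambda=0$. The same argument from the other side then gives symmetry of $w$ in every direction perpendicular to $e_N$ about the origin. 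Since the origin is an arbitrary point of $\partial H$ (we can recenter the Kelvin transform at any $y\in\partial H$), $u$ is symmetric with respect to every hyperplane containing $e_N$ and passing through every point of $\partial H$. Hence $u$ depends only on $x_N$.

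Once $u=u(x_N)$, we reduce to a one-dimensional problem: $-u''=\chi_{\{t>0\}}u^{p-1}$ with $u$ bounded and nonnegative. On $\{t<0\}$ the function $u$ is affine and bounded, hence constant. On $\{t>0\}$ it is concave and bounded, hence nondecreasing, so $u''\to 0$ at infinity, which forces $u(+\infty)=0$; combined with monotonicity this gives $u\equiv 0$ on $t>0$, and the same equation then yields $u\equiv 0$ everywhere. The main obstacle I expect is the start of the moving-plane procedure and the treatment of the singularity of $w$ at the origin, together with the verification that the characteristic function $\chi_H$ does not spoil the comparison. This works because we only reflect across planes perpendicular to $\partial H$, and I would check it carefully with the narrow-region and decay lemmas of \cite{QuSo19}.
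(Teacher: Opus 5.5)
Your proposal is correct and follows essentially the same route as the paper. For $N=1,2$ you use the Liouville property of bounded-below superharmonic functions. For $N\ge 3$ you run the Chen--Li/Quittner--Souplet moving planes on the Kelvin transform in directions parallel to $\partial H$, obtain symmetry about every such hyperplane through every point of $\partial H$ (the paper gets this by applying Step 1 to the translates $u(\cdot - a e_1)$), and reduce to a one-dimensional concave profile.

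One small correction concerns the exponent. With the nonlinearity $u^{p-1}$, the Kelvin transform satisfies $-\Delta w = \chi_H |x|^{-\gamma} w^{p-1}$ with $\gamma = 2N - p(N-2)$, not $N+2-p(N-2)$. Your value corresponds to the nonlinearity $u^p$ and is not positive for all $p<2^*$. The correct $\gamma$ is positive exactly when $p<2^*$. Its positivity makes the comparison $|x^\lambda|^{-\gamma}\ge |x|^{-\gamma}$ strict, which is what rules out symmetry about a plane $T_{\bar\lambda}$ with $\bar\lambda<0$.
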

For completeness, the proof of \cref{lemnelambda+0} is provided
in \cref{app:lemnelambda+0}.

To deal with the case where $\lambda > 0$, 
we need the notation of stability from \cite[Definition 2.1]{EP} and \cite[Definition]{Fa}:  
\begin{definition}\label{defstable}
	Let $\lambda\geq 0$, $p>2$ and $H$ be a half-space.  We say that a solution $u$ of \eqref{eqneH}
	\begin{itemize}
		\item 
		is stable if
		\[
		Q_{\lambda,u}(\varphi) \coloneq \int_{\R^N} |\nabla \varphi|^2 + \lambda\varphi^2 - (p-1)\chi_{{H}}|u|^{p-2}\varphi^2 \,dx\ge 0 
		\quad \text{for all $\varphi \in C^1_0(\R^N)$};
		\]
		\item 
		is stable outside a compact set $K$ if $Q_{\lambda, u}(\varphi ) \ge 0$ for any $\varphi \in C_0^1(\R^N \setminus K)$;
		\item 
		has Morse index $m_{\lambda,H}(u)$ equal to $k$ if $k$ is 
		the maximal dimension of a subspace $W \subset C_0^1(\R^N)$ so that $Q_{\lambda,u}(\varphi)< 0$ for any $\varphi \in W \setminus \{0\}$.
	\end{itemize}
\end{definition}

By adapting the argument in \cite{Fa,EsLi82}, we obtain the following lemma.

 \begin{lemma}\label{lem25}
   Let $N \geq 1$, $\lambda>0$,  $p>2$ and $u \in C^1(\R^N)$  be a bounded nonnegative solution of \eqref{eqneH}. 
   Assume that $u$ is stable outside a compact set $K$. 
   Then $u \equiv 0$ holds. 
%
 \end{lemma}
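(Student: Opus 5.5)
We plan to show, in order, that $u$ decays exponentially, that $u$ satisfies a Pohozaev-type identity in the direction normal to $\partial H$, and that this identity forces $u\equiv 0$; stability outside $K$ serves only to start the first step. After a rotation and translation we may assume $H=\{x_N>0\}$, so $\partial H=\{x_N=0\}$.

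\textbf{Step 1: decay at infinity.} We first aim for $u(x)\to 0$ as $|x|\to\infty$. This is the main obstacle, and we would follow Farina's argument. Suppose there were $x_n$ with $|x_n|\to\infty$ and $u(x_n)\ge\delta>0$. Since $u$ is bounded, elliptic estimates give uniform $C^{1,\alpha}$ bounds. Hence the translates $u(\cdot+x_n)$ converge locally to a bounded nonnegative solution $w\not\equiv 0$ of $-\Delta w+\lambda w=\chi_{H'}w^{p-1}$, where $H'$ is a half-space, the whole space, or the empty set, depending on the behaviour of $(x_n)_N$. Because $K$ is compact, $w$ is stable on all of $\R^N$.

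If $H'=\emptyset$, then $w$ is a bounded nonnegative solution of $-\Delta w+\lambda w=0$, and $w\equiv 0$ by the maximum principle, which is a contradiction. If $H'$ is nonempty, we test stability with $\varphi=w\eta_R$, where $\eta_R$ is a cutoff. Multiplying the equation by $w\eta_R^2$ gives
\[
\int |\nabla(w\eta_R)|^2+\lambda w^2\eta_R^2-\chi_{H'}w^p\eta_R^2=\int w^2|\nabla\eta_R|^2,
\]
so stability yields
\[
(p-2)\int\chi_{H'}w^p\eta_R^2\le\int w^2|\nabla\eta_R|^2\le CR^{N-2}.
\]
This crude bound is not enough to conclude. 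We would instead use Farina's test functions $\varphi=w^{\gamma}\eta_R$, or, more simply, exponentially weighted cutoffs $\eta(x)=e^{-\varepsilon|x|}$. Since $\lambda>0$, the term $\int w^2|\nabla\eta|^2=\varepsilon^2\int w^2\eta^2$ is absorbed by $\lambda\int w^2\eta^2$ once $\varepsilon^2<\lambda$. We then get
\[
(p-2)\int\chi_{H'}w^p\eta^2+(\lambda-\varepsilon^2)\int w^2\eta^2\le 0,
\]
hence $w\equiv 0$, a contradiction. The key point is that this identity uses only stability and the boundedness of $w$.

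\textbf{Step 2: exponential decay and integrability.} Once $u\to 0$ at infinity, we have $\chi_H u^{p-2}<\lambda/2$ outside a large ball. Comparing with $Ce^{-\sqrt{\lambda/2}\,|x|}$ gives exponential decay of $u$ and, by elliptic estimates, of $\nabla u$. Consequently $u\in H^1\cap L^p$ and every integral below converges.

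\textbf{Step 3: Pohozaev identity in the $x_N$ direction.} Following Esteban--Lions, we multiply the equation by $\partial_N u$ and integrate over $\R^N$, which is justified by the decay and by $C^1$ regularity. The terms $-\Delta u\,\partial_N u$ and $\lambda u\,\partial_N u$ are exact $x_N$-derivatives, so they integrate to zero. For the nonlinear term,
\[
\int_H u^{p-1}\partial_N u\,dx=\int_H\partial_N\Big(\frac{u^p}{p}\Big)dx=-\frac1p\int_{\{x_N=0\}}u^p\,dx'.
\]
Hence $\int_{\partial H}u^p=0$, that is, $u=0$ on $\partial H$.

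\textbf{Step 4: conclusion.} On $\R^N\setminus\overline H$ we have $-\Delta u+\lambda u=0$, $u\ge0$, $u=0$ on $\partial H$, and $u$ decays at infinity, so the maximum principle gives $u\equiv0$ there. In particular $u$ and $\nabla u$ vanish on $\partial H$, since $u\in C^1$. On $H$, the function $u$ is a nonnegative solution of $-\Delta u+\lambda u=u^{p-1}$ with $u=0$ on $\partial H$. If $u\not\equiv 0$, the strong maximum principle gives $u>0$ in $H$, and then the Hopf lemma gives $\partial_N u>0$ on $\partial H$, contradicting $\nabla u=0$ there. The same conclusion follows from unique continuation for the linear equation $-\Delta u+(\lambda-u^{p-2})u=0$, since $u$ has zero Cauchy data on $\partial H$. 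Therefore $u\equiv0$. For $N=1$ the same argument reduces to an ODE phase-plane computation, which is straightforward.
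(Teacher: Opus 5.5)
Your outline is correct and matches the paper's four-step architecture. However, the last display of Step 1 does not follow as written, and Steps 1 and 4 take a different, more elementary route.

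\textbf{The slip in Step 1.} Write $A=\int\chi_{H'}w^p\eta^2$, $B=\int w^2\eta^2$ and $G=\int|\nabla(w\eta)|^2$. Your identity reads $G+\lambda B-A=\varepsilon^2B$, and stability reads $G+\lambda B-(p-1)A\ge 0$. Substituting the first into the second, the $\lambda$-terms cancel exactly: $Q_{\lambda,w}(w\eta)=\varepsilon^2B-(p-2)A$. So there is no $\lambda B$ available to absorb $\varepsilon^2B$, and the claimed inequality $(p-2)A+(\lambda-\varepsilon^2)B\le 0$ is not a consequence of these two relations.

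The fix is to use the identity a second time. It gives $A=G+(\lambda-\varepsilon^2)B\ge(\lambda-\varepsilon^2)B$. Combined with $(p-2)A\le\varepsilon^2B$, this yields
\[
\bigl[(p-2)\lambda-(p-1)\varepsilon^2\bigr]B\le 0,
\]
so $w\equiv 0$ once $\varepsilon^2<\lambda(p-2)/(p-1)$. Using $\eta=e^{-\varepsilon|x|}\notin C^1_0$ is legitimate: $w$ and $\nabla w$ are bounded, so both the weak equation and the stability form extend to $w\eta\in H^1$ by density.

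\textbf{Comparison with the paper.} With this correction, your Step 1 is simpler than the paper's. The paper uses Farina's test function $u_\infty^\alpha\eta$ with $\alpha>1$. It then eliminates the gradient term between the stability inequality and the equation tested with $u_\infty^{2\alpha-1}\eta^2$, which leaves the coefficient $\frac{(\alpha-1)^2}{2\alpha-1}\lambda$ in front of $\int u_\infty^{2\alpha}\eta^2$. This coefficient vanishes for $\alpha=1$, which is exactly your cancellation. The paper then runs the doubling argument $m(2R)\ge(1+R^2/C_\alpha)m(R)$ against boundedness, and in the half-space case also uses a Green-function reflection inequality. You keep $\alpha=1$ and recover $\lambda$ from $G\ge 0$ instead of eliminating the gradient term. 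An exponential weight then closes the argument in one line, uniformly for $H'=\emptyset$, $\R^N$, or a half-space.

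In Step 4, the paper cites Esteban--Lions' nonexistence theorem on the upper half-space, and only afterwards kills $u$ on the lower half-space by testing with $u$. You reverse the order. You first get $u\equiv 0$ below $\partial H$ by the maximum principle, which needs only $u=0$ on $\partial H$ and decay. Then $u\in C^1$ forces $\nabla u=0$ on $\partial H$, contradicting Hopf's lemma, or alternatively unique continuation for $-\Delta u+(\lambda-\chi_Hu^{p-2})u=0$. This removes the external theorem entirely.

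\textbf{A minor point in Step 3.} $u$ is not $C^2$ across $\partial H$, so the identity $\int\nabla u\cdot\nabla\partial_Nu=0$ should be justified by $u\in H^2(\R^N)$, as the paper does. This follows from exponential decay, which gives $\chi_Hu^{p-1}\in L^2$. $C^1$ regularity alone is not enough.
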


\begin{proof} 
We first prove that $u(x) \to 0$ as $\abs{x} \to +\infty$ and $u \in H^1(\R^N)$. 
We argue by contradiction and suppose that there exist a constant $\delta>0$ and a sequence $\{x_n\}\subset \R^N$ such that $\abs{x_n}\to+\infty$ as $n\to+\infty$ and 
\begin{equation}\label{equxndelta}
	u(x_n)\geq\delta \quad \text{for any $n\in \mathbb{N}$}.
\end{equation}
Let $u_n(x) := u(x + x_n)$ and $\chi_{{H}_n}(x):=\chi_{{H}}(x+x_n)$. Then, by passing to a subsequence, we obtain $\chi_{{H}_n}(x) \to \chi_{{H}_\infty}(x)$ for almost every $ x \in \R^N$, where $H_\infty$ is $\emptyset$, $\R^N$ or a half-space. Moreover, since $u$ is bounded, we know that $\{u_n\}$ is bounded in $L^\infty(\R^N)$. Thus, by elliptic regularity and up to a subsequence, there exists $u_\infty \in C^1(\R^N)$ such that $u_n \to u_\infty$ in $C^1_\text{loc}(\R^N)$, where $u_\infty$ satisfies 
\begin{equation}\label{eqneHinfty}
	-\Delta u_\infty +\lambda u_\infty = \chi_{{H}_\infty} u_\infty^{p-1} \quad \text{in} \ \R^N. 
\end{equation}
By \eqref{equxndelta}, we obtain $u_\infty(0)\geq \delta>0$, which implies that $u_\infty> 0$ in $\R^N$ due to the strong maximum principle. 

Next, we exclude the case $H_\infty = \emptyset$. Indeed, if $H_\infty = \emptyset$, then 
$u_\infty$ is a bounded positive solution to $-\Delta u_\infty + \lambda u_\infty = 0$ in $\R^N$, 
hence $(\vab{\xi}^2 + \lambda) \widehat{u_\infty} (\xi) = 0$ in $\mathcal{S}^*$, 
where $\widehat{u_\infty}$ and $\mathcal{S}^*$ denote the Fourier transform of $u_\infty$ and the space of tempered distributions. 
Since $\lambda > 0$, we infer $\widehat{u_\infty} \equiv 0$ and $u_\infty \equiv 0$, which is a contradiction. 
Thus, $H_\infty = \emptyset$ does not occur.

To proceed, when $H_\infty$ is a half-space, we remark that 
by a rotation and a translation, we may assume 
\[
H_\infty = \set{x \in \R^N : x_N>0 }.
\]
In this case, for the later use, we claim that 
\begin{equation}\label{compH}
	u_\infty (x',x_N) \geq u_\infty (x',-x_N) > 0 \quad \text{for each $(x',x_N) \in H_\infty$}. 
\end{equation}
Since $u_\infty$ is a bounded positive solution to \eqref{eqneHinfty}, $u_\infty(x)$ can be expressed as 
\[
u_\infty(x) = \int_{\R^N} G_\lambda (x-y) \chi_{H_\infty} (y) u_\infty(y)^{p-1} \, dy
= \int_{H_\infty} G_\lambda \ab( \abs{x-y} ) u_\infty(y)^{p-1} \, dy \quad \text{for every $x \in \R^N$},
\]
where $G_\lambda$ is the Green function of $-\Delta + \lambda$. 
Since $G_\lambda(\cdot)$ is decreasing in $(0,\infty)$, 
for any $(x',x_N), (y',y_N) \in H_\infty$, the fact $\vab{(x'-y',x_N-y_N)} \leq \vab{(x'-y',-x_N-y_N)}$ yields 
\[
\begin{aligned}
	u_\infty(x',x_N) - u_\infty(x',-x_N) 
	&= 
	\int_{H_\infty} \bab{ G_\lambda \ab( \vab{ (x'-y',x_N-y_N) } ) - G_\lambda \ab( \vab{( x'-y',-x_N-y_N )} ) } u_\infty(y)^{p-1} \, dy
	\\
	& \geq 0.
\end{aligned}
\]
Therefore, \eqref{compH} holds.

Let $\eta \in C_0^\infty(\mathbb{R}^N)$ be a radial function such that
\begin{equation}\label{eqeta}
	\begin{cases}
		0 \le \eta \le 1, \\
		\eta \equiv 1 \quad &\text{in } B_{R}, \\
		\eta \equiv 0 \quad &\text{in } \R^N \setminus B_{2R}, \\
		R|\nabla \eta|,R^2\abs{\Delta(\eta^2)} \le C \quad &\text{in } B_{2R} \setminus B_R.
	\end{cases}
\end{equation}
Since $\abs{x_n} \to +\infty$ and $u$ is stable outside a compact set, $u_\infty$ is stable, that is 
\[
Q_{\lambda, u_\infty,H_\infty}(\varphi) \coloneq 
\int_{\R^N} |\nabla \varphi|^2 + \lambda\varphi^2 - (p-1)\chi_{{H}_\infty}|{u_\infty}|^{p-2}\varphi^2 \,dx\ge 0 \quad \text{for all } \varphi \in C_0^1(\R^N).
\]
Let $\varphi \coloneq u_\infty^{\alpha}\eta$, where $\alpha>1$ will be specified later. 
Then, integrating by parts leads to 
\[
\begin{aligned}
	\int_{\R^N}\abs{\nabla \varphi}^2\,dx&= \int_{\R^N} \alpha^2 u_\infty^{2\alpha-2} \eta^2 |\nabla {u_\infty}|^2+ u_\infty^{2\alpha} |\nabla \eta|^2 
	+ 2\alpha u_\infty^{2\alpha-1} \eta (\nabla {u_\infty} \cdot \nabla \eta)\,dx\\
	&=\int_{\R^N}  \alpha^2 u_\infty^{2\alpha-2} \eta^2 |\nabla {u_\infty}|^2+ u_\infty^{2\alpha} |\nabla \eta|^2 
	+  \frac{1}{2} \nabla( u_\infty^{2\alpha}) \cdot \nabla(\eta^2) \,dx\\
	&=\int_{\R^N}  \alpha^2 u_\infty^{2\alpha-2} \eta^2 |\nabla {u_\infty}|^2+ u_\infty^{2\alpha} |\nabla \eta|^2 - \frac{1}{2} u_\infty^{2\alpha} \Delta(\eta^2)\,dx;
\end{aligned}
\]
hence, by $Q_{\lambda,u_\infty,H_\infty}(\varphi )\geq 0$, we obtain
\begin{equation}\label{eqneHm}
	\begin{aligned}
		\int_{\R^N} \alpha^2 u_\infty^{2\alpha-2} \eta^2 |\nabla {u_\infty}|^2 + u_\infty^{2\alpha}|\nabla \eta|^2 - \frac{1}{2} u_\infty^{2\alpha}\Delta(\eta^2) 
		+ \lambda u_\infty^{2\alpha}\eta^2 \,dx
		&=\int_{\R^N} |\nabla \varphi|^2 + \lambda\varphi^2  \,dx 
		\\
		&\ge (p-1) \int_{{H}_\infty} u_\infty^{p-2+2\alpha}\eta^2\,dx.
	\end{aligned}
\end{equation}
Use $u_\infty^{2\alpha-1}\eta^2$ as a test function in \eqref{eqneHinfty} to get 
\begin{equation}\label{eqneHn}
	(2\alpha-1) \int_{\R^N} u_\infty^{2\alpha-2} \eta^2 |\nabla {u_\infty}|^2 - \frac{1}{2\alpha} u_\infty^{2\alpha}\Delta(\eta^2) + \lambda u_\infty^{2\alpha}\eta^2 \,dx
	= \int_{{H}_\infty} u_\infty^{p-2+2\alpha}\eta^2\,dx.
\end{equation}
It follows from \eqref{eqneHm} and \eqref{eqneHn} that 
\begin{equation}\label{eqneHineq}
	\begin{aligned}
		&\int_{\R^N} u_\infty^{2\alpha}|\nabla \eta|^2 - \frac{\alpha-1}{2(2\alpha-1)} u_\infty^{2\alpha}\Delta(\eta^2)\,dx
		\geq \frac{A(\alpha)}{2\alpha-1} \int_{{H}_\infty}  u_\infty^{p-2+2\alpha}\eta^2 + \frac{(\alpha-1)^2}{2\alpha-1}\lambda  u_\infty^{2\alpha}\eta^2 \,dx,
	\end{aligned}
\end{equation}
where $A(\alpha)\coloneq-\alpha^2+2(p-1)\alpha-(p-1)$. By a direct computation, we know that $A(\alpha)\geq 0$ on $[p-1-\sqrt{(p-1)(p-2)},p-1+\sqrt{(p-1)(p-2)}]$, and thus, for $p>2$, we can always choose a suitable $\alpha > 1$ such that $A(\alpha) \ge 0$.  Fix $\alpha>1$ such that $A(\alpha) \ge 0$. Then, by \eqref{eqeta} and \eqref{eqneHineq}, we have
\begin{equation}\label{eqneHind}
	\frac{C_\alpha}{R^2}\int_{B_{2R}\setminus B_{R}} u_\infty^{2\alpha} \,dx\geq \int_{B_R \cap H_\infty } u_\infty^{2\alpha} \,dx,
\end{equation}
where $C_\alpha>0$ depends only on $\alpha>1$, $\lambda>0$ and $N \geq 1$. 
When $H_\infty = \set{x \in \R^N | x_N>0}$, by changing $C_\alpha$ to $2C_\alpha$, 
\eqref{compH} and \eqref{eqneHind} give 
\begin{equation}\label{half}
	\int_{B_R} u_\infty^{2\alpha} \, dx 
	\leq 2 \int_{B_R \cap H_\infty} u_\infty^{2\alpha} \, dx \leq \frac{C_\alpha}{R^2}\int_{B_{2R}\setminus B_{R}} u_\infty^{2\alpha} \,dx.
\end{equation}

In both cases $H_\infty = \R^N$ and $H_\infty = \set{x_N>0}$, define 
\[
m(r) \coloneq \int_{B_r} u_\infty^{2\alpha}\,dx.
\]
Then,  \eqref{eqneHind} if $H_\infty = \R^N$ and \eqref{half} if $H_\infty$ is a half-space lead to
\[
m(2R) \ge \ab( 1 + \frac{R^2}{C_\alpha} ) m(R).
\]
Since $u_\infty$ is positive, fix any $R_0>\sqrt{C_\alpha}$ such that $m(R_0)>0$.
Then, we obtain 
\[
m(2^k R_0) \ge m(R_0) \prod_{j=0}^{k-1} \ab( 1 + \frac{4^j R_0^2}{C_\alpha} )>m(R_0)4^\frac{k(k-1)}{2} 
= m(R_0) 2^{k(k-1)}.
\]
However, since $u_\infty$ is bounded, we have 
\[
m(2^k R_0)=\int_{B_{2^k R_0}} u_\infty^{2\alpha}\,dx\leq \norm{u_\infty}_\infty^{2\alpha}\int_{B_{2^k R_0}}1\,dx\leq C{2^{Nk}}R_0^N\norm{u_\infty}_\infty^{2\alpha},
\]
which is a contradiction since $\frac{2^{k(k-1)}  }{2^{Nk}} \to+\infty$ as $k \to +\infty$. 
Thus $u(x) \to 0$ as $\abs{x} \to +\infty$.

From $\lambda > 0$ and $u(x) \to 0$ as $\abs{x} \to +\infty$, 
the standard argument yields 
\begin{equation}\label{exp-decay}
	\abs{u(x)}+ \abs{\nabla u(x)} \leq Ce^{-\sqrt{\lambda}\abs{x}},
\end{equation}
which implies that $u \in H^1(\R^N)$.

Finally, we prove $u \equiv 0$. 
By a rotation and a translation, we may suppose $H = \set{x \in \R^N | x_N > 0}$. 
Since $\chi_H \vab{u}^{p-2} u \in L^2(\R^N)$ in view of \eqref{exp-decay}, 
we have $u \in H^2(\R^N)$. 
As in \cite[Proposition I.2]{EsLi82}, 
testing \eqref{eqneH} with $\partial u / \partial x_N \in H^1(\R^N)$ gives 
\[
\int_{\R^N} \nabla u \cdot \nabla \ab( \frac{\partial u}{\partial x_N} ) + \lambda u \frac{\partial u}{\partial x_N} \, dx 
= \int_H \vab{u}^{p-2} u \frac{\partial u}{\partial x_N} \, dx. 
\]
By \eqref{exp-decay} and Fubini's theorem, the left-hand side is computed as 
\[
\int_{\R^N} \nabla u \cdot \nabla \ab( \frac{\partial u}{\partial x_N} ) + \lambda u \frac{\partial u}{\partial x_N} \, dx 
= \int_{\R^{N-1}} dx' \int_{-\infty}^\infty \frac{1}{2} \frac{\partial}{\partial x_N} \ab( \vab{\nabla u}^2 + \lambda u^2 ) \, dx_N = 0.
\]
Therefore, recalling $H= \set{x \in \R^N | x_N>0}$ yields 
\[
0 = \int_H \vab{u}^{p-2} u \frac{\partial u}{\partial x_N} \, dx 
= \int_{\R^{N-1}} dx' \int_0^\infty \frac{1}{p} \frac{\partial}{\partial x_N} \vab{u}^p \, dx_N 
= - \frac{1}{p} \int_{\R^{N-1}} \vab{u(x',0)}^p \, dx'. 
\]
Hence, $u(x',0) \equiv 0$ in $\R^{N-1} \times \set{0}$. 

Since we may suppose $u \in C^2( \R^{N-1} \times [0,\infty) )$, \cite[Theorem 1.1]{EsLi82} leads to $u \equiv 0 $ in $\R^{N-1} \times [0,\infty)$. 
On the other hand, exploiting $u$ as a test function to \eqref{eqneH} on $\R^{N-1} \times (-\infty,0)$ implies $u \equiv 0$ in $\R^{N-1} \times (-\infty, 0)$, 
which completes the proof. 
%
%
\end{proof}

\begin{remark}\label{R:entire}
	In \cite[Theorem 2.3]{EP}, it is proved that when $N \geq 2$ and $2<p<2^*$, 
	$u(x) \to 0$ as $\vab{x} \to +\infty$ and $u \in H^1(\R^N)$ hold for any solution $u$ to 
	\[
	-\Delta u + u = \vab{u}^{p-2} u \quad \text{in} \ \R^N,
	\]
	which is stable outside a compact set. 
	We remark that the proof of \cite[Theorem 2.3]{EP} also works for $N=1$ and the same conclusion holds when $N=1$. 
\end{remark}

 \section{Mountain pass geometry}
 \label{sec:mp}
 
 This section is devoted to finding a uniform mountain pass geometry for a family $\{E_{V,\tau}\}_{\tau \in [1/2,1]}$ of functionals 
 in order to apply the monotonicity trick in \cite{BCJS}. 
 In addition, we state the nonexistence of critical points $u$ of $E_{V,\tau}$ for small $L^2$-norm with $u>0$ in $\R^N$.

To define $E_{V,\tau}$, we first decompose $f$ into two parts as in \cite{CRZ}. 
 By \eqref{A1} and \eqref{A3} with \eqref{eqodd}, 
 there exists $R_0 > 0$ such that $f(t)t > 0$ for $|t| \ge R_0$. Utilizing this observation, we define
 \begin{equation}\label{eqdeff1f2}
 	f_1(t) \coloneq \eta_0(t)f(t) \quad \text{and} \quad f_2(t) \coloneq (1 - \eta_0(t))f(t),
 \end{equation}
 where $\eta_0 \in C^\infty(\R)$ is an even function such that $|\eta_0'(t)| \le 2$ for $R_0 < |t| < R_0 + 1$ and 
 \[
 \eta_0(t) = \begin{dcases}
 	1 & \text{if }  |t| \ge R_0 + 1, \\
 	0 & \text{if } |t| < R_0.
 \end{dcases}
 \]
It is clear that $f_1(t)t \geq 0$ for all $t \in \R$. 
 For $\tau \in [\frac{1}{2},1]$, we define $E_{V,\tau} : H^1(\R^N) \to \mathbb{R}$ by
 \begin{equation}\label{eqdefevtau}
 	E_{V,\tau}(u) \coloneq \frac{1}{2}\abs{u}_V^2 - \int_\Omega F_2(u) \,dx - \tau \int_\Omega F_1(u) \,dx, \quad 
 	F_i(t) \coloneq \int_0^t f_i(s) \, ds \quad (i=1,2). 
 \end{equation}
 Note that any positive critical point of $E_{V,1}|_{S_\mu}$ gives a solution to \eqref{eqV} and 
 \begin{equation}\label{mono-Evtau}
 	E_{V,\tau_1} (u) \geq E_{V,\tau_2} (u) \quad \text{for each $u \in H^1(\R^N)$ and $\frac{1}{2} \leq \tau_1 < \tau_2 \leq 1$}. 
 \end{equation}
In \cref{subsec:mp-small}, the uniform mountain pass structure for $E_{V,\tau} |_{S_\mu}$ is proved under \eqref{A1} and \eqref{A3} for a small $\mu>0$. 
On the other hand, in \cref{subsec:mp-all}, the same structure is established under \eqref{A1}, \eqref{A3} and \eqref{A5} for all $\mu > 0$. 
Finally, \cref{subsec33} is devoted to the nonexistence of positive critical points of $E_{V,\tau}$ when the $L^2$ norm is small. 
 
 \subsection{For $\mu>0$ small enough}
 \label{subsec:mp-small}

\begin{lemma}\label{lemmps}
Let $N \geq 1$ and suppose \eqref{A1} and \eqref{A3}. 
Then, for every $\alpha_0>0$, there exists $\mu^*>0$ such that, for any $\mu \in (0, \mu^*)$, 
there exist $w_1, w_2 \in S_\mu$ such that $w_1,w_2\geq 0$ and 
\[
    c_\tau = c_\tau(\mu) \coloneq 
    \inf_{\gamma \in \Gamma} \max_{t \in [0,1]} E_{V,\tau}(\gamma(t)) \geq \frac{3}{8} \alpha_0  > \max\{E_{V,\tau}(w_1), E_{V,\tau}(w_2), 0\} 
    \quad \text{for all $\tau \in \ab[ \frac{1}{2}, 1 ]$}, 
\]
where
\begin{equation*}
  \Gamma \coloneq \Set{\gamma \in C([0,1], S_\mu) : \gamma(0) = w_1, \, \gamma(1) = w_2 }.
\end{equation*}
Moreover, $[1/2, 1] \ni \tau \mapsto c_\tau \in (0,\infty)$ is nonincreasing. 
\end{lemma}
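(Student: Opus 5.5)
We build the mountain pass geometry in the quadratic-form norm $\abs{u}_V$, using the Gagliardo--Nirenberg type inequality of \cref{lemgn}. The basic estimate is as follows. By \eqref{A1}, \eqref{A3} and the definition \eqref{eqdeff1f2} of $f_1,f_2$, for every $\varepsilon>0$ there is $C_\varepsilon$ with $\vab{F_1(t)}+\vab{F_2(t)}\le \varepsilon t^2 + C_\varepsilon \vab{t}^p$. The $\varepsilon t^2$ term is harmless only for small $\mu$, since it contributes $\varepsilon\mu$. A cleaner bound comes from fixing some $q\in(2+4/N,2^*)$ with $q\le p$: since $F(t)=o(t^2)$ near $0$, we can instead use $\vab{F_i(t)}\le C(\vab{t}^{q}+\vab{t}^{p})$ with $q$ close to $2$ (any $q>2$) and still apply \cref{lemgn}. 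Then, for $u\in S_\mu$ and all $\tau\in[1/2,1]$,
\[
E_{V,\tau}(u)\ \ge\ \frac12\abs{u}_V^2 - C\mu^{\frac{(1-\beta_q)q}{2}}\abs{u}_V^{\beta_q q} - C\mu^{\frac{(1-\beta_p)p}{2}}\abs{u}_V^{\beta_p p}
- C\varepsilon\mu .
\]
Since $\beta_p p>2$, for each exponent we may have $\beta_q q$ either below or above $2$. It is simplest to take $q\in(2,2+4/N)$ with the $\varepsilon$-free bound $\vab{F_i(t)}\le C(\vab t^q+\vab t^p)$.

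Given $\alpha_0>0$, set $\rho:=\alpha_0$. We work with the sets $A:=\{u\in S_\mu:\abs{u}_V^2\le \rho\}$ and the sphere $\{\abs{u}_V^2=2\rho\}$. On the sphere, the right-hand side of the estimate above is at least $\rho - C\mu^{a}\rho^{b}-C\mu^{a'}\rho^{b'}$ with $a,a'>0$, which is $\ge \frac{3}{4}\alpha_0\cdot\frac12\cdot 2=\frac34\alpha_0$ (certainly $\ge\frac38\alpha_0$) once $\mu<\mu^*$. On $A$ the trivial bound $E_{V,\tau}(u)\le \frac12\abs{u}_V^2+C\mu^{a}\rho^{b}+C\mu^{a'}\rho^{b'}$ holds, so $E_{V,\tau}\le \alpha_0/2+o(1)<\frac38\alpha_0$ after shrinking $\rho$ to, say, $\alpha_0/4$. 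The constants can be rebalanced here; only the ordering matters.

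For $w_1$ we need a nonnegative element of $S_\mu$ with small $\abs{\cdot}_V$. Since $\sigma_0=0$ and $\psi_0>0$ is periodic, take $w_1=t\psi_0\varphi(\cdot/R)$ with a cutoff $\varphi\ge0$ and $R$ large, normalized in $L^2$. By \eqref{equpsi}, $\abs{w_1}_V^2=t^2\int\psi_0^2\vab{\nabla\varphi(\cdot/R)}^2\,dx\sim \mu R^{-2}$, so $w_1\in A$ for $R$ large; by the previous paragraph, $E_{V,\tau}(w_1)<\frac38\alpha_0$. For $w_2$ we use that $\Omega$ contains a ball $B_r(x_0)$. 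Take $\phi\ge0$ supported in $B_r(x_0)$ with $\Vab{\phi}_2^2=\mu$ and the mass-preserving dilation $\phi_s=s^{N/2}\phi(x_0+s(\cdot-x_0))$, $s\ge 1$, which keeps the support inside $\Omega$. By \eqref{A3} and $F_1\ge0$, with $F_2$ bounded by $C t^2$ on the compact range where it lives, we get $\int_\Omega F_2(\phi_s)+\tau F_1(\phi_s)\ge \frac12 c\, s^{N(p-2)/2}\int\phi^p - C\mu$. The quadratic part, including the $\int V\phi_s^2$ term, is $O(s^2)$ because $V$ is bounded, and $N(p-2)/2>2$. Hence $E_{V,\tau}(\phi_s)\to-\infty$ uniformly in $\tau$, so we may fix $w_2=\phi_s$ with $E_{V,\tau}(w_2)<0$ and $\abs{w_2}_V^2>2\rho$.

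Every $\gamma\in\Gamma$ is continuous in $H^1$, and $u\mapsto\abs{u}_V^2$ is continuous. Since $\abs{w_1}_V^2<2\rho<\abs{w_2}_V^2$, each path crosses the sphere $\{\abs{u}_V^2=2\rho\}$, which gives $c_\tau\ge\frac38\alpha_0$. That $c_\tau\ge 0$ and $\frac38\alpha_0>\max\{E_{V,\tau}(w_1),E_{V,\tau}(w_2),0\}$ follows from the choices above. Monotonicity of $\tau\mapsto c_\tau$ follows directly from \eqref{mono-Evtau}, since $\Gamma$ does not depend on $\tau$.

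The main obstacle is the lower bound on the sphere, which must hold uniformly in $\tau$ and give only positive powers of $\mu$. Choosing the exponent below $2+4/N$ for the small-$t$ part and $p$ for the large-$t$ part, both via \cref{lemgn}, handles this. Note that $\abs{\cdot}_V$ is only a seminorm-like quantity that does not control $\Vab{\nabla u}_2$ by itself, so \cref{lemgn} is essential here rather than the standard \eqref{stdGN}.
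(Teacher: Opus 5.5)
Your architecture matches the paper's. You use a lower bound for $E_{V,\tau}$ on a sphere $\{\abs{u}_V^2=\mathrm{const}\}$ via \cref{lemgn} and smallness of $\mu$, and a nonnegative $w_1$ with small $\abs{w_1}_V$ coming from $\sigma_0=0$; your explicit $t\psi_0\varphi(\cdot/R)$ with \eqref{equpsi} is a concrete version of the paper's appeal to \eqref{inf-spect}. You then take a mass-preserving dilation inside $\Omega$ for $w_2$, the intermediate value theorem along paths, and \eqref{mono-Evtau} for monotonicity.

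However, the step you single out as resolving the ``main obstacle'' is false under the hypotheses of the lemma. Assumption \eqref{A1} only gives $F(t)=o(t^2)$ as $t\to0$, and this does \emph{not} yield $\abs{F_2(t)}\le C\abs{t}^q$ near $0$ for any $q>2$. For example, if $f(t)=t/\log(1/t)$ on $(0,1/2]$, then $F(t)\ge \int_{t/2}^t s\,ds/\log(2/t)=\frac{3t^2}{8\log(2/t)}$, so $F(t)/t^q\to+\infty$ for every $q>2$. A power bound at the origin needs extra structure such as \eqref{A2} or \eqref{A5}, and neither is assumed here; adding \eqref{A5} is exactly what \cref{lemmpe} does to treat every $\mu>0$. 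So the ``$\varepsilon$-free bound'' with $q\in(2,2+4/N)$ is not available.

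The repair is the paper's route, which your displayed estimate already contains. Use $\abs{F_1(t)}\le C\abs{t}^p$ and $\abs{F_2(t)}\le \frac{\varepsilon_0}{2}t^2+C_{\varepsilon_0}\abs{t}^p$ (even $\abs{F_2(t)}\le Ct^2$ would do). Then the small-amplitude part costs only $\frac{\varepsilon_0}{2}\mu$, which is absorbed by taking $\mu^*$ small relative to $\alpha_0$; the paper takes $\mu^*<\alpha_0/(4\varepsilon_0)$. Only the $\abs{t}^p$ term needs \cref{lemgn}, and it carries the positive power $\mu^{(1-\beta_p)p/2}$.

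Two smaller points.
\begin{enumerate}
\item As written, ``shrinking $\rho$ to $\alpha_0/4$'' also moves your sphere to $\{\abs{u}_V^2=\alpha_0/2\}$. There the lower bound is only about $\alpha_0/4<\frac38\alpha_0$. You must decouple the two radii, e.g.\ keep the sphere at $\abs{u}_V^2=2\alpha_0$ (the paper uses $\alpha_0$) and require $\abs{w_1}_V^2\le\alpha_0/4$ (the paper uses $\abs{w_1}_V^2<\alpha_0/8$).
\item Your closing remark is inaccurate. On $S_\mu$ one has $\Vab{\nabla u}_2^2\le\abs{u}_V^2+\Vab{V}_\infty\mu$, so for small $\mu$ the standard inequality \eqref{stdGN} would serve equally well; this is how \cref{lemmps-g} is proved for general bounded $V$. \cref{lemgn} really matters in \cref{lemmpe}, where $\mu$ is arbitrary and the additive term $\Vab{V}_\infty\mu$ could not be absorbed.
\end{enumerate}
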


\begin{proof}
The monotonicity of $c_\tau$ follows from \eqref{mono-Evtau}. 
To find $w_1,w_2$ and to prove the inequality for $c_\tau$, 
set $B_\alpha \coloneq \set{u \in S_\mu: \abs{u}_V^2=\alpha}$. 
By \eqref{A1}, \eqref{A3} and definitions of $f_1$ and $f_2$ given in \eqref{eqdeff1f2}, 
for any $\varepsilon_0>0$, there exists a constant $C_{\varepsilon_0}>0$ such that 
\begin{equation}\label{eqsubsec121}
    \abs{f_1(t)}\leq C\abs{t}^{p-1}\text{ and }\abs{f_2(t)}\leq \varepsilon_0\abs{t}+C_{\varepsilon_0}\abs{t}^{p-1} \quad \text{for all $t \in \R$}.
\end{equation}
Fix $\alpha_0>0$ and $\varepsilon_0>0$. Then, by \cref{lemgn}, 
for any $u \in B_{\alpha_0}$ and $\tau \in [\frac{1}{2},1]$, we obtain 
\[
E_{V,\tau}(u) 
\geq 
\frac{1}{2}\alpha_0-\frac{1}{2}\varepsilon_0\norm{u}_2^2-C\norm{u}_p^p 
\geq 
\frac{1}{2}\alpha_0-\frac{1}{2}\varepsilon_0\mu-C C_{p,N,V}\mu^{(1 - \beta_p)p/2}\alpha_0^{\beta_p p/2}. 
\]
Thus, there exists  $\mu^*\in(0,  \min \{ \alpha_0/(4\varepsilon_0) , \alpha_0 / 8 \} )$ small enough such that
\begin{equation}\label{lweonBa}
	E_{V,\tau}(u) \geq \frac{3\alpha_0}{8} \quad \text{for every $\mu\in (0,\mu^*)$, $u \in B_{\alpha_0}$ and $\tau \in \ab[\frac{1}{2},1]$}. 
\end{equation}

Fix any $\mu\in (0,\mu^*)$. 
By recalling \eqref{inf-spect} 
\[
0 = \inf \sigma (-\Delta+V)=\displaystyle\inf_{u\in H^1(\R^N)\setminus \{0\}}\frac{\abs{u}_V^2}{\norm{u}_2^2}, 
\] 
\eqref{eqsubsec121}, $0<\mu < \mu^* < \min \{ \alpha_0 / (4\varepsilon_0) , \alpha_0/8\} $ and \cref{lemgn} imply that 
there exists $w_1 \in S_\mu$ satisfying $w_1\geq 0$ and $\abs{w_1}_V^2\in (0,\alpha_0/8)$ small enough such that 
for each $\tau \in [\frac{1}{2},1]$, 
\[
\begin{aligned}
    E_{V,\tau}(w_1) &\leq \frac{1}{2}\abs{w_1}_V^2+ \frac{\varepsilon_0}{2}\|w_1\|_{2}^2 +CC_{p,N,V} \|w_1\|_{2}^{(1 - \beta_p)p} \abs{ w_1}_{V}^{\beta_p p}\\
    &\leq\frac{3}{16}\alpha_0 +CC_{p,N,V} \mu^{(1 - \beta_p)p/2} \abs{ w_1}_{V}^{\beta_p p} \leq \frac{\alpha_0}{4}.
\end{aligned}
\]
By \eqref{A1} and \eqref{A3}, we find $R_1>R_0>0$ such that 
\[
\abs{F_2(t)} \leq C\abs{t}^2 \quad \text{for all $t \in \R$}
\]
and
\[
F_1(t) \geq \frac{a_0}{4p}\abs{t}^p-C\abs{t}^2 \quad \text{for every $t \in \R$ with $\abs{t} \geq R_1$}. 
\]
Let $v \in C_0^\infty(\R^N)\cap S_{\mu}$ satisfy $v \geq 0$. 
For any $s>0$, set $v_s(x) \coloneq s^{N/2}v(s(x-x_0))$ for some  $x_0 \in \Omega$. 
Then, by $p>2+\frac{4}{N}$ and $\tau \in [\frac{1}{2},1]$, 
for $w_2 \coloneq v_{s_0}$ with $s_0>0$ sufficiently large, 
we have $\supp w_2 \subset \Omega$, $\abs{w_2}^2_V> \alpha_0$ and
\[
\begin{aligned}
    E_{V,\tau}(w_2)&\leq \frac{1}{2}\norm{\nabla w_2}_2^2+\frac{1}{2}\norm{V}_\infty\norm{w_2}_2^2+C\norm{w_2}_2^2-\frac{a_0 }{8p}\norm{w_2}_p^p\\
    &=\frac{s_0^2}{2}\norm{\nabla v}_2^2+\frac{1}{2}\norm{V}_\infty\mu+C\mu-\frac{a_0s_0^{Np/2-N}}{8p}\norm{v}_p^p<0.
\end{aligned}
\]
Thus, by $\abs{w_1}_V^2\in (0,\alpha_0/8)$ and $\abs{w_2}^2_V> \alpha_0$, for all $\tau \in [\frac{1}{2},1]$, 
\eqref{lweonBa} gives
\[
c_\tau=\inf_{\gamma\in\Gamma}\max_{t\in[0,1]}E_{V,\tau}(\gamma(t)) \geq \frac{3\alpha_0}{8} > \frac{\alpha_0}{4} \geq\max\{E_{V,\tau}(w_{1}),E_{V,\tau}(w_{2})\}.
\]
This completes the proof. 
\end{proof}

\subsection{For every $\mu>0$}
\label{subsec:mp-all}

\begin{lemma}\label{lemmpe}
 Let $N \geq 1$ and assume that \eqref{A1}, \eqref{A3} and \eqref{A5} hold. 
 Then, for every $\mu >0$, there exist $\alpha_0>0$, $w_1, w_2 \in S_\mu$ such that $w_1,w_2\geq 0$ and 
\begin{equation*}
    c_\tau = c_\tau(\mu) \coloneq \inf_{\gamma \in \Gamma} \max_{t \in [0,1]} E_{V,\tau}(\gamma(t)) 
    \geq \frac{1}{4} \alpha_0 > 
    \max\set{E_{V,\tau}(w_1), E_{V,\tau}(w_2), 0} \quad \text{for all $\tau \in \ab[\frac{1}{2}, 1 ]$},
\end{equation*}
where
\begin{equation*}
    \Gamma \coloneq \Set{\gamma \in C([0,1], S_\mu) : \gamma(0) = w_1, \, \gamma(1) = w_2}.
\end{equation*} 
Furthermore, $[\frac{1}{2}, 1] \ni \tau \mapsto c_\tau \in (0,\infty)$ is nonincreasing. 
\end{lemma}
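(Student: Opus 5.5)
The plan follows the proof of \cref{lemmps}: we construct a sphere $B_{\alpha_0} = \set{u \in S_\mu : \abs{u}_V^2 = \alpha_0}$ on which $E_{V,\tau}$ is bounded below by a positive constant. The difference is that now $\mu$ is fixed and arbitrary, so $\alpha_0$ must be taken small rather than $\mu$. The monotonicity of $\tau \mapsto c_\tau$ again follows directly from \eqref{mono-Evtau}.

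The key step is a lower bound in which the quadratic part of $F$ does not appear. The term $\varepsilon_0\abs{t}^2$ in \eqref{eqsubsec121} cannot be used here, because it produces $\varepsilon_0\mu$, and $\mu$ is now arbitrary. Instead, we use \eqref{A5} together with \eqref{A1}, \eqref{A3} and the definitions \eqref{eqdeff1f2}.
\begin{itemize}
\item For $\abs{t}\le 1$, \eqref{A5} gives $\abs{F_2(t)} \le C\abs{t}^{q_3}$, and $F_1 = 0$ there (taking $R_0 \ge 1$ if needed).
\item For $\abs{t}\ge 1$, $\abs{F_i(t)} \le C\abs{t}^p$.
\end{itemize}
Hence $\abs{F_2(t)} + \abs{F_1(t)} \le C(\abs{t}^{q_3} + \abs{t}^p)$ for all $t\in\R$. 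By \cref{lemgn}, for $u\in B_\alpha$ and $\tau \in [1/2,1]$,
\[
E_{V,\tau}(u) \ge \frac{\alpha}{2} - C\mu^{(1-\beta_{q_3})q_3/2}\alpha^{\beta_{q_3}q_3/2} - C\mu^{(1-\beta_p)p/2}\alpha^{\beta_p p/2}.
\]
Since $q_3, p > 2+4/N$, we have $\beta_{q_3}q_3/2 > 1$ and $\beta_p p/2 > 1$. So for fixed $\mu$ we can choose $\alpha_0>0$ so small that, for every $0<\alpha\le\alpha_0$, the two negative terms are at most $\alpha/4$. In particular $E_{V,\tau}\ge \alpha_0/4$ on $B_{\alpha_0}$, and $E_{V,\tau}(u) \ge \abs{u}_V^2/4 > 0$ whenever $\abs{u}_V^2 \le \alpha_0$. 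The point to watch is that $\Omega$ plays no role, since $\int_\Omega \le \int_{\R^N}$.

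For $w_1$, we use $\sigma_0 = 0$ from \eqref{inf-spect}. Pick $w_1 \in S_\mu$, $w_1\ge0$ (replace $u$ by $\abs{u}$, which does not change $\abs{\cdot}_V$), with $\abs{w_1}_V^2 < \alpha_0/8$. Then
\[
E_{V,\tau}(w_1) \le \frac{\alpha_0}{16} + \int \bigl(\abs{F_2(w_1)} + \abs{F_1(w_1)}\bigr),
\]
and by the same Gagliardo--Nirenberg estimate the integral is $o(\abs{w_1}_V^2)$. Thus $E_{V,\tau}(w_1) < \alpha_0/4$ after shrinking $\abs{w_1}_V$ further. For $w_2$ we copy the construction in \cref{lemmps}: take $v_s(x) = s^{N/2}v(s(x-x_0))$ with $x_0\in\Omega$ and $s$ large, so that $\supp w_2\subset\Omega$, $\abs{w_2}_V^2 > \alpha_0$ and $E_{V,\tau}(w_2)<0$ uniformly in $\tau\ge 1/2$, using $p>2+4/N$.

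Every path $\gamma\in\Gamma$ runs from $\abs{w_1}_V^2<\alpha_0$ to $\abs{w_2}_V^2>\alpha_0$, so by continuity of $\abs{\cdot}_V$ it crosses $B_{\alpha_0}$. Therefore $c_\tau \ge \alpha_0/4 > \max\set{E_{V,\tau}(w_1), E_{V,\tau}(w_2), 0}$. The strictness of the last inequality requires $E_{V,\tau}(w_1) < \alpha_0/4$ strictly, and this holds by the choice of $w_1$.
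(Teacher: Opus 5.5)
Your proposal is correct and follows essentially the same route as the paper: for fixed $\mu$, use \eqref{A5} to obtain $|F_1(t)|\le C|t|^p$ and $|F_2(t)|\le C|t|^{q_3}$ with no quadratic term, apply \cref{lemgn} to get $E_{V,\tau}\ge \alpha_0/4$ on $B_{\alpha_0}$ for small $\alpha_0$, pick $w_1$ with tiny $|w_1|_V$ via $\sigma_0=0$, and reuse the concentrating $w_2$ from \cref{lemmps}. Your side remark that one may take $R_0\ge 1$ is harmless but not needed, since $F_2$ is bounded and the bound $|F_2(t)|\le C|t|^{q_3}$ holds for every $R_0>0$.
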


\begin{proof}
The monotonicity is an easy consequence of \eqref{mono-Evtau}. 
Fix $\mu > 0$ and set $B_\alpha \coloneq \set{u \in S_\mu: \abs{u}_V^2=\alpha}$. By \eqref{A1}, \eqref{A3}, \eqref{A5} and \eqref{eqdeff1f2}, 
there exists $C>0$ such that 
\begin{equation}\label{eqa5f2}
  \abs{F_1(t)} \leq C \abs{t}^{p}, \quad \abs{F_2(t)}\leq C\abs{t}^{q_3}  \quad \text{for each $t \in \R$}. 
\end{equation}
Then, by \cref{lemgn}, for any $u \in B_\alpha$,
\[
\begin{aligned}
    E_{V,\tau}(u) &\geq 
    \frac{1}{2}\abs{u}_V^2-CC_{q_3,N,V} \|u\|_{2}^{(1 - \beta_{q_3})q_3} \abs{ u}_{V}^{\beta_{q_3} q_3}-CC_{p,N,V} \|u\|_{2}^{(1 - \beta_p)p} \abs{ u}_{V}^{\beta_p p}\\
    &= \frac{1}{2}\alpha-CC_{q_3,N,V} \mu^{(1 - \beta_{q_3})q_3/2} \alpha^{\beta_{q_3} q_3/2}-CC_{p,N,V} \mu^{(1 - \beta_p)p/2} \alpha^{\beta_p p/2}.
\end{aligned}
\]
Therefore, since $2+\frac{4}{N}<q_3<p<2^*$, there exists $\alpha_0>0$ such that
\[
E_{V,\tau}(u) \geq \frac{\alpha_0}{4} \quad \text{for each $u \in B_{\alpha_0}$ and $\tau \in \ab[ \frac{1}{2}, 1 ]$}.
\]
By \eqref{inf-spect}, \eqref{eqa5f2} and \cref{lemgn}, there exists $w_1 \in S_\mu$ satisfying $w_1 \geq 0$ and $\abs{w_1}_V^2 \in (0,\alpha_0/8)$ small enough 
such that for all $\tau \in [\frac{1}{2},1]$, 
\[
\begin{aligned}
    E_{V,\tau}(w_1) &\leq \frac{1}{2}\abs{w_1}_V^2+CC_{q_3,N,V} \|w_1\|_{2}^{(1 - \beta_{q_3})q_3} \abs{ w_1}_{V}^{\beta_{q_3} q_3} 
    +CC_{p,N,V} \|w_1\|_{2}^{(1 - \beta_p)p} \abs{ w_1}_{V}^{\beta_p p}\\
    &\leq
    \frac{\alpha_0}{16} 
    + CC_{q_3,N,V} \mu^{(1 - \beta_{q_3})q_3/2} \abs{ w_1}_{V}^{\beta_{q_3} q_3} 
    +  CC_{p,N,V} \mu^{(1 - \beta_p)p/2} \abs{ w_1}_{V}^{\beta_p p} \leq \frac{\alpha_0}{8}.
\end{aligned}
\]
Now, by defining $w_2$ as in \cref{lemmps}, we complete the proof of \cref{lemmpe}.
\end{proof}

\subsection{Nonexistence of positive critical points of $E_{V,\tau}$ with small $L^2$-norm}\label{subsec33}

By \cref{lemmps}, under \eqref{A1} and \eqref{A3}, we notice that 
\begin{equation}\label{lwb-mp}
\liminf_{\mu \to 0^+} c_\tau(\mu) \geq \liminf_{\mu \to 0^+} c_{1} (\mu) = +\infty \quad \text{for all $\tau \in \ab[\frac{1}{2},1]$}. 
\end{equation}
Now we state the nonexistence result of positive critical points of $E_{V,\tau}$ with small $L^2$ norm, 
which plays a crucial role in the proof of  \cref{th1} for $N \geq 5$.
	\begin{theorem}\label{T:Liouv-musmall}
		Suppose $N \geq 1$, \eqref{A1} and \eqref{A3}. 
		For any $\varepsilon > 0$, there exists $\mu_\varepsilon > 0$ such that for each $\tau \in [1/2,1]$, the following problem 
		\[
		-\Delta u + Vu = \chi_{\Omega} \ab( \tau f_1(u) + f_2(u) ),\quad  u>0 \quad \text{in} \ \R^N, \quad \Vab{u}_2^2 \leq \mu_\varepsilon, 
		\quad E_{V,\tau} (u) \geq \varepsilon
		\]
		admits no nontrivial solution. 
		In particular, there exists $\mu_0 \in (0,\mu^*)$ such that 
		for each $\tau \in [1/2,1]$, there is no critical point $u$ of $E_{V,\tau}$ with $u>0$ in $\R^N$, 
		$\Vab{u}_2^2 \leq \mu_0$ and $E_{V,\tau} (u) \geq \inf_{0<\mu \leq \mu_0}c_\tau( \mu )$. 
	\end{theorem}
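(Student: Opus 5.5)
The plan is to argue by contradiction with a blow-up argument. Suppose that for some $\varepsilon>0$ there are $\tau_n\in[1/2,1]$ and positive solutions $u_n$ of $-\Delta u_n+Vu_n=\chi_\Omega(\tau_n f_1(u_n)+f_2(u_n))$ with $\Vab{u_n}_2^2\to0$ and $E_{V,\tau_n}(u_n)\ge\varepsilon$. Passing to a subsequence, we may assume $\tau_n\to\tau_\infty$.

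\textbf{Step 1: $M_n\coloneq\Vab{u_n}_\infty\to\infty$.} Suppose instead that $\sup_n\Vab{u_n}_\infty<\infty$.
\begin{itemize}
\item By \eqref{A1} and \eqref{A3}, the bound on $M_n$ gives $|f_i(u_n)|\le C u_n$, uniformly in $n$.
\item Since $\Omega$ is bounded, elliptic $L^2$-to-$L^\infty$ estimates on unit cubes give $\Vab{u_n}_{L^\infty(\Omega)}\le C\Vab{u_n}_{L^2(\Omega_1)}\to0$, where $\Omega_1$ is a fixed $1$-neighbourhood of $\Omega$.
\item Test the equation with $u_n$ and use \eqref{equpsi}: $\abs{u_n}_V^2=\int_\Omega(\tau_nf_1(u_n)+f_2(u_n))u_n\,dx$.
\item Since $f_1$ vanishes near $0$, the $f_1$ term is eventually zero. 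By \eqref{A1}, $|f_2(t)t|\le o(1)t^2$ for small $t$, so the $f_2$ term is $o(\mu_n)$.
\end{itemize}
Hence $\abs{u_n}_V^2\to0$. The $F$ terms in $E_{V,\tau_n}(u_n)$ are also $o(1)$, so $E_{V,\tau_n}(u_n)\to0$, contradicting $E_{V,\tau_n}(u_n)\ge\varepsilon$.

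\textbf{Step 2: blow-up limit.} Pick $x_n$ with $u_n(x_n)\ge M_n/2$. Outside $\Omega$ the equation is linear, $-\Delta u+Vu=0$. The maximum principle, written via $L_0$ for $v=u/\psi_0$, therefore bounds $u_n$ off $\overline\Omega$ by its values on $\partial\Omega$. So up to harmless constants we may take $x_n\in\overline\Omega$, converging to some $x_\infty\in\overline\Omega$.

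Set $\rho_n\coloneq M_n^{-(p-2)/2}\to0$ and $w_n(y)\coloneq M_n^{-1}u_n(x_n+\rho_ny)$. Then
\[
-\Delta w_n+\rho_n^2V(x_n+\rho_n y)w_n=\chi_{\Omega_n}\,\rho_n^2M_n^{-1}\ab(\tau_nf_1+f_2)(M_nw_n),
\]
with $0\le w_n\le1$ and $w_n(0)\ge1/2$. By \eqref{A3} the right-hand side converges locally uniformly to $\tau_\infty a_0\chi_{H}w^{p-1}$, where $H$ is either $\R^N$ or a half-space, according to the rescaled distance $\operatorname{dist}(x_n,\partial\Omega)/\rho_n$. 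Here we use that $\partial\Omega$ is smooth.

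Elliptic estimates give $w_n\to w$ in $C^1_{\rm loc}$. The limit $w$ is a bounded nonnegative solution of $-\Delta w=\tau_\infty a_0\chi_Hw^{p-1}$ with $w(0)\ge1/2$. If $p<2^*$, \cref{lemnelambda+0} (after scaling out the constant $\tau_\infty a_0$) gives $w\equiv0$, a contradiction. For $N=1,2$, where $2^*=\infty$, the same lemma applies directly.

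\textbf{Obstacle: the choice of $x_n$.} This is the main subtlety. To guarantee $w_n\le1$, the point $x_n$ should be a genuine maximum point of $u_n$. That maximum is attained because $u_n\to0$ at infinity, which follows from $L^2$ integrability together with local elliptic bounds. Since $-\Delta u_n+Vu_n=0$ outside $\Omega$, this maximum need not lie in $\Omega$. I would handle this through $v_n=u_n/\psi_0$:
\begin{itemize}
\item $v_n$ satisfies $L_0v_n=0$ off $\Omega$, so by the weak maximum principle $v_n$ attains its maximum on $\overline\Omega$.
\item Hence $\max u_n\le C\max_{\overline\Omega}u_n$, with $C$ depending only on the bounds of $\psi_0$.
\item Blowing up at the maximum point in $\overline\Omega$ then gives $w_n\le C$, which suffices.
\end{itemize}

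\textbf{The ``in particular'' part.} Set $\varepsilon\coloneq\inf_{0<\mu\le\mu^*/2}c_1(\mu)$. This is positive: \cref{lemmps} gives $c_\tau\ge3\alpha_0/8$ for $\mu<\mu^*$. By monotonicity in $\tau$, $c_\tau(\mu)\ge c_1(\mu)$ for all $\tau\in[1/2,1]$. Take $\mu_0\coloneq\min\{\mu_\varepsilon,\mu^*/2\}$. Then any critical point $u$ with $E_{V,\tau}(u)\ge\inf_{0<\mu\le\mu_0}c_\tau(\mu)$ satisfies $E_{V,\tau}(u)\ge\varepsilon$, and the first part excludes it.
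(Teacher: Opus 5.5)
Your proof is correct. Its blow-up half coincides with the paper's claim \eqref{eqmoneganmn}. The two arguments differ only in how they exclude $M_{\Omega,n}/M_n\to0$, i.e.\ the scenario where the maximum of $u_n$ escapes into $\R^N\setminus\overline{\Omega}$, where the equation is linear and blow-up gives no information.

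The paper handles this in two steps. It first proves by blow-up that this ratio must tend to $0$, rescaling by $M_{\Omega,n}$ so that $\Vab{\tilde U_n}_\infty=M_n/M_{\Omega,n}$. It then contradicts this with a De Giorgi--Moser iteration on the truncations $(u_n-M_{\Omega,n}\sum_{m\le\ell}2^{-m})_+$, which vanish on $\Omega$ and yield $M_n-2M_{\Omega,n}\le C\Vab{u_n}_2\to0$.

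You instead write $v_n=u_n/\psi_0$ and observe that $L_0v_n=0$ in $\R^N\setminus\overline\Omega$ with $v_n\to0$ at infinity. The weak maximum principle for $L_0=-\diver(\psi_0^2\nabla\cdot)$, which has no zeroth-order term, then gives $M_n\le(\max\psi_0/\min\psi_0)M_{\Omega,n}$ directly. A single blow-up at the maximum point on $\overline\Omega$, with a uniformly bounded rescaled sequence, finishes the proof.

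Your route is shorter and reuses the paper's own reformulation from \cref{subsec:-rewr}. However, it rests on having a positive solution of $-\Delta\psi+V\psi=0$ that is bounded above and away from zero, which is exactly what periodicity and the normalization $\sigma_0=0$ provide. The paper's iteration uses only $\Vab{V}_\infty$ and $\Vab{u_n}_2\to0$. That is why \cref{rem:generalV} can assert the same conclusion for an arbitrary bounded potential, as needed for \cref{th1'} in \cref{sec:generalV}; your argument would not cover that case.

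Some minor points:
\begin{itemize}
\item When the limit domain is $\R^N$ you need Gidas--Spruck \cite{GS}, since \cref{lemnelambda+0} only treats half-spaces.
\item Because of $\chi_{\Omega_n}$, the right-hand side converges a.e.\ with a uniform bound rather than locally uniformly. This still suffices.
\item Step 1 simplifies. Bounded $M_n$ gives $\vab{f_i(t)t}+\vab{F_i(t)}\le Ct^2$, hence $\vab{u_n}_V^2\to0$ and $E_{V,\tau_n}(u_n)\to0$ directly from $\Vab{u_n}_2\to0$.
\end{itemize}
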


Since we use a blow-up argument to prove \cref{T:Liouv-musmall},
the detailed proof is provided in \cref{sec:blow-up}.

 \section{Compactness of bounded Palais-Smale Sequences}
 \label{sec:comp-bPS}

 The aim of this section is to study the behavior of bounded Palais--Smale sequences of $E_{V,\tau}|_{S_\mu}$. 
 Throughout this section, we always assume   that $\Omega\subset  \R^N$ is a (nonempty) bounded open set with smooth boundary $\partial \Omega$, and that \eqref{A1} and \eqref{A3}  hold. Before proceeding, we prepare some notation. 
 For $u \in S_\mu$ and $E'_{V,\tau} (u) \in H^{-1} (\R^N)$, set 
 \[
 \norm{E'_{V,\tau}(u)}_{ T_u^*S_\mu } \coloneq 
\sup_{ v \in T_uS_\mu, \norm{v}_{H^1} \leq 1 } \abs{E_{V,\tau} '(u) v} 
 = 
 \inf_{\lambda \in \R} \norm{ E_{V,\tau}'(u) + \lambda (u , \cdot)_2 }_{H^{-1}},
 \]
 where $T_uS_\mu$ denotes the tangent space of $S_\mu$ at $u \in S_\mu$ and is given by 
 \[
 T_uS_\mu = \Set{ v \in H^1(\R^N) : (u, v)_2 = 0 }. 
 \] 
 Let $\mu>0$, $\tau \in [\frac{1}{2},1]$ and $\{u_n\} \subset S_\mu$ be a bounded Palais-Smale sequence for $E_{V,\tau}$ constrained on $S_\mu$, at some level $c\in\R$, that is, 
 \begin{align}
 	\{u_n\} \text{ is bounded in } H^1(\R^N), \label{PS0}
 	\\
 	E_{V,\tau}(u_n) \to c, \label{PS1}
 	\\
 	\norm{ E_{V,\tau}'(u_n) }_{ T^*_{u_n} S_\mu } \to 0. 
 	\nonumber
 \end{align}
Note that the last condition is equivalent to 
\begin{equation}\label{PS2}
E^{ \prime}_{V,\tau}(u_n) + \lambda_n (u_n, \cdot)_2 \to 0 \quad \text{in }H^{-1}(\R^N)
\end{equation}
for some $\{\lambda_n\} \subset \R$.

Since \(\{u_n\} \subset H^1(\R^N)\) is bounded and $\Omega \subset \R^N$ is bounded,  we may assume that, up to a subsequence, there exists \( u_\tau \in H^1(\R^N) \) such that 
\begin{equation}\label{eqweakconverge}
\begin{aligned}
u_n &\rightharpoonup  u_\tau \quad \text{in } H^1(\R^N),\\
u_n &\to u_\tau \quad \text{in } L^p(\Omega) \text{ for all } p\in [2,2^*),\\
u_n &\to u_\tau \quad \text{a.e. in } \R^N. 
\end{aligned}
\end{equation}
Observe also that, from \eqref{PS2} and the boundedness of $\{u_n\} \subset H^1(\R^N)$, we obtain
\[
   o(1)= E_{V,\tau}'(u_n )u_n+\lambda_n(u_n,u_n)_2 =E_{V,\tau}'(u_n )u_n+\lambda_n \mu,
\]
which implies that $\{\lambda_n\}$ is bounded. Thus, up to a subsequence, there exists \(\lambda_\tau \in \mathbb{R}\) such that 
\begin{equation}\label{eqlambdatau}
    \lambda_n \rightarrow \lambda_\tau \quad \text{as }n \to +\infty.
\end{equation}
It is clear that $u_{\tau}\in H^{1}(\R^N)$ solves 
\begin{equation}\label{eqsolutionutau}
    -\Delta u_\tau+V(x)u_\tau+\lambda_\tau u_\tau=\tau \chi_\Omega f_1(u_\tau)+\chi_\Omega f_2(u_\tau).
\end{equation}

Next, we focus on proving that $u_{n} \to u_\tau$ in $H^1(\R^N)$ as $n \to +\infty$ provided $\lambda_{\tau} >0$, 
which ensures that the limit \(u_{\tau}\) satisfies the mass constraint \(\Vab{u_\tau}_2^2 = \mu\) (cf. \cite[Proposition 5.1]{CGJT}). 

\begin{lemma}\label{lemurhoconverge}
Let $N \geq 1$, $\mu>0$ and $\tau \in [\frac{1}{2},1]$. 
Suppose \eqref{A1}, \eqref{A3}, \eqref{PS0}, \eqref{PS1}, \eqref{PS2}, \eqref{eqweakconverge} and \eqref{eqlambdatau}. 
If $\lambda_\tau \geq 0$, then
\begin{equation}\label{strong conv}
\int_{\R^N}|\nabla (u_{n}-u_{\tau})|^{2}+ \left(V(x)+\lambda_{\tau}\right)|u_{n}-u_{\tau}|^{2}\,dx\to 0 \quad \text{as }n \to \infty.
\end{equation}
In particular, under \(\lambda_{\tau}>0\), the sequence \(\{u_{n}\}\) converges strongly to $u_{\tau}$ in \(H^{1}(\R^N)\).
\end{lemma}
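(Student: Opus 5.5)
The plan is to test the approximate equation \eqref{PS2} with $u_n - u_\tau$ and subtract the limit equation \eqref{eqsolutionutau} tested with the same function. Set $w_n \coloneq u_n - u_\tau$, which is bounded in $H^1(\R^N)$ and converges weakly to $0$. Since $\{u_n\}$ is bounded, \eqref{PS2} gives
\[
\int_{\R^N} \nabla u_n\cdot\nabla w_n + (V+\lambda_n)u_n w_n \,dx - \int_\Omega \bigl(\tau f_1(u_n)+f_2(u_n)\bigr) w_n\,dx = o(1).
\]
Testing \eqref{eqsolutionutau} with $w_n$ gives the analogous identity with $u_\tau$ and $\lambda_\tau$ in place of $u_n$ and $\lambda_n$, and with right-hand side exactly $0$. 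Subtracting the two and using $\lambda_n\to\lambda_\tau$ together with the boundedness of $\int u_n w_n$, we should arrive at
\[
\int_{\R^N}|\nabla w_n|^2+(V+\lambda_\tau)w_n^2\,dx = \int_\Omega \bigl(\tau f_1(u_n)+f_2(u_n)-\tau f_1(u_\tau)-f_2(u_\tau)\bigr)w_n\,dx + o(1).
\]

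Next I show the right-hand side is $o(1)$. By \eqref{eqsubsec121}, $|\tau f_1(t)+f_2(t)|\le C(|t|+|t|^{p-1})$ with $p<2^*$. Since $\Omega$ is bounded, \eqref{eqweakconverge} gives $w_n\to0$ in $L^2(\Omega)\cap L^p(\Omega)$. The terms $f_i(u_n)$ are bounded in $L^{2}(\Omega)+L^{p'}(\Omega)$, so Hölder's inequality makes the right-hand side tend to $0$. The essential point is that the localization $\chi_\Omega$ provides compactness of the nonlinear term; the potential $V$ causes no trouble because it sits on the left-hand side.

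It remains to read off the conclusions. The left-hand side is exactly the quantity in \eqref{strong conv}, so \eqref{strong conv} follows. Note that no sign condition was needed to obtain the identity itself; the hypothesis $\lambda_\tau\ge0$ matters only in that $|w_n|_V^2+\lambda_\tau\|w_n\|_2^2\ge0$ by \eqref{inf-spect}, so the expression genuinely controls something.

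For $\lambda_\tau>0$, I then need coercivity, namely $|w|_V^2+\lambda_\tau\|w\|_2^2\ge c\|w\|_{H^1}^2$. I expect this to be the main (mild) obstacle. I would prove it by interpolation. For $\theta\in(0,1)$,
\[
|w|_V^2+\lambda_\tau\|w\|_2^2 \ge \theta\bigl(\|\nabla w\|_2^2-\|V\|_\infty\|w\|_2^2\bigr)+\lambda_\tau\|w\|_2^2,
\]
which uses $|w|_V^2\ge0$ for the remaining fraction $1-\theta$. Choosing $\theta$ small so that $\theta\|V\|_\infty<\lambda_\tau/2$ gives the bound $\ge \theta\|\nabla w\|_2^2+\tfrac{\lambda_\tau}{2}\|w\|_2^2$. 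Hence $w_n\to0$ in $H^1(\R^N)$, and in particular $\|u_\tau\|_2^2=\mu$.
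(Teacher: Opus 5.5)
Your proposal is correct and follows the paper's proof. You test the Palais--Smale relation and the limit equation with $u_n-u_\tau$, subtract, and dispose of the nonlinear terms using the compactness given by the bounded $\Omega$. The only difference is in the coercivity step for $\lambda_\tau>0$: the paper cites the equivalence of $\|\cdot\|_\tau$ with the $H^1$ norm, while you prove it directly via $|w|_V^2\ge\theta|w|_V^2$, the same splitting the paper itself uses later in the proof of \cref{th1'}.
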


\begin{proof}
Using \eqref{PS2} and \eqref{eqlambdatau}, for every \(\eta \in H^1(\R^N)\), we have, as $n \to+\infty$,
\begin{equation*}\label{eqsolution u2}
\begin{aligned}
o(1)\|\eta\|_{H^1}&=\ab(E^{ \prime}_{V,\tau}(u_{n})+\lambda_{n}(u_{n},\cdot)_2)\eta\\
&=\int_{\R^N}\nabla u_{n}\nabla\eta\,dx+\int_{\R^N}\left(V(x)+\lambda_{\tau}\right)u_{n}\eta\,dx-\int_{\Omega}\left(\tau f_1(u_n)+f_2(u_n) \right)\eta\,dx .
\end{aligned}
\end{equation*}
Since \(u_\tau\) is a solution to \eqref{eqsolutionutau}, choosing \(\eta=\eta_{n}:=u_{n}-u_{\tau}\), by \eqref{eqweakconverge}, we obtain, as $n \to+\infty$,
\begin{align*}
o(1) =&\int_{\R^N}\nabla (u_{n}-u_\tau)\nabla
\eta_n\,dx+\int_{\R^N}\left(V(x)+\lambda_{\tau}\right)(u_{n}-u_\tau)\eta_n\,dx\\
&-\int_{\Omega}\left(\tau f_1(u_n)+f_2(u_n)-\tau f_1(u_\tau)-f_2(u_\tau)\right)\eta_n\,dx\\
=&\int_{\R^N}|\nabla (u_{n}-u_{\tau})|^{2}+ \left(V(x)+\lambda_{\tau}\right)|u_{n}-u_{\tau}|^{2}\,dx+o(1),
\end{align*}
which yields \eqref{strong conv}. 
When $\lambda_\tau>0$, since $\norm{u}_\tau \coloneq \ab(\int_{\R^N}\abs{
\nabla u}^2\,dx+\int_{\R^N}\left(V(x)+\lambda_{\tau}\right)\abs{u}^2\,dx )^{1/2}$ defines an equivalent norm on $H^1(\R^N)$, 
see, e.g., \cite[\S3.3]{SW}, we conclude that $u_n\to u_\tau$ in $H^1(\R^N)$. 
\end{proof}
We end this section by the following lemma, which is analogous to \cite[Lemma 5.2]{CGJT} and 
useful to prove $\lambda_\tau > 0$ via \cref{lemliou}. 

\begin{lemma}\label{lem42}
Let $N \geq 1$, $\mu>0$ and $\tau \in [\frac{1}{2},1]$. Suppose that \eqref{A1}--\eqref{A3} hold and 
$\sup_{n \in \N} \norm{u_n}_{H^1} \leq M$. 
Then for every \(d \in \N\) and \( \delta > 0 \), 
there exist a subspace \(Y_{d,\delta}\subset H^{1}(\R^N)\) and \(C_{\delta} > 0\) such that \(\dim Y_{d,\delta} =d\) and
\begin{equation}\label{eqlem42}
 E''_{V,\tau}(u_n)[w, w] + \lambda \|w\|_2^2 \leq - C_\delta  \|w\|_{H^1}^{2} \quad \text{for all \(\lambda \leq -\delta\) and \(w \in Y_{d,\delta}\)}. 
\end{equation}

\end{lemma}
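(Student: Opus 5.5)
The idea is to build $Y_{d,\delta}$ from functions supported \emph{outside} $\Omega$, so that the nonlinear part of $E''_{V,\tau}(u_n)$ does not act on them. The estimate then reduces to a purely spectral statement about $-\Delta+V$, which follows from \eqref{inf-spect} and the periodicity of $V$. The resulting subspace does not depend on $n$, so the bound $\sup_n\|u_n\|_{H^1}\le M$ is not really needed.

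By \eqref{A2}, $f\in C^1$, hence $E_{V,\tau}$ is of class $C^2$ and
\[
E''_{V,\tau}(u_n)[w,w]=\abs{w}_V^2-\int_\Omega\bigl(\tau f_1'(u_n)+f_2'(u_n)\bigr)w^2\,dx .
\]
\textbf{Step 1 (one good function).} Since $\sigma_0=0$ is the infimum of $\abs{u}_V^2/\|u\|_2^2$ over $H^1(\R^N)\setminus\{0\}$, and $C_0^\infty(\R^N)$ is dense in $H^1(\R^N)$ with both $\abs{\cdot}_V^2$ and $\|\cdot\|_2^2$ continuous in $H^1$, there is $\phi\in C_0^\infty(\R^N)\setminus\{0\}$ with
\[
\abs{\phi}_V^2\le \tfrac{\delta}{2}\|\phi\|_2^2 .
\]

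\textbf{Step 2 (translate away from $\Omega$).} Because $V$ is $1$-periodic, $\abs{\phi(\cdot-k)}_V=\abs{\phi}_V$ for every $k\in\Z^N$. Using that $\Omega$ and $\supp\phi$ are bounded, I will choose $k_1,\dots,k_d\in\Z^N$ such that the translates $\phi_i:=\phi(\cdot-k_i)$ have pairwise disjoint supports, all disjoint from $\overline\Omega$. Set $Y_{d,\delta}:=\operatorname{span}\{\phi_1,\dots,\phi_d\}$; disjointness of the supports gives $\dim Y_{d,\delta}=d$.

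\textbf{Step 3 (the estimate).} Take $w=\sum_i a_i\phi_i\in Y_{d,\delta}$. Disjoint supports give $\abs{w}_V^2=\sum_i a_i^2\abs{\phi_i}_V^2$ and $\|w\|_2^2=\sum_i a_i^2\|\phi_i\|_2^2$, so $\abs{w}_V^2\le\frac{\delta}{2}\|w\|_2^2$. Since $w=0$ on $\Omega$, the integral over $\Omega$ in $E''_{V,\tau}(u_n)[w,w]$ vanishes for every $n$. Hence, for every $\lambda\le-\delta$,
\[
E''_{V,\tau}(u_n)[w,w]+\lambda\|w\|_2^2\le-\tfrac{\delta}{2}\|w\|_2^2 .
\]
Finally, $Y_{d,\delta}$ is finite-dimensional, so all norms on it are equivalent: there is $c>0$ with $\|w\|_2^2\ge c\|w\|_{H^1}^2$ for all $w\in Y_{d,\delta}$. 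Taking $C_\delta:=c\delta/2$ yields \eqref{eqlem42}.

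The only point needing care is Step 1, namely producing a compactly supported almost-minimizer of the Rayleigh quotient; this follows directly from the density argument above. Everything else is bookkeeping.
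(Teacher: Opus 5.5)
Your proposal is correct and follows essentially the same route as the paper. Both take a compactly supported function with small Rayleigh quotient (the paper uses $\delta/3$, you use $\delta/2$), then integer translates with pairwise disjoint supports disjoint from $\Omega$, so the nonlinear term vanishes and periodicity of $V$ preserves $\abs{\cdot}_V$; both finish by comparing the $L^2$ and $H^1$ norms on the span. Your explicit treatment of general linear combinations via disjoint supports spells out the step the paper dismisses as ``easily seen,'' and you are right that neither argument uses the bound $\sup_n\|u_n\|_{H^1}\le M$.
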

\begin{proof}
Let $d \in \N$ and $\delta > 0$ be given. 
Since $0 = \inf \sigma (-\Delta + V)$, we may find $\varphi_\delta \in C^\infty_c(\R^N) $ such that 
\[
\| \varphi_\delta \|_2 = 1, \quad | \varphi_\delta |_V^2 < \frac{\delta}{3}. 
\]
From the compactness of $\supp \varphi_\delta$, there exist $\alpha_1,\dots,\alpha_{d} \in \Z^N$ such that 
\[
\begin{aligned}
   &\supp \varphi_\delta( \cdot - \alpha_{i}) \cap \Omega = \emptyset \quad \text{for every $1 \leq i \leq d$},\\
	&\supp \varphi_\delta ( \cdot - \alpha_{i}) \cap \supp \varphi_\delta (\cdot - \alpha_{j}) = \emptyset \quad \text{for every $1 \leq i < j \leq d$}.
\end{aligned}
\]
For any $\varphi_{\delta,i}(x) \coloneq \varphi_\delta(x - \alpha_{i})$, the periodicity of $V$ leads to 
\[
| \varphi_{\delta,i} |_{V}^2 = | \varphi_\delta |_V^2 < \frac{\delta}{3}.
\]
Choose $\tilde{C}_\delta > 0$ so that 
\[
\tilde{C}_\delta \Vab{\varphi_\delta}_{H^1}^2 \leq \Vab{\varphi_\delta}_2^2. 
\]
By \(\supp \varphi_{\delta,i} \cap \Omega = \emptyset\) and \( \norm{\varphi_\delta}_2 = 1 \), for any $\lambda \leq - \delta$, 
\[
\begin{aligned}
	\int_{\R^N} | \nabla \varphi_{\delta, i} |^2 + V(x) \varphi_{\delta,i}^2 + \lambda \varphi_{\delta,i}^2 \,dx- 
	\int_{\Omega} \ab[ f_2'(u_n) + \tau f_1'(u_n) ] \varphi_{\delta,i}^2 \, dx
	&= 
	|\varphi_{\delta,i}|_V^2 + \lambda \| \varphi_{\delta,i} \|_2^2 
	\\
	&<  - \frac{2\delta}{3} \norm{\varphi_\delta}_2^2 \leq - \frac{2\tilde{C}_\delta}{3} \delta \Vab{\varphi_\delta}_{H^1}^2  . 
\end{aligned}
\]
Set \(Y_{d,\delta}\coloneq \operatorname{span}  \set{ \varphi_{\delta,1},\dots, \varphi_{\delta,d} } \). 
It is easily seen that $\dim Y_{d,\delta} = d$ and \eqref{eqlem42} holds. 
\end{proof}


\section{Existence of MP solutions for a dense set}
\label{sec: Ex-MP-a.e.}

 This section is devoted to proving the existence results of the following $L^2$-normalized equation for almost every $\tau \in [1/2,1]$
\begin{equation}\tag{$P_{\mu,\tau}$}\label{eqVt}
    \begin{dcases}
    -\Delta u +V (x)u + \lambda u=\tau\chi_{{\Omega}}f_1(u)+\chi_{{\Omega}}f_2(u)\quad \text{in }\R^N,\\
    u>0 \quad \text{in \(\R^N\)}, \\
    \int_{\R^N}\abs{u}^2\, dx =\mu.
\end{dcases}
\end{equation}
We first recall the definition of the Morse index of solutions to \eqref{eqVt} and 
an approximate Morse index (see \cite[Definition 1.4]{BCJS}).

\begin{definition}\label{d:Mor-apMor}
	Let $u \in S_\mu$, $\lambda \in \R$ and $\theta \geq 0$. 
	\begin{enumerate}
		\item 
		The Morse index $m_{\lambda,\tau} (u)$ is defined by 
		\[
		m_{\lambda,\tau} (u) 
		\coloneq 
		\sup \Set{ \dim L :
			\begin{aligned}
				&\text{$L \subset H^1(\R^N)$ is a subspace such that for all $\varphi \in L \setminus \set{0}$}
				\\
				& Q_{\lambda,\tau,u} (\varphi) \coloneq \int_{\R^N} \abs{\nabla \varphi}^2 + (V + \lambda) \varphi^2 
				- \chi_\Omega \ab( \tau f_1'(u) + f_2'(u) ) \varphi^2 \, dx  < 0
			\end{aligned}
		}.
		\]
		
		\item 
    An approximate Morse index $\tilde{m}_{\tau,\theta} (u)$ is defined by 
		\[
		\tilde{m}_{\tau,\theta}(u) 
		\coloneq 
		\sup \Set{ \dim L :
			\begin{aligned}
				&\text{$L \subset T_u S_\mu$ is a subspace such that for all $\varphi \in L \setminus \set{0}$}
				\\
				& E_{V,\tau}''(u) [\varphi,\varphi] - \frac{E_{V,\tau}'(u) u}{\norm{u}_2^2} \ab( \varphi, \varphi )_2 < - \theta \norm{\varphi}_{H^1}^2
			\end{aligned}
		}.
		\]
	\end{enumerate}
\end{definition}

Now, we state our main results in this section. 
 \begin{theorem}\label{th51}
     Let $N \geq 1$ and $\Omega\subset  \R^N$ be a (nonempty) bounded open set with smooth boundary $\partial \Omega$, and assume \eqref{A1}--\eqref{A3}. 
     Recall the numbers $\mu_0$ and $c_\tau$ in \cref{T:Liouv-musmall} and \cref{lemmps}. 
     Then, for every $\mu\in (0,\mu_0)$, problem \eqref{eqVt} has a solution $(u_\tau,\lambda_\tau)\in H^1(\R^N) \times (0,+\infty)$ for almost every $\tau \in [1/2,1]$ 
     such that $E_{V,\tau}(u_\tau)=c_\tau$ and $\tilde{m}_{\tau,0}(u_\tau)\leq 1$. 
     Moreover, $m_{\lambda_\tau,\tau} (u_\tau) \leq 2$ holds.  
\end{theorem}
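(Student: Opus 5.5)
We apply the monotonicity trick with Morse index information from \cite{BCJS} (their Theorem 1.5) to the family $\{E_{V,\tau}\}_{\tau\in[1/2,1]}$ on $S_\mu$. By \cref{lemmps}, for $\mu\in(0,\mu_0)\subset(0,\mu^*)$ the family has a uniform mountain pass geometry: $c_\tau>\max\{E_{V,\tau}(w_1),E_{V,\tau}(w_2)\}$, and $\tau\mapsto c_\tau$ is nonincreasing. The split $E_{V,\tau}=\tfrac12|u|_V^2-\int_\Omega F_2-\tau\int_\Omega F_1$ with $F_1\ge0$ matches the structure $A(u)-\tau B(u)$ required in \cite{BCJS}. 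We also need $E'_{V,\tau}$ and $E''_{V,\tau}$ to be $\alpha$-H\"older continuous on bounded sets; this follows from \eqref{A2}, together with the boundedness of $\Omega$ and the Sobolev embeddings. Taking the paths in $\Gamma$ to be nonnegative, or replacing $\gamma$ by $|\gamma|$, which leaves $E_{V,\tau}$ unchanged, the abstract result then gives the following. For almost every $\tau$, where $c_\tau$ is differentiable, there is a bounded Palais--Smale sequence $\{u_n\}\subset S_\mu$ at level $c_\tau$ with $\|u_n^-\|_2\to0$ and $\tilde m_{\tau,\zeta_n}(u_n)\le1$ for some $\zeta_n\to0^+$.

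Next we pass to the limit. By \eqref{eqweakconverge}--\eqref{eqsolutionutau}, $u_n\rightharpoonup u_\tau$ and $\lambda_n\to\lambda_\tau$, and $u_\tau\ge0$ solves \eqref{eqsolutionutau}. The key step is to show $\lambda_\tau>0$; \cref{lemurhoconverge} then gives $u_n\to u_\tau$ in $H^1$, hence $\|u_\tau\|_2^2=\mu$ and $E_{V,\tau}(u_\tau)=c_\tau$. The bound $\tilde m_{\tau,0}(u_\tau)\le1$ passes to the limit through the strong convergence and the continuity of $E''_{V,\tau}$. From this, $m_{\lambda_\tau,\tau}(u_\tau)\le2$ follows by the standard codimension-one argument: $T_{u_\tau}S_\mu$ has codimension one, and on it the constrained Hessian coincides with $Q_{\lambda_\tau,\tau,u_\tau}$. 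Finally, $u_\tau\not\equiv0$ since $\|u_\tau\|_2^2=\mu$, and the strong maximum principle gives $u_\tau>0$.

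To show $\lambda_\tau>0$, first exclude $\lambda_\tau<0$. If $\lambda_\tau\le-2\delta<0$, then $\lambda_n\le-\delta$ for large $n$. \cref{lem42} with $d=2$ gives a two-dimensional space $Y$ on which $E''_{V,\tau}(u_n)+\lambda_n\|\cdot\|_2^2\le -C_\delta\|\cdot\|_{H^1}^2$. Its intersection with $T_{u_n}S_\mu$ is at least one-dimensional. Because the supports in \cref{lem42} can be translated far away, the functions in $Y$ can be made almost orthogonal to $u_n$, uniformly in $n$; a small correction then yields a two-dimensional subspace of $T_{u_n}S_\mu$ on which the form is $<-\zeta_n\|\cdot\|^2$. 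Here we use $-E'_{V,\tau}(u_n)u_n/\mu=\lambda_n+o(1)$. This contradicts $\tilde m_{\tau,\zeta_n}(u_n)\le1$.

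The case $\lambda_\tau=0$ is the main obstacle. By \eqref{strong conv}, $|u_n-u_\tau|_V\to0$, so $E_{V,\tau}(u_\tau)=c_\tau$. However, mass may be lost: $\|u_\tau\|_2^2\le\mu<\mu_0$, since $|\cdot|_V$ does not control the $L^2$ norm. We then have a nonnegative critical point $u_\tau$ of $E_{V,\tau}$, free with $\lambda=0$, with $\|u_\tau\|_2^2\le\mu_0$ and $E_{V,\tau}(u_\tau)=c_\tau(\mu)\ge\inf_{0<\mu'\le\mu_0}c_\tau(\mu')>0$. Hence $u_\tau\ne0$, and the maximum principle gives $u_\tau>0$. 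This is exactly what \cref{T:Liouv-musmall} forbids, so $\lambda_\tau=0$ is impossible. The delicate points are justifying $E_{V,\tau}(u_\tau)=c_\tau$ from the $|\cdot|_V$ convergence alone, and checking that the nonlinear terms converge because they live on the bounded set $\Omega$; I expect both to work via \eqref{eqweakconverge}.
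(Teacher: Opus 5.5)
Your proposal is correct and follows essentially the paper's argument: the monotonicity trick of \cite{BCJS} with the Morse-index bound, exclusion of $\lambda_\tau<0$ via \cref{lem42}, exclusion of $\lambda_\tau=0$ via \cref{T:Liouv-musmall}, and then \cref{lemurhoconverge}, passage of $\tilde m_{\tau,\zeta_n}(u_n)\le 1$ to the limit, and the codimension-one count. Using \cref{T:Liouv-musmall} for every $N\ge1$ is legitimate because $\mu<\mu_0$; the paper additionally uses \cref{lemliou} when $N\le 4$, which matters only for \cref{th52}. The one place you work harder than needed is $\lambda_\tau<0$. Taking $d=3$ in \cref{lem42}, as the paper does, already gives a negative subspace meeting $T_{u_n}S_\mu$ in dimension at least two, so your translation and near-orthogonality correction is unnecessary. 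The points you flagged as delicate are routine: $|\cdot|_V$ is a seminorm because $\sigma_0=0$, the nonlinear terms converge by compactness on $\Omega$, and the coercivity of $A$ on $S_\mu$ required by \cite{BCJS} follows from $|F_2(t)|\le Ct^2$.
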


\begin{theorem}\label{th52}
	Let $ 1 \leq N \leq 4$  and $\Omega\subset  \R^N$ be a (nonempty) bounded open set with smooth boundary $\partial \Omega$, assume \eqref{A1}--\eqref{A3} and \eqref{A5}, and recall $c_\tau$ from \cref{lemmpe}. 
	Then, for every $\mu>0$, problem \eqref{eqVt} has a solution $(u_\tau,\lambda_\tau)\in H^1(\R^N) \times (0,+\infty)$ for almost every $\tau \in [1/2,1]$ 
	such that $E_{V,\tau}(u_\tau)=c_\tau$ and $\tilde{m}_{\tau,0}(u_\tau)\leq 1$. 
	Furthermore, $m_{\lambda_{\tau},\tau} (u_\tau) \leq 2$ holds. 
\end{theorem}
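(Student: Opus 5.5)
We plan to prove \cref{th52} by the monotonicity trick of \cite{BCJS}, taking the uniform mountain pass geometry from \cref{lemmpe}, which holds for every $\mu>0$. For the parametrized family $\{E_{V,\tau}\}_{\tau\in[1/2,1]}$ the paths connect $w_1,w_2$ independently of $\tau$, and $c_\tau > \max\{E_{V,\tau}(w_1),E_{V,\tau}(w_2)\}$ uniformly in $\tau$. Write $E_{V,\tau}=A-\tau B$ with $A(u)=\frac12\abs{u}_V^2-\int_\Omega F_2(u)\,dx$ and $B(u)=\int_\Omega F_1(u)\,dx\ge0$ (recall $f_1(t)t\ge0$, so $F_1\ge0$). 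We also need the regularity hypotheses of \cite[Theorem 1]{BCJS}: $E_{V,\tau}'$ and $E_{V,\tau}''$ are $\alpha$-Hölder continuous on bounded sets, which is exactly what \eqref{A2} together with the boundedness of $\Omega$ provides. With these in place, the abstract theorem yields the following for almost every $\tau\in[1/2,1]$ (those $\tau$ at which $\tau\mapsto c_\tau$ is differentiable): a \emph{bounded} Palais--Smale sequence $\{u_n\}\subset S_\mu$ for $E_{V,\tau}|_{S_\mu}$ at level $c_\tau$, with $\tilde m_{\tau,\zeta_n}(u_n)\le1$ for some $\zeta_n\to0^+$. Since $F_1,F_2$ are even, we may replace $\gamma$ by $\abs{\gamma}$ and so arrange $u_n^-\to0$ in $H^1$.

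Next we pass to the limit. By \eqref{eqweakconverge} and \eqref{eqlambdatau} we get $u_n\rightharpoonup u_\tau$ and $\lambda_n\to\lambda_\tau$, with $u_\tau\ge0$ solving \eqref{eqsolutionutau}. The key step is to show $\lambda_\tau>0$. First, suppose $\lambda_\tau<0$. Set $\delta=-\lambda_\tau/2$ and apply \cref{lem42} with $d=2$. Since $\lambda_n\le-\delta$ for large $n$, this produces a $2$-dimensional subspace on which $E''_{V,\tau}(u_n)+\lambda_n(\cdot,\cdot)_2\le -C_\delta\|\cdot\|_{H^1}^2$. Intersecting with $T_{u_n}S_\mu$ leaves a subspace of dimension at least $1$. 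We then argue as in \cite[Lemma 5.2]{CGJT}: the functions $\varphi_{\delta,i}$ can be translated far away, so that they are nearly orthogonal to $u_n$, while $E_{V,\tau}'(u_n)u_n/\mu=-\lambda_n+o(1)$. Taking this into account, we expect to obtain $\tilde m_{\tau,\zeta_n}(u_n)\ge2$, which contradicts the Morse bound. Hence $\lambda_\tau\ge0$. Second, suppose $\lambda_\tau=0$. Then \cref{lemurhoconverge} gives $\abs{u_n-u_\tau}_V\to0$, so $E_{V,\tau}(u_\tau)=c_\tau>0$ and $u_\tau\not\equiv0$. By the strong maximum principle $u_\tau>0$, and $u_\tau\in L^2$ since it is a weak limit of a bounded sequence. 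The Liouville argument of \cref{lemliou} with $s=2\le N/(N-2)$ applies here; this is where $1\le N\le4$ enters. It shows that a positive $L^2$ solution of the equation with $\lambda=0$ cannot exist, which is a contradiction. Therefore $\lambda_\tau>0$, and \cref{lemurhoconverge} yields $u_n\to u_\tau$ in $H^1$, so $u_\tau\in S_\mu$, $E_{V,\tau}(u_\tau)=c_\tau$ and $u_\tau>0$.

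Finally, the Morse information passes to the limit. The approximate indices satisfy $\tilde m_{\tau,\zeta_n}(u_n)\le1$, and by strong convergence together with the continuity of $E''_{V,\tau}$ this gives $\tilde m_{\tau,0}(u_\tau)\le1$. This constrained index is computed on $T_{u_\tau}S_\mu$, which has codimension one. The quadratic form there is $Q_{\lambda_\tau,\tau,u_\tau}$, because $-E_{V,\tau}'(u_\tau)u_\tau/\mu=\lambda_\tau$. A negative subspace of dimension $3$ in $H^1$ would therefore meet $T_{u_\tau}S_\mu$ in dimension at least $2$, so $m_{\lambda_\tau,\tau}(u_\tau)\le2$.

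The main obstacle is excluding $\lambda_\tau\le0$. The case $\lambda_\tau<0$ requires carefully relating the unconstrained negativity from \cref{lem42} to the constrained approximate index with the shifted multiplier. The case $\lambda_\tau=0$ relies on \cref{lemliou}, and that lemma is dimension-restricted.
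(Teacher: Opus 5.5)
\textbf{A gap in excluding $\lambda_\tau<0$.} Your overall route is the paper's. You use the monotonicity trick of \cite{BCJS} on the geometry of \cref{lemmpe}, then get $\lambda_\tau\ge0$ from \cref{lem42} and the Morse bound, then use \cref{lemurhoconverge} and \cref{lemliou} to obtain $u_\tau>0$, $\lambda_\tau>0$ and strong convergence, and finally pass the index to the limit.

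The step that does not close as written is the exclusion of $\lambda_\tau<0$. With $d=2$ in \cref{lem42}, intersecting the $2$-dimensional negative subspace with the codimension-one space $T_{u_n}S_\mu$ leaves only a $1$-dimensional subspace. That is fully consistent with $\tilde m_{\tau,\zeta_n}(u_n)\le1$, so there is no contradiction yet. Your patch (translate far away, use near-orthogonality to $u_n$) is left at ``we expect''. It could be completed with $n$-dependent translations and the projection $w\mapsto w-\mu^{-1}(w,u_n)_2u_n$, but it is unnecessary.

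\textbf{The fix.} Take $d=3$, which \cref{lem42} allows since $d$ is arbitrary. The ``shifted multiplier'' issue you flag also does not arise. If you define $\lambda_n\coloneq-\mu^{-1}E'_{V,\tau}(u_n)u_n$, as the paper does and as \cite[Remarks 1.6, 1.7]{BCJS} permit, then the quadratic form defining $\tilde m_{\tau,\zeta_n}(u_n)$ on $T_{u_n}S_\mu$ is exactly $E''_{V,\tau}(u_n)[w,w]+\lambda_n\|w\|_2^2$, with no $o(1)$ error. Set $\delta=|\lambda_\tau|/2$ and take $n$ large, so that $\lambda_n\le-\delta$ and $\zeta_n<C_\delta$. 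A $3$-dimensional subspace on which this form is $\le -C_\delta\|w\|_{H^1}^2$ then meets $T_{u_n}S_\mu$ in dimension at least $2$. This contradicts $\tilde m_{\tau,\zeta_n}(u_n)\le1$. With this change your argument coincides with the paper's.

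\textbf{Two smaller points.}
\begin{itemize}
\item You should verify the boundedness/coercivity hypothesis of \cite{BCJS}, not only H\"older continuity. The paper obtains it from $|F_2(t)|\le C|t|^2$, which gives $\int_\Omega|F_2(u)|\,dx\le C\mu$ on $S_\mu$; hence $\{u\in S_\mu: A(u)\le K\}$ is bounded.
\item For $N=1,2$, \cref{lemliou} is applied with $s=+\infty$, not $s=2$. The hypotheses still hold because $v=u_\tau/\psi_0$ is continuous and tends to $0$ at infinity.
\end{itemize}
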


\begin{proof}[Proofs of \cref{th51,th52}]
We prove \cref{th51,th52} simultaneously and for the sake of brevity, 
if $N=1,2,3,4$ and \eqref{A5} holds, then we write $\mu_0=+\infty$.

By \eqref{A2} and the definitions of $f_1$ and $f_2$ given in \eqref{eqdeff1f2}, it is clear that $f_1$ and $f_2$ satisfy \eqref{A2} and $f_1(t)t \geq 0$ for all $t\in \R$. Set 
$\mathcal{F} \coloneq \Gamma$ and $B_{0} \coloneq \{w_1,w_2\}$ given by \cref{lemmps} or \cref{lemmpe}, and
\[
A(u) \coloneq \frac{1}{2}\abs{u}_V^2-\int_{\Omega}F_2(u)\,dx\text{ and }B(u) \coloneq\int_{\Omega}F_1(u)\,dx.
\]
From \eqref{eqdeff1f2} and \eqref{A3}, it follows that \[
\abs{F_2(t)} \leq C\abs{t}^2 \quad \text{for all $t \in \R$}.
\]
Then, for any $ u \in S_\mu$, we have $\int_\Omega\abs{F_2(u)}\,dx \leq C\mu$, hence, \( \set{ u \in S_\mu : A(u) \leq K } \) is bounded in \(H^1(\R^N)\) for every $K \geq 0$.
Therefore, the set \(G \coloneq \set{ u \in S_\mu : A(u) \leq K_1, B(u) \leq K_2 } \) is bounded in \(H^1(\R^N)\) for each $K_1,K_2 \geq 0$. 
Since $f_1$ and $f_2$ are odd due to \eqref{eqodd}, for all \(\mu \in (0,\mu_0)\), by \cref{lemmps,lemmpe}, 
\cite[Theorem 2.1 and Remark 1.8]{BCJS} are applicable 
for almost every $\tau \in [1/2,1]$ (see the proof of \cite[Theorem 1.10]{BCJS}, 
and note also that {\rm(i)} in the proof of \cite[Theorem 1.10]{BCJS} now follows from  the boundedness of $G$ and  \cite[Eq.(3.5)]{BCJS} 
as in the proof of \cite[Theorem 3.1]{BCJS}). 
Hence, there exist sequences $\{u_n\}\subset S_\mu$ and $\zeta_n\to0^+$ such that, as $n \to +\infty$,
\begin{enumerate}[label=\rm(\roman*)]
    \item $E_{V,\tau}(u_n) \to c_\tau>0$;
    \item \label{bps2}$\norm{E_{V,\tau}'(u_n) }_{ T_{u_n}^* S_{\mu} }\to 0$;
    \item \label{bps3} $u_n \geq 0$ and $\{u_n\}$ is bounded in $H^1(\R^N)$; 
    \item \label{bps4} $\tilde{m}_{\tau,\zeta_n}(u_n)\leq 1$.
\end{enumerate}
By the boundedness of $\{u_n\}$ in $H^1(\R^N)$ and \eqref{A2},  $\lambda_n \coloneq -\frac{1}{\mu}E'_{V,\tau}(u_n)u_n$ is bounded. 
Up to a subsequence, we may assume $u_n \rightharpoonup u_\tau$ weakly in $H^1(\R^N)$ and $\lambda_n \to \lambda_\tau \in \R$ without loss of generality. 
By \cite[Remarks 1.6 and 1.7]{BCJS}, (\hyperref[bps2]{ii}) and (\hyperref[bps4]{iv}) imply that 
\begin{equation*}
E^{ \prime}_{V,\tau}(u_n) + \lambda_n (u_n, \cdot)_2 \to 0 \quad \text{in }H^{-1}(\R^N),
\end{equation*}
and, if there exists a subspace $W_n \subset T_{u_n}S_\mu$ such that 
\begin{equation}\label{eqineqmorse}
    E''_{V,\tau}(u_n)[w,w]+\lambda_n(w,w)_2<-\zeta_n \norm{w}^2_{H^1} \quad \text{for all }w \in W_n\setminus\{0\},
\end{equation}
then $\dim W_n \leq 1$ holds. 
Observe that the codimension of $T_{u_n}S_\mu$ in $H^1(\R^N)$ is $1$. 
Thus, if inequality \eqref{eqineqmorse} holds for every $w \in W_n \setminus\{0\}$ for a subspace $W_n \subset H^1(\R^N)$, 
then $\dim W_n \leq 2$ holds. 
From this and \cref{lem42} we deduce $\lambda_{\tau} \geq 0$. 
Indeed, if $\lambda_\tau < 0$ holds, then 
by exploiting \cref{lem42} with $d=3$ and $\delta \coloneq \abs{\lambda_\tau}/2$, 
there exist a subspace $Y \subset H^1(\R^N)$ and $C>0$ such that $\dim Y=3$ and 
\[
E_{V,\tau}''(u_{n}) [w,w] + \lambda_{n} \norm{w}_2^2 \leq - \frac{C}{2} \norm{w}_{H^1}^2 
\quad \text{for all $w \in Y$ and sufficiently large $n$}. 
\]
However, this contradicts the above observation and $\lambda_\tau \geq 0$ holds.

Now, since \eqref{PS0}--\eqref{PS2} are satisfied, by \eqref{eqlambdatau} and \eqref{eqsolutionutau}, 
the weak limit $u_\tau$ of $\{u_n\}$ in $H^1(\R^N)$ solves
\begin{equation}\label{eqsec5solve}
    -\Delta u_\tau+V(x)u_\tau+\lambda_\tau u_\tau=\tau \chi_\Omega f_1(u_\tau)+\chi_\Omega f_2(u_\tau),
    \quad u_\tau \geq 0 \quad \text{in} \ \R^N.
\end{equation}
By $\lambda_\tau \geq 0$ and \cref{lemurhoconverge}, 
we have $\abs{u_n-u_\tau}_V\to 0$ as $n\to +\infty$. 
In addition, Fatou's lemma leads to $\norm{u_\tau}_2^2\leq \mu < \mu_0$. 
Since $\norm{u_n - u_\tau}_r \to 0$ for $r \in (2,2^*)$ due to \cref{lemgn}, we obtain
\[
E_{V,\tau}(u_\tau)=\lim_{n\to+\infty}E_{V,\tau}(u_n)=c_\tau>0, 
\]
which implies that $u_\tau\not \equiv 0$. 
By \eqref{eqsec5solve} and the fact $u_\tau \geq 0$, we see 
\[
-\Delta u_\tau + \bab{ V + \lambda_\tau - \tau \chi_\Omega \frac{f_1(u_\tau)}{u_\tau} - \chi_\Omega \frac{f_2(u_\tau)}{u_\tau}  }_+ u_\tau \geq 0 \quad \text{in} \ \R^N;
\]
hence, the strong maximum principle gives $u_\tau>0$ in $\R^N$. 
By \cref{lemliou} when $1 \leq N \leq 4$ and \cref{T:Liouv-musmall} when $N \geq 5$ and $\mu \in (0,\mu_0)$, 
we have $\lambda_\tau > 0$. 
Thus, \cref{lemurhoconverge} leads to $\Vab{u_n - u_\tau}_{H^1} \to 0$. 
Recalling the definition of $\lambda_n$ with $\lambda_n \to \lambda_\tau > 0$, $\tilde{m}_{\tau,\zeta_n} (u_n) \leq 1$, $\zeta_n \to 0^+$ 
and $\Vab{u_n - u_\tau}_{H^1} \to 0$, 
we find that $\tilde{m}_{\tau,0}(u_\tau) \leq  1$ (cf. \cite[Remark 2.6]{CGJT}). 
Since the codimension of $T_{u_\tau} S_\mu$ is $1$,  it is easily seen that $m_{\lambda_{\tau},\tau} (u_\tau) \leq 2$, 
which completes the proof of \cref{th51,th52}.
\end{proof}

 \section{Blow-up analysis}
 \label{sec:blow-up}
 
 In order to obtain the boundedness of $\{(u_\tau,\lambda_\tau)\}_\tau$ found in \cref{th51,th52}, 
 the aim of this section is to study the blow-up phenomena of solutions to the following equation:
 \begin{equation}\label{eqVnmu}
    \begin{cases}
    -\Delta u +V (x)u + \lambda u= \tau \chi_{\Omega} f_1(u) + \chi_{\Omega} f_2(u) \quad \text{in }\R^N,\\
    u>0 \quad \text{in} \ \R^N.\\
\end{cases}
\end{equation}
Throughout this section, \eqref{A1}--\eqref{A3} are always assumed 
and let $\Omega \subset \R^N$ be a (nonempty) bounded open set with smooth boundary.

We deal with sequences 
\(\{(u_{n},\lambda_{n})\} \subset H^1(\R^N)\times \R^+\) and \(\{\tau_n\} \subset [\frac{1}{2},1] \) such that 
\begin{equation}\label{equn}
	\tau_n \to 1^-, \quad 
 \{(u_{n},\lambda_{n})\}\subset H^1(\R^N)\times \R^+\text{ satisfies } \eqref{eqVnmu}, \lambda_n \to +\infty  \text{ and } m_{\lambda_n,\tau_n}(u_n)\leq \bar{k}
\end{equation}
for some $\bar{k} \in \mathbb{N}$ not depending on $n \in \mathbb{N}$. 
In \cref{sec:pf-main}, we shall prove that if \(\{(u_{\tau_n},\lambda_{\tau_n})\}\) in \cref{th51,th52} is unbounded with $\tau_n \to 1^-$, 
then \eqref{equn} is fulfilled.

By elliptic regularity, we know that $u_n \in W^{2,r}_{\rm loc}(\R^N)$ for all $r < +\infty$ and 
$u_n\in C^{1,\alpha}(\R^N)$ for all $\alpha \in (0,1)$. 
Thus, we can pick $P_n \in \R^N$ such that $u_n(P_n)=\norm{u_n}_\infty$.

\begin{lemma}\label{lem61}
    Let \eqref{A1}--\eqref{A3} and \eqref{equn} hold. Then, $P_n \in \bar\Omega$ for $n \in \mathbb{N}$ large enough, 
    $\norm{u_n}_\infty \to +\infty$ as $n \to+\infty$ and 
    \[
		\tau_n \frac{f_1(u_n(P_n)) }{u_n(P_n)} -\lambda_n -V(P_n) \geq 0 \quad \text{for all sufficiently large $n \in \mathbb{N}$}.
		\]
\end{lemma}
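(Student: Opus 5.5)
The plan is to read off all three assertions from the equation at the maximum point $P_n$. The key object is the second–order information there: $-\Delta u_n(P_n)\ge 0$, interpreted in a suitable weak or a.e. sense because $u_n\in W^{2,r}_{\rm loc}\cap C^{1,\alpha}$ only.

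\textbf{Step 1: $P_n\in\bar\Omega$.} Suppose $P_n\notin\bar\Omega$. Then on a small ball $B\subset\R^N\setminus\bar\Omega$ around $P_n$ the equation reads
\[
-\Delta u_n=-(V+\lambda_n)u_n .
\]
Since $\lambda_n\to+\infty$ and $V$ is bounded (continuous and periodic), we have $V+\lambda_n>0$ for large $n$. Hence $u_n$ is strictly subharmonic on $B$, with $-\Delta u_n<0$ a.e. there. By the strong maximum principle for $W^{2,N}_{\rm loc}$ subsolutions, $u_n$ cannot attain an interior maximum on $B$ unless it is constant. A constant would force $(V+\lambda_n)u_n=0$, hence $u_n(P_n)=0$. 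That contradicts $u_n>0$, so $P_n\in\bar\Omega$ for large $n$.

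\textbf{Step 2: the inequality at $P_n$.} Rewrite the equation as
\[
-\Delta u_n=\Bigl(\chi_\Omega\frac{\tau_nf_1(u_n)+f_2(u_n)}{u_n}-V-\lambda_n\Bigr)u_n .
\]
Call the bracket $c_n(x)$; it is continuous off $\partial\Omega$. If $c_n<0$ in a neighbourhood of $P_n$ (intersected appropriately), the maximum principle argument of Step 1 again gives a contradiction. Since $P_n$ may lie on $\partial\Omega$, I would argue on a full ball $B_r(P_n)$ and use that $u_n$ is a $W^{2,N}$ function with an interior maximum. By Bony's maximum principle, $\operatorname*{ess\,lim\,inf}_{x\to P_n}\Delta u_n(x)\le 0$. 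Consequently there are points $x\to P_n$ with $c_n(x)u_n(x)\ge -o(1)$. Taking the limit, using continuity of $u_n$ and $V$ and that $\chi_\Omega f_i(u_n)/u_n\le f_i(u_n(P_n))/u_n(P_n)$ near $P_n$ (when the value of $\chi_\Omega$ at the point is $1$, or is bounded above by it), gives
\[
\tau_n\frac{f_1(u_n(P_n))}{u_n(P_n)}+\frac{f_2(u_n(P_n))}{u_n(P_n)}-V(P_n)-\lambda_n\ge 0 .
\]

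\textbf{Step 3: blow-up of $\|u_n\|_\infty$ and removal of the $f_2$ term.} From Step 2, $f_1(t)/t+|f_2(t)|/t$ evaluated at $t=u_n(P_n)$ is at least $\lambda_n+V(P_n)-C\to+\infty$. Since $f(t)/t$ is bounded on bounded sets by (A1)–(A2), this forces $\|u_n\|_\infty=u_n(P_n)\to\infty$. Once $u_n(P_n)\ge R_0+1$, we have $f_2(u_n(P_n))=0$ by \eqref{eqdeff1f2}, which leaves exactly the claimed inequality.

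The main obstacle is Step 2 when $P_n\in\partial\Omega$, where the right-hand side is discontinuous. There I would use that $f_1\ge0$ and $f_2=0$ at large values. Then near $P_n$ the right side is at most $\tau_nf_1(u_n)-(V+\lambda_n)u_n$ evaluated with $\chi_\Omega$ replaced by $1$. This gives a continuous supersolution-type bound, and Bony's principle applies to it.
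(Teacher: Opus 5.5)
Your route is the paper's: apply Bony's maximum principle at the maximum point $P_n$, read off an inequality from the equation along points tending to $P_n$, and deduce the blow-up. Your Step 1, which uses the strong maximum principle for $W^{2,N}_{\rm loc}$ subsolutions on a ball in $\R^N\setminus\bar\Omega$, is a valid substitute for the paper's use of Bony there. Step 3 is fine once Step 2 holds.

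The gap is in Step 2 when $P_n\in\partial\Omega$, which is the only delicate point of the lemma. You invoke $\chi_\Omega f_i(u_n)/u_n\le f_i(u_n(P_n))/u_n(P_n)+o(1)$ near $P_n$, but this fails for $i=2$ at points $x\notin\Omega$. There the left side is $0$, whereas $f_2(u_n(P_n))$ can be negative: (A1)--(A3) allow $f<0$ on part of $(0,R_0)$ (e.g.\ $f(t)=-t^{q-1}+t^{p-1}$ with $2<q<p$), and $f_2=f$ there. Your fallback in the last paragraph replaces $\chi_\Omega$ by $1$ because ``$f_2=0$ at large values''. That requires $u_n\ge R_0+1$ near $P_n$, which you only obtain in Step 3, and Step 3 itself rests on Step 2. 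As written the argument is circular.

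The missing observation is one you already used in Step 1. Off $\Omega$ the equation reads $-\Delta u_n=-(V+\lambda_n)u_n$, with $V+\lambda_n>0$ for large $n$. Suppose the Bony points $x_k\to P_n$ (where $c_n(x_k)u_n(x_k)\ge -o(1)$) lay outside $\Omega$ along a subsequence. Then the limit would give $-(V(P_n)+\lambda_n)u_n(P_n)\ge 0$, which is impossible. So the points eventually lie in $\Omega$, where $c_n$ is continuous, and the limit yields your inequality. This is exactly the paper's step: it obtains
\[
0\le\max\Bigl\{\tau_n\frac{f_1(u_n(P_n))}{u_n(P_n)}+\frac{f_2(u_n(P_n))}{u_n(P_n)}-\lambda_n-V(P_n),\ -\lambda_n-V(P_n)\Bigr\}
\]
and discards the second entry since it is negative for large $n$.

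Equivalently, set $g_n=\tau_nf_1+f_2$ and bound $\chi_\Omega g_n(u_n)\le (g_n(u_n))_+$, which is continuous in $x$. Bony then gives $(g_n(u_n(P_n)))_+\ge(\lambda_n+V(P_n))u_n(P_n)>0$, so $(g_n)_+=g_n$ at $u_n(P_n)$.

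A non-circular version of your own fallback also works. First use $\chi_\Omega f_2(u_n)\le Cu_n$ and $\chi_\Omega f_1(u_n)\le f_1(u_n)$ to get $\tau_nf_1(u_n(P_n))/u_n(P_n)\ge\lambda_n+V(P_n)-C$, which already forces $u_n(P_n)\to+\infty$. Then $f_2(u_n)\equiv0$ near $P_n$ by continuity, and a second application of Bony gives the sharp inequality. With either repair the proposal is complete.
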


\begin{proof}
Assume $P_n \in \bar\Omega^c$.
By Bony's maximum principle \cite{Bony,Lions}, 
for every $v \in W^{2,r}_{\text{loc}}(\R^N)$ with $r>N$, and for any strict local maximum point $x_0$ of $v$, 
there exists $\varepsilon \in C([0,\infty) , [0,\infty)  )$ such that 
$\varepsilon(0) = 0$ and for each $\delta > 0$, $\abs{A_\delta} > 0$ holds where 
\[
A_\delta \coloneq \Set{ y \in B_\delta(x_0) :  \abs{\nabla v(y)} \leq \varepsilon(\delta), \ D^2v(y) \leq 0  }.
\]
We apply this for $v (x) \coloneq u_n(x) - \abs{x-P_n}^4$. Since $P_n \in \bar \Omega^c$ is a strict local maximum of $u_n - \vab{x-P_n}^4$ 
and $\bar \Omega^c$ is open, there exists $\{y_{n,k}\}_k \subset \R^N$ such that 
\[
\begin{aligned}
	&\vab{y_{n,k} - P_n} < \frac{1}{k}, \quad D^2 v(y_{n,k})  \leq 0, \quad \vab{\nabla v (y_{n,k})} \leq \varepsilon \ab( k^{-1} ), \quad 
	y_{n,k} \in \bar \Omega^c,
	\\
	&
	-\Delta u_n(y_{n,k}) + V(y_{n,k}) u_n(y_{n,k}) + \lambda_n u_n(y_{n,k}) 
	= 
	\chi_\Omega (y_{n,k}) \bab{ \tau_n f_1(u_n(y_{n,k})) + f_2(u_n(y_{n,k})) } = 0. 
\end{aligned}
\]
Since $-\Delta u_n(y_{n,k}) \geq O(k^{-2})$ and $u_n(y_{n,k}) \to u_n(P_n) > 0$ as $k \to \infty$, 
we have 
\[
O(k^{-2}) \leq -\lambda_n-V(y_{n,k}),
\]
which is not possible for $n \in \mathbb{N}$ large enough, since $V$ is bounded and $\lambda_n \to +\infty$ as $n \to +\infty$.

   Next, we prove that  $\norm{u_n}_\infty \to +\infty$ as $n \to+\infty$. 
Let $P_n \in \bar \Omega$. As in the above, due to Bony's maximum principle, 
there exists $\{y_{n,k}\} \subset \R^N$ such that 
\[
\begin{aligned}
O (k^{-2})  \leq - \frac{\Delta u_n(y_{n,k})}{u_n(y_{n,k})} 
= 
\chi_{ \bar{\Omega} } (y_{n,k}) \left\{ \frac{f_2(u_n(y_{n,k}))}{u_n(y_{n,k})} + \tau_n \frac{f_1(u_n(y_{n,k}))}{u_n(y_{n,k})} \right\} - V(y_{n,k}) - \lambda_n. 
\end{aligned}
\]
By $y_{n,k} \to P_n$ and taking $\limsup_{k \to \infty}$, we obtain 
\begin{equation*}\label{eqPnpomega}
    0\leq \max\Set{\tau_n \frac{f_1(u_n(P_n)) }{u_n(P_n)}+\frac{f_2(u_n(P_n))}{u_n(P_n)}-\lambda_n -V(P_n), \ -\lambda_n -V(P_n) }.
\end{equation*}
Since $\lambda_n \to+\infty$ as $n \to +\infty$, for $n \in \mathbb{N}$ large enough, we have $-\lambda_n - V(P_n) < 0$ and  
\[
\tau_n \frac{f_1(u_n(P_n)) }{u_n(P_n)}+\frac{f_2(u_n(P_n))}{u_n(P_n)}-\lambda_n -V(P_n) \geq 0,
\]
which implies that $u_n(P_n) \to +\infty$ as $n \to +\infty$ and $f_2(u_n(P_n)) = 0$. This completes the proof. 
\end{proof}

\begin{remark}\label{r:loc-max}
	If $x_n \in \R^N$ is any local maximum point of $u_n$, then as above, for $n\in \mathbb{N}$ large enough,
	we may verify $x_n \in \bar \Omega$, $u_n(x_n) \to +\infty$ and 
	\[
	\tau_n \frac{ f_1(u_n(x_n)) }{u_n(x_n)} - \lambda_n - V(x_n) \geq 0. 
	\]
\end{remark}

We now perform a blow-up analysis for this type of sequence, in the spirit of \cite{EP,PV} 
as in \cite{BCJS23,CJS24,CDCGJT,CRZ}. 

\begin{lemma}\label{lem62}
    Let \eqref{A1}--\eqref{A4} and \eqref{equn} hold. Let $P_n \in \R^N$ be such that, for some $R_n \to \infty$,
\[
    u_n(P_n) =\max_{B_{R_n \tilde{\varepsilon}_n}(P_n)} u_n(x) \quad \text{where} \quad 
    \tilde{\varepsilon}_n \coloneq u_n(P_n)^{ - (p-2)/2 } \to 0. 
\]
Set $\varepsilon_n \coloneq \lambda_n^{-\frac{1}{2}}$. Then
\begin{equation}\label{eqlem621}
    \left( \frac{\tilde{\varepsilon}_n}{\varepsilon_n} \right)^2 \to \tilde{\lambda} \in (0, a_0],
\end{equation}
where $a_0$ is the number in \eqref{A3}. Moreover, 
\begin{equation}\label{eqlem622}
    \limsup_{n \to +\infty} \frac{\operatorname{dist}(P_n, \partial\Omega)}{\tilde{\varepsilon}_n} = +\infty
\end{equation}
holds, and passing to a subsequence if necessary, we have
\begin{enumerate}[label={\rm (\roman*)}]
    \item 
    $P_n \in \Omega$;
    \item 
    the scaled sequence
    \begin{equation*}\label{eqlem623}
        U_n(x) \coloneq a_0^{\frac{1}{p-2}} \varepsilon_n^{\frac{2}{p-2}} u_n(\varepsilon_n x + P_n) 
        = a_0^{ \frac{1}{p-2} } \lambda_n^{ - \frac{1}{p-2} } u_n ( \lambda_n^{ - \frac{1}{2} } x + P_n )
    \end{equation*}
    converges to some $U \in H^1(\mathbb{R}^N)$ in $C^1_{\rm loc}(\mathbb{R}^N)$, where $U$ is a unique positive solution to 
    \begin{equation}\label{limeq}
        \begin{dcases}
            -\Delta U + U = |U|^{p-2}U,\quad U>0 & \text{in } \mathbb{R}^N, \\
            U(0) = \max_{x \in \mathbb{R}^N} U, & \\
            U(x) \to 0 \quad \text{as } |x| \to +\infty;
        \end{dcases}
    \end{equation}
    \item
    there exists $\phi_n \in C_0^\infty(\R^N)$, with $\operatorname{supp}\phi_n \subset B_{R\varepsilon_n}(P_n)$ for some $R > 0$, 
    such that $Q_{\lambda_n, \tau_n,u_n}(\phi_n ) < 0$;
    \item
    for all $R > 0$ and $q \ge 1$,
    \[
        \lim_{n \to \infty} \lambda_n^{\frac{N}{2} - \frac{q}{p-2}} \int_{B_{R\varepsilon_n}(P_n)} u_n^q \, dx 
        = \lim_{n \to \infty} a_0^{ - \frac{q}{p-2} } \int_{B_R(0)} U_n^q \, dy = a_0^{ - \frac{q}{p-2} } \int_{B_R(0)} U^q\, dy.
    \]
\end{enumerate}
%
\end{lemma}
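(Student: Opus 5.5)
We first rescale at the natural blow-up scale $\tilde\varepsilon_n$ and then compare the two scales $\tilde\varepsilon_n$ and $\varepsilon_n$. Set $\tilde U_n(x) \coloneq \tilde\varepsilon_n^{2/(p-2)} u_n(\tilde\varepsilon_n x + P_n)$. Then $\tilde U_n(0) = 1 = \max_{B_{R_n}} \tilde U_n$, and $\tilde U_n$ solves
\[
-\Delta \tilde U_n + \tilde\varepsilon_n^2 \ab( V(\tilde\varepsilon_n x + P_n) + \lambda_n ) \tilde U_n = \chi_{\Omega_n} \tilde\varepsilon_n^{2(p-1)/(p-2)} \ab( \tau_n f_1 + f_2 )\ab( \tilde\varepsilon_n^{-2/(p-2)} \tilde U_n ),
\]
where $\Omega_n \coloneq (\Omega - P_n)/\tilde\varepsilon_n$. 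By \eqref{A3}, the right-hand side is bounded on $B_{R_n}$ and converges locally uniformly to $a_0 \chi_{H_\infty} \tilde U^{p-1}$ wherever $\tilde U_n \geq c>0$. The lower-order pieces are negligible there: $f_2$ is bounded, and $\tilde\varepsilon_n^2 V \to 0$.

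Next we show $\tilde\varepsilon_n^2\lambda_n = (\tilde\varepsilon_n/\varepsilon_n)^2$ converges along a subsequence to some $\tilde\lambda\in[0,a_0]$. The upper bound comes from the maximum point argument of \cref{lem61} and \cref{r:loc-max}: at $P_n$ we have $\lambda_n \leq \tau_n f_1(u_n(P_n))/u_n(P_n) - V(P_n)$, and \eqref{A3} turns this into $\tilde\varepsilon_n^2\lambda_n \leq a_0 + o(1)$. Since $0\le \tilde U_n \le 1$ on $B_{R_n}$ and the potential term $\tilde\varepsilon_n^2\lambda_n$ is bounded, elliptic $W^{2,r}$ and $C^{1,\alpha}$ estimates give $\tilde U_n\to \tilde U$ in $C^1_{\rm loc}$. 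The limit $\tilde U$ satisfies $\tilde U(0)=1$, $0\le\tilde U\le1$, and
\[
-\Delta\tilde U+\tilde\lambda\tilde U = a_0\chi_{H_\infty}\tilde U^{p-1},
\]
where, after a subsequence, $H_\infty$ is $\emptyset$, $\R^N$, or a half-space according to the behaviour of $\operatorname{dist}(P_n,\partial\Omega)/\tilde\varepsilon_n$. The smooth boundary ensures that $\chi_{\Omega_n}\to\chi_{H_\infty}$ a.e. Moreover, $P_n$ lies in $\bar\Omega$ for large $n$, exactly as in \cref{lem61}.

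The main step is the Morse index bound $m_{\lambda_n,\tau_n}(u_n)\le\bar k$, which we use to make $\tilde U$ stable outside a compact set. If $\tilde U$ had Morse index above $\bar k$, or were unstable in regions escaping to infinity, we could take test functions of compact support and scale them back to $\tilde\varepsilon_n$-scale. Using \eqref{A4} to compare $\tau_n f_1'+f_2'$ with $(p-1)a_0 t^{p-2}$ from below at large $t$ (the small-$t$ region contributes $o(1)$ after scaling), this would produce more than $\bar k$ negative directions of $Q_{\lambda_n,\tau_n,u_n}$, a contradiction. Now we split cases. If $\tilde\lambda=0$, then $\tilde U$ is a bounded nonnegative nontrivial solution of $-\Delta\tilde U = a_0\chi_{H_\infty}\tilde U^{p-1}$. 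For $H_\infty$ a half-space this contradicts \cref{lemnelambda+0}. For $H_\infty=\R^N$ it contradicts Gidas--Spruck (here we need $p<2^*$). For $H_\infty=\emptyset$, $\tilde U$ is a bounded harmonic function; since $\tilde U(0)=1$ it is constant equal to $1$, and then the mass/energy scaling in (iv) or the stability of an $\Omega$-localized problem gives the contradiction. We expect the $\emptyset$ case to need care: if it persists, we would first show that the maximum point must lie at distance $O(\tilde\varepsilon_n)$ from $\Omega$ by the inequality $\tau_n f_1(u_n)/u_n \ge \lambda_n + V$ at $P_n$. Hence $\tilde\lambda\in(0,a_0]$, which is \eqref{eqlem621}. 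With $\tilde\lambda>0$, if $H_\infty$ were a half-space, then \cref{lem25}, applicable by the stability outside a compact set, forces $\tilde U\equiv0$, contradicting $\tilde U(0)=1$. The case $H_\infty=\emptyset$ is excluded because a bounded positive solution of $-\Delta w+\tilde\lambda w=0$ does not exist, as in \cref{lem25}. Hence $H_\infty=\R^N$, which gives \eqref{eqlem622} and (i).

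Finally we pass from the $\tilde\varepsilon_n$-scale to the $\varepsilon_n$-scale. Since $\varepsilon_n/\tilde\varepsilon_n\to\tilde\lambda^{-1/2}$, the rescaled $U_n$ converges in $C^1_{\rm loc}$ to a positive bounded solution of $-\Delta U+U=U^{p-1}$ that attains its maximum at $0$. By \cref{R:entire}, which uses the stability outside a compact set, $U$ decays and lies in $H^1$. By Gidas--Ni--Nirenberg and Kwong it is the unique radial ground state, so (ii) holds. The maximum $U(0)$ of the ground state then fixes $\tilde\lambda$, but only its positivity is needed here. Item (iii) follows by taking $\phi$ to be a cutoff of $U$: we have $\int|\nabla U|^2+U^2-(p-1)U^p<0$, so rescaling $\phi(\cdot)$ to $\phi((x-P_n)/\varepsilon_n)$ and using \eqref{A4} gives $Q_{\lambda_n,\tau_n,u_n}(\phi_n)<0$. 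Item (iv) is the change of variables $x=\varepsilon_n y+P_n$ combined with $C^1_{\rm loc}$ convergence.
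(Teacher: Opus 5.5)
Your proposal is correct and follows essentially the same route as the paper. You blow up at scale $\tilde\varepsilon_n$, bound $\tilde\varepsilon_n^2\lambda_n$ by the local-maximum inequality of \cref{r:loc-max}, and transfer $m_{\lambda_n,\tau_n}(u_n)\le\bar k$ (via \eqref{A4}) to stability of the limit outside a compact set. You then exclude $\tilde\lambda=0$ by Gidas--Spruck and \cref{lemnelambda+0}, exclude the half-space by \cref{lem25}, and finally rescale to $\varepsilon_n$ and conclude with \cref{R:entire} and Kwong's uniqueness.

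The one thing to tidy is the case $H_\infty=\emptyset$, which never occurs. Since $P_n\in\bar\Omega$ (which you already note), $0\in\overline{\Omega_n}$, and smoothness of $\partial\Omega$ forces the a.e. limit of $\chi_{\Omega_n}$ to be $\R^N$ or a half-space. The alternatives you sketch for that case are therefore unnecessary, and as stated they do not give a contradiction: a bounded harmonic $\tilde U\equiv 1$ is not absurd on its own, and the mass-scaling argument is not actually carried out.
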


\begin{proof}
By \cref{r:loc-max}, we deduce that $P_n \in \bar\Omega$ for $n \in \mathbb{N}$ large enough, $u_n(P_n) \to +\infty$ as $n \to+\infty$ and 
\[
\tau_n \frac{f_1(u_n(P_n)) }{u_n(P_n)} -\lambda_n -V(P_n) \geq 0 \quad  \text{for all large $n \in \mathbb{N}$}.
\]
To prove \eqref{eqlem621}, when $n \in \N$ is large enough, 
by \eqref{A3} and $\tau_n \to 1^-$, 
\[
0 \leq \frac{\lambda_n}{u_n(P_n)^{p-2}} \leq o(1) + \tau_n \frac{ f_1(u_n(P_n)) }{ u_n(P_n)^{p-1} } = o(1) + a_0. 
\]
Thus, up to a subsequence, we have
\begin{equation}\label{eqlun}
        \tilde{\varepsilon}_n^2 \lambda_n = \frac{\lambda_n}{u_n(P_n)^{p-2}} \to \tilde{\lambda} \in [0,a_0].
\end{equation}

Next, we prove $\tilde{\lambda}>0$, which implies \eqref{eqlem621}.  For this purpose, define the rescaled function
\[
\tilde{U}_n(y) \coloneq \tilde{\varepsilon}_n^{\frac{2}{p-2}}u_n(\tilde{\varepsilon}_ny+P_n) 
= \frac{1}{u_n(P_n)} u_n ( \tilde{\varepsilon}_n y + P_n ). 
\]
It is immediate to check that 
\begin{equation*}\label{eqscale}
\begin{dcases}
\begin{aligned}
            &-\Delta \tilde{U}_{n} +\tilde{\varepsilon}_n^2V (\tilde{\varepsilon}_nx+P_n)\tilde{U}_{n} + \tilde{\varepsilon}_n^2\lambda_{n} \tilde{U}_{n}\\
            &\quad = 
            \tilde{\varepsilon}_n^{2(p-1)/(p-2)}\chi_\Omega (\tilde{\varepsilon}_nx+P_n) 
            \ab[ f_2 \ab( \tilde{\varepsilon}_n^{-2/(p-2)} \tilde{U}_n )+\tau_n  f_1 \ab( \tilde{\varepsilon}_n^{-2/(p-2)} \tilde{U}_n ) ] \quad \text{in }\R^N,
            \end{aligned}
        \\ 
        \tilde{U}_n(0)=1,\quad 0<\tilde{U}_n(y)\leq 1 \quad \text{for }\abs{y}\leq R_n.
\end{dcases}
\end{equation*}
By elliptic regularity, up to a subsequence, there exists $\tilde{U} \in W^{2,r}_{\rm loc} (\R^N)$ for any $r<+\infty$ 
such that $\tilde{U}_n \to \tilde{U}$ in $C^1_\text{loc}(\R^N)$. For every $x \in \R^N$ such that $\tilde{U}(x)>0$, we have 
\[
\lim_{n \to \infty}\frac{u_n(\tilde{\varepsilon}_nx+P_n)}{u_n(P_n)}=\lim_{n \to \infty}\tilde{U}_n(x)=\tilde{U}(x)>0;
\]
hence, by \eqref{A3}, as $n \to +\infty$, 
\begin{equation*}
\label{eqlem21f10}
\begin{aligned}
    0 \leq \tilde{\varepsilon}_n^{2(p-1)/(p-2)}f_1\ab(\tilde{\varepsilon}_n^{-2/(p-2)} \tilde{U}_n (x))
    =
    \frac{f_1\left(u_n(\tilde{\varepsilon}_nx+P_n) \right)}{u_n(\tilde{\varepsilon}_nx+P_n)^{p-1}} \cdot \frac{u_n(\tilde{\varepsilon}_nx+P_n)^{p-1}}{u_n(P_n)^{p-1}}
    \to  a_0 \tilde{U}(x)^{p-1}.
\end{aligned}
\end{equation*}
Similarly, for every $x \in \R^N$ such that $\tilde{U}(x)=0$, 
we obtain 
\[
\lim_{n \to \infty}\frac{u_n(\tilde{\varepsilon}_nx+P_n)}{u_n(P_n)}=0 = \tilde{U}(x).
\]
Hence, by \eqref{A3}, as $n \to +\infty$, 
\[
\begin{aligned}
	0 \leq \tilde{\varepsilon}_n^{2(p-1)/(p-2)} f_1\ab(\tilde{\varepsilon}_n^{-2/(p-2)} \tilde{U}_n (x) )
   = \frac{f_1\left(u_n(\tilde{\varepsilon}_nx+P_n) \right)}{u_n(\tilde{\varepsilon}_nx+P_n)^{p-1}} \cdot \frac{u_n(\tilde{\varepsilon}_nx+P_n)^{p-1}}{u_n(P_n)^{p-1}}
   \leq C\frac{u_n(\tilde{\varepsilon}_nx+P_n)^{p-1}}{u_n(P_n)^{p-1} } \to 0. 
    \end{aligned}
\]
In conclusion, for every $x \in \R^N$, we have 
\begin{equation}\label{eqlem21f1}
    \tau_n \tilde{\varepsilon}_n^{2(p-1)/(p-2)} f_1\ab(\tilde{\varepsilon}_n^{-2/(p-2)} \tilde{U}_n (x)) \to a_0 \tilde{U}(x)^{p-1} \quad \text{as }n \to +\infty.
\end{equation}
Moreover, it follows from \eqref{A1}, \eqref{A3} and \eqref{eqdeff1f2} that, for every $x \in\R^N$, 
\begin{equation}
\label{eqlem21f12}
0 \leq \tilde{\varepsilon}_n^{2(p-1)/(p-2)}f_1\ab( \tilde{\varepsilon}_n^{-2/(p-2)} \tilde{U}_n (x) )  \leq C \tilde{U}_n (x)^{p-1},
\end{equation}
and
\begin{equation}\label{eqlem21f2}
   \begin{aligned}
    \abs{\tilde{\varepsilon}_n^{2(p-1)/(p-2)}f_2\ab( \tilde{\varepsilon}_n^{-2/(p-2)} \tilde{U}_n (x) ) } \leq C\tilde{\varepsilon}_n^{ \frac{2(p-1)}{p-2} }.
\end{aligned} 
\end{equation}

Let $\tilde{\Omega}_n \coloneq\frac{\Omega-P_n}{\tilde{\varepsilon}_n}$.
Since $\tilde{U}_n \to \tilde{U}$ in $C^1_\text{loc}(\R^N)$, by \eqref{eqlem21f1}--\eqref{eqlem21f2} and Lebesgue's dominated convergence theorem, 
we conclude that, up to a subsequence, for every $\varphi \in C^\infty_c(\R^N)$,
\[
\begin{aligned}
    \tilde{\varepsilon}_n^{2(p-1)/(p-2)}\int_{\tilde{\Omega}_n} \tau_nf_1  \ab( \tilde{\varepsilon}_n^{-2/(p-2)} \tilde{U}_n )\varphi 
    + f_2 \ab( \tilde{\varepsilon}_n^{-2/(p-2)} \tilde{U}_n)\varphi\,dx 
    \to \int_{D} a_0 \tilde{U}^{p-1} \varphi\,dx\quad \text{as }n \to +\infty,
\end{aligned}
\]
where
\[
D \coloneq \begin{dcases}
    \R^N &\displaystyle\text{if }\lim_{n\to+\infty}\frac{\operatorname{dist}(P_n,\partial \Omega)}{\tilde{\varepsilon}_n}=+\infty,\\ 
    H &\displaystyle\text{if }\lim_{n\to+\infty}\frac{\operatorname{dist}(P_n,\partial \Omega)}{\tilde{\varepsilon}_n}<+\infty,
\end{dcases}
\]
and $H$ is a half-space. 
Thus, by the strong maximum principle together with elliptic regularity, $\tilde{U}$ satisfies
\begin{equation*}\label{eqtv}
\begin{dcases}
        -\Delta \tilde{U} + \tilde{\lambda} \tilde{U}=  a_0 \chi_D \tilde{U}^{p-1} ,\quad \tilde{U}>0 \quad  \text{in }\R^N,
        \\ \tilde{U}(0)=1=\norm{\tilde{U}}_\infty.
\end{dcases}
\end{equation*}
Therefore, when $D=\R^N$ (resp. $D=H$), $\tilde{\lambda} > 0$ holds thanks to the result of \cite{GS} (resp. \cref{lemnelambda+0}). 
Since $\tilde{\lambda} > 0$, \eqref{eqlun} leads to 
\[
\ab( \frac{ \tilde \varepsilon_n }{\varepsilon_n} )^2 = \frac{\lambda_n}{u_n(P_n)^{p-2}} \to \tilde \lambda \in (0,a_0]
\]
and \eqref{eqlem621} holds.

To proceed, we claim that $m_{\tilde \lambda , D , a_0}(\tilde{U})\leq \bar{k}$, where 
\[
m_{\tilde \lambda, D, a_0}(\tilde{U}) \coloneq 
\sup \Set{ \dim W \in \N : \begin{aligned}
		&\text{$W \subset C^1_0(\R^N)$ is a subspace such that for all $\varphi \in W \setminus \set{0}$}\\
		&Q_{\tilde \lambda, D, a_0} (\varphi) \coloneq 
		\int_{\R^N} \abs{\nabla \varphi}^2 + \tilde \lambda \varphi^2 - (p-1) a_0 \chi_D \tilde U^{p-2} \varphi^2 \, d x < 0
\end{aligned} 
}.
\]
Assume by contradiction that $m_{\tilde \lambda, D, a_0}(\tilde{U})\geq \bar{k}+1$. 
Then, there exist $\delta_0 > 0$ and $\varphi_1,\varphi_2,...,\varphi_{\bar{k}+1} \in C_c^\infty(\R^N)$, which are orthonormal  in $L^2(\R^N)$, such that 
\begin{equation}\label{neg-k+1}
	Q_{\tilde \lambda, D, a_0} (\varphi) \leq - \delta_0 \norm{\nabla \varphi}_{2}^2 \quad 
	\text{for each $\varphi \in W \coloneq \operatorname{span} \set{\varphi_1,\dots, \varphi_{\bar k + 1}}$},
\end{equation}
where we used the fact that every norm on $W$ is equivalent. 
For each $n \in \N$, set 
\[
\tilde{W}_n \coloneq \Set{ \tilde{\varepsilon}_n^{ - \frac{N-2}{2} } \varphi \ab( \tilde{\varepsilon}_n^{-1} (x-P_n) ) 
: \varphi \in W  }.
\]
For any $\psi_n(x) = \tilde{\varepsilon}_n^{ - \frac{N-2}{2} } \varphi(\frac{x-P_n}{\tilde{\varepsilon}_n}) \in \tilde{W}_n $ where $\varphi \in W$, we have 
\begin{equation}\label{eqlem21m1}
\begin{aligned}
   Q_{\lambda_n,\tau_n,u_n} (\psi_n) 
   &= 
   \int_{\R^N}\abs{\nabla \psi_n}^2+V(x) \psi_n^2+\lambda_n \psi_n^2-\chi_{\Omega}\ab[\tau_n f_1'(u_n)+f_2'(u_n) ]\psi_n^2\,dx\\
   &= 
   \int_{\R^N}\abs{\nabla \varphi}^2+\tilde{\varepsilon}_n^{2}V(\tilde{\varepsilon}_nx+P_n)\varphi^2+\tilde{\varepsilon}_n^{2}\lambda_n \varphi^2\\
    &\quad\qquad-\tilde{\varepsilon}_n^{2}\chi_{\tilde{\Omega}_n}\ab[\tau_nf_1'\ab(u_n(\tilde{\varepsilon}_n x+P_n))+ f_2'\ab(u_n(\tilde{\varepsilon}_n x+P_n))]\varphi^2\,dx.
\end{aligned}
\end{equation}
On one hand, by \eqref{A1} and the definition of $f_2$ given in \eqref{eqdeff1f2}, we have 
\begin{equation}\label{eqlem21m2}
\tilde{\varepsilon}_n^{2}\abs{f_2'\ab(u_n(\tilde{\varepsilon}_n x+P_n))}
=\tilde{\varepsilon}_n^{2}\abs{ f_2'\ab( \tilde{\varepsilon}_n^{-2/(p-2)}\tilde{U}_n(x) ) } \leq \tilde{\varepsilon}_n^{2}C.
\end{equation}
On the other hand, let $\delta \in (0,\delta_0)$ be so small that 
\begin{equation}\label{ch-delta}
	a_0 \delta \int_{\R^N} \tilde{U}^{p-2} \varphi^2 \, dx \leq \frac{\delta_0}{2} \norm{\nabla \varphi}_2^2 \quad \text{for every $\varphi \in W$}.
\end{equation}
By \eqref{A4} and definition of $f_1$ given in \eqref{eqdeff1f2}, there exist $M_\delta,C_\delta>0$ such that
\[
\abs{f'_1(t)}\leq C_\delta \quad \text{for all $t \in [-M_\delta,M_\delta]$}, 
\quad f'_1(t)\geq a_0(p-1-\delta) t^{p-2} \quad \text{for each $t >M_\delta$}.
\]
Then,
\begin{equation}\label{eqlem21hx1}
    \begin{aligned} 
&\tau_n \int_{ \tilde{\Omega}_n }\tilde{\varepsilon}_n^{2}f_1' \ab(\tilde{\varepsilon}_n^{-2/(p-2)}\tilde{U}_n(x) ) \varphi^2\, dx  
\\
\geq \ &
- \tilde{\varepsilon}_n^{2}C_\delta\int_{\tilde{\Omega}_n\cap \set{\tilde{\varepsilon}_n^{-2/(p-2)}\tilde{U}_n \leq M_\delta }} \varphi^2 \,dx
+ \tau_n a_0 (p-1-\delta)\int_{\tilde{\Omega}_n\cap \set{ \tilde{\varepsilon}_n^{-2/(p-2)}\tilde{U}_n> M_\delta } } \tilde{U}_n^{p-2}\varphi^2\,dx\\
\geq \ & 
-\tilde{\varepsilon}_n^{2}C_\delta\int_{\tilde{\Omega}_n}\varphi^2\,dx
+\tau_n a_0 (p-1-\delta)\int_{\tilde{\Omega}_n\cap \set{ \tilde{\varepsilon}_n^{-2/(p-2)}\tilde{U}_n > M_\delta }} \tilde{U}_n^{p-2}\varphi^2\,dx
.
\end{aligned}
\end{equation}
Letting
\[
h_n(x) \coloneq 
\chi_{\set{ \tilde{\varepsilon}_n^{-2/(p-2)}\tilde{U}_n > M_\delta } } \tilde{U}_n^{p-2}(x) \quad 
\text{and} \quad  \operatorname{supp}(W) \coloneq\bigcup_{j=1}^{j=\bar{k}+1}\operatorname{supp}(\varphi_j),
\]
since $\tilde{U}_n \to \tilde{U}$ in $C^1_\text{loc}(\R^N)$, by Lebesgue’s dominated convergence theorem, we have 
\begin{equation}
\label{eqlem21hx2}
\begin{aligned}
	 \max_{ w \in W, \norm{\nabla w}_{2} = 1 }
	\int_{ \tilde{\Omega}_n} \abs{h_n-\tilde{U}^{p-2}}w^2\, dx
\leq \max_{ w \in W, \norm{\nabla w}_{2} = 1  }\norm{w}_\infty^2\int_{\operatorname{supp}(W)}\abs{h_n-\tilde{U}^{p-2}}\, dx\to 0 \quad \text{as }n\to+\infty.
\end{aligned}
\end{equation}
Thanks to \eqref{eqlun} and \eqref{neg-k+1}--\eqref{eqlem21hx2}, for any $\varphi \in W$ with $\norm{\nabla \varphi}_2^2 = 1$, 
\[
Q_{ \lambda_n, \tau_n, u_n } (\psi_n) 
\leq Q_{ \tilde \lambda, D, a_0  } (\varphi) + \delta a_0 \int_{\R^N} \tilde{U}^{p-2} \varphi^2 \, dx + o(1)
\leq - \frac{\delta_0}{2} + o(1),
\]
where $o(1) \to 0$ as $n \to \infty$ uniformly with respect to $\varphi \in W$ with $\norm{\nabla \varphi}_2 = 1$. 
By $\dim \tilde W_n = \bar k + 1$, for sufficiently large $n$, we obtain $m_{\lambda_n,\tau_n} (u_n) \geq \bar k + 1$,  
which contradicts $m_{\lambda_n,\tau_n}(u_n)\leq \bar{k}$. Thus, we conclude $m_{\tilde \lambda, D, a_0}(\tilde{U})\leq \bar{k}$.

Notice that 
\[
U_n(x) = a_0^{ \frac{1}{p-2} } \ab( \frac{\varepsilon_n}{\tilde \varepsilon_n} )^{ \frac{2}{p-2} } 
\tilde U_n \ab( \frac{\varepsilon_n}{\tilde \varepsilon_n} x ) \to a_0^{\frac{1}{p-2}} \tilde \lambda^{ - \frac{1}{p-2} } 
\tilde U \ab( \frac{x}{\sqrt{\tilde \lambda}} ) \eqcolon U(x)
\]
in $W^{2,r}_{\rm loc} (\R^N)$ for each $r<\infty$ and $U$ satisfies 
\[
\left\{\begin{aligned}
	&-\Delta U + U = \chi_{\tilde{H} } U^{p-1} \quad \text{in} \ \R^N, \quad U > 0 \quad \text{in} \ \R^N, \\
	& U(0) = \max_{\R^N} U, \quad m_{ 1, \tilde{H}, 1 } (U) \leq \bar k,
\end{aligned}\right.
\]
where $\tilde{H} \coloneq \tilde{\lambda}^{1/2} D$. Note that $U$ is stable outside a compact set due to \cite[Remark 1]{Fa}. 
When $\tilde{H}$ is a half-space, 
\cref{lem25} gives a contradiction $U \equiv 0$. Therefore, $D=\R^N$, \eqref{eqlem622} and assertion (i) hold. 
Furthermore, 
\cref{R:entire} implies that $U \in H^1(\R^N)$ is a unique positive solution due to \cite{Kw89}. 
Thus, assertion (ii) holds.

Assertion (iv) is an easy consequence of (ii) and the change of variables. 
To prove assertion (iii), notice that $U \in H^1(\R^N)$ and 
\[
\int_{\R^N} \abs{\nabla U}^2 + U^2 - (p-1) U^p \, dx = - (p-2) \int_{\R^N} U^p \, dx < 0. 
\]
Since $U$ and $\tilde U$ are related by scaling and a scalar multiple, 
there exists $\delta_0 > 0$ and $\varphi \in C^\infty_0(\R^N)$ such that 
\[
\norm{\nabla \varphi}_2^2 = 1, \quad 
\int_{\R^N} \abs{\nabla \varphi}^2 + \tilde \lambda \varphi^2 - (p-1) a_0 \tilde{U}^{p-2} \varphi^2 \, dx \leq - \delta_0 
\]
As in the above, a function defined by 
$\phi_n (x) \coloneq \tilde \varepsilon_n^{ - \frac{N-2}{2} } \varphi ( \tilde \varepsilon_n^{-1} (x-P_n) ) \in C^\infty_0( \R^N )$ 
satisfies $\supp \phi_n \subset B_{R \tilde \varepsilon_n } (P_n) $ for some $R > 0$ and 
$Q_{\lambda_n,\tau_n,u_n} (\phi_n) < 0$. 
Hence, assertion (iii) also holds. 
\end{proof}

As in \cite[Theorem 3.2]{EP} (cf. \cite[\S2]{PV} and \cite[Lemma 4.4]{CRZ}), by repeating the argument in the proof of \cref{lem62}, 
we obtain the following: 
%
\begin{proposition}\label{prophk}
    Let \eqref{A1}--\eqref{A4} and \eqref{equn} hold. Then there exist $P^1_n,...,P^k_n$ with $k \leq \bar{k}$ such that 
    \[
    \lambda_n^{1/2}\abs{P^i_n-P^j_n} \to +\infty, \quad i,j=1,...,k, \quad i\neq j \quad \text{as }n \to +\infty
    \]
    and
    \[
    u_n(P_n^i)=\max_{B_{ R_n\lambda_n^{-1/2}}(P_n^i)} u_n, \quad i =1,...,k,
    \]
    for some $R_n \to +\infty$ as $n \to +\infty$. Moreover, 
    \[
    \lim_{R\to +\infty}h_k(R)=0,
    \]
    where $h_k(R)$ is given by 
    \[
    h_k(R) \coloneq \limsup_{n \to +\infty}\ab(\lambda_n^{-\frac{1}{p-2}}\max_{d_{n,k} (x) \geq R\lambda_n^{-1/2}} u_n(x) ) 
    \text{ and } 
    d_{n,k} (x) \coloneq \min \Set{ \abs{x-P_n^i} : i=1,...,k }.
    \]
\end{proposition}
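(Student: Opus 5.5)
The plan is the iterative bubble-extraction argument of \cite[Theorem 3.2]{EP} (see also \cite[\S2]{PV}), with \cref{lem62} as the one-bubble step and the Morse index bound $m_{\lambda_n,\tau_n}(u_n)\le\bar k$ in \eqref{equn} stopping the iteration after at most $\bar k$ steps.

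\emph{First bubble.} Take $P_n^1$ to be a global maximum point of $u_n$. Then the hypothesis of \cref{lem62} holds with any $R_n\to\infty$. By \cref{lem61}, $u_n(P_n^1)\to\infty$. \cref{lem62} then gives the profile $U$ around $P_n^1$ at scale $\varepsilon_n=\lambda_n^{-1/2}$. It also gives a test function $\phi_n^1$ with $\supp\phi_n^1\subset B_{R\varepsilon_n}(P_n^1)$ and $Q_{\lambda_n,\tau_n,u_n}(\phi_n^1)<0$.

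\emph{Inductive step.} Suppose $P_n^1,\dots,P_n^j$ have been constructed with the mutual separation $\lambda_n^{1/2}|P_n^i-P_n^l|\to\infty$, each a maximum of $u_n$ on $B_{R_n\varepsilon_n}(P_n^i)$. If $\lim_{R\to\infty}h_j(R)=0$, we stop with $k=j$. Otherwise there are $\delta>0$, $R'_m\to\infty$, and points $x_n$ with $d_{n,j}(x_n)\ge R'_m\varepsilon_n$ and $\lambda_n^{-1/(p-2)}u_n(x_n)\ge\delta$ along a diagonal subsequence. Choose $R''_n\to\infty$ with $d_{n,j}(x_n)/(R''_n\varepsilon_n)\to\infty$. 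Let $P_n^{j+1}$ be a maximum point of $u_n$ on the set $\{d_{n,j}\ge \tfrac12 d_{n,j}(x_n)\}$, more precisely on a large ball around $x_n$ inside that set, chosen as in \cite{EP}.

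Key claim: $\lambda_n^{1/2}d_{n,j}(P_n^{j+1})\to\infty$, and $P_n^{j+1}$ maximizes $u_n$ on $B_{\tilde R_n\varepsilon_n}(P_n^{j+1})$ for some $\tilde R_n\to\infty$. For this I use that $u_n(P_n^{j+1})\ge u_n(x_n)\ge\delta\lambda_n^{1/(p-2)}$, so $\tilde\varepsilon_n:=u_n(P_n^{j+1})^{-(p-2)/2}\le C\varepsilon_n$. If $P_n^{j+1}$ were at bounded rescaled distance from some $P_n^i$, the $C^1_{\rm loc}$ convergence $U_n\to U$ around $P_n^i$, together with the decay of $U$, would force $\lambda_n^{-1/(p-2)}u_n(P_n^{j+1})$ to be small there, a contradiction. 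On the boundary of the maximization region, where the maximum might otherwise sit, one compares with $u_n(x_n)$ and with the decay near the earlier bubbles, as in \cite{EP}. Having the claim, \cref{lem62} applies at $P_n^{j+1}$ and yields $\phi_n^{j+1}$ with $Q_{\lambda_n,\tau_n,u_n}(\phi_n^{j+1})<0$, supported in $B_{R\varepsilon_n}(P_n^{j+1})$.

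\emph{Termination.} The supports of $\phi_n^1,\dots,\phi_n^{j+1}$ are pairwise disjoint for large $n$ by the separation. Hence $Q_{\lambda_n,\tau_n,u_n}$ is negative definite on their span: the cross terms vanish, including the potential and nonlinear terms, which are local. So $m_{\lambda_n,\tau_n}(u_n)\ge j+1$. Since $m_{\lambda_n,\tau_n}(u_n)\le\bar k$, the process must stop at some $k\le\bar k$, and at that stage $h_k(R)\to0$ as $R\to\infty$.

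\emph{Main obstacle.} The hard part is the inductive claim that the new maximum point lies at rescaled distance $\to\infty$ from the existing bubbles and is a maximum on a ball of radius $R_n\varepsilon_n$ with $R_n\to\infty$. This needs careful control near the old bubble cores, using $U(x)\to0$ from \cref{lem62}(ii) uniformly on annuli, and handling the case where the maximum lands on the boundary of the maximization region. There I would try the trick from \cite{EP} of maximizing the weighted quantity $u_n(x)\,d_{n,j}(x)^{2/(p-2)}$ instead of $u_n$ itself. This makes the maximum automatically interior and comparable to nearby values.
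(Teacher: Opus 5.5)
Your overall architecture matches the Esposito--Petralla scheme the paper invokes: first bubble at a global maximum via \cref{lem61,lem62}, induction on the number of bubbles, disjointly supported negative directions from \cref{lem62}(iii), and termination by $m_{\lambda_n,\tau_n}(u_n)\le\bar k$. The termination step is fine. The gap is in the inductive step, exactly at what you call the main obstacle, and neither of your suggested mechanisms closes it.

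\textbf{Maximizing over a region.} If you maximize $u_n$ over $\{d_{n,j}\ge\frac12 d_{n,j}(x_n)\}$ or over a ball around $x_n$, the maximum may sit on, or within $O(\varepsilon_n)$ of, the boundary. Nothing controls $u_n$ there. The $C^1_{\rm loc}$ convergence around the old centers only describes $u_n$ on $B_{R\varepsilon_n}(P^i_n)$ for fixed $R$, while that boundary is at rescaled distance $\to\infty$ from every $P^i_n$. The failure of $h_j(R)\to0$ says precisely that $u_n$ need not be small in that region. Your sentence about bounded rescaled distance is also off. There, $C^1_{\rm loc}$ convergence gives $\lambda_n^{-1/(p-2)}u_n\to a_0^{-1/(p-2)}U(z)>0$, which is not small. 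That case is excluded by construction, not by the decay of $U$.

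\textbf{The weighted maximization.} Maximizing $u_n d_{n,j}^{2/(p-2)}$ only yields the doubling bound $u_n\le 2^{2/(p-2)}u_n(y_n)$ on $B_{d_{n,j}(y_n)/2}(y_n)$. It does not make $y_n$ a maximum on a ball. Since $d_{n,j}(y_n)$ may exceed $d_{n,j}(x_n)$, it also does not by itself give $u_n(y_n)\ge c\lambda_n^{1/(p-2)}$. So \cref{lem62} cannot be applied as stated at the new point.

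\textbf{The missing idea.} No maximization is needed to start the blow-up.
\begin{itemize}
\item \cref{lem62} at the global maximum gives $\tilde\lambda>0$, hence $\|u_n\|_\infty\le C\lambda_n^{1/(p-2)}$. So you can rescale around $x_n$ itself: $\hat U_n(y):=a_0^{1/(p-2)}\lambda_n^{-1/(p-2)}u_n(\varepsilon_n y+x_n)$ is uniformly bounded and $\hat U_n(0)\ge a_0^{1/(p-2)}\delta$.
\item The proof of \cref{lem62} then runs verbatim. You get $\hat U_n\to\hat U$ in $C^1_{\rm loc}$, where $\hat U$ is bounded, solves $-\Delta\hat U+\hat U=\chi_D\hat U^{p-1}$ with $\hat U>0$, and has Morse index at most $\bar k$, hence is stable outside a compact set.
\item $D=\emptyset$ is impossible, since a bounded positive solution of $-\Delta\hat U+\hat U=0$ vanishes. $D$ a half-space is excluded by \cref{lem25}. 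So $D=\R^N$, and \cref{R:entire} together with uniqueness \cite{Kw89} gives $\hat U=U(\cdot-z_*)$ for some $z_*$.
\item Since $U$ is radially decreasing, for each fixed $R>|z_*|$ the maximum point of $u_n$ on $\overline{B_{R\varepsilon_n}(x_n)}$ lies within $o(1)\varepsilon_n$ of $x_n+\varepsilon_n z_*$.
\item A diagonal choice $R_n\to\infty$ gives a point $P^{j+1}_n$ that maximizes $u_n$ on $B_{(R_n-|z_*|-1)\varepsilon_n}(P^{j+1}_n)$. It satisfies $u_n(P^{j+1}_n)\ge u_n(x_n)\ge\delta\lambda_n^{1/(p-2)}$ and $\lambda_n^{1/2}d_{n,j}(P^{j+1}_n)\to\infty$.
\end{itemize}
From there \cref{lem62} applies and your argument goes through. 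This is what the paper's phrase ``repeating the argument in the proof of \cref{lem62}'' refers to.
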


Following the argument of \cite[Theorem 3.2]{EP} and \cite[Lemma 2.12]{PV} with \cref{prophk}, we conclude that 
\begin{proposition}\label{propbu}
     Suppose \eqref{A1}--\eqref{A4} and \eqref{equn}. Let $P^1_n,...,P^k_n \in \R^N$ be the points in \cref{prophk}, 
     and $U$ the unique solution to \eqref{limeq}. 
     Then there exist positive constants $C_0,\gamma$ such that 
     \begin{equation}\label{eqprop23e}
         \abs{u_n}\leq C_0\lambda_n^{\frac{1}{p-2}}\sum_{i=1}^ke^{-\gamma \sqrt{\lambda_n}\abs{x-P_n^i}} 
         \quad \text{for each $x \in \R^N$ and $n \in \mathbb{N}$}
     \end{equation}
     and for every $q \geq 1$,
     \[
     \lambda_n^{\frac{N}{2}-\frac{q}{p-2}}\int_{\R^N}\abs{u_n}^q\,dx\to a_0^{-\frac{q}{p-2}} k \int_{\R^N} \abs{U}^q\,dx \quad 
     \text{as }n \to +\infty.
     \]
\end{proposition}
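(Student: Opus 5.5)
The proof has two parts: a pointwise exponential decay estimate away from the concentration points $P_n^i$ of \cref{prophk}, obtained by comparison; and then the $L^q$ asymptotics, obtained from the local convergence in \cref{lem62} (iv) together with that decay.

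\textbf{Step 1 (smallness outside the cores).} By \cref{prophk}, for any $\eta>0$ there is $R_\eta$ such that, for large $n$, $u_n(x)\le \eta\lambda_n^{1/(p-2)}$ whenever $d_{n,k}(x)\ge R_\eta\lambda_n^{-1/2}$. On this region, \eqref{A1} and \eqref{A3} give, with constants uniform in $n$,
\[
0\le\tau_n f_1(u_n)/u_n\le C u_n^{p-2}\le C\eta^{p-2}\lambda_n ,\qquad |f_2(u_n)|/u_n\le C .
\]
We also have $|V|\le C$. Choosing $\eta$ small, $u_n$ therefore satisfies
\[
-\Delta u_n+\tfrac12\lambda_n u_n\le 0 \quad\text{on } A_n\coloneq\{d_{n,k}(x)>R_\eta\lambda_n^{-1/2}\}
\]
for large $n$. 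The set $\Omega^c$ causes no trouble, since there the right-hand side vanishes.

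\textbf{Step 2 (comparison).} Take the barrier
\[
w_n(x)=C_0\lambda_n^{1/(p-2)}\sum_{i=1}^k e^{-\gamma\sqrt{\lambda_n}|x-P_n^i|}\qquad\text{with }\gamma^2<\tfrac12 .
\]
A direct computation gives $-\Delta w_n+\tfrac12\lambda_n w_n\ge 0$ away from the points $P_n^i$, because the term $\gamma(N-1)\sqrt{\lambda_n}/|x-P_n^i|$ has a favorable sign. On $\partial A_n$ we have $u_n\le \|u_n\|_\infty\le C\lambda_n^{1/(p-2)}$. This bound holds because every local maximum is covered by \cref{lem62}, and \eqref{eqlem621} gives $u_n(P_n)^{p-2}\sim \lambda_n/\tilde\lambda$. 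Hence $u_n\le w_n$ on $\partial A_n$ once $C_0$ is large, which uses $e^{\gamma R_\eta}$. Both $u_n$ and $w_n$ vanish at infinity, since $u_n\in H^1$ decays for fixed $n$ and $\lambda_n>0$. The weak maximum principle on the unbounded set $A_n$ then yields $u_n\le w_n$ there. Inside the balls $B_{R_\eta\lambda_n^{-1/2}}(P_n^i)$ the bound is trivial after enlarging $C_0$. This proves \eqref{eqprop23e}. I expect the main obstacle to be making the constants uniform, in particular verifying the global sup bound $\|u_n\|_\infty\le C\lambda_n^{1/(p-2)}$. For this I would argue by contradiction: a point with a larger value would be a further blow-up point violating $h_k(R)\to0$.

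\textbf{Step 3 (integrals).} Split $\R^N$ into the balls $B_{R\lambda_n^{-1/2}}(P_n^i)$, which are disjoint for large $n$ because $\lambda_n^{1/2}|P_n^i-P_n^j|\to\infty$, and their complement. Each point $P_n^i$ is of the type in \cref{lem62}, so \cref{lem62} (iv) gives
\[
\lambda_n^{N/2-q/(p-2)}\int_{B_{R\lambda_n^{-1/2}}(P_n^i)}u_n^q\to a_0^{-q/(p-2)}\int_{B_R}U^q .
\]
On the complement, \eqref{eqprop23e} and the change of variables $y=\sqrt{\lambda_n}(x-P_n^i)$ bound the rescaled integral by $C\sum_i\int_{|y|\ge R}e^{-q\gamma|y|}dy$, which tends to $0$ as $R\to\infty$ uniformly in $n$. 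Letting $n\to\infty$ and then $R\to\infty$, and using $U\in L^q$ (it decays exponentially), we obtain the limit $a_0^{-q/(p-2)}k\int U^q$.
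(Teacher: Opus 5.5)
Your proposal is correct and follows essentially the same route as the paper: smallness of $\lambda_n^{-1/(p-2)}u_n$ on $\{d_{n,k}\geq R\lambda_n^{-1/2}\}$ from \cref{prophk}, then comparison there with $\lambda_n^{1/(p-2)}\sum_i e^{-\gamma\sqrt{\lambda_n}|x-P_n^i|}$, and then the global bound $\max_{\R^N}u_n\leq (2\lambda_n/\tilde\lambda)^{1/(p-2)}$ (from \cref{lem62} applied at the global maximum point) inside the cores; your Step 3 is the standard splitting argument that the paper delegates to Pierotti--Verzini. The only cosmetic difference is on the spheres $\partial B_{R\lambda_n^{-1/2}}(P_n^i)$: you get the boundary inequality from that same global sup bound by enlarging $C_0$ by $e^{\gamma R}$, whereas the paper uses $U_n\to U$ there together with $1-U(R)>0$.
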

\begin{proof}
By \eqref{A1} and \eqref{A3}, there exists $C_1> 0$ such that for every $t \in \R$, 
\begin{equation}\label{eqprop231}
\abs{f_1(t)}\leq C_1\abs{t}^{p-1}\quad \text{and}\quad \abs{f_2(t)}\leq C_1\abs{t} .
\end{equation}
Let 
\[
h_n(x) \coloneq \chi_\Omega(x)\frac{\tau_nf_1(u_n(x))+f_2(u_n(x))}{u_n(x)}. 
\]
By \cref{prophk}, if $R>0$ is large enough, then there exists $n(R)>0$ such that  for any $n \geq n(R)$,
\[
\lambda_n^{-\frac{1}{p-2}}\max_{\set{d_{n,k}(x)\geq R\lambda_n^{-1/2}}}\abs{u_n(x)}\leq \ab(\frac{1}{8C_1})^{\frac{1}{p-2}}
\quad \text{and} \quad \frac{\lambda_n}{8}\geq \norm{V}_\infty+C_1.
\]
Thus, due to \eqref{eqprop231}, in $\set{d_{n,k}\geq R\lambda_n^{-1/2}}$ for $n \geq n(R)$, we have
\begin{equation}\label{eqprop23a}
  \tilde{a}_n(x) \coloneq \frac{\lambda_n}{2}-\norm{V}_\infty-h_n(x) \geq \frac{\lambda_n}{2}-\norm{V}_\infty-C_1 -\frac{\lambda_n}{8} 
  \geq  \frac{\lambda_n}{4}.
\end{equation}
For $\phi^i_n(x) \coloneq e^{-\gamma\lambda_n^{1/2}\abs{x-P^i_n}}$ with $0<\gamma \leq 1/4$,  
in $\set{d_{n,k}\geq R\lambda_n^{-1/2}}$, we obtain, for $n\in \mathbb{N}$ large enough, 
\[
\begin{aligned} 
    -\Delta \phi^i_n + \frac{\lambda_n}{4} \phi^i_n
    =\lambda_n\phi^i_n\ab[-\gamma^2+(N-1)\frac{\gamma}{\lambda_n^{1/2}\abs{x-P^i_n}}+ \frac{1}{4} ]\geq 0 
    \quad  \text{in} \set{d_{n,k}\geq R\lambda_n^{-1/2}}.
\end{aligned}
\]
Since $U(x)\to 0$ as  $\abs{x}\to+\infty$,  for $R>0$ large enough, we infer
\[
\ab(e^{\gamma R}\phi^i_n -a_0^{\frac{1}{p-2}}\lambda_n^{-\frac{1}{p-2}}u_n )\Big|_{\partial B_{R\lambda_n^{-1/2}}(P^i_n)}\to 1- U(R)>0\quad \text{as }n\to+\infty.
\]
Then, letting $\phi_n \coloneq a_0^{-\frac{1}{p-2}}  e^{\gamma R}\lambda_n^{1/(p-2)}\sum^k_{i=1}\phi^i_n$ and $L_n \coloneq -\Delta+\lambda_n+V-h_n$, 
by \eqref{eqprop23a} and \( u_n,\phi_n > 0 \), for $n\in\mathbb{N}$ large enough, we have
\[
L_n(\phi_n-u_n) \geq 0\quad \text{in }\set{d_{n,k}\geq R\lambda_n^{-1/2}},
\]
and
\[
\phi_n-u_n \geq 0  \quad \text{on }\set{d_{n,k}= R\lambda_n^{-1/2}}.
\]
Then, by the maximum principle \cite[Theorem 8.1]{GT}, for $n\in\mathbb{N}$ large enough, we conclude that
\[
0<u_n\leq \phi_n
=
a_0^{-\frac{1}{p-2}}e^{\gamma R}\lambda_n^{1/(p-2)}\sum^k_{i=1}e^{-\gamma\lambda_n^{1/2}\abs{x-P^i_n}}\quad \text{in }\set{d_{n,k}\geq R\lambda_n^{-1/2}}.
\]
By \cref{lem62},  we may find $C>0$ so that for $n\in\mathbb{N}$ large enough and all $x \in \set{d_{n,k} \leq R \lambda_n^{-1/2}}$, 
\[
\begin{aligned}
0 \leq u_n(x) \leq \max_{\R^N}u_n &\leq \left(\frac{2\lambda_n}{\tilde{\lambda}}\right)^{1/(p-2)} 
\leq C\phi_n (x) =C a_0^{- \frac{1}{p-2}} e^{\gamma R}\lambda_n^{1/(p-2)}\sum^k_{i=1}e^{-\gamma\lambda_n^{1/2}\abs{x-P^i_n}},
\end{aligned}
\]
which proves \eqref{eqprop23e}. Now, by repeating the argument of \cite[Lemma 2.12]{PV}, we complete the proof of \cref{propbu}. 
\end{proof}

We end this section by providing the proof of \cref{T:Liouv-musmall}.
\begin{proof}[Proof of \cref{T:Liouv-musmall}]
The latter assertion follows from the former and \eqref{lwb-mp}. In fact, 
by \eqref{lwb-mp}, there exists $\mu_1 > 0$ such that 
\[
\inf \Set{c_\tau (\mu) :  0 < \mu \leq \mu_1, \  \frac{1}{2} \leq \tau \leq 1 } \geq 1 . 
\]
Thus, from the former assertion with $\varepsilon = 1$, 
there exists $\mu_0 \in(0,\mu_1]$ such that 
$E_{V,\tau}$ admits no critical point $u$ with $u>0$ in $\R^N$, $\Vab{u}_2^2 \leq \mu_0$ and $E_{V,\tau} (u) \geq \inf_{0<\mu \leq \mu_0} c_\tau(\mu)$.

For the former assertion, 
though it suffices to consider the case $N \geq 5$ due to \cref{lemliou}, 
for later use in \cref{sec:generalV}, we present a proof which works for $N \geq 1$. 
We argue by contradiction and suppose that for some $N \geq 1$ and $\varepsilon_0>0$, 
there exist $\{u_n\}$ and $\{\tau_n\}$ such that 
\[
\begin{aligned}
	&-\Delta u_n + V u_n = \chi_\Omega \ab( \tau_n f_1(u_n) + f_2(u_n) ) \quad \text{in} \ \R^N, 
	\quad u_n > 0 \quad \text{in} \ \R^N, 
	\\
	&0 < \Vab{u_n}_2^2 \to 0, 
	\quad E_{V,\tau_n} (u_n) \geq \varepsilon_0 > 0, 
	\quad \tau_n \to \tau_\infty \in \ab[ \frac{1}{2} , 1 ]. 
\end{aligned}
\]
Write 
\[
M_n \coloneq \max_{\R^N} u_n, \quad M_{\Omega,n} \coloneq \max_{ \overline{\Omega} } u_n.
\]
If $\{M_n\}$ is bounded, then $\Vab{u_n}_2 \to 0$ and the interpolation inequality 
yield $\Vab{u_n}_r \to 0$ for every $r \in [2,\infty)$. 
Therefore, from \eqref{A1}, \eqref{A3} and 
\[
\int_{\R^N} \vab{\nabla u_n}^2 + Vu_n^2 \, dx = \int_{\Omega} \tau_n f_1(u_n)u_n + f_2(u_n)u_n \, dx \to 0,
\]
we have a contradiction: $E_{V,\tau_n} (u_n) \to 0$. Thus, $M_n \to +\infty$ holds.

Next, we claim that 
\begin{equation}\label{eqmoneganmn}
    \frac{M_{\Omega,n}}{M_n} \to 0.
\end{equation}
In fact, suppose $M_{\Omega,n} / M_n \to \alpha_0>0$. 
Pick up $\{\tilde{P}_n\} \subset \bar{\Omega}$ so that $M_{\Omega,n} = u_n(\tilde{P}_n)$, and set 
\[
\tilde{\varepsilon}_{\Omega,n} \coloneq M_{\Omega,n}^{ - \frac{p-2}{2} }, \quad 
\tilde{U}_n(y) \coloneq \tilde{\varepsilon}_{\Omega,n}^{ \frac{2}{p-2} } u_n \ab( \tilde{\varepsilon}_{\Omega,n} y + \tilde{P}_n ), 
\quad 
\tilde{\Omega}_n \coloneq \tilde{\varepsilon}_{\Omega,n}^{-1} \ab(\Omega - \tilde{P}_n). 
\]
Notice that $\tilde{U}_n$ satisfies 
\[
\begin{dcases}
	-\Delta \tilde{U}_n + \tilde{\varepsilon}_{\Omega,n}^2 V \ab( \tilde{\varepsilon}_{\Omega,n} y + \tilde{P}_n ) \tilde{U}_n 
	= \tilde{\varepsilon}_{\Omega,n}^{ \frac{2(p-1)}{p-2} } \chi_{\tilde{\Omega}_n} (y) 
	\ab[ f_2 \ab(  \tilde{\varepsilon}_{\Omega,n}^{-\frac{2}{p-2}} \tilde{U}_n ) 
	+ \tau_n f_1 \ab( \tilde{\varepsilon}_{\Omega,n}^{-\frac{2}{p-2}} \tilde{U}_n ) 
	 ] \quad \text{in} \ \R^N,
	 \\
	 \tilde{U}_n(0) = 1, \quad \Vab{\tilde{U}_n}_\infty = \frac{M_n}{M_{\Omega,n}} \to \alpha_0^{-1} \in (0,\infty), \quad 
	 0 \in \overline{\tilde{\Omega}_n}. 
\end{dcases}
\]
Hence, $M_n \to \infty$, \eqref{A3} and elliptic regularity imply that up to a subsequence, 
$M_{\Omega,n} \to \infty$, $\tilde{U}_n \to \tilde{U}_\infty$ in $C^1_{\rm loc} (\R^N)$, where $\tilde{U}_\infty$ is a positive solution to 
\[
-\Delta \tilde{U}_\infty = a_0 \tau_\infty \chi_{\tilde{D}} \tilde{U}_\infty^{p-1} \quad \text{in} \ \R^N, 
\quad \tilde{U}_\infty(0) = 1, 
\quad \Vab{\tilde{U}_\infty}_\infty \leq \alpha_0^{-1}, 
\]
where $\tilde{D} = \R^N$ or $\tilde{D} = \tilde{H}$ (a half-space) due to $0 \in \overline{\tilde{\Omega}_n}$. 
However, this contradicts \cite{GS} if $\tilde{D} = \R^N$ and \cref{lemnelambda+0} if $\tilde{D} = \tilde{H}$. 
Thus, \eqref{eqmoneganmn} holds.

Set 
\[
v_{n,0} (x) \coloneq \ab( u_n(x) - M_{\Omega,n} )_+ \leq u_n(x).
\]
By $v_{n,0} \equiv 0$ on $\Omega$ and $\nabla u_n \cdot \nabla v_{n,0} = \vab{\nabla v_{n,0}}^2$, we have 
\[
\int_{\R^N} \vab{\nabla v_{n,0}}^2 \,dx = - \int_{\R^N} V u_n v_{n,0} \, dx 
\leq \Vab{V}_\infty \Vab{u_n}_2^2  \to 0.
\]
Put 
\[
\frac{1}{q_N} \coloneq \frac{1}{4} \ab( \frac{1}{2} - \frac{1}{N} ) + \frac{3}{4} \cdot \frac{1}{2} = \frac{2N-1}{4N}.
\]
The standard Gagliardo--Nirenberg inequality \eqref{stdGN} leads to 
\begin{equation}\label{vn0un}
\Vab{v_{n,0}}_{q_N} \leq C_N \Vab{\nabla v_{n,0}}_2^{1/4} \Vab{v_{n,0}}_2^{3/4} \leq C_N \norm{V}_\infty^{1/8} \Vab{u_n}_2 \to 0,
\end{equation}
where $C_N:=C_{q_N,N}^{1/q_N}$.

To proceed, we notice the following inequality: for each $1 < a_1 < a_2$ and $r > 0$,
\begin{equation}\label{basic-ineq}
	s \ab(s-a_2)_+^r \leq \frac{a_2}{a_2-a_1} (s-a_1)_+^{r+1} \quad \text{for every $s \geq 0$}. 
\end{equation}
In fact, the inequality trivially holds for any $s \in [0,a_2]$. 
On the other hand, if $s > a_2$, then 
\eqref{basic-ineq} follows from $s \leq a_2(s-a_1)/(a_2-a_1)$. 
For $\ell \geq 1$, write 
\[
\xi \coloneq \frac{q_N}{2} > 1, \quad 
p_\ell \coloneq \xi^{\ell-1} \cdot q_N \geq q_N > 2, \quad v_{n,\ell} (x) \coloneq \ab(  u_n(x)  - M_{\Omega,n} \sum_{m=0}^\ell \frac{1}{2^m} )_+.
\]
Multiplying the equation of $u_n$ by $v_{n,\ell}^{p_\ell -1}$ gives 
\[
\int_{\R^N} \nabla u_n \cdot (p_\ell-1) v_{n,\ell}^{p_\ell-2} \nabla v_{n,\ell} \, dx = - \int_{\R^N} V u_n v_{n,\ell}^{p_\ell-1} \, dx. 
\]
The left-hand side is computed as 
\[
\begin{aligned}
	(p_\ell-1) \int_{\R^N} \nabla u_n \cdot v_{n,\ell}^{p_\ell-2} \nabla v_{n,\ell} \, dx
	&= 
	(p_\ell -1) \int_{\R^N} v_{n,\ell}^{p_\ell-2} \vab{\nabla v_{n,\ell}}^2 \, dx 
	= (p_\ell-1) \ab( \frac{2}{p_\ell} )^2 \Vab{ \nabla \ab( v_{n,\ell}^{ p_\ell /2 } ) }_2^2.
\end{aligned}
\]
On the other hand, by \eqref{basic-ineq} with 
$a_1 = M_{\Omega,n} \sum_{m=0}^{\ell-1} 2^{-m}$ and $a_2 = M_{\Omega,n} \sum_{m=0}^\ell 2^{-m} \leq 2M_{\Omega,n}$, 
we have 
\[
u_n(x)  v_{n,\ell} (x)^{p_\ell -1} \leq 2^{\ell + 1} v_{n,\ell-1}(x)^{p_{\ell}}.
\]
Hence, the right-hand side is estimated as 
\[
- \int_{\R^N} V u_n v_{n,\ell}^{p_\ell -1} \, dx 
\leq \Vab{V}_\infty \int_{\R^N} u_n v_{n,\ell}^{p_\ell-1} \, dx 
\leq \Vab{V}_\infty 2^{\ell + 1} \Vab{v_{n,\ell-1}}_{p_\ell}^{p_\ell}. 
\]
The Gagliardo--Nirenberg inequality \eqref{stdGN} as in \eqref{vn0un} yields 
\[
C_N^8 \Vab{\nabla \ab( v_{n,\ell}^{p_\ell/2} )}_2^2 \Vab{v_{n,\ell}^{p_\ell/2}  }_{2}^{6}  \geq 
\Vab{ v_{n,\ell}^{p_\ell/2} }_{q_N}^8
= \Vab{ v_{n,\ell} }_{ p_\ell \cdot \xi }^{4p_\ell} = \Vab{v_{n,\ell}}_{p_{\ell + 1}}^{4p_\ell}.
\]
Hence, 
\[
\begin{aligned}
	\Vab{v_{n,\ell}}_{p_{\ell+1}}^{4p_\ell} 
	&\leq 
	C_N^8 \Vab{\nabla \ab( v_{n,\ell}^{p_\ell/2} )}_2^2 \Vab{v_{n,\ell} }_{p_\ell}^{3p_\ell}  
	\leq 
	C_N^8 \frac{p_\ell^2}{4(p_\ell-1)} \Vab{V}_\infty 2^{\ell + 1} \Vab{v_{n,\ell-1}}_{p_\ell}^{p_\ell}  \Vab{v_{n,\ell} }_{p_\ell}^{3p_\ell}.
\end{aligned}
\]
By $p_\ell^2 \leq 2p_\ell(p_\ell-1)$ and $v_{n,\ell} (x) \leq v_{n,\ell-1} (x)$, we infer that 
\[
\Vab{v_{n,\ell}}_{p_{\ell+1}} \leq \ab[ D_{N,V} \ab( 2 \xi )^{\ell} ]^{ 1/(4p_\ell) } \Vab{v_{n,\ell-1}}_{p_\ell},
\]
where $D_{N,V}>0$ is a constant depending only on $N$ and $V$. 
Using this inequality repeatedly, we observe that for each $\ell \in \N$, 
\[
\Vab{ v_{n,\ell} }_{p_{\ell + 1}} \leq D_{N,V}^{\sum_{ m=1 }^{\ell} 1/(4p_m) } \cdot (2\xi)^{ \sum_{m=1}^\ell m /(4p_m) } 
\Vab{v_{n,0}}_{p_1}. 
\]
Since $p_1 = q_N$, $\xi > 1$ and $(u_n-2M_{\Omega,n})_+ \leq v_{n,\ell}$ for every $\ell \geq 1$, 
it follows from \eqref{vn0un} that 
\[
\Vab{ (u_n - 2 M_{\Omega,n})_+ }_{p_{\ell + 1}} \leq D_{N,V}' \Vab{v_{n,0}}_{q_N} \leq D_{N,V}'' \Vab{u_n}_2. 
\]
Letting $\ell \to \infty$ implies 
\[
M_n - 2 M_{\Omega,n} \leq D_{N,V}''  \Vab{u_n}_2 \to 0,
\]
which is absurd due to $M_{\Omega,n} / M_n \to 0$ as $n \to +\infty$. Thus \cref{T:Liouv-musmall} holds. 
\end{proof}

\begin{remark}\label{rem:generalV}
	When we replace $V$ by a general $W \in L^\infty(\R^N)$, 
	the above proof is still valid and the same conclusion to \cref{T:Liouv-musmall} holds. 
\end{remark}

 \section{Proofs of \cref{th1,th2}: The case where $\Omega$ is bounded}
 \label{sec:pf-main}
 
 We now provide the proofs of \cref{th1,th2}.
\begin{proof}[Proofs of \cref{th1,th2}]
We prove \cref{th1,th2} at the same time and as in the proof of \cref{th51,th52} 
we use the convention $\mu_0 = +\infty$ when $1 \leq N \leq 4$ and \eqref{A1}--\eqref{A5} hold. 
Let $\mu \in (0,\mu_0)$ and 
$\{(u_n,\lambda_n)\} \subset S_\mu \times \R^+$ be a sequence of solutions to (\hyperref[eqVt]{$P_{\mu,\tau_n}$}) given by \cref{th51} (resp. \cref{th52}), 
corresponding to some $\tau_n \to 1^-$. 
Our aim is to prove that $\{u_n\}$ is a bounded Palais-Smale sequence for the functional $E_{V,1}$ constrained on $S_\mu$ at some level $c>0$.
    
We prove that $\{u_n\}$ is bounded in $H^1(\R^N)$ by contradiction, and suppose that 
$\int_{\R^N}\abs{\nabla u_n}^2\,dx \to +\infty$ as $n \to +\infty$. 
From $0 = E_{V,\tau_n}'(u_n)u_n + \lambda_n \Vab{u_n}_2^2$ and $E_{V,\tau_n}(u_n)=c_{\tau_n}$, it follows that 
\begin{equation}\label{eqsec71}
	\int_{\R^N}\abs{\nabla u_n}^2+\ab(V(x)+\lambda_n)\abs{u_n}^2\,dx = \int_{\Omega}\tau_n f_1(u_n)u_n+f_2(u_n)u_n\,dx
\end{equation}
and
\begin{equation}\label{eqsec72}
	\frac{1}{2}\int_{\R^N}\abs{\nabla u_n}^2+V(x)\abs{u_n}^2\,dx -\int_{\Omega}\tau_n F_1(u_n)+F_2(u_n)\,dx=c_{\tau_n}>0.
\end{equation}
By \cref{lemmps} (resp. \cref{lemmpe}), 
\begin{equation}\label{eqsec7ctau}
	0<c_1\leq c_\tau \leq c_{1/2} \quad \text{for each $\tau \in \ab[\frac{1}{2}, 1]$}, 
\end{equation}
which implies that $\{c_{\tau_n}\}$ is bounded. 
Thus, by \eqref{eqsec72}, we see that 
\begin{equation}\label{F-blowup}
\int_{\Omega}\tau_n F_1(u_n)+F_2(u_n)\,dx \to +\infty \quad \text{as $n \to + \infty$}.
\end{equation}
\eqref{A1} and the definition of  $f_2$ given in \eqref{eqdeff1f2} yield 
\[
\abs{f_2(t)}\leq C\abs{t} \text{ and } \abs{F_2(t)}\leq C\abs{t}^2 \quad \text{for all $t \in \R$}. 
\]
Hence, 
\begin{equation}
\label{F2-bounded}
\int_{\Omega}F_2(u_n)\, dx\leq C\mu\quad  \text{and}\quad \int_{\Omega}f_2(u_n)u_n\,dx\leq C\mu.
\end{equation}
Combining this fact with \eqref{F-blowup} implies
\begin{equation}\label{F1-blowup}
\int_{\Omega}\tau_n F_1(u_n)\,dx \to +\infty \quad \text{as $n \to + \infty$}.
\end{equation}
Furthermore, by \eqref{A3}, the definition of $f_1$ given in \eqref{eqdeff1f2} and L'H\^{o}pital's rule, 
\[
\lim_{\abs{t} \to \infty} \frac{F_1(t)}{\abs{t}^p} = \lim_{\abs{t} \to \infty} \frac{f_1(t)}{p |t|^{p-2} t} = \frac{a_0}{p}. 
\]
Hence, for any $\delta \in  (0,a_0)$ small enough, there exists $M_\delta >0$ such that if $\abs{t} > M_\delta$, then 
\begin{equation}\label{eqF1f1}
F_1(t) \leq \frac{a_0+\delta}{p} \abs{t}^p = \frac{a_0+\delta}{a_0-\delta} \cdot \frac{1}{p} \cdot (a_0-\delta) \abs{t}^p \leq \frac{a_0+\delta}{a_0-\delta} \cdot \frac{1}{p} f_1(t)t.
\end{equation}

Let $\delta > 0 $ be chosen such that $\frac{a_0+\delta}{a_0-\delta} \cdot \frac{1}{p}<\frac{1}{2}$. 
Then, it follows from \eqref{eqsec71}, \eqref{eqsec72}, \eqref{F2-bounded} and \eqref{F1-blowup} that 
\[
\begin{aligned}
	\lambda_n\mu &=\int_{\Omega}\tau_n f_1(u_n)u_n-2\tau_n F_1(u_n)+f_2(u_n)u_n-2F_2(u_n)\,dx-2c_{\tau_n}\\
	&\geq -C\mu-C+\int_{\Omega\cap \set{u_n >M_\delta }}\tau_n f_1(u_n)u_n-2\tau_n F_1(u_n)\,dx \\
	& \geq  -C\mu-C+\ab(p\frac{a_0-\delta}{a_0+\delta}-2)\int_{\Omega\cap \set{u_n >M_\delta }}\tau_n F_1(u_n)\,dx\\
  & \geq -C\mu- \tilde{C}  + \ab(p\frac{a_0-\delta}{a_0+\delta}-2)\int_{ \Omega }\tau_n F_1(u_n)\,dx \to +\infty \quad \text{as }n \to +\infty.
\end{aligned}
\]
Therefore, $\lambda_n \to +\infty$ holds. Since $m_{\lambda_n,\tau_n} (u_n) \leq 2$ due to \cref{th51} (resp. \cref{th52}), 
\eqref{equn} is satisfied by $\{(u_n,\lambda_n)\}$, and hence \cref{lem62} and \cref{propbu} are applicable. 
Then \cref{lem62} and \cref{propbu} yield for some $k \in \set{1,2}$, 
\[
\lambda_n^{\frac{N}{2}-\frac{2}{p-2}}\int_{\R^N}\abs{u_n}^2\,dx \to a_0^{-\frac{2}{p-2}}k\norm{U}_2^2 < +\infty  \quad\text{as }n \to +\infty.
\]
However, due to $\frac{N}{2}-\frac{2}{p-2}>0$ and $\norm{u_n}_2^2 = \mu>0$, this is a contradiction. 
Thus, $\{u_n\}$ is bounded in $H^1(\R^N)$. 

By \eqref{eqsec7ctau} and a standard argument, 
we know that $\{u_n\}$ is a bounded Palais-Smale sequence for the functional $E_{V,1}$ constrained on $S_\mu$ at level $c = \lim_{n \to \infty} c_{\tau_n} \geq c_1 > 0$. 
Then, by repeating the argument in the last part of the proofs of \cref{th51,th52}, 
we conclude that $u_n \to u$ in $H^1(\R^N)$ and $\lambda_n \to \lambda>0$ as $n\to +\infty$ for some $(u,\lambda) \in H^1(\R^N)\times \R^+$ and $(u,\lambda)$ is a solution to problem \eqref{eqV}.
\end{proof}

\section{Proof of \cref{T:whole}: The case $\Omega = \R^N$}
\label{sec:the-whole-sp}

In this section, we treat the case $\Omega = \R^N$ in \eqref{eqV} and prove \cref{T:whole}.

\subsection{Proof of \cref{T:whole} \ref{T:whole-i}}

In this subsection, we aim to prove the existence of minimizers to \eqref{e:loc-min}. 
Throughout this subsection, \eqref{A1} and \eqref{nA6} are always assumed. In particular, 
$E_V \in C^1(H^1(\R^N) , \R)$ and critical points of $E_V|_{S_\mu}$ give solutions of \eqref{eqV} with some $\lambda \in \R$ where 
\[
E_V(u) \coloneq \int_{\R^N} \frac{1}{2} \vab{\nabla u}^2 + \frac{1}{2} V u^2 dx - \int_{\R^N} F(u) dx. 
\]

\begin{lemma}\label{l:up-lo-EV}
	Under \eqref{A1} and \eqref{nA6}, there exists $\overline{\mu}_1=\overline{\mu}_1(N,V,f) >0$ such that 
		\[
		E_V(u) \geq \frac{3}{2} 
		\quad \text{for every $\mu \in (0, \overline{\mu}_1]$ and $u \in S_\mu$ with $4 \leq \vab{u}_V^2 \leq 8$}.
		\]
	Moreover, for any $\mu \in (0,\overline{\mu}_1]$ and $\tau \in (0,\mu)$, 
	\[
	m_{V,\mu} \leq 0 \quad \text{and} \quad m_{V,\mu} \leq m_{V,\tau} + m_{V,\mu - \tau}.
	\]
	In particular, $(0,\overline{\mu}_1] \ni \mu  \mapsto m_{V,\mu} \in (-\infty,0]$ is nonincreasing.
\end{lemma}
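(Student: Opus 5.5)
The lower bound comes from the Gagliardo--Nirenberg type inequality of \cref{lemgn}. By \eqref{A1} and \eqref{nA6}, for every $\varepsilon>0$ there is $C_\varepsilon>0$ with $\vab{F(t)} \leq \frac{\varepsilon}{2} t^2 + C_\varepsilon \vab{t}^{q_4}$ for all $t$. Fix $\varepsilon = 1/4$. For $u \in S_\mu$ with $4 \leq \vab{u}_V^2 \leq 8$, \cref{lemgn} with $q=q_4$ gives
\[
E_V(u) \geq \frac{1}{2}\vab{u}_V^2 - \frac{\mu}{8} - C C_{q_4,N,V}\, \mu^{(1-\beta_{q_4})q_4/2}\, 8^{\beta_{q_4} q_4/2}
\geq 2 - \frac{\mu}{8} - C\mu^{(1-\beta_{q_4})q_4/2}.
\]
Since $q_4<2^*$, we have $(1-\beta_{q_4})q_4>0$. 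Hence the right-hand side is $\geq 3/2$ once $\mu \leq \overline{\mu}_1$ with $\overline{\mu}_1$ small.

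For $m_{V,\mu}\leq 0$, I use \eqref{inf-spect}: since $\sigma_0=0$, pick $\varphi \in C_0^\infty$ with $\Vab{\varphi}_2=1$ and $\vab{\varphi}_V^2$ arbitrarily small. Dilating, $\varphi_s(x)=s^{N/2}\varphi(sx)$ does not directly keep $\vab{\cdot}_V$ small, so I instead use the Rayleigh quotient infimum directly. Take $w\in S_\mu$ with $\vab{w}_V^2 \leq \delta$ and also $w$ spread out, so that $\Vab{w}_\infty$ is small. Concretely, I use $w=\sqrt{\mu}\,\psi_0\,\eta_R/\Vab{\psi_0\eta_R}_2$ with $\eta_R(x)=\eta(x/R)$. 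By \eqref{equpsi}, $\vab{w}_V^2 = \mu\int \psi_0^4\vab{\nabla \eta_R}^2/\Vab{\psi_0\eta_R}_2^2 = O(\mu R^{-2})$, and $\Vab{w}_r^r = O(\mu^{r/2} R^{N(1-r/2)}) \to 0$ for $r>2$. Then $w\in\mathcal{A}_\mu$ and
\[
E_V(w)\leq \tfrac12\vab{w}_V^2+\int\vab{F(w)} \leq o_R(1)+\tfrac{\varepsilon}{2}\mu+C_\varepsilon\Vab{w}_{q_4}^{q_4}.
\]
Letting $R\to\infty$ and then $\varepsilon\to0$ gives $m_{V,\mu}\leq 0$. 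Along the way I note that $m_{V,\mu}>-\infty$, from the same GN estimate on $\mathcal{A}_\mu$.

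For subadditivity, take near-minimizers $u\in\mathcal{A}_\tau$ and $v\in\mathcal{A}_{\mu-\tau}$. By density and a cutoff argument they can be taken with compact support; truncation changes $E_V$ and $\vab{\cdot}_V$ only slightly, and I must stay strictly inside $\vab{\cdot}_V^2<8$. Translate $v$ by $k\in\Z^N$ with $\vab{k}$ large, so that $u$ and $v(\cdot-k)$ have disjoint supports. By periodicity of $V$, $E_V$ and $\vab{\cdot}_V$ are invariant under integer translations. Then $w=u+v(\cdot-k)\in S_\mu$, $E_V(w)=E_V(u)+E_V(v)$, and $\vab{w}_V^2=\vab{u}_V^2+\vab{v}_V^2$.

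The main obstacle is ensuring $\vab{w}_V^2<8$, i.e.\ keeping $w\in\mathcal{A}_\mu$. Here the first part is used. Since $E_V(u)\leq m_{V,\tau}+\epsilon\leq\epsilon<3/2$ and $\tau\leq\overline{\mu}_1$, the first part forbids $4\leq \vab{u}_V^2\leq 8$, so $\vab{u}_V^2<4$. Likewise $\vab{v}_V^2<4$. This requires checking that $m_{V,\tau}\leq 0$ holds for $\tau\leq\overline{\mu}_1$, which was proved above, and it gives $\vab{w}_V^2<8$. Letting $\epsilon\to0$ yields $m_{V,\mu}\leq m_{V,\tau}+m_{V,\mu-\tau}$. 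Monotonicity then follows from $m_{V,\mu-\tau}\leq 0$.
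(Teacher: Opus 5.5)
Your proof is correct and follows the paper's argument. The lower bound comes from \cref{lemgn}, and subadditivity comes from compactly supported near-minimizers glued along disjoint integer translates, with $\vab{\cdot}_V^2<4$ forced by the first assertion together with $m_{V,\tau},m_{V,\mu-\tau}\le 0$. The only deviation is in proving $m_{V,\mu}\le 0$: the paper just takes any $u_n\in S_\mu$ with $\vab{u_n}_V\to 0$ (available since $\sigma_0=0$) and gets $\Vab{u_n}_{q_4}\to 0$ from \cref{lemgn}. Your explicit $\psi_0\eta_R$ construction is therefore valid but not needed. One small slip there: the weight in $\vab{w}_V^2$ should be $\psi_0^2$, not $\psi_0^4$, which does not affect the $O(\mu R^{-2})$ bound.
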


\begin{proof}
By \eqref{A1} and \eqref{nA6}, there exists $C=C(f)>0$ such that 
$\vab{F(t)} \leq t^2 + C \vab{t}^{q_4}$. 
As in the proof of \cref{lemmps}, it follows from \cref{lemgn} that 
for any $u \in B_\alpha \coloneq \set{ u \in S_\mu \colon \vab{u}_V^2 = \alpha }$ with $4 \leq \alpha \leq 8$, 
\[
\begin{aligned}
	E_V(u) 
	&\geq 
	\frac{1}{2} \alpha - \Vab{u}_2^2 - C \Vab{u}_{q_4}^{q_4}
	\geq 
	2 - \mu -  C C_{q_4,N,V} \alpha^{\beta_{q_4}q_4/2} \mu^{ (1-\beta_{q_4})q_4/2}.
\end{aligned}
\]
Hence, we may find $\overline{\mu}_1>0$ such that 
$E_V(u) \geq 3/2$ for every $\mu \in (0,\overline{\mu}_1]$ and $u \in S_\mu$ with $4 \leq \vab{u}_V^2 \leq 8$.

Next, notice that for any $\varepsilon>0$ there exists $C_\varepsilon>0$ such that $\vab{F(t)} \leq \varepsilon t^2 + C_\varepsilon \vab{t}^{q_4}$ for all $t \in \R$. 
Therefore, it follows from \cref{lemgn} that for each $\mu \in (0,\overline{\mu}_1]$ and $u \in S_\mu$, 
\[
\begin{aligned}
	E_V(u) 
	&\leq 
	\frac{1}{2} \vab{u}_V^2 + \varepsilon \mu + C_\varepsilon \Vab{u}_{q_4}^{q_4}  
	\leq 
	\frac{1}{2} \vab{u}_V^2 + \varepsilon \mu + C_\varepsilon C_{q_4,N,V} \vab{u}_V^{\beta_{q_4}q_4}\mu^{ (1-\beta_{q_4})q_4/2}.
\end{aligned}
\]
Recalling $\sigma_0 = 0$ in \eqref{bot-spec}, we may find $\{u_n\} \subset S_\mu$ such that $\vab{u_n}_V \to 0$, hence $u_n \in \mathcal{A}_\mu$. 
Therefore, 
\[
m_{V,\mu} \leq \limsup_{n \to +\infty} E_V(u_n) \leq \varepsilon \mu. 
\]
Since $\varepsilon > 0$ is arbitrary, $m_{V,\mu} \leq 0$ holds.

Finally, let $\tau \in (0,\mu)$ and $\{u_n\}_n \subset \mathcal{A}_\tau$ satisfy $E_V(u_n) \to m_{V,\tau}$. 
By the fact that $m_{V,\tau} \leq 0$ and the first assertion of this lemma, we may assume that $\vab{u_n}_V^2 < 4$.
Hence, there exists $\{\varphi_n\}_{n \in \N} \subset \mathcal{A}_{\tau} \cap C_0^\infty(\R^N) $ such that 
\[
E_{V} (\varphi_n) \to m_{V,\tau}, \quad \vab{\varphi_n}_V^2 < 4. 
\]
In a similar way, there exists $\{\psi_n\}_{n \in \N} \subset \mathcal{A}_{\mu - \tau }  \cap C_0^\infty(\R^N)$ such that 
\[
E_V(\psi_n) \to m_{V,\mu - \tau}, \quad \vab{\psi_n}_V^2 < 4. 
\]
For a sufficiently large $L_n \in \N$, we have 
\[
\supp \varphi_n \cap \supp \ab( \psi_n (\cdot - L_n e_1) ) = \emptyset, \quad 
\varphi_n + \psi_n ( \cdot - L_n e_1 ) \in \mathcal{A}_{\mu}.
\]
Since $V$ is $1$-periodic, 
\[
m_{V,\mu} \leq E_V( \varphi_n + \psi_n(\cdot - L_n e_1) ) = E_V(\varphi_n) + E_V(\psi_n) \to m_{V,\tau} + m_{V,\mu - \tau}. 
\]
Thus, we complete the proof. 
\end{proof}

Next, the strict subadditivity of $m_{V,\mu}$ is proved:

\begin{lemma}\label{l:str-ineq}
	Suppose \eqref{A1}, \eqref{nA6} and \eqref{nA7}. 
	For any $0<\tau < \mu \leq \overline{\mu}_1$ and $\xi > 1$ with $\xi^2 \tau \leq \mu$, 
	\[
	m_{V,\xi^2 \tau} \leq \xi^2 m_{V,\tau}
	\]
	and the strict inequality holds provided either $m_{V,\tau} < 0$ or there exists $u \in \mathcal{A}_\tau$ such that $E_{V} (u) = m_{V,\tau}$. 
	In addition, if $m_{V,\tau} < 0$ or $m_{V,\tau}$ is attained, then the strict subadditivity holds: 
	\[
	m_{V,\mu} < m_{V,\tau} + m_{V,\mu - \tau}. 
	\]
\end{lemma}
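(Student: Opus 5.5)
The plan is to use the amplitude scaling $u \mapsto \xi u$, which maps $S_\tau$ into $S_{\xi^2\tau}$, together with \eqref{nA7}. For $u \in S_\tau$ and $\xi>1$ we have
\[
E_V(\xi u) = \frac{\xi^2}{2}\vab{u}_V^2 - \int_{\R^N} F(\xi u)\,dx
= \xi^2 E_V(u) - \int_{\R^N} \ab( F(\xi u) - \xi^2 F(u) ) dx .
\]
By \eqref{nA7} and the oddness \eqref{eqodd} (so $F$ is even), the integrand is positive wherever $u \neq 0$. Hence $E_V(\xi u) < \xi^2 E_V(u)$ for every $u \neq 0$.

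The difficulty is that $\xi u$ must lie in $\mathcal{A}_{\xi^2\tau}$, i.e.\ $\vab{\xi u}_V^2 = \xi^2\vab{u}_V^2 < 8$. Take a minimizing sequence $\{u_n\}\subset\mathcal{A}_\tau$ with $E_V(u_n)\to m_{V,\tau}\le 0$ (by \cref{l:up-lo-EV}). Since $\tau\le\overline{\mu}_1$, the first assertion of \cref{l:up-lo-EV} gives $E_V\ge 3/2$ on $\{4\le\vab{u}_V^2\le 8\}\cap S_\tau$, so we may assume $\vab{u_n}_V^2<4$. We also need $\xi^2\vab{u_n}_V^2<8$, which is automatic only when $\xi^2\le 2$. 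For larger $\xi$, I would iterate: write $\xi^2=\prod_{j}\xi_j^2$ with each $\xi_j^2\le 2$. Each intermediate mass $\xi_1^2\cdots\xi_j^2\tau\le\mu\le\overline{\mu}_1$, so \cref{l:up-lo-EV} applies at every stage and allows re-choosing a minimizing sequence with $\vab{\cdot}_V^2<4$. Multiplying the inequalities $m_{V,\xi_j^2 s}\le \xi_j^2 m_{V,s}$ along the chain gives $m_{V,\xi^2\tau}\le\xi^2 m_{V,\tau}$.

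For strictness, there are two cases.
\begin{enumerate}
\item If $m_{V,\tau}$ is attained by $u\in\mathcal{A}_\tau$, then again $\vab{u}_V^2<4$ since $E_V(u)\le 0<3/2$. Restrict first to $\xi^2\le 2$, so that $\xi u\in\mathcal{A}_{\xi^2\tau}$. Then $m_{V,\xi^2\tau}\le E_V(\xi u)<\xi^2 m_{V,\tau}$. For general $\xi$, combine one strict step with the non-strict chain above.
\item If $m_{V,\tau}<0$, then $m_{V,\xi^2\tau}\le\xi^2 m_{V,\tau}<m_{V,\tau}$, but that is not the statement. The strict inequality $m_{V,\xi^2\tau}<\xi^2 m_{V,\tau}$ needs a quantitative gain, and this is the main obstacle. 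I would argue via a uniform lower bound on $\int_{\R^N}\ab(F(\xi u_n)-\xi^2F(u_n))\,dx$ along a minimizing sequence. The sequence cannot vanish: if $\Vab{u_n}_{q}\to0$ for $2<q<2^*$ (by Lions' lemma, using $\vab{u_n}_V$ bounded and $H^1$ bounded), then $\int F(u_n)\to0$ by \eqref{A1}, \eqref{nA6}, so $\liminf E_V(u_n)\ge0>m_{V,\tau}$, a contradiction. So after $\Z^N$-translations (allowed by periodicity of $V$), $u_n\rightharpoonup u\neq0$ and $u_n\to u$ a.e. By Fatou's lemma and pointwise positivity, $\liminf\int\ab(F(\xi u_n)-\xi^2F(u_n))\ge\int\ab(F(\xi u)-\xi^2F(u))>0$. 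Hence $m_{V,\xi^2\tau}\le\liminf E_V(\xi u_n)<\xi^2 m_{V,\tau}$ (for $\xi^2\le2$, then chained).
\end{enumerate}
Using \eqref{nA7} with Fatou in this way avoids needing any growth lower bound.

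Finally, strict subadditivity follows in the standard way. Assume without loss of generality $\tau\ge\mu-\tau$, and set $s\coloneq\mu-\tau$.
\begin{itemize}
\item If $m_{V,\tau}<0$ or it is attained, the strict scaling inequality gives
\[
m_{V,\mu}=m_{V,(\mu/\tau)\tau}<\tfrac{\mu}{\tau}m_{V,\tau}=m_{V,\tau}+\tfrac{s}{\tau}m_{V,\tau}.
\]
The non-strict inequality gives $m_{V,\tau}=m_{V,(\tau/s)s}\le\tfrac{\tau}{s}m_{V,s}$, so $\tfrac{s}{\tau}m_{V,\tau}\le m_{V,s}$. Together these yield $m_{V,\mu}<m_{V,\tau}+m_{V,\mu-\tau}$.
\item If instead the hypothesis holds only for the smaller mass $\mu-\tau$, swap the roles of $\tau$ and $\mu-\tau$. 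The hypothesis then propagates to the larger mass through the scaling inequality: $m_{V,s}<0$ implies $m_{V,\tau}\le\tfrac{\tau}{s}m_{V,s}<0$, and attainment gives a strict inequality.
\item The degenerate case $\tau=\mu-\tau$ is handled directly by $\xi^2=2$.
\end{itemize}
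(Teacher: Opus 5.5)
Your proof is correct and follows the paper's argument: the amplitude scaling $u\mapsto \xi u$ combined with \eqref{nA7}, nonvanishing via Lions' lemma plus $\mathbb{Z}^N$-translation and Fatou for strictness when $m_{V,\tau}<0$, and the same two-case argument for strict subadditivity. The one difference is how you keep $\xi u_n \in \mathcal{A}_{\xi^2\tau}$ when $\xi^2>2$. You chain factors $\xi_j^2\le 2$ through intermediate masses. The paper instead takes $E_V(u_n)\le \xi^{-2}$ and uses the barrier $E_V\ge 3/2$ on $\{4\le |u|_V^2\le 8\}$ together with $E_V(\theta u_n)<\theta^2E_V(u_n)\le 1$ to show that $|\theta u_n|_V^2$ never reaches $4$ for $\theta\in[1,\xi]$, which handles all admissible $\xi$ at once; both routes are valid.
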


\begin{proof}
Let $ 0< \tau < \mu \leq\overline{\mu}_1$ and $\xi>1$ satisfy $\xi^2 \tau \leq \mu$. 
Fix any $\{u_n\}_n \subset \mathcal{A}_\tau$ so that 
$E_V(u_n) \to m_{V,\tau} \leq 0$ and $E_V(u_n) \leq \xi^{-2}$ for all $n \in \N$. 
From \cref{l:up-lo-EV}, we may suppose $\vab{u_n}_V^2 < 4$ for every $n \in \N$. 
For each $n \in \N$, we claim $\vab{\theta u_n}_V^2 < 4$ for any $\theta \in [1,\xi]$. 
Otherwise, for some $n \in \N$, there exists $\theta \in (1,\xi]$ such that $\vab{\theta u_n}_{V}^2 = 4$. 
From $\Vab{\theta u_n}_2^2 = \theta^2 \tau \leq \mu \leq \overline{\mu}_1$ and $u_n \not \equiv 0$, 
\cref{l:up-lo-EV} and \eqref{nA7} give 
\[
\frac{3}{2} \leq E_V(\theta u_n) = \theta^2 E_V(u_n) -  \int_{\R^N} F(\theta u_n) - \theta^2 F(u_n) dx < \theta^2 E_V(u_n) \leq \theta^2 \xi^{-2}\leq 1,
\]
which is a contradiction. Thus, $\vab{\theta u_n}_V^2 < 4$ for every $n \in \N$ and $\theta \in (1,\xi]$.

We remark that $\theta u_n \in \mathcal{A}_{\theta^2 \tau}$ for every $\theta \in[1,\xi]$. 
Hence, 
\begin{equation}\label{e:m}
m_{V,\xi^2\tau} \leq E_V(\xi u_n) = \xi^2 E_V(u_n) - \int_{\R^N} F(\xi u_n) - \xi^2 F(u_n) dx \leq \xi^2 E_V(u_n). 
\end{equation}
Letting $n \to +\infty$ gives 
\[
m_{V,\xi^2\tau} \leq \xi^2 m_{V,\tau}. 
\]

In what follows, we prove that $m_{V,\xi^2 \tau} < \xi^2 m_{V,\tau}$ holds when $m_{V,\tau} < 0$ or $m_{V,\tau}$ is attained. 
First, assume that there exists $u \in \mathcal{A}_\tau$ such that $E_V(u) = m_{V,\tau}$. 
In \eqref{e:m}, we substitute $u=u_n$ to obtain 
\[
m_{V,\xi^2\tau} \leq \xi^2 m_{V,\tau} - \int_{\R^N} F(\xi u) - \xi^2 F(u) dx. 
\]
Since $u \not \equiv 0$, \eqref{nA7} implies 
\[
\int_{\R^N} F(\xi u) - \xi^2 F(u) dx > 0, \quad m_{V,\xi^2 \tau} < \xi^2 m_{V,\tau}. 
\]

On the other hand, suppose that $E_V(u_n) \to m_{V,\tau} < 0$ holds where $\{u_n\} \subset \mathcal{A}_\tau$. 
By \eqref{e:m}, if we can verify 
\begin{equation}\label{F-pos}
	\limsup_{n \to \infty} \int_{\R^N} F(\xi u_n) - \xi^2 F(u_n) dx > 0, 
\end{equation}
then $m_{V,\xi^2 \tau} < \xi^2 m_{V,\tau}$ holds.

To prove \eqref{F-pos}, remark that $\{u_n\}$ is bounded in $H^1(\R^N)$ 
thanks to $u_n \in S_\tau$, $\vab{u_n}_V^2 \leq 8$ and $V \in L^\infty(\R^N)$. 
Next, we claim that 
\[
\liminf_{n \to +\infty} \sup_{z \in \Z^N} \Vab{ u_n }_{L^2(z+Q)} > 0, \quad \text{where $Q \coloneq [0,1]^N$}. 
\]
Indeed, if this fails to hold, then Lions' lemma yields $\Vab{u_n}_q \to 0$ for any $q \in (2,2^*)$. 
Hence, \eqref{A1}, \eqref{nA6} and the fact $\sigma_0 = 0$ lead to 
\[
0 > m_{V,\tau} = \liminf_{n \to +\infty} E_V(u_n) = \liminf_{n \to +\infty} \frac{1}{2} \vab{u_n}_V^2 \geq 0,
\]
which is a contradiction. In particular, there exists $\{z_n\} \subset \Z^N$ such that 
$\Vab{u_n(\cdot + z_n)}_{L^2(Q)} \to c_0 > 0$. 
Since $V$ is $1$-periodic, without loss of generality, we may suppose $z_n=0$. 
Recalling that $\{u_n\}_n$ is bounded in $H^1(\R^N)$, 
we may suppose $u_n \to u_\infty$ weakly in $H^1(\R^N)$ and $u_n \to u_\infty$ a.e. $\R^N$. 
Then \eqref{nA7}, Fatou's lemma and $u_\infty \not \equiv 0$ yield 
\[
0 < \int_{\R^N} F(\xi u_\infty) - \xi^2 F(u_\infty) dx 
\leq \liminf_{n \to +\infty} \int_{\R^N} F(\xi u_n) - \xi^2 F(u_n) dx. 
\]
Hence, \eqref{F-pos} and $m_{V,\xi^2 \tau} < \xi^2m_{V,\tau}$ hold.

Finally, let $\tau \in (0,\mu)$ and assume that $m_{V,\tau} < 0$ or $m_{V,\tau}$ is attained. 
When $\tau \in [\mu/2, \mu )$, select $\xi > 1$ so that $\xi^2 \tau = \mu$. From
\[
\xi^2 - 1 = \frac{\mu-\tau}{\tau} \leq 1, \quad\tau = \frac{\tau}{\mu - \tau}  \cdot (\mu-\tau)
\]
and the strict inequality on $m_{V,\xi^2 \tau} < \xi^2 m_{V,\tau}$, it follows that 
\[
m_{V,\tau} \leq \frac{\tau}{\mu-\tau} \cdot m_{V,\mu-\tau}, \quad 
m_{V,\mu} < \xi^2 m_{V,\tau} = m_{V,\tau} + \ab( \xi^2 -1 ) m_{V,\tau} 
\leq m_{V,\tau} + m_{V,\mu-\tau}.
\]
On the other hand, if $\tau \in (0,\mu/2)$, then $\tau < \mu /2 <  \mu - \tau$ and 
\[
m_{V,\mu-\tau} < \frac{\mu-\tau}{\tau} m_{V,\tau} \leq 0. 
\]
Hence, by changing the role of $\tau$ and $\mu-\tau$ in the above argument, 
we obtain $m_{V,\mu} < m_{V,\tau} + m_{V,\mu -\tau}$. 
\end{proof}

Next, we prove the existence of minimizers when $m_{V,\mu} < 0$:

\begin{lemma}\label{l:attain}
	Assume \eqref{A1}, \eqref{nA6}, \eqref{nA7}, $\mu \in (0,\overline{\mu}_1]$ and $m_{V,\mu} < 0$. 
	Then each minimizing sequence is compact in $H^1(\R^N)$ up to translations, 
	and $m_{V,\mu}$ is attained by a positive function.  
\end{lemma}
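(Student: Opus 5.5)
We argue by the concentration--compactness method, in the spirit of \cite{CL,AlJi22}, using \cref{l:up-lo-EV} and the strict subadditivity of \cref{l:str-ineq}. Let $\{u_n\}\subset\mathcal{A}_\mu$ satisfy $E_V(u_n)\to m_{V,\mu}<0$. Since $m_{V,\mu}<0<3/2$, \cref{l:up-lo-EV} allows us to assume $\vab{u_n}_V^2<4$. Because $V\in L^\infty$, the sequence is bounded in $H^1(\R^N)$. Replacing $u_n$ by $\vab{u_n}$ does not change $\Vab{u_n}_2$, $\vab{u_n}_V$ or $E_V$ ($F$ is even by \eqref{eqodd}), so we may take $u_n\geq0$.

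\textbf{Step 1: no vanishing.} If $\sup_{z\in\Z^N}\Vab{u_n}_{L^2(z+Q)}\to0$, Lions' lemma gives $\Vab{u_n}_q\to0$ for $q\in(2,2^*)$. Then \eqref{A1} and \eqref{nA6} yield $\int F(u_n)\to0$, and $\sigma_0=0$ gives $\liminf E_V(u_n)\geq0$, a contradiction. Translating by $z_n\in\Z^N$ (allowed since $V$ is periodic), we get $u_n\rightharpoonup u\neq0$ in $H^1$, with $\Vab{u}_2^2=\tau\in(0,\mu]$.

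\textbf{Step 2: no dichotomy.} Set $w_n\coloneq u_n-u$. Brezis--Lieb splitting and weak convergence give
\[
\Vab{w_n}_2^2\to\mu-\tau,\qquad \vab{u_n}_V^2=\vab{u}_V^2+\vab{w_n}_V^2+o(1),\qquad E_V(u_n)=E_V(u)+E_V(w_n)+o(1).
\]
The $F$-splitting holds by \eqref{A1}, \eqref{nA6} and the Brezis--Lieb lemma for nonlinearities with subcritical growth. Since $\vab{\cdot}_V^2\geq0$, both $\vab{u}_V^2$ and $\limsup\vab{w_n}_V^2$ are $<4$, so $u\in\mathcal{A}_\tau$.

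Suppose $\tau<\mu$. Rescaling $w_n$ to mass exactly $\mu-\tau$ changes $E_V$ by $o(1)$ and keeps it in $\mathcal{A}_{\mu-\tau}$. Hence $\liminf E_V(w_n)\geq m_{V,\mu-\tau}$, and so
\[
m_{V,\mu}\geq E_V(u)+m_{V,\mu-\tau}\geq m_{V,\tau}+m_{V,\mu-\tau}.
\]
If $m_{V,\tau}<0$ or $E_V(u)=m_{V,\tau}$, this contradicts \cref{l:str-ineq}. In the remaining case $m_{V,\tau}=0<E_V(u)$, we use $m_{V,\mu}<E_V(u)+m_{V,\mu-\tau}$ together with the subadditivity $m_{V,\mu}\le m_{V,\tau}+m_{V,\mu-\tau}$ of \cref{l:up-lo-EV}: we get $m_{V,\mu-\tau}\ge m_{V,\mu}-m_{V,\tau}=m_{V,\mu}$, while \cref{l:str-ineq} with $\xi^2=\mu/(\mu-\tau)$ gives $m_{V,\mu}<\xi^2 m_{V,\mu-\tau}\le m_{V,\mu-\tau}$ once $m_{V,\mu-\tau}<0$. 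Tracking these inequalities carefully is the delicate point. Concretely, we first show $m_{V,\mu-\tau}<0$: if instead $m_{V,\mu-\tau}=0$, the chain gives $m_{V,\mu}\geq E_V(u)>0$, impossible. So $m_{V,\mu-\tau}<0$, strict subadditivity applies with the roles swapped, and we obtain a contradiction in all cases. Therefore $\tau=\mu$.

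\textbf{Step 3: strong convergence and positivity.} With $\tau=\mu$, $u_n\to u$ in $L^2$, hence in $L^q$ for $q\in[2,2^*)$ by interpolation, and $\int F(u_n)\to\int F(u)$. Weak lower semicontinuity of $\vab{\cdot}_V^2$ gives $E_V(u)\le m_{V,\mu}$, and $u\in\mathcal{A}_\mu$ forces equality. Then $\vab{u_n}_V\to\vab{u}_V$; combined with $L^2$ convergence this yields $\Vab{\nabla u_n}_2\to\Vab{\nabla u}_2$, i.e. strong $H^1$ convergence. Since $\vab{u}_V^2<4<8$, $u$ lies in the interior of the open constraint, so it is a critical point of $E_V|_{S_\mu}$ with a Lagrange multiplier $\lambda$. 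From $u\geq0$, $u\not\equiv0$, the strong maximum principle applied to
\[
-\Delta u+\bigl[V+\lambda-f(u)/u\bigr]_+u\geq0
\]
gives $u>0$.

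The main obstacle is the dichotomy exclusion in Step 2, namely the case in which the weak limit has zero energy level at its mass.
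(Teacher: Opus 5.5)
Your proof is correct and follows the same concentration--compactness route as the paper. Vanishing is excluded by Lions' lemma, dichotomy by Brezis--Lieb splitting together with \cref{l:str-ineq}, and strong convergence follows from norm convergence. Two remarks.

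First, the ``remaining case'' in Step 2 is not actually a delicate point. Your chain gives $E_V(u)+m_{V,\mu-\tau}\le m_{V,\mu}$ (here $\tau=\Vab{u}_2^2$ in your notation). Combined with the non-strict subadditivity $m_{V,\mu}\le m_{V,\tau}+m_{V,\mu-\tau}$ from \cref{l:up-lo-EV}, this yields $E_V(u)\le m_{V,\tau}$. Since $u\in\mathcal{A}_\tau$, you get $E_V(u)=m_{V,\tau}$. So $m_{V,\tau}$ is attained, and the ``attained'' clause of \cref{l:str-ineq} gives the contradiction at once. This is exactly how the paper closes the argument, with no case distinction on the sign of $m_{V,\tau}$.

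Note also that your intermediate inequality $m_{V,\mu}<E_V(u)+m_{V,\mu-\tau}$ is written in the wrong direction; you had derived $\geq$. Your ``concretely'' paragraph does reach a valid contradiction, but only by a detour. Similarly, the bounds $\vab{u}_V^2<4$ should read $\le 4$, which is all you need for membership in $\mathcal{A}$.

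Second, replacing $u_n$ by $\vab{u_n}$ at the outset is not harmless for the first assertion. As written, your argument only shows that $\{\vab{u_n}\}$ is compact up to translations, not the original minimizing sequence. Steps 1--3 never use $u_n\ge 0$, so simply drop that replacement. Instead, replace the limit $u$ by $\vab{u}$, which lies in $\mathcal{A}_\mu$ with the same energy, before invoking the strong maximum principle, as the paper does.
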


\begin{proof}
Let $\mu \in (0,\overline{\mu}_1]$ and $\{u_n\}_{n \in \N}$ be any minimizing sequence for $m_{V,\mu} < 0$. 
Due to $E_V(u_n) \to m_{V,\mu} < 0$, as in the proof of \cref{l:str-ineq}, we may verify
\[
\liminf_{n \to +\infty} \sup_{z \in \Z^N} \Vab{u_n}_{L^2(z+Q)} > 0.
\]
Moreover, thanks to the periodicity of $V$, we may suppose 
$\Vab{u_n}_{L^2(Q)} \to c_0>0$ and $u_n \rightharpoonup u_\infty \not \equiv 0$. 
Notice that $\vab{u_n}_V^2 < 4$ holds due to \cref{l:up-lo-EV} and $E_V(u_n) \to m_{V,\mu} < 0$. 
Since the functional $H^1(\R^N) \ni u \mapsto \vab{u}_V^2 \in [0,+\infty)$ is convex due to $\sigma_0=0$, 
it is weakly lower semicontinuous on $H^1(\R^N)$ and
\[
\vab{u_\infty}_V^2 \leq \liminf_{n \to +\infty} \vab{u_n}_V^2 \leq 4. 
\]
In particular, $\Vab{u_\infty}_2^2 \leq \mu$, $u_\infty \in \mathcal{A}_{\Vab{u_\infty}_2^2}$ and 
$m_{V,\Vab{u_\infty}_2^2} \leq E_V(u_\infty)$.

If $0<\Vab{u_\infty}_2^2 < \mu$, then by writing $v_n \coloneq u_n - u_\infty$, the weak convergence $u_n \rightharpoonup u_\infty$ and the Brezis--Lieb lemma give
\[
4  > \vab{u_n}_V^2 = \vab{u_\infty}_V^2 + \vab{v_n}_V^2 + o(1), \quad 
E_V(u_n) = E_V(u_\infty) + E_V(v_n) + o(1), \quad \Vab{v_n}_2^2 \to \mu - \Vab{u_\infty}_2^2. 
\]
Thus, we may suppose $\vab{u_\infty}_V^2 \leq 4$, $\vab{v_n}_V^2 \leq 5$ and $\Vab{v_n}_2^2 \leq \mu - \Vab{u_\infty}_2^2 +o(1)$, hence 
$v_n \in \mathcal{A}_{ \Vab{v_n}_2^2 }$. 
Since $\{v_n\}$ is bounded in $H^1(\R^N)$ and $E_V$ is uniformly continuous on each bounded set of $H^1(\R^N)$, 
letting $n \to + \infty$ yields 
\[
m_{V,\mu} \geq E_V(u_\infty) + \liminf_{n \to +\infty} E_V \ab( \sqrt{ \mu - \Vab{u_\infty}_2^2 } \frac{v_n}{\Vab{v_n}_2} ) \geq  m_{V,\Vab{u_\infty}_2^2} + m_{V, \mu - \Vab{u_\infty}_2^2 }.
\]
\cref{l:up-lo-EV} leads to
\[
m_{V,\mu} = m_{V,\Vab{u_\infty}_2^2} + m_{V,\mu - \Vab{u_\infty}_2^2}, \quad E_V(u_\infty) = m_{V,\Vab{u_\infty}_2^2},
\]
which contradicts \cref{l:str-ineq}. Hence, $\mu = \Vab{u_\infty}_2^2$ and $u_n \to u_\infty$ strongly in $L^2(\R^N)$. 
Furthermore, by 
\[
m_{V,\mu} = \lim_{n \to \infty} E_V(u_n) \geq  E_V(u_\infty) \geq m_{V,\mu},
\]
we have $E_V(u_\infty) = m_{V,\mu}$. 
Since $u_n\to u_\infty$ strongly in $L^2(\R^N)$ and $\{u_n\}$ is bounded in $H^1(\R^N)$, 
the interpolation inequality yields $u_n\to u_\infty$ strongly in $L^q(\R^N)$ for every $q\in(2,2^*)$. By \eqref{A1}, \eqref{nA6} and $V\in L^\infty(\R^N)$, it follows that
\[
\int_{\R^N}F(u_n)\,dx\to\int_{\R^N}F(u_\infty)\,dx,
\qquad
\int_{\R^N}V u_n^2\,dx\to\int_{\R^N}V u_\infty^2\,dx.
\]
These facts together with $E_V(u_n)\to E_V(u_\infty)$ give $\Vab{\nabla u_n}_2^2 \to \Vab{\nabla u_\infty}_2^2$.
Thus, $u_n \to u_\infty$ strongly in $H^1(\R^N)$. 
Finally, since $f$ is odd and $F$ even, $E_V(\vab{u_\infty}) = E_V(u_\infty)$ and $\vab{u_\infty} \in \mathcal{A}_\mu$. 
Hence, we may assume $u_\infty \geq 0$ in $\R^N$. 
Since $u_\infty$ is a minimizer, it is a solution to \eqref{eqV} with some $\lambda \in \R$, 
and the strong maximum principle yields $u_\infty > 0$ in $\R^N$. 
Therefore, $u_\infty$ is the desired minimizer. 
\end{proof}

We are ready to prove \cref{T:whole} \ref{T:whole-i}.

\begin{proof}[Proof of \cref{T:whole} \ref{T:whole-i}]
Let $\mu \in (0,\overline{\mu}_1]$ where $\overline{\mu}_1>0$ is the constant in \cref{l:up-lo-EV}. 
According to \cref{l:attain}, what remains to prove is $m_{V,\mu} < 0$. 
Fix any $\eta \in C^\infty_0(\R^N)$ with
\[
0 \leq \eta \leq 1, \quad 
\eta(x) = \begin{dcases}
	1 & \text{if $\vab{x} \leq 1$}, \\
	0 & \text{if $\vab{x} \geq 2$},
\end{dcases}
\]
and recall a positive $1$-periodic solution $\psi_0$ to $-\Delta u + Vu = 0$ in $\R^N$ from \cref{subsec:-rewr}. 
Consider
\[
z_n (x) \coloneq n^{-N/2} \eta \ab( \frac{x}{n} ) \psi_0(x), \quad 
v_n (x) \coloneq \sqrt{\mu} \frac{z_n}{\Vab{z_n}_2} \in S_\mu. 
\]
In \cite[Lemma 3.4 and Proposition 3.6]{HKS92}, it was shown that 
%
\[
\abs{v_n}_V^2\leq Cn^{-2}, \quad \norm{v_n}_{q}^{q}\geq C n^{-\frac{N}{2}(q-2)} 
\quad \text{for $q \in (2,2^*)$}. 
\]
Though \cite{HKS92} dealt with the case $N \geq 2$, the argument works also for $N=1$. 
Moreover, by \cref{lemgn}, for $q_6$ in \eqref{nA8}, we have 
\[
\norm{v_n}_{q_6}^{q_6} \leq C\mu^{\frac{q_6}{2}(1 - \beta_{q_6})} \abs{v_n}_{V}^{\beta_{q_6} {q_6}}
\leq \tilde C\mu^{\frac {q_6}{2}(1 - \beta_{q_6})} n^{-\beta_{q_6} {q_6}}.
\]
Since $\beta_{q_6} {q_6}=\frac{N}{2}(q_6-2)>2>\frac{N}{2}(q_5-2)$, 
by \eqref{nA8}, we conclude that, for $n \in \mathbb{N}^+$ large enough, 
\[
\abs{v_n}_V^2\leq 6, \quad m_{V,\mu} \leq  E_V(v_n)<0.
\]
Since $V \in L^\infty(\R^N)$ and $\bigcup_{0<\mu \leq \bar{\mu}_1} \mathcal{A}_\mu$ is bounded in $H^1(\R^N)$, 
it is easily seen that $m_{V,\mu} \to 0$ as $\mu \to 0^+$. 
This completes the proof of \cref{T:whole} \ref{T:whole-i}. 
\end{proof}

\subsection{Proof of \cref{T:whole} \ref{T:whole-ii}}

In this subsection, we deal with the existence of mountain pass type critical points of $E_V|_{S_\mu}$ 
when $\mu>0$ is small. 
Throughout this subsection, we always assume \eqref{A1}--\eqref{A4} and \eqref{nA9}. 
To begin with, we remark that from \eqref{A3} and \eqref{nA9}, there exists $C>0$ such that 
\begin{equation}\label{eq:est-f-above}
	f(t)t \leq C \ab( \vab{t}^{2+4/N} + \vab{t}^p ) 
	\quad \text{for all $t \in \R$}.
\end{equation}
Then we need the following lemma on the Lagrange multiplier and the Morse index:

\begin{lemma}\label{lemb1}
	Suppose $N \geq 1$, \eqref{A1}--\eqref{A4} and \eqref{nA9}. 
	For any $M > 0$, there exists $\mu_M > 0$ such that for each $\tau \in [1/2,1]$ and each solution $(u,\lambda)\in H^1(\R^N)\times [0,+\infty)$ to
	\[
	-\Delta u + (V+\lambda)u =  \ab( \tau f_1(u) + f_2(u) ),\quad  u>0 \quad \text{in} \ \R^N, \quad \Vab{u}_2^2 \leq \mu_M, 
	\]
	$\lambda\geq M$ and $\tilde{m}_{\tau,0}(u)\geq 1$ hold.
\end{lemma}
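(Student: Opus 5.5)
The plan is to argue by contradiction for each of the two assertions. Throughout, the mass is small, $\lambda\ge 0$, and $\Omega=\R^N$.

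\textbf{Step 1: a priori blow-up of $\abs{u}_V$.} Let $(u,\lambda)$ solve the equation with $\Vab{u}_2^2\le\mu$ and $\lambda\ge0$. Testing the equation with $u$ and using \eqref{eq:est-f-above} (which also holds for $\tau f_1+f_2$, since $f_1t\ge0$) gives
\[
\abs{u}_V^2+\lambda\Vab{u}_2^2\le C\bigl(\Vab{u}_{2+4/N}^{2+4/N}+\Vab{u}_p^p\bigr).
\]
By \cref{lemgn}, $\Vab{u}_{2+4/N}^{2+4/N}\le C\mu^{2/N}\abs{u}_V^2$ and $\Vab{u}_p^p\le C\mu^{(1-\beta_p)p/2}\abs{u}_V^{\beta_pp}$, where $\beta_pp>2$. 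Since $u\not\equiv0$ forces $\abs{u}_V>0$, for $\mu$ small we get $\abs{u}_V\ge c\,\mu^{-\theta}$ for some $\theta>0$. Hence $\abs{u}_V\to\infty$ as $\mu\to0$.

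\textbf{Step 2: $\lambda\ge M$.} Suppose instead that there are solutions $(u_n,\lambda_n)$ with $\Vab{u_n}_2\to0$ and $\lambda_n\in[0,M)$. Set $M_n=\Vab{u_n}_\infty$.

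1. First, $M_n\to\infty$. If $M_n$ stayed bounded, interpolation with $\Vab{u_n}_2\to0$ would send the right-hand side above to $0$. The left-hand side, however, diverges by Step 1.
2. Next, rescale at a maximum point: $\tilde U_n(y)=M_n^{-1}u_n(\tilde\varepsilon_n y+P_n)$ with $\tilde\varepsilon_n=M_n^{-(p-2)/2}$. Since $\tilde\varepsilon_n^2\lambda_n\to0$ and $\tilde\varepsilon_n^2V\to0$, the arguments in the proof of \cref{lem62} (with $D=\R^N$, since $\Omega=\R^N$) give $\tilde U_n\to\tilde U$ in $C^1_{\rm loc}$. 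The limit solves $-\Delta\tilde U=a_0\tau_\infty\tilde U^{p-1}$ with $\tilde U(0)=1=\Vab{\tilde U}_\infty$.
3. This contradicts \cite{GS}, which requires no Morse index information. Hence $\lambda\ge M$ once $\mu_M$ is small.

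\textbf{Step 3: $\tilde m_{\tau,0}(u)\ge1$.} Suppose instead that there are solutions with $\Vab{u_n}_2\to0$ and $\tilde m_{\tau_n,0}(u_n)=0$. Step 2, applied with $M=M_k\to\infty$ along a diagonal sequence, gives $\lambda_n\to\infty$. Because $T_{u_n}S_\mu$ has codimension one, we also have $m_{\lambda_n,\tau_n}(u_n)\le1$, so \eqref{equn} holds with $\bar k=1$. The blow-up analysis of \cref{lem62,prophk,propbu} carries over verbatim with $\Omega=\R^N$; there is no boundary, so only the case $D=\R^N$ arises. Consequently $U_n\to U$, the ground state of \eqref{limeq}, and $u_n$ decays exponentially away from a single peak $P_n$.

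Now take the exact tangent direction generated by mass-preserving dilations,
\[
\varphi_n\coloneq\tfrac N2u_n+(x-P_n)\cdot\nabla u_n .
\]
An integration by parts shows $(\varphi_n,u_n)_2=0$, so $\varphi_n\in T_{u_n}S_\mu$; the exponential decay guarantees $\varphi_n\in H^1$.

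We then compute $E''_{V,\tau_n}(u_n)[\varphi_n,\varphi_n]+\lambda_n\Vab{\varphi_n}_2^2$ after rescaling by $\varepsilon_n=\lambda_n^{-1/2}$. Up to a positive factor, it converges to
\[
\int_{\R^N}\abs{\nabla\Phi}^2+\Phi^2-(p-1)U^{p-2}\Phi^2\,dx,\qquad \Phi\coloneq\tfrac N2U+x\cdot\nabla U .
\]
The contributions of $V$, of $f_2$, and the error $f_1'(t)-a_0(p-1)t^{p-2}$ (controlled by \eqref{A4} together with the decay) vanish in the limit. This limit equals $e''(1)$ for
\[
e(s)=\tfrac{s^2}{2}\Vab{\nabla U}_2^2+\tfrac12\Vab{U}_2^2-\tfrac{s^{N(p-2)/2}}{p}\Vab{U}_p^p .
\]
By the Pohozaev identity, $s=1$ is a critical point of $e$. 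Since $p>2+4/N$, one finds $e''(1)=\bigl(2-\tfrac{N(p-2)}{2}\bigr)\Vab{\nabla U}_2^2<0$. Therefore $\varphi_n$ spans a one-dimensional space on which the approximate Morse form is negative, so $\tilde m_{\tau_n,0}(u_n)\ge1$, a contradiction.

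\textbf{Main obstacle.} I expect the delicate part to be Step 3. There one must justify the convergence of the quadratic form along $\varphi_n$, whose weight $x-P_n$ grows, and control the $f_1'$ error on the region where $u_n$ is neither large nor small. I would handle both with the uniform exponential bound \eqref{eqprop23e}, which dominates $\abs{x-P_n}^2u_n^2$ and $\abs{x-P_n}^2\abs{\nabla u_n}^2$ after rescaling. For the gradient term this requires the gradient analogue of \eqref{eqprop23e}, obtained by local elliptic estimates on unit balls in rescaled variables.
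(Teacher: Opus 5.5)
Your proof is correct. For $\lambda\ge M$ it is essentially the paper's, but for the Morse-index assertion you take a genuinely different route. On $\lambda\ge M$, the paper also tests the equation with $u_n$ and uses \eqref{eq:est-f-above} and \cref{lemgn}. It gets $1\le C\|u_n\|_2^{4/N}(1+\|u_n\|_\infty^{p-2-4/N})$, hence $\|u_n\|_\infty\to\infty$, and then the blow-up at a maximum point together with \cite{GS} forces $\lambda_n\to\infty$.

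The difference is in the Morse-index part. The paper never uses the exact dilation direction $\frac N2u_n+(x-P_n)\cdot\nabla u_n$. It starts from $Z=\frac N2U+x\cdot\nabla U$ for the limit profile and approximates it in $H^1$ by a compactly supported $\widehat Z_{n_0}$ with $(\widehat Z_{n_0},U)_2=0$ and $Q_U(\widehat Z_{n_0})<0$. It then subtracts a multiple of a fixed bump $\eta$ so that the result is exactly $L^2$-orthogonal to $U_n$, and rescales back. All test functions then live in one fixed compact set. So the paper needs only the local convergence $U_n\to U$ in $C^1_{\rm loc}$, plus decay of the limit $U$ (obtained as in \cref{lem62} from $m_{\lambda_n,\tau_n}(u_n)\le1$ and \cref{R:entire}). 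The quadratic-form estimate is then the one in \eqref{eqlem21m1}--\eqref{eqlem21hx2}.

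Your route gives an exactly tangent vector with no approximation or correction step, but it pays with global control. You need \cref{prophk,propbu} transplanted to $\Omega=\R^N$ with $\tau_n\to\tau_\infty$ instead of $\tau_n\to1^-$. You also need uniform exponential decay of $U_n$, $\nabla U_n$ and $D^2U_n$. The Hessian is needed too, because the $H^1$ norm of $\varphi_n$ involves $|x-P_n|\,|D^2u_n|$, not only $|x-P_n|\,|\nabla u_n|$. Finally you need global convergence $\Phi_n\to\Phi$ in $H^1(\R^N)$. All of this is available from \eqref{eqprop23e} with $k=1$ and local $W^{2,2}$ estimates on unit balls. The paper itself uses the whole-space version of \cref{propbu} in the proof of \cref{T:whole}~\ref{T:whole-ii}. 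So there is no gap, only a heavier argument.

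One correction: \eqref{A4} is only a one-sided $\liminf$ bound, so the $f_1'$ error does not vanish in the limit. What you actually get, and all you need, is that the $\limsup$ of the rescaled form is at most $Q_U(\Phi)$. This follows from $f_1'(t)\ge a_0(p-1-\delta)t^{p-2}$ for large $t$ and Fatou's lemma applied to $\int U_n^{p-2}\Phi_n^2$.
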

\begin{proof}
	We argue by contradiction and suppose that for some $N \geq 1$, 
	there exist $\{(u_n,\lambda_n)\}\subset H^1(\R^N)\times [0,+\infty)$ and $\{\tau_n\}$ such that 
	\[
	\begin{aligned}
		&-\Delta u_n + (V+\lambda_n) u_n =  \ab( \tau_n f_1(u_n) + f_2(u_n) ) \quad \text{in} \ \R^N, 
		\quad u_n > 0 \quad \text{in} \ \R^N, 
		\\
		&0 < \Vab{u_n}_2^2 \eqcolon \mu_n \to 0, 
		\quad \tau_n \to \tau_\infty \in \ab[ \frac{1}{2} , 1 ]. 
	\end{aligned}
	\]
	Moreover, $\{(u_n,\lambda_n)\}$ satisfies $\lambda_n \leq M$ for some $M>0$ or $\tilde{m}_{\tau_n,0}(u_n) = 0$. 
	Elliptic regularity yields $u_n\in C^{1,\alpha}(\R^N)$ for all $\alpha \in (0,1)$ and $u_n(x) \to 0$ as $\abs{x}\to +\infty$. 
	Thus, we can pick $P_n \in \R^N$ such that $u_n(P_n)=\norm{u_n}_\infty$.

	We first show that $\norm{u_n}_\infty \to +\infty$. By \eqref{eq:est-f-above} and \cref{lemgn},
	\[
	\begin{aligned}
		\abs{u_n}_V^2\leq \abs{u_n}_V^2+\lambda_n\norm{u_n}^2_2&=\int_{\R^N}\tau_n f_1(u_n)u_n + f_2(u_n)u_n \,dx \\
		&\leq C\norm{u_n}_{2+4/N}^{2+4/N}+C\norm{u_n}_{p}^{p}\\
		&\leq C\norm{u_n}_{2+4/N}^{2+4/N} \ab(1+\norm{u_n}_\infty^{p-2-4/N})\\
		& \leq C\Vab{u_n}_2^{4/N}\abs{u_n}_V^{2} \ab(1+\norm{u_n}_\infty^{p-2-4/N}).
	\end{aligned}
	\]
	By $\Vab{u_n}_2^2 \to 0$ and $u_n \not \equiv 0$, 
	we conclude that $\norm{u_n}_\infty \to +\infty$ as $n\to +\infty$. 
	From this fact and the blow-up argument (see the proof of \cref{lem62}) with \cite{GS}, 
	it follows that $\lambda_n \to +\infty$ as $n \to +\infty$. 
	Hence, up to a subsequence, $\{(u_n,\lambda_n)\}$ satisfies $\tilde{m}_{\tau_n,0}(u_n) = 0$. 
	
	To derive a contradiction, we will find $\psi_n\in T_{u_n}S_{\mu_n}$ such that 
	$Q_{\lambda_n,\tau_n,u_n}(\psi_n)<0$ for $n \in \mathbb{N}^+$ large enough. 
	To this end, let $\varepsilon_n \coloneq \lambda_n^{-\frac{1}{2}}$ and $U_n(y) \coloneq (\tau_\infty a_0)^{\frac{1}{p-2}}\varepsilon_n^{\frac{2}{p-2}}u_n(\varepsilon_ny+P_n)$. 
	By repeating the argument in the proof of \cref{lem62}, we know that, up to a subsequence, $U_n \to U$ in $C^1_{\text{loc}}(\R^N)$, where $U \in H^1(\R^N)$ 
	is a positive radial solution to 
	\begin{equation*}
		\begin{dcases}
			-\Delta U + U=  U^{p-1} ,\quad U>0 \quad  \text{in }\R^N, 
			\\ U(0)=\norm{U}_\infty.
		\end{dcases}
	\end{equation*}
	Moreover, by elliptic regularity, $U \in C^\infty(\R^N)$ and for some $\delta>0$,
	\[
	\vab{U(x)}+\vab{\nabla U(x)}+\vab{D^2U(x)} + \vab{D^3U(x)}
	\le C e^{-\delta|x|}.
	\]
	Hence, $\gamma(\theta) \coloneq \theta^{N/2} U(\theta \cdot) \in C^2( [1/2,2] , H^1(\R^N) )$ and $\Vab{\gamma(\theta)}_2^2 = \Vab{U}_2^2$.

	Put 
	\[
	\begin{aligned}
		&Z \coloneq \gamma'(1) = \frac{N}{2}U + x \cdot \nabla U \in T_U S_{\Vab{U}_2^2},  \\
		&	I(u) \coloneq \frac{1}{2} \Vab{\nabla u}_2^2 + \frac{1}{2} \Vab{u}_2^2 - \frac{1}{p} \Vab{u}_p^p, \quad 
		Q_u(\varphi) \coloneq \int_{\R^N} \vab{\nabla \varphi}^2 + \varphi^2 - (p-1) |u|^{p-2} \varphi^2 \, dx. 
	\end{aligned}
	\]
	From $I'(U) = 0$, it follows that 
	\[
	Q_U(Z) = I''(U)[Z,Z] = \frac{d^2}{d\theta^2} I(\gamma(\theta))|_{\theta = 1}.
	\]
	On the other hand, the Pohozaev identity gives 
	\[
	\Vab{\nabla U}_2^2 = N \ab( \frac{1}{2} - \frac{1}{p} ) \Vab{U}_p^p.
	\]
	Since $N+2 - Np/2 < 0$ holds thanks to $p> 2+4/N $, we have 
	\[
	\begin{aligned}
		\frac{d^2}{d\theta^2} I(\gamma(\theta))|_{\theta = 1} 
		&= 
		\frac{d^2}{d \theta^2} 
		\ab[ \frac{\theta^2}{2} \Vab{\nabla U}_2^2 + \frac{1}{2} \Vab{U}_2^2 - \frac{\theta^{Np/2-N}}{p} \Vab{U}_p^p  ]\bigg|_{\theta=1}
		\\
		&= \Vab{\nabla U}_2^2 - N \ab( \frac{1}{2} - \frac{1}{p} ) \ab( \frac{Np}{2} - N-1 ) \Vab{U}_p^p
		\\
		&= N \ab( \frac{1}{2} - \frac{1}{p} ) \ab[ N+2 - \frac{Np}{2} ] \Vab{U}_p^p < 0.
	\end{aligned}
	\]
	Hence, $Q_U(Z) < 0$ and $(U,Z)_2 = 0$.

	By the density of $C_0^\infty(\mathbb R^N)$ in
	$H^1(\mathbb R^N)$, we can choose
	\[
	Z_n\in C_0^\infty(\mathbb R^N)\qquad \text{and}\qquad 
	Z_n\to Z
	\quad\text{in }H^1(\mathbb R^N).
	\]
	Fix a nonnegative function $\eta\in C_0^\infty(\mathbb R^N)$ with $\Vab{\eta}_2=1$. 
	Since $U>0$, $(U,\eta)_2 > 0$ holds. 
	Set
	\[
	\widehat Z_n
	:=Z_n-
	\frac{ (Z_n,U)_2 }{(U,\eta)_2}\eta.
	\]
	It is easily seen from $\Vab{Z_n-Z}_{H^1} \to 0$ and $(U,Z)_2 = 0$ that 
	\[
	\widehat Z_n\in C_c^\infty(\mathbb R^N), \quad (\widehat{Z}_n , U)_2 = 0, \quad 
	\Vab{ \widehat{Z}_n - Z }_{H^1} \to 0.
	\]
	Since $Q_U$ is continuous on
	$H^1(\mathbb R^N)$, $Q_U(\widehat Z_n)\to Q_U(Z)<0$ as $n \to +\infty$. 
	We fix a sufficiently large $n_0 \in \mathbb{N}^+$ so that $Q_U(\widehat Z_{n_0})<0$ and set
	\[
	\varphi_n \coloneq \widehat{Z}_{n_0} - \frac{ ( U_n , \widehat{Z}_{n_0} )_2 }{(U_n,\eta)_2} \eta \in C^\infty_0(\R^N). 
	\]
	It is not difficult to check that $\varphi_n$ is well-defined due to $U_n>0$, and that 
	\[
	\ab( U_n ,\varphi_n )_2 = 0, \quad 
	\Vab{\varphi_n - \widehat{Z}_{n_0}}_{H^1} \to 0.
	\]
	
	Define
	\[
	\psi_n(x) \coloneq
	\varepsilon_n^{-(N-2)/2}
	\varphi_n\left(\frac{x-P_n}{\varepsilon_n}\right) \in C^\infty_0(\R^N).
	\]
	Direct computations yield 
	\[
	\int_{\mathbb R^N}u_n\psi_n\,dx
	=
	(\tau_\infty a_0)^{-1/(p-2)}
	\varepsilon_n^{(N+2)/2-2/(p-2)}
	\int_{\mathbb R^N}U_n\varphi_n\,dy
	=0.
	\]
	Thus $\psi_n\in T_{u_n}S_{\mu_n}$. Notice that 
	\[
	\supp \varphi_n \subset \supp \widehat{Z}_{n_0} \cup \supp \eta \quad \text{for every $n \in \N$}. 
	\]
	Let $\delta \in (0,1)$ be a small constant fixed later. 
	Since $\sup_n \Vab{\varphi_n}_\infty < \infty$, 
	applying the arguments in \eqref{eqlem21m1}--\eqref{eqlem21hx2}, we obtain 
	\[
	\begin{aligned}
		Q_{\lambda_n,\tau_n,u_n} (\psi_n) 
		\leq Q_{U_n} (\varphi_n) + \delta \int_{\R^N} U_n^{p-2} \varphi_n^2 \, dx + o(1).
	\end{aligned}
	\]
Since $\supp\varphi_n$ is contained in a fixed compact set, $U_n\to U$ in $C^1_{\rm loc}(\R^N)$ and $\varphi_n\to\widehat Z_{n_0}$ in $H^1(\R^N)$, we have
\[
C_0\coloneq\sup_n\int_{\R^N}U_n^{p-2}\varphi_n^2\,dx<+\infty.
\]
By $Q_{U_n}(\varphi_n)\to Q_U(\widehat Z_{n_0})<0$, we may choose
\[
0<\delta<\frac{|Q_U(\widehat Z_{n_0})|}{2\max\{C_0,1\}},
\]
and then $Q_{\lambda_n,\tau_n,u_n}(\psi_n)<0$ holds for all sufficiently large $n$.
Since $\psi_n \in T_{u_n}S_{\mu_n}$ and 
	\[
	\lambda_n = - \frac{E_{V,\tau_n}'(u_n) u_n }{\Vab{u_n}_2^2},
	\]
	we obtain a contradiction to $\widetilde m_{\tau_n,0}(u_n)=0$ for all $n \in \mathbb{N}^+$.
\end{proof}
Now we give a proof of \cref{T:whole} \ref{T:whole-ii}. 

\begin{proof}[Proof of \cref{T:whole} \ref{T:whole-ii}]
	Let $\mu_1>0$ be given by \cref{lemb1}
	with $M=1$. 
	Choose $\overline{\mu}_2 \in(0,\mu_1]$ sufficiently
	small that the mountain pass geometry of \cref{lemmps}
	holds for every $\mu\in(0,\overline{\mu}_2]$ and
	$\tau\in[1/2,1]$. Fix $\mu\in(0,\overline{\mu}_2]$. 
	
	By repeating the argument in the proofs of \cref{lemmps,th51,th52}, for almost every $\tau \in [1/2,1]$, there exist sequences $\{u_n\}\subset S_\mu$ and $\zeta_n\to0^+$ such that, as $n \to +\infty$,
	\begin{enumerate}[label=\rm(\roman*)]
		\item $E_{V,\tau}(u_n) \to c_\tau>0$;
		\item \label{bps2r}$\norm{E_{V,\tau}'(u_n) }_{ T_{u_n}^* S_{\mu} }\to 0$;
		\item \label{bps3r} $u_n \geq 0$ and $\{u_n\}$ is bounded in $H^1(\R^N)$;
        \item \label{bps4r} $\widetilde m_{\tau,\zeta_n}(u_n)\leq1$;
	\end{enumerate}
	a sequence $\lambda_n \coloneq -\frac{1}{\mu}E'_{V,\tau}(u_n)u_n$ is bounded
	and, if  there exists a subspace $W_n \subset H^1(\R^N)$ such that 
	\begin{equation}\label{eqineqmorse-r}
		E''_{V,\tau}(u_n)[w,w]+\lambda_n(w,w)_2<-\zeta_n \norm{w}^2_{H^1} \quad \text{for all }w \in W_n\setminus\{0\},
	\end{equation}
	then $\dim W_n \leq 2$ holds. Up to a subsequence, we may assume 
	\begin{equation*}
		\begin{aligned}
			u_n &\rightharpoonup  u_\tau  \quad \text{in } H^1(\R^N),\\
			u_n &\to u_\tau \quad \text{in } L^\nu_{\text{loc}}(\R^N) \text{ for all } \nu\in [1,2^*),\\
			u_n &\to u_\tau \quad \text{a.e. in } \R^N,
		\end{aligned}
	\end{equation*}
	and $\lambda_n \to \lambda_\tau \in \R$ without loss of generality. 
	It is clear that $u_{\tau}\in H^{1}(\R^N)$ solves 
	\begin{equation*}
		-\Delta u_\tau+V(x)u_\tau+\lambda_\tau u_\tau=\tau f_1(u_\tau)+ f_2(u_\tau).
	\end{equation*}
	
	Adapting the argument in the proof of \cref{lem42},
	we show that ${\lambda_\tau}\geq0$. We argue by contradiction and suppose that ${\lambda_\tau}<0$. 
	Since $\inf\sigma\ab(-\Delta +V)=0$, we may find $\varphi_{\lambda_\tau} \in C_0^\infty (\R^N)$ such that 
	\[
	\| \varphi_{\lambda_\tau} \|_2 = 1, \quad | \varphi_{\lambda_\tau} |_V^2 < -\frac{{\lambda_\tau}}{3}. 
	\]
	From the compactness of $\supp \varphi_{\lambda_\tau}$ and \eqref{A2}, for every $n \in \mathbb{N}^+$, 
	there exist $\alpha_i \in \Z^N$ ($i=1,2,3$) such that 
	\begin{equation}\label{eqf2''+f1''r}
		\begin{aligned}
			&\supp \varphi_{\lambda_\tau} ( \cdot - \alpha_{i}) \cap \supp \varphi_{\lambda_\tau} (\cdot - \alpha_{j}) = \emptyset\quad \text{for every $1\leq i <j \leq 3$},
			\\
			& \int_{\R^N}| f_2'(u_n(x)) + \tau f_1'(u_n(x)) |\abs{\varphi_{\lambda_\tau}(x-\alpha_i)}^2\,dx \leq -\frac{\lambda_\tau}3 \quad \text{for every $i=1, 2,3$}. 
		\end{aligned}
	\end{equation}
	For any $\varphi_{{\lambda_\tau},i}(x) \coloneq \varphi_{\lambda_\tau}(x - \alpha_{i})$, the periodicity of $V$ leads to 
	\[
	| \varphi_{{\lambda_\tau},i} |_{V}^2 = | \varphi_{\lambda_\tau} |_V^2 < -\frac{{\lambda_\tau}}{3}.
	\]
	Choose $\tilde{C}_{\lambda_\tau} > 0$ so that 
	\[
	\tilde{C}_{\lambda_\tau} \Vab{\varphi_{\lambda_\tau}}_{H^1}^2 \leq \Vab{\varphi_{\lambda_\tau}}_2^2. 
	\]
	By \eqref{eqf2''+f1''r} and \( \norm{\varphi_{\lambda_\tau}}_2 = 1 \),
\[
\begin{aligned}
			&\int_{\R^N} | \nabla \varphi_{{\lambda_\tau}, i} |^2 + V(x) \varphi_{{\lambda_\tau},i}^2 + {\lambda_\tau} \varphi_{{\lambda_\tau},i}^2 \,dx-
			\int_{\R^N} \ab[ f_2'(u_n) + \tau f_1'(u_n) ] \varphi_{{\lambda_\tau},i}^2 \, dx\\
			&\qquad\leq |\varphi_{{\lambda_\tau},i}|_V^2 + {\lambda_\tau} \|\varphi_{{\lambda_\tau},i}\|_2^2 - \frac{\lambda_\tau}{3}\|\varphi_{{\lambda_\tau},i}\|_2^2\\
			&\qquad< \frac{\lambda_\tau}{3}\norm{\varphi_{\lambda_\tau}}_2^2 \leq \frac{\tilde C_{\lambda_\tau}\lambda_\tau}{3}\Vab{\varphi_{\lambda_\tau}}_{H^1}^2<0.
		\end{aligned}\]
	Set \(Y_{d,{\lambda_\tau}}\coloneq \operatorname{span}  \set{ \varphi_{{\lambda_\tau},1},\varphi_{\lambda_\tau,2}, \varphi_{{\lambda_\tau},3} } \). 
	It is easily seen that $\dim Y_{d,{\lambda_\tau}} = 3$, which contradicts \eqref{eqineqmorse-r} for $n \in \mathbb{N}^+$ large enough.

	Next, we exclude the vanishing case. We argue by contradiction and suppose that, for each $R>0$, 
	\[\lim_{n \to \infty}\sup_{y \in \Z^N} \Vab{u_n}_{L^2(y+Q)}  =0, \quad \text{where $Q=[0,1]^N$}.\]
	By Lions' lemma, we have, as $n \to +\infty$,
	\[
	u_n \to0\quad\text{in }L^s(\mathbb R^N)
	\qquad\text{for every }2<s<2^*.
	\]
	This fact and \eqref{A1}--\eqref{A3} give 
	\begin{equation}\label{eqrvanish}
		\int_{\mathbb R^N}f_2(u_n)u_n+{\tau}f_1(u_n)u_n\,dx\to0 \quad \text{and}\quad     \int_{\mathbb R^N}F_2(u_n)+{\tau}F_1(u_n)\,dx\to0.
	\end{equation}
	Testing
	$E'_{V,\tau}(u_n)+{\lambda_\tau}(u_n,\cdot)_2=o(1)$
	in $H^{-1}(\mathbb R^N)$ with $u_n$, and using the
	boundedness of $\{u_n\}$ in $H^1(\mathbb R^N)$, we obtain
	\begin{equation*}\label{eqrvanishun}
		|u_n|_V^2+{\lambda_\tau}\|u_n\|_2^2=o(1).
	\end{equation*}
Since ${\lambda_\tau}\geq0$, we have $|u_n|_V^2\to0$. This together with \eqref{eqrvanish} implies $E_{V,\tau}(u_n)\to0$, which contradicts $E_{V,\tau}(u_n) \to c_\tau>0$.
	
	Since $\{u_n\}$ is not vanishing, there exists $\{y_n\}\subset \Z^N$ such that
	\begin{equation*}\label{eqnonva}
		\limsup_{n \to \infty} \Vab{u_n}_{L^2(y_n+Q)} \geq \delta.
	\end{equation*}
	By the periodicity of $V$, we may assume $y_n = 0$. Then, up to a subsequence, we may assume that, as $n \to +\infty$,
	\begin{equation*}\label{eqrvn}
		\begin{aligned}
			&u_n \rightharpoonup  u_\tau \not\equiv 0 \quad &&\text{in } H^1(\R^N),\\
			&u_n \to u_\tau \quad &&\text{in } L^\nu_{\text{loc}}(\R^N) \text{ for all } \nu\in [1,2^*),\\
			&u_n \to u_\tau \quad &&\text{a.e. in } \R^N. 
		\end{aligned}
	\end{equation*}
	It is clear that $u_{\tau}\in H^{1}(\R^N)$ solves 
	\begin{equation*}
		-\Delta u_\tau+V(x)u_\tau+\lambda_\tau u_\tau=\tau f_1(u_\tau)+ f_2(u_\tau).
	\end{equation*}
	By using strong maximum principle as in the proofs of \cref{th51,th52}, we see that $u_\tau>0$. 
	Therefore, \cref{lemb1} implies $\lambda_\tau\geq 1$. Moreover, Fatou's lemma yields 
	\[
	0<\|u_\tau\|_2^2\leq\mu \leq \overline{\mu}_2.
	\]

	To prove $u_n \to u_\tau$ strongly in $H^1(\R^N)$, 
	consider $r_n \coloneq u_n-u_\tau$. 
	Our aim is to prove that $\{r_n\}$ vanishes, that is, 
	\[
	\lim_{n \to +\infty} \sup_{z \in \Z^N} \Vab{r_n}_{L^2(z+Q)} = 0. 
	\]
	If this does not hold, then up to a subsequence, there exists $\{k_n\}\subset \Z^N$ such that
	\begin{equation*}
		\limsup_{n \to +\infty} \Vab{r_n}_{L^2(k_n+Q)} \geq \delta'.
	\end{equation*}
	By the exact same argument, we see that, as $n \to +\infty$,
	\begin{equation}\label{eqrvnkn}
		\begin{aligned}
			&u_n(\cdot+k_n)-u_\tau(\cdot+k_n) \rightharpoonup  u^2_\tau \not\equiv 0 \quad &&\text{in } H^1(\R^N),\\
			&u_n(\cdot+k_n)-u_\tau(\cdot+k_n)   \to u^2_\tau \quad &&\text{in } L^\nu_{\text{loc}}(\R^N) \text{ for all } \nu\in [1,2^*),\\
			&u_n(\cdot+k_n)-u_\tau(\cdot+k_n)   \to u^2_\tau \quad &&\text{a.e. in } \R^N. 
		\end{aligned}
	\end{equation}
	Since $u_n(\cdot+z_n) - u_\tau(\cdot+z_n) \rightharpoonup 0$ weakly in $H^1(\R^N)$ for any bounded sequence $\{z_n\} \subset \Z^N$, 
	we see that $|k_n|\to+\infty$. Moreover, it is clear that
	$u^2_\tau>0$ satisfies
	\begin{equation*}
		-\Delta u_\tau^2+V(x) u_\tau^2+\lambda_\tau u_\tau^2=\tau f_1(u_\tau^2)+ f_2(u_\tau^2), 
		\quad \Vab{u^2_\tau}_2^2 \leq \mu \leq \overline{\mu}_2. 
	\end{equation*}
	Put $u^1_\tau \coloneq u_\tau$. By \cref{lemb1},
	\[
	\tilde m_{\tau,0}(u^i_\tau)\geq1,
	\qquad i=1,2.
	\]
	From the density of $C^\infty_0(\R^N)$ in $H^1(\R^N)$,
	there exist $\phi_i,\eta_i\in C_0^\infty(\mathbb R^N)$
	such that
	\[
	\int_{\mathbb R^N}u^i_\tau\phi_i\,dx=0,
	\qquad
	Q_{\lambda_\tau,\tau,u^i_\tau}(\phi_i)<0,
	\qquad
	\int_{\mathbb R^N}u^i_\tau\eta_i\,dx\neq0.
	\]
	Set $k_n^1 \coloneq 0$, $k_n^2 \coloneq k_n$, and define
	\[
	\xi_n^i(x) \coloneq 
	\phi_i(x-k_n^i)
	- \frac{\ab( u_n, \phi_i(\cdot - k^i_n) )_2}{ (u_n , \eta_i(\cdot - k^i_n))_2 }
	\eta_i(x-k_n^i).
	\]
	Then, for $n \in \mathbb{N}^+$ large enough, $\xi_n^i$ is well-defined and $\xi_n^i\in T_{u_n}S_\mu$.
	Moreover, by \eqref{eqrvn} and \eqref{eqrvnkn}, $\xi_n^i(\cdot + k^i_n) \to \phi_i$ in $H^1(\R^N)$ and 
	$\{\xi^i_n\}$ is bounded in  $L^\infty (\R^N)$. 
	Let $K_i \coloneq \supp \phi_i\cup \supp \eta_i$ and $W_{n,i} \coloneq\operatorname{span}\{\xi_n^i\}$ 
	for $i=1,2$ and $n \in \mathbb{N}$ large enough.
	For $j=1,2$, \eqref{A2} and H\"older's inequality with $u_n \to u_\tau$ in $L^\nu_{\text{loc}}(\R^N)$ with $\nu \in [1,2^*)$ lead to 
	\begin{equation*}
		\begin{aligned}
			\max_{ w \in W_{n,1}, \norm{ w}_{H^1}^2 = 1 }
			\int_{\R^N} \abs{f_j'(u_n)-f_j'(u_\tau)}w^2\, dx
			\leq \ab( \max_{w \in W_{n,1}, \norm{ w}_{H^1}^2 = 1  }\norm{w}_\infty^2 )\int_{K_1}\abs{f_j'(u_n)-f_j'(u_\tau)}\, dx \to 0,
		\end{aligned}
	\end{equation*}
	and, similarly,
	\begin{equation*}
		\begin{aligned}
			&\max_{ w \in W_{n,2}, \norm{w}_{H^1}^2 = 1 }
			\int_{\R^N} \abs{f_j'(u_n(x))-f_j'(u_\tau^2(x-k_n))}\abs{w(x)}^2\, dx \to 0. 
		\end{aligned}
	\end{equation*}
	Since $|k_n|\to+\infty$,  $\supp \xi^1_n \cap \supp \xi^2_n = \emptyset$ and $\dim W_n = 2$ for $n \in \mathbb{N}^+$ large enough, 
	where $W_n \coloneq W_{n,1} \oplus W_{n,2} \subset T_{u_n} S_\mu$. 
	Hence, there exists
	$\kappa>0$ such that for $n \in \mathbb{N}^+$ large enough,
	\[
	Q_{\lambda_n,\tau,u_n}(\xi)< - \kappa
	\qquad
	\text{for every }
	\xi\in W_n \text{ and }\norm{\xi}_{H^1}^2=1,
	\]
	that is,
	\[
	Q_{\lambda_n,\tau,u_n}(\xi)
	\leq-\kappa\|\xi\|_{H^1}^2
	\qquad
	\text{for every } \xi \in W_n \subset T_{u_n} S_\mu.
	\]
	This contradicts
	$\tilde m_{\tau,\zeta_n}(u_n)\leq1$ and $\{r_n\}$ vanishes.

	Since $\Vab{r_n}_q \to 0$ holds for every $q \in (2,2^*)$ via Lions' lemma, 
	by  
	\[
	o(1) = E_{V,\tau}'(u_n) + \lambda_\tau (u_n , \cdot)_2, \quad 0 = E_{V,\tau}'(u_\tau) + \lambda_\tau (u_\tau,\cdot)_2,
	\]
	as in the proof of \cref{lemurhoconverge}, we obtain 
	\[
	|r_n|_V^2+\lambda_\tau\|r_n\|_2^2\to0  \quad \text{as }n \to +\infty.
	\]
	From $\lambda_\tau \geq 1$, \eqref{eqineqmorse-r} and  \cite[Remark 2.6]{CGJT}, it follows that 
\[
u_n\to u_\tau\quad\text{in }H^1(\mathbb R^N), \quad \tilde{m}_{\tau,0}(u_\tau)\leq 1, \quad \text{and}
\quad \|u_\tau\|_2^2=\mu.
\]
On the other hand, \cref{lemb1} gives $\tilde{m}_{\tau,0} (u_\tau) = 1$.

	Let $\tau_n\to1^-$, and let
	\(
	\{(u_{\tau_n},\lambda_{\tau_n})\}
	\subset S_\mu\times(0,+\infty)
	\)
	be the sequence of positive solutions constructed in the preceding argument. Thus, for each $n\in\mathbb N^+$,
	\[
	\begin{dcases}
		-\Delta u_{\tau_n}+V(x)u_{\tau_n}
		+\lambda_{\tau_n}u_{\tau_n}
		=\tau_n f_1(u_{\tau_n})+f_2(u_{\tau_n})
		\quad\text{in }\mathbb R^N,\\
		u_{\tau_n}>0\quad\text{in }\mathbb R^N,\\
		\displaystyle\int_{\mathbb R^N}|u_{\tau_n}|^2\,dx=\mu.
	\end{dcases}
	\]
	We show that $\{u_{\tau_n}\}$ is bounded in $H^1(\mathbb R^N)$
	by adapting the argument used in the proof of \cref{th1}.
	We argue by contradiction, and suppose that
	$|u_{\tau_n}|_V^2\to+\infty$. By \eqref{A1} and the definition of $f_1$ given in \eqref{eqdeff1f2}, for every $M>0$, there exists $C_M>0$ such that
	\[
	\abs{f_1(t)t} \leq C_M\abs{t}^2\quad \text{and}\quad \abs{F_1(t)} \leq C_M\abs{t}^2\quad \text{for every $t \in \R$ with  $\abs{t}\leq M$}.
	\]
	By repeating the argument in the proofs of \cref{th1,th2}, and choosing $\delta>0$ and
	$M_\delta>0$ as in \eqref{eqF1f1}, we obtain
	\[
	\begin{aligned}
		\lambda_{\tau_n}\mu =&\int_{\R^N}\tau_n f_1(u_{\tau_n})u_{\tau_n}-2\tau_n F_1(u_{\tau_n})+f_2(u_{\tau_n})u_{\tau_n}-2F_2(u_{\tau_n})\,dx-2c_{\tau_n}\\
		\geq& -C\mu-2c_{\tau_n}+\int_{ \set{u_{\tau_n} \leq M_\delta }}\tau_n f_1(u_{\tau_n})u_{\tau_n}-2\tau_n F_1(u_{\tau_n})\,dx\\
		&+\int_{ \set{u_{\tau_n} >M_\delta }}\tau_n f_1(u_{\tau_n})u_{\tau_n}-2\tau_n F_1(u_{\tau_n})\,dx \\
		\geq&  -C\mu-2c_{\tau_n}+\ab(p\frac{a_0-\delta}{a_0+\delta}-2)\int_{ \set{u_{\tau_n} >M_\delta }}\tau_n F_1(u_{\tau_n})\,dx\\
		\geq& -C\mu -2c_{\tau_n}+ \ab(p\frac{a_0-\delta}{a_0+\delta}-2)\int_{\R^N }\tau_n F_1(u_{\tau_n})\,dx \to +\infty \quad \text{as }n \to +\infty.
	\end{aligned}
	\] 
Therefore, if $\{u_{\tau_n}\}$ were unbounded, 
then the above estimate would imply $\lambda_{\tau_n}\to+\infty$, and the blow-up argument in the proof of \cref{propbu} leads to, after passing to a subsequence,
\[
\lambda_{\tau_n}^{\frac N2-\frac{2}{p-2}}\|u_{\tau_n}\|_2^2
\longrightarrow a_0^{-\frac{2}{p-2}}k\|U\|_2^2<+\infty
\]
for some $k\in\{1,2\}$. This is impossible because $p>2+4/N$ and $\frac N2-\frac{2}{p-2}>0$, 
whereas $\|u_{\tau_n}\|_2^2=\mu>0$ is fixed. Hence $\{u_{\tau_n}\}$ is bounded in $H^1(\R^N)$.
Now, we can exploit the argument in the beginning of the proof, 
	and find $\{z_n\} \subset \Z^N$ and $(u_\infty,\lambda_\infty) \in H^1(\R^N) \times \R$ such that, 
	up to a subsequence, $u_{\tau_n} (\cdot + z_n) \to u_\infty$ strongly in $H^1(\R^N)$ and $\lambda_{\tau_n} \to \lambda_\infty$.
Moreover, the preceding construction gives $\lambda_{\tau_n}\geq1$ for every $n$, and hence $\lambda_\infty\geq1>0$. Since $u_{\tau_n}(\cdot+z_n)\to u_\infty$ strongly in $H^1(\R^N)$ and $\tau_n\to1$,  \eqref{A2} implies
\[
E_V(u_\infty)=\lim_{n\to\infty}E_{V,\tau_n}(u_{\tau_n})=\lim_{n\to\infty}c_{\tau_n}\geq c_1>0.
\]
In addition, \cref{lemb1} and $\tilde{m}_{\tau_n,0} (u_{\tau_n}) = 1$ lead to $\tilde{m}_{1,0} (u_\infty) = 1$. 
Finally, from the argument in \cref{lemmps}, we also observe $c_1=c_1(\mu) \to +\infty$ as $\mu \to 0^+$, 
which completes the proof. 
\end{proof}

\subsection{Proof of \cref{T:whole} \ref{T:whole-iii}}

This subsection is devoted to the proof of \cref{T:whole} \ref{T:whole-iii}. 
Under \eqref{A6}, since $tf(t) > 0$ holds for every $t \in \R \setminus \{0\}$, 
it is not necessary to introduce $f_1$ and $f_2$ in \eqref{eqdeff1f2}, and we define 
\[
E_{V,\tau} (u) \coloneq \frac{1}{2} \vab{u}_V^2 - \tau \int_{\R^N} F(u) \, dx.
\]
It is easily seen that \cref{lemmps} holds and the monotonicity trick in \cite{BCJS} can be applied for $\{E_{V,\tau}\}_{\tau \in [1/2,1]}$. 
In what follows, the same notations for $m_{\lambda,\tau} (u)$, $Q_{\lambda,\tau,u} (\varphi)$ and $\tilde{m}_{\tau,\theta}(u)$ with $\Omega = \R^N$ 
are used with this $E_{V,\tau}$. 
To proceed, the following two lemmas are necessary. 

We also observe that all the estimates in the blow-up argument that rely on
$ |f_1(t)|\leq C|t|^{p-1} $
remain valid upon replacing the former inequality by the following one: for every \(\varepsilon>0\), there exists \(C_\varepsilon>0\) such that 
\[ |f(t)|\leq \varepsilon |t|+C_\varepsilon |t|^{p-1} \quad \text{for all $t \in \R$}. \]

\begin{lemma}\label{l:est-locmin}
	Assume $N \geq 1$, \eqref{A1}--\eqref{A4} and \eqref{A6}. 
	There exist $\tilde{M}>0$ and $\mu_3'>0$ such that 
	\[
	\lambda + \Vab{u}_{H^1} \leq \tilde{M}
	\]
	holds for every $\tau \in [1/2,1]$ and every $(u,\lambda)\in H^1(\R^N)\times [0,+\infty)$ satisfies $\Vab{u}_2^2 \leq \mu_3'$ and 
	\begin{equation}\label{eq:loc-min}
		-\Delta u + (V+\lambda)u =   \tau f (u) \quad \text{in} \ \R^N ,\quad  u>0 \quad \text{in} \ \R^N, \quad 
		\tilde{m}_{\tau,0} (u) = 0.
	\end{equation}
	Moreover, 
for every $\varepsilon>0$, there exists $\mu_\varepsilon>0$ such that, for every $\mu\in(0,\mu_\varepsilon]$, $\tau\in[1/2,1]$, and every $(u,\lambda)\in S_\mu\times[0,+\infty)$ satisfying \eqref{eq:loc-min}, one has
\[
|E_{V,\tau}(u)|\le\varepsilon.
\]
Furthermore, for every $M>0$, there exists $\mu_M>0$ such that, for every $\mu\in(0,\mu_M]$, $\tau\in[1/2,1]$, and every $(u,\lambda)\in S_\mu\times[0,+\infty)$ satisfying
\[
-\Delta u+(V+\lambda)u=\tau f(u) \quad \text{in} \ \R^N,\qquad u>0\quad\text{in }\R^N, \quad \tilde m_{\tau,0}(u)\ge1, \quad E_{V,\tau}(u)\ge1,
\]
 one has $\lambda> M$.
\end{lemma}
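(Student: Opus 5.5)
All three assertions will be proved by contradiction, combining the blow-up analysis of \cref{sec:blow-up} with the scaling construction from the proof of \cref{lemb1}. The key fact is this: once $\lambda\to+\infty$, the blow-up limit is the ground state $U$ of \eqref{limeq}. The direction $Z=\frac N2 U+x\cdot\nabla U$ lies in $T_U S_{\Vab{U}_2^2}$ and satisfies $Q_U(Z)<0$. Transplanting $Z$ as in \cref{lemb1} produces $\psi_n\in T_{u_n}S_{\mu_n}$ with $Q_{\lambda_n,\tau_n,u_n}(\psi_n)<0$, so that $\tilde m_{\tau_n,0}(u_n)\ge1$. Throughout, the bounds on $f_1$ are replaced by $|f(t)|\le\varepsilon|t|+C_\varepsilon|t|^{p-1}$, as remarked before the lemma.

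\emph{First assertion.} Suppose there are solutions $(u_n,\lambda_n)$ of \eqref{eq:loc-min} with $\Vab{u_n}_2^2=\mu_n\le\mu_3'$ and $\lambda_n+\Vab{u_n}_{H^1}\to\infty$. I first show that $\lambda_n$ is bounded. If $\lambda_n\to\infty$, then the $\lambda$-dependent blow-up of \cref{lem62} applies. We have $\|u_n\|_\infty\to\infty$, and by \cref{r:loc-max} $\tau_n f(u_n(P_n))/u_n(P_n)\ge\lambda_n+V(P_n)$. The rescaled $U_n$ converges to the positive solution $U$ of \eqref{limeq}, with $D=\R^N$ since $\Omega=\R^N$. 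The construction in \cref{lemb1} then gives $\tilde m_{\tau_n,0}(u_n)\ge1$, contradicting $\tilde m=0$. Note that no Morse bound $\bar k$ is needed here, because only a single negative direction is extracted.

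Next, if $\lambda_n\le C$ but $|u_n|_V\to\infty$, I test the equation with $u_n$. Using \eqref{A6}, which gives $f(t)t-2F(t)>0$ and superquadratic behaviour, together with the nonnegativity of the second variation along tangent directions (from $\tilde m=0$), the aim is to bound $|u_n|_V$. The simplest route is to again rescale at the maximum point: $\|u_n\|_\infty\to\infty$ would give a blow-up profile solving $-\Delta\tilde U=a_0\tau\tilde U^{p-1}$, excluded by \cite{GS}. Hence $\|u_n\|_\infty$ is bounded. Then, by \cref{lemgn}, $|u_n|_V^2\le\int\tau f(u_n)u_n\le C\Vab{u_n}_2^2$, which is bounded. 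So $\tilde M$ exists.

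\emph{Second assertion.} For solutions with $\tilde m_{\tau,0}(u)=0$ and $\Vab{u}_2^2=\mu\to0$, the first part gives $\lambda\le\tilde M$, $\|u\|_{H^1}\le\tilde M$, and $\|u\|_\infty$ bounded. Then $\int|F(u)|+|f(u)u|\le C\mu$ by \eqref{A1} and the $L^\infty$ bound. The Nehari identity gives $|u|_V^2\le\int f(u)u\le C\mu$, hence $|E_{V,\tau}(u)|\le C\mu\le\varepsilon$.

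\emph{Third assertion.} Suppose $\mu_n\to0$, $\tilde m\ge1$, $E_{V,\tau_n}(u_n)\ge1$, and $\lambda_n\le M$. If $\|u_n\|_\infty$ were bounded, the Nehari estimate above would give $E_{V,\tau_n}(u_n)\le C\mu_n\to0$, a contradiction. So $\|u_n\|_\infty\to\infty$. Rescaling at the maximum with $\tilde\varepsilon_n=u_n(P_n)^{-(p-2)/2}$, we get $\tilde\varepsilon_n^2\lambda_n\to0$. The limit therefore solves $-\Delta\tilde U=a_0\tau_\infty\tilde U^{p-1}$, $\tilde U(0)=1$, and \cite{GS} gives a contradiction.

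The main obstacle is the bounded-$\lambda$ part of the first assertion. Its blow-up is of Gidas--Spruck type, and for $N$ where $2^*$ vs.\ $p$ matters I must check that the $\varepsilon|t|$ correction and $V$ vanish under the rescaling. They do, since they are multiplied by $\tilde\varepsilon_n^2$.
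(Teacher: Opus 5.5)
Your proposal follows the paper's proof almost step for step. The ingredients are the same: blow-up at a maximum point, \cite{GS} to force $\lambda_n\to+\infty$, and then the scaling direction $Z=\frac N2U+x\cdot\nabla U$ from \cref{lemb1} transplanted into $T_{u_n}S_{\mu_n}$ to contradict $\tilde m_{\tau_n,0}(u_n)=0$. The third assertion is blow-up plus \cite{GS}, as in the paper. The rearrangements are cosmetic. You bound $\Vab{u_n}_\infty$ in the bounded-$\lambda$ branch instead of first showing $\Vab{u_n}_\infty\to\infty$. You also get the second assertion as a direct estimate $|E_{V,\tau}(u)|\le C\mu$. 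The uniform $L^\infty$ bound used there is not contained in the statement of the first assertion, but it follows from the same \cite{GS} blow-up once $\lambda\le\tilde M$ is known.

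One remark is wrong, though. The Morse bound \emph{is} needed in the branch $\lambda_n\to+\infty$. It is not needed to count negative directions; it is needed to identify the blow-up limit.
\begin{itemize}
\item A priori, the limit of $U_n$ is only a bounded positive solution of $-\Delta U+U=U^{p-1}$ with $U(0)=\max U$. The constant $U\equiv1$ is such a solution, and for a non-decaying limit $Z\notin H^1(\R^N)$, so the Pohozaev computation of \cref{lemb1} is meaningless.
\item In \cref{lem62}, the facts $U\in H^1(\R^N)$, $U(x)\to0$ and uniqueness all come from stability outside a compact set (\cite[Remark 1]{Fa}, \cref{R:entire}, \cite{Kw89}). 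That stability is a consequence of $m_{\lambda_n,\tau_n}(u_n)\le\bar k$.
\item Here this bound is available for free. Since $u_n$ solves the equation, $\lambda_n=-E_{V,\tau_n}'(u_n)u_n/\Vab{u_n}_2^2$. Hence the form defining $\tilde m_{\tau_n,0}(u_n)$ is $Q_{\lambda_n,\tau_n,u_n}$ restricted to the codimension-one space $T_{u_n}S_{\mu_n}$. So $\tilde m_{\tau_n,0}(u_n)=0$ gives $m_{\lambda_n,\tau_n}(u_n)\le1$, and \cref{lem62} applies with $\bar k=1$.
\end{itemize}
Alternatively, if $U$ were not stable outside any compact set, it would have two disjointly supported negative directions, and transplanting them would already give one in $T_{u_n}S_{\mu_n}$. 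With either justification replacing your remark, the argument is complete.
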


\begin{proof}
We prove the first assertion by contradiction and assume that there exist $\{\tau_n\} \subset [1/2,1]$ 
and $\{(u_n,\lambda_n)\} \subset H^1(\R^N) \times [0,+\infty)$ such that $\tau_n \to \tau_\infty \in [1/2,1]$, 
$\lambda_n + \Vab{\nabla u_n}_2 \to \infty$ and 
\[
-\Delta u_n + (V+\lambda_n) u_n = \tau_n f(u_n) \quad \text{in} \ \R^N, \quad u_n > 0 \quad \text{in} \ \R^N, 
\quad \Vab{u_n}_2 \to 0, \quad \tilde{m}_{\tau_n,0} (u_n) = 0. 
\]
We claim $\Vab{u_n}_\infty \to +\infty$. When $\lambda_n \to +\infty$, then the argument in the proof of \cref{lem61} is valid and 
$\Vab{u_n}_\infty \to +\infty$ holds. 
On the other hand, when $\Vab{\nabla u_n}_2 \to +\infty$ and $\{\Vab{u_n}_\infty\}_n$ is bounded, 
we have $\Vab{u_n}_q \to 0$ for every $q \in [2,+\infty)$ due to $\Vab{u_n}_2 \to 0$ 
and 
\[
\Vab{\nabla u_n}_2^2 + o(1) \leq \Vab{\nabla u_n}_2^2 + \int_{\R^N} V u_n^2 \, dx + \lambda_n \Vab{u_n}_2^2 
= \int_{\R^N} \tau_n f(u_n) u_n  \, dx \to 0,
\]
which contradicts $\Vab{\nabla u_n}_2 \to +\infty$. Hence, $\Vab{u_n}_\infty \to +\infty$ holds in either case.

Let $P_n$ be a maximum point of $u_n$. 
The blow-up argument in the proof of \cref{lem62} and \cite{GS} lead to $\lambda_n \to + \infty$ 
and hence up to a subsequence, 
\[
U_n(x) \coloneq (\tau_\infty a_0)^{\frac{1}{p-2}} \lambda_n^{-\frac{1}{p-2}} u_n \ab( \lambda_n^{-\frac{1}{2}} x + P_n )
\to U \quad \text{in $C^1_{\rm loc} (\R^N)$},
\]
where $U$ is a positive solution to 
\[
-\Delta U + U = \vab{U}^{p-2} U \quad \text{in} \ \R^N, \quad U>0 \quad \text{in} \ \R^N, \quad  \quad U(0) = \max_{\R^N} U,  \quad U(x) \to 0 \quad \text{as $\vab{x} \to +\infty$}. 
\]
Then as in the proof of \cref{lemb1}, we may find $\psi_n \in T_{u_n} S_{\mu_n}$ where $\mu_n \coloneq \Vab{u_n}_2^2$ such that 
$Q_{\lambda_n,\tau_n,u_n} (\psi_n) < 0$. 
Since $\lambda_n = - E_{V,\tau}'(u_n)u_n / \Vab{u_n}_2^2$, this contradicts $\tilde{m}_{\tau_n,0} (u_n) = 0$.

Next, we prove the second assertion. 
If the assertion fails to hold, then there exist 
$\{\mu_n\}$ and $\{(u_n,\lambda_n,\tau_n)\} \subset S_{\mu_n} \times [0,+\infty) \times [1/2,1]$ such that 
$\mu_n \to 0^+$, $(u_n,\lambda_n,\tau_n)$ satisfies \eqref{eq:loc-min}, $\tau_n \to \tau_\infty$ and 
$\inf_{n \in \N}\vab{E_{V,\tau_n}(u_n)} \geq \delta_0 >0 $ for some $\delta_0$. 
Then $\Vab{u_n}_\infty \to +\infty$ holds due to $\vab{E_{V,\tau_n}(u_n)} \geq \delta_0 > 0$. 
Hence, the rest of the argument is the same as in the previous case and we obtain a contradiction to $\tilde{m}_{\tau_n,0} (u_n) = 0$.

The third assertion may be verified similarly. 
If there exists $\{\mu_n\}$ and $\{(u_n,\lambda_n,\tau_n)\} \subset S_{\mu_n} \times [0,+\infty) \times [1/2,1]$ such that 
$\mu_n \to 0^+$, $\{\lambda_n\}$ is bounded, $\tau_n \to \tau_\infty$ and $(u_n,\lambda_n, \tau_n)$ satisfies 
\[
-\Delta u_n + (V+\lambda_n) u_n = \tau_n f(u_n)  \ \text{in} \ \R^N, \quad u_n > 0 \ \text{in} \ \R^N, \quad 
\tilde{m}_{\tau_n,0} (u_n) \geq 1, \quad E_{V,\tau_n}(u_n) \geq 1. 
\]
Then $\Vab{u_n}_\infty \to +\infty$ holds as $n \to +\infty$ due to $E_{V,\tau_n}(u_n) \geq 1$ and $\Vab{u_n}_2 \to 0$. 
Hence, the blow-up argument and \cite{GS} yield $\lambda_n \to + \infty$, which is a contradiction. 
\end{proof}

\begin{lemma}\label{l:Morse-index}
	Suppose $N \geq 1$, \eqref{A1}--\eqref{A4} and \eqref{A6}, and let $\mu > 0$ and $\tau \in [1/2,1]$. 
	Then every solution to 
	\[
	-\Delta u + (V+\lambda) u = \tau f(u) \quad \text{in} \ \R^N, \quad 
	u > 0 \quad \text{in} \ \R^N, \quad \Vab{u}_2^2 = \mu
	\]
	satisfies $m_{\lambda,\tau} (u) \geq 1$. 
\end{lemma}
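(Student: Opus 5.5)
The plan is to test the quadratic form $Q_{\lambda,\tau,u}$ with $\varphi = u$ itself. Since $u \in H^1(\R^N)$ is a positive solution, testing the equation with $u$ gives
\[
\int_{\R^N} \vab{\nabla u}^2 + (V+\lambda) u^2 \, dx = \tau \int_{\R^N} f(u) u \, dx .
\]
Hence
\[
Q_{\lambda,\tau,u}(u) = \tau \int_{\R^N} \ab( f(u)u - f'(u) u^2 ) \, dx .
\]
By \eqref{A6}, $t \mapsto f(t)/t$ is strictly increasing on $(0,\infty)$. Since $f \in C^1$ by \eqref{A2}, this yields $\frac{d}{dt}\ab( f(t)/t ) = \ab( f'(t)t - f(t) )/t^2 \geq 0$, that is, $f'(t)t^2 - f(t)t \geq 0$ for all $t>0$.

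The main point is that we need strict negativity, not just $\leq 0$. Strict monotonicity gives $f'(t)t \ge f(t)$, but it does not rule out equality at isolated points. However, the set $\{t > 0 : f'(t)t = f(t)\}$ cannot contain an interval, because on such an interval $f(t)/t$ would be constant, contradicting strict monotonicity. Since $u$ is continuous and positive with $u(x) \to 0$ as $\vab{x} \to \infty$ and $u \not\equiv 0$, the range $u(\R^N)$ contains an interval $(0, \max u]$. Hence the open set $\{x : f'(u(x))u(x)^2 - f(u(x))u(x) > 0\}$ is nonempty. This is because its complement is $u^{-1}(Z)$ with $Z$ a closed set having empty interior, and $u^{-1}(Z)$ cannot be all of $\R^N$, since $u$ attains every value of an interval. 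The integrand is therefore nonnegative, continuous, and positive on a nonempty open set, so $Q_{\lambda,\tau,u}(u) < 0$.

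It then remains to check two technical points. The first is that $f'(u)u^2 \in L^1$, so that $Q_{\lambda,\tau,u}(u)$ is well defined. This follows from the growth bounds in \eqref{A2}--\eqref{A4}, since $\vab{f'(t)} \le C(\vab{t}^{q_1-2}+\vab{t}^{q_2-2})$, together with $u \in H^1 \cap L^\infty$. The second is that the Morse index in \cref{d:Mor-apMor} allows any subspace $L \subset H^1(\R^N)$, so $L = \operatorname{span}\{u\}$ is admissible and gives $m_{\lambda,\tau}(u) \geq 1$. If one insists on $C_0^1$ test functions, a cutoff $u\eta_R$ approximates $u$ in $H^1$ and preserves negativity by continuity of $Q_{\lambda,\tau,u}$.
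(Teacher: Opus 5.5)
Your proposal is correct and follows the paper's proof essentially verbatim. You test $Q_{\lambda,\tau,u}$ with $u$ itself, rewrite it via the equation as $\tau\int_{\R^N} \ab(f(u)u - f'(u)u^2)\,dx$, and obtain strict negativity from \eqref{A6} on a nonempty open set where $u$ takes values in an interval on which $f'(t)t > f(t)$. The only cosmetic difference is how you find that interval: you use that the closed zero set of $f'(t)t - f(t)$ has empty interior, whereas the paper picks a point $t_0$ with $f'(t_0)t_0 > f(t_0)$ by the mean value theorem and then extends it by continuity.
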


\begin{proof}
By elliptic regularity, $u \in C(\R^N)$, and $\Vab{u}_2^2 = \mu>0$ with the strong maximum principle yields $u>0$ in $\R^N$. 
Therefore, the range of $u$ contains a nonempty open interval $I\subset(0,+\infty)$. Since \(f(t)/t\) is strictly increasing on \((0,+\infty)\) by \eqref{A6}, the mean value theorem yields some \(t_0\in I\) such that
\(f'(t_0)t_0>f(t_0)\).
By the continuity of \(t\mapsto f'(t)t-f(t)\), there exists a nonempty open interval \(J\subset I\) such that $f'(t)t>f(t)$ for every $t \in J$.
Moreover, \eqref{A6} implies \(f'(t)t\geq f(t)\) for every \(t>0\), and $\{x\in\R^N:u(x)\in J\}$ is a nonempty open set and hence has positive measure.
Consequently,
\[
\begin{aligned}
	Q_{\lambda,\tau,u} (u)
	= 
	\int_{\R^N} \vab{\nabla u}^2 + (V+\lambda) u^2 - \tau f'(u) u^2 \, dx 
	<
	\int_{\R^N} \vab{\nabla u}^2 + (V+\lambda) u^2 - \tau f(u) u \, dx = 0,
\end{aligned}
\]
which implies $m_{\lambda,\tau} (u) \geq 1$. 
\end{proof}

\begin{proof}[Proof of \cref{T:whole} \ref{T:whole-iii}]
Suppose $N \geq 1$, \eqref{A1}--\eqref{A4} and \eqref{A6}.
By \cref{lemmps,l:est-locmin}, there exists $\bar{\mu}_3>0$ such that if $\mu \in (0,\bar{\mu}_3]$, then 
\begin{enumerate}[label=(\Roman*), ref=\Roman*]
	\item \label{I}
	$c_\tau \geq c_1>2$;
	\item \label{II}
	$\vab{E_{V,\tau}(u)} \leq 1$ and $\lambda \leq \tilde{M}$ hold for any $(u,\lambda,\tau) \in S_\mu \times [0,+\infty) \times [1/2,1]$ fulfilling \eqref{eq:loc-min}; 
	\item \label{III}
	$\lambda > \tilde{M}$ for every $(u,\lambda,\tau) \in S_\mu \times [0,+\infty) \times [1/2,1]$ with 
	$E_{V,\tau} (u) \geq 1$, $-\Delta u + (V+\lambda) u = \tau f(u)$ in $\R^N$, 
	$u>0$ in $\R^N$ and $\tilde{m}_{\tau,0} (u) \geq 1$. 
\end{enumerate}
Let $\mu \in (0,\bar{\mu}_3]$. 
Since \cite{BCJS} may be applied for $E_{V,\tau}$ and $c_\tau = c_\tau(\mu)$, 
as in the proof of \cref{th51,th52}, for almost every $\tau \in [1/2,1]$, 
there exist sequences $\{u_n\}\subset S_\mu$ and $\zeta_n\to0^+$ such that, as $n \to +\infty$,
\begin{enumerate}[label=\rm(\roman*)]
	\item $E_{V,\tau}(u_n) \to c_\tau>0$;
	\item $\norm{E_{V,\tau}'(u_n) }_{ T_{u_n}^* S_{\mu} }\to 0$;
	\item $u_n \geq 0$ and $\{u_n\}$ is bounded in $H^1(\R^N)$;
	\item $\widetilde m_{\tau,\zeta_n}(u_n)\leq1$;
\end{enumerate}
a sequence $\lambda_n \coloneq -\frac{1}{\mu}E'_{V,\tau}(u_n)u_n$ is bounded
and, if  there exists a subspace $W_n \subset H^1(\R^N)$ such that 
\begin{equation}\label{eqineqmorse-iii}
	E''_{V,\tau}(u_n)[w,w]+\lambda_n(w,w)_2<-\zeta_n \norm{w}^2_{H^1} \quad \text{for all }w \in W_n\setminus\{0\},
\end{equation}
then $\dim W_n \leq 2$ holds. Up to a subsequence, we may assume $\lambda_n \to \lambda_\tau \geq 0$ 
as in the proof of \cref{T:whole} \ref{T:whole-ii}. 
Since $E_{V,\tau} (u_n) \to c_\tau > 0$, the sequence $\{u_n\}$ does not vanish. 
From the periodicity of $V$, without loss of generality, we may suppose 
\begin{equation*}
	\begin{aligned}
		u_n &\rightharpoonup  u_\tau^1 \not \equiv  0  \quad \text{in } H^1(\R^N),\\
		u_n &\to u_\tau^1 \quad \text{in } L^\nu_{\text{loc}}(\R^N) \text{ for all } \nu\in [1,2^*),\\
		u_n &\to u_\tau^1 \quad \text{a.e. in } \R^N.
	\end{aligned}
\end{equation*}
It is clear that $u_{\tau}^1 \in H^{1}(\R^N)$ satisfies 
\begin{equation}\label{eq:limeq}
	-\Delta u+V(x)u+\lambda_\tau u =\tau f(u) \quad \text{in} \ \R^N, \quad u > 0 \quad \text{in} \ \R^N, 
	\quad \Vab{u}_2^2 \leq \mu
\end{equation}
and by \cref{l:Morse-index}, $m_{\lambda_\tau,\tau} (u^1_\tau) \geq 1$ holds.

Let $r_n^1 \coloneq u_n-u_\tau^1$. Brezis--Lieb lemma implies
\[
\mu = \Vab{u_n}_2^2 = \Vab{u_\tau^1}_2^2 + \Vab{r_n^1}_2^2 + o(1), \quad  c_\tau + o(1) = E_{V,\tau}(u_\tau^1) + E_{V,\tau}(r_n^1).
\]
Our aim is to prove that 
$\{r_n^1\}_n$ vanishes. 
If $\{r_n^1\}_n$ does not vanish, 
then there exists $\{y^1_n\}_n \subset \Z^N$ such that 
$\{r_n^1 (\cdot+y^1_n)\}_n$ converges to $u_\tau^2 \not \equiv 0$ weakly in $H^1(\R^N)$, 
in $L^\nu_{\rm loc}(\R^N)$ for all $\nu \in [1,2^*)$ and a.e. $\R^N$, where 
$u^2_\tau$ is a solution to \eqref{eq:limeq} with $m_{\lambda_\tau,\tau} (u^2_\tau) \geq 1$ due to \cref{l:Morse-index}. 
It is easily seen that $\vab{y_n^1} \to \infty$ thanks to $r^1_n \rightharpoonup 0$ weakly in $H^1(\R^N)$. 
Writing $r^2_n(x) \coloneq r_n^1(x) - u^2_\tau( x - y^1_n ) $,  we have 
\[
\mu = \Vab{u_\tau^1}_2^2 + \Vab{u_\tau^2}_2^2 + \Vab{r^2_n}_2^2 + o(1), \quad c_\tau + o(1) = E_{V,\tau} (u_\tau^1) + E_{V,\tau} (u^2_\tau) 
+ E_{V,\tau}(r^2_n) + o(1).
\]

Next, we claim that $\{r_n^2\}$ vanishes. Otherwise, there exists $\{y^2_n\}_n \subset \Z^N$ such that 
$\{r^2_n(\cdot + y^2_n)\}_n$ converges to $u_\tau^3 \not \equiv 0$ weakly in $H^1(\R^N)$, 
in $L^\nu_{\rm loc} (\R^N)$ for all $\nu \in [1,2^*)$ and a.e. $\R^N$, where 
$u^3_\tau$ is also a solution to \eqref{eq:limeq} with $m_{\lambda_\tau,\tau} (u^3_\tau) \geq 1$. 
Set $z_n^1\coloneq0$, $z_n^2\coloneq y_n^1$ and $z_n^3\coloneq y_n^2$. From $m_{\lambda_\tau,\tau}(u^i_\tau)\geq1$ for $1\leq i\leq3$ and 
$\vab{z_n^i-z_n^j}\to\infty$ for $1\leq i<j\leq3$, 
as in the proof of \cref{T:whole} \ref{T:whole-ii}, 
for sufficiently large $n$, we may find $\varepsilon_0>0$ and $W_n \subset H^1(\R^N)$ such that 
$\dim W_n = 3$ and 
\[
Q_{\lambda_n,\tau,u_n} (w) = E_{V,\tau}''(u_n) [w,w] + \lambda_n (w,w)_2 \leq - \varepsilon_0 \Vab{w}_{H^1}^2 
\quad \text{for all $w \in W_n \setminus \{0\}$}.
\]
However, this contradicts \eqref{eqineqmorse-iii}. Hence $\{r_n^2\}_n$ vanishes.

Since $\{r_n^2\}_n$ vanishes, $\Vab{r_n^2}_\nu \to 0$ holds for all $\nu \in (2,2^*)$. From 
\[
o(1) = \Vab{ -\Delta u_n + V u_n + \lambda_\tau u_n - \tau f(u_n) }_{H^{-1}}
\]
and 
\[
 -\Delta u^i_\tau (\cdot - z^i_n) + V u^i_\tau (\cdot - z^i_n) + \lambda_\tau u^i_\tau(\cdot-z_n^i) = \tau f(u^i_\tau (\cdot - z^i_n)) \quad \text{in} \ \R^N \quad (i=1,2),
\]
it follows that 
\[
\vab{r_n^2}_{V}^2 \leq \vab{r_n^2}_V^2 + \lambda_\tau \Vab{r_n^2}_2^2 
= \tau \int_{\R^N} g_n(x) r_n^2 \, dx + o(1) = o(1),
\]
where 
\[
g_n(x) \coloneq f(u_n(x)) - f(u^1_\tau(x-z^1_n)) - f(u^2_\tau(x-z^2_n)).
\]
Hence, 
\begin{equation}\label{e:ctau-u1u2}
	c_\tau =  E_{V,\tau} (u_\tau^1) + E_{V,\tau} (u_\tau^2). 
\end{equation}
Up to relabeling $u_\tau^1$ and $u_\tau^2$, there are three cases to consider:
\begin{enumerate}[label=\textbf{Case \arabic*}:]
	\item 
	$\tilde{m}_{\tau,0} (u^i_\tau) = 0$ for $i=1,2$;
	\item 
	$\tilde{m}_{\tau,0} (u^1_\tau) = 0 < 1 \leq \tilde{m}_{\tau,0} (u^2_\tau)$; 
	\item 
	$\tilde{m}_{\tau,0} (u^i_\tau) \geq 1$ for $i=1,2$. 
\end{enumerate}
In view of \eqref{I} and \eqref{II}, Case 1 does not occur by \eqref{e:ctau-u1u2}, $c_\tau > 2$ and $\vab{E_{V,\tau} (u^i_\tau)} \leq 1$ for $i=1,2$. 
In Case 2, \eqref{II} gives $\lambda_\tau\le\tilde M$ because $\tilde m_{\tau,0}(u_\tau^1)=0$. Moreover, by \eqref{e:ctau-u1u2}, $c_\tau>2$ and $|E_{V,\tau}(u_\tau^1)|\le1$, we have
\[
E_{V,\tau}(u_\tau^2)=c_\tau-E_{V,\tau}(u_\tau^1)>1.
\]
Therefore, property \eqref{III} together with $\tilde m_{\tau,0}(u_\tau^2)\ge1$ yields $\lambda_\tau>\tilde M$, a contradiction.
If Case 3 occurred, then the proof of \cref{T:whole} \ref{T:whole-ii} works to find $\kappa > 0$ and a subspace $\tilde{W}_n \subset T_{u_n} S_\mu$ such that $\dim \tilde{W}_n = 2$ and $Q_{\lambda_n,\tau,u_n} (\xi) \leq - \kappa \Vab{\xi}_{H^1}^2$ for all $\xi \in \tilde{W}_n$. 
However, this contradicts $\tilde{m}_{\tau,\zeta_n} (u_n) \leq 1$. 
Thus, Case 3 does not happen. 
Since none of the three cases can occur, $\{r_n^1\}$ vanishes.

By $\Vab{r^1_n}_\nu \to 0$ for all $\nu \in (2,2^*)$, we have 
\[
\vab{r^1_n}_V^2 + \lambda_\tau \Vab{r^1_n}_2^2 
= 
\tau \int_{\R^N} \bab{f(u_n) - f(u_\tau^1) } r_n^1 \, dx = o(1), \quad 
E_{V,\tau} (u^1_\tau) = \lim_{n \to +\infty} E_{V,\tau} (u_n) = c_\tau > 2. 
\]
Thus, \cref{l:est-locmin} leads to $\tilde{m}_{\tau,0} (u^1_\tau) \geq 1$ and $\lambda_\tau > \tilde{M} > 0$. 
Hence, $\Vab{r^1_n}_2 \to 0$, $\Vab{u_n - u^1_\tau}_{H^1} \to 0$ and $(u_\tau,\lambda_\tau) \in S_\mu \times (0,+\infty)$ 
is a solution to 
\[
-\Delta u + Vu + \lambda u = \tau f(u) \quad \text{in} \ \R^N, \quad \Vab{u}_2^2 = \mu
\]
and satisfies $E_{V,\tau} (u_\tau^1) = c_\tau$ and $\tilde{m}_{\tau,0} (u_\tau^1) = 1$ due to $\tilde{m}_{\tau,\zeta_n}(u_n) \leq 1$. 

Let $\tau_n\to1^-$, and let
\(
\{(u_{\tau_n},\lambda_{\tau_n})\}_n
\subset S_\mu\times(0,+\infty)
\)
be the sequence of positive solutions constructed in the preceding argument. Thus, for each $n\in\mathbb N^+$,
\[
\left\{\begin{aligned}
	&-\Delta u_{\tau_n}+V(x)u_{\tau_n}
	+\lambda_{\tau_n}u_{\tau_n}
	=\tau_n f(u_{\tau_n})\quad\text{in }\mathbb R^N, \quad u_{\tau_n} > 0 \quad \text{in} \ \R^N, 
	\\
	& \Vab{u_{\tau_n}}_2^2 = \mu, \quad E_{V,\tau_n} (u_{\tau_n}) = c_{\tau_n}, \quad \tilde{m}_{\tau_n,0} (u_{\tau_n}) = 1.
\end{aligned}\right.
\]
As in the proof of \cref{T:whole} \ref{T:whole-ii}, we verify that $\{u_{\tau_n}\}_n$ is bounded in $H^1(\mathbb R^N)$.
Since $2<c_1\leq c_{\tau_{n+1}}\leq c_{\tau_n}\leq c_{1/2}$ and $\tilde{m}_{\tau_n,0}(u_{\tau_n})=1$,
we may exploit the argument in the beginning of the proof,
and find $\{z_n\} \subset \Z^N$ and $(u_\infty,\lambda_\infty) \in H^1(\R^N) \times \R$ such that, 
up to a subsequence, $u_{\tau_n} (\cdot + z_n) \to u_\infty$ strongly in $H^1(\R^N)$ and $\lambda_{\tau_n} \to \lambda_\infty$.
Furthermore, $\lambda_{\tau_n}>\tilde M$ and $E_{V,\tau_n}(u_{\tau_n})=c_{\tau_n}\geq c_1>2$ for every $n$. Passing to the limit and using $u_{\tau_n}\to u$ strongly in $H^1(\R^N)$ together with $\tau_n\to1$, we obtain
\[
\lambda_\infty\geq \tilde M>0,
\qquad
E_V(u_\infty)=\lim_{n\to\infty}E_{V,\tau_n}(u_{\tau_n})=\lim_{n\to\infty}c_{\tau_n}\geq c_1>2.
\]
Hence, $(u_\infty,\lambda_\infty) \in S_\mu \times (0,+\infty)$ is the desired solution. 
As in the proof of \cref{T:whole} \ref{T:whole-ii} with \cref{l:est-locmin}, $E_V(u_\infty) > 2$ and $\tilde{m}_{\tau_n,0} (u_{\tau_n}) = 1$, 
we see that $c_1=c_1(\mu) \to + \infty$ as $\mu \to 0^+$ and $\tilde{m}_{1,0} (u_\infty) = 1$. 
\end{proof}

\appendix
\section{Proof of \cref{lemnelambda+0}}
\label{app:lemnelambda+0}
 In this appendix, we give the proof of \cref{lemnelambda+0}. 
\begin{proof}[Proof of \cref{lemnelambda+0}]
We argue by contradiction and suppose that $u$ is a solution to \eqref{eqneH0} with $u \not \equiv 0$. 
By a translation, we may suppose
\[
H= \Set{ x = (x_1,\dots, x_N) \in \R^N : x_N > 0 }. 
\]
Elliptic regularity yields $u \in W^{2,r}_{\rm loc} (\R^N)$ for every $r < \infty$, $u \in C^2(H)$ 
and the strong maximum principle (\cite[Theorem 8.19]{GT}) leads to $u>0$ in $\R^N$.

When $N=1$, by $-u'' = \chi_{(0,\infty)} u^{p-1} \geq 0$, we have $\lim_{x \to \pm \infty} u'(x) = u_{\pm \infty} \in \R \cup \set{\pm \infty}$. 
The fact that $u$ is bounded implies $0 = u_{-\infty} = u_{+\infty}$. From $-u'' \geq 0$, it follows that 
$u'' \equiv 0$, $u' \equiv 0$ and hence $u \equiv 0$ from the differential equation. 
However, this is a contradiction.

On the other hand, when $N=2$, 
a function $u_\varepsilon \coloneq \rho_\varepsilon * u \in C^2(\R^2)$ where $\{\rho_\varepsilon\}$ is a mollifier satisfies 
$-\Delta u_\varepsilon \geq 0$ in $\R^2$. 
Hence Liouville's theorem (\cite[Theorem 29 in Chapter 2]{PW}) yields $u_\varepsilon \equiv \text{const.}$ and 
letting $\varepsilon \to 0$ gives $u \equiv \text{const.}$. 
From \eqref{eqneH0}, we deduce $u \equiv 0$ in $\R^2$.

We now consider the case where $N \geq 3$. The proof is a minor modification of \cite[\S8.4]{QuSo19}. 
If we can prove that $u(x) = u(x_N)$ for each $x \in H$, 
then the situation is reduced to the case $N=1$ and a contradiction occurs.

In what follows, we aim to prove that $u(x) = u(x_N)$ for each $x \in H$. 
To this end, we first prove 

\smallskip 

\noindent
\textbf{Step 1:} \textsl{For any positive bounded solution $u$ to \eqref{eqneH0}, its Kelvin transform $v$ satisfies 
\[
v(z) = v(-z_1,z_2,\dots,z_N) = v(z_1,-z_2,z_3,\dots,z_N) = \dots = v(z_1,\dots,z_{N-2}, - z_{N-1} , z_N)
\]
for each $z \in \R^N \setminus \set{ 0} $ where 
\[
v(z) \coloneq |z|^{2-N} u \ab( \frac{z}{|z|^2} ).
\]
}

\smallskip 

First, notice that for each $z \in \R^N \setminus \set{0}$, $\chi_H( z / |z|^2 ) = \chi_H(z)$ and 
$v$ solves 
\[
-\Delta v(z) = \frac{1}{|z|^{N+2}} \ab(-\Delta u ) \ab( \frac{z}{|z|^2} ) 
= |z|^{-N-2} \chi_H(z) |z|^{(p-1)(N-2)} v^{p-1} (z) 
= |z|^{-\gamma} \chi_H(z) v^{p-1} (z),
\]
where $\gamma \coloneq 2N - p(N-2) > 0$. 
Moreover, $v \in C(\R^N \setminus \set{0})$, $v>0$ in $\R^N \setminus \set{0}$ and there exists $C_0>0$ such that 
\begin{equation}\label{decay-v}
	v(z) \leq C_0 |z|^{2-N} \quad \text{for all $|z| \geq 1$}.
\end{equation}
In addition, by $-\Delta v \geq 0$ in $\R^N \setminus \set{0}$ and $N \geq 3$, 
as in \cite[Lemma 4.4]{QuSo19}, we may verify
\[
v \in W^{2,r}_{\rm loc} (\R^N \setminus \{0\}) \quad \text{for any $r<\infty$}, 
\quad v \in L^1_{\rm loc} (\R^N), \quad 
-\Delta v \geq 0 \quad \text{in} \ \mathcal{D}'(\R^N).
\]
Hence, the maximum principle in \cite[Proposition 52.3(ii)]{QuSo19} gives 
\begin{equation}\label{min-v-R}
	v(z) \geq \min_{\partial B_R} v \eqcolon \eta(R) > 0 \quad \text{for each $z \in B_R \setminus \set{0} $ and $R>0$}. 
\end{equation}

To prove Step 1 for $z_1$, let $\lambda \leq 0$ and write
\[
\begin{aligned}
	z^\lambda &\coloneq \left( 2\lambda - z_1, z_2,\dots,z_N \right), \quad 0^\lambda \coloneq (2\lambda,0,\dots,0), 
	\\
	\Sigma(\lambda) & \coloneq \Set{ z \in \R^N : z_1 < \lambda }, \quad \Sigma'(\lambda) \coloneq \Sigma(\lambda) \setminus \set{0^\lambda},
	\\
	w(z;\lambda) &\coloneq v(z^\lambda) - v(z) \quad \text{for $z$ $\in \Sigma'(\lambda)$. }
\end{aligned}
\]
Since $\chi_H(z^\lambda) = \chi_H(z)$ and $|z^\lambda| \leq |z|$ for any $z \in \Sigma(\lambda)$, 
by $\gamma > 0$, on $\Sigma'(\lambda)$, $w$ satisfies 
\begin{equation}\label{diff-ineq-w}
	\begin{aligned}
		-\Delta w &= |z^\lambda|^{-\gamma} \chi_H (z^\lambda) v^{p-1} (z^\lambda) - |z|^{-\gamma} \chi_H(z) v^{p-1} (z)
		\\
		&= 
		\ab[ |z^\lambda|^{-\gamma} - |z|^{-\gamma} ] \chi_H (z^\lambda)  v^{p-1}(z^\lambda) 
		+ |z|^{-\gamma} \chi_H(z) \ab[ v^{p-1} (z^\lambda) - v^{p-1}(z) ]
		\\
		&\geq 
		|z|^{-\gamma} \chi_H (z) (p-1) \int_0^1 \left[ v(z) + \theta w(z;\lambda) \right]^{p-2} d \theta w(z;\lambda) 
		\eqcolon |z|^{-\gamma} \chi_H(z) \tilde{c}(z;\lambda) w(z;\lambda).
	\end{aligned}
\end{equation}
Let $\alpha \coloneq (N-2)/2$ and introduce 
\[
\tilde{w} (z;\lambda) \coloneq |z|^\alpha w (z;\lambda).
\]
From 
\[
\begin{aligned}
	&-\Delta \tilde{w} = -\Delta (|z|^\alpha) w - 2 \nabla (|z|^\alpha) \cdot \nabla w + |z|^\alpha (-\Delta w),
	\\
	&-\Delta (|z|^\alpha) = - \left[ \alpha (\alpha -1) + (N-1) \alpha \right] |z|^{\alpha-2} = - \alpha (N+\alpha-2) |z|^{\alpha-2} = - \frac{3}{4} (N-2)^2 |z|^{\alpha-2},
	\\
	&
	\nabla (|z|^\alpha) \cdot \nabla w
	= \alpha |z|^{\alpha-2} z \cdot \nabla w 
	= \alpha |z|^{\alpha-2} z \cdot[ -\alpha |z|^{-\alpha-2} z \tilde{w} + |z|^{-\alpha} \nabla \tilde{w} ] 
	= \alpha |z|^{-2} \left[  -\alpha \tilde{w} + z\cdot\nabla \tilde{w} \right],
\end{aligned}
\]
it follows that 
\[
-\Delta \tilde{w} \geq - \frac{1}{4} (N-2)^2 \frac{\tilde{w}}{|z|^2} - \frac{N-2}{|z|^2} z \cdot \nabla \tilde{w} 
+ |z|^{-\gamma} \chi_H(z) \tilde{c} (z;\lambda) \tilde{w} \quad \text{in} \ \Sigma'(\lambda). 
\]
By writing 
\[
c(z;\lambda) \coloneq \frac{1}{4} (N-2)^2 |z|^{-2}  - |z|^{-\gamma} \chi_H(z) \tilde{c} (z;\lambda),
\]
we have 
\begin{equation}\label{diff-ineq-tildew}
	-\Delta \tilde{w} + \frac{N-2}{|z|^2} z \cdot \nabla \tilde{w} + c(z;\lambda) \tilde{w} \geq 0 \quad \text{in} \ \Sigma'(\lambda). 
\end{equation}

Next, we prove $\tilde{w} \geq 0$ in $\Sigma'(\lambda)$ provided $\lambda \ll -1$. 
Indeed, if this were false, then there exists $\{\lambda_n\}$ such that 
\[
\lambda_n \to - \infty, \quad \inf_{ \Sigma'(\lambda_n) } \tilde{w} (\cdot ; \lambda_n) < 0.
\]
If $|z-0^{\lambda_n}| < 1$, then $|z^{\lambda_n}| < 1 < 2 |\lambda_n| - 1 \leq |z_1| \leq |z|$. 
Therefore, \eqref{decay-v} and \eqref{min-v-R} yield 
\[
w(z;\lambda_n) = v( z^{\lambda_n} ) - v(z) \geq \eta(1) - C_0 |z|^{2-N} > 0 
\quad \text{if $|z - 0^{\lambda_n} | < 1$}. 
\]
Moreover, \eqref{decay-v} implies 
\[
\tilde{w} (z;\lambda_n) = |z|^{(N-2)/2} \ab[ v(z^{\lambda_n}) - v(z) ] \to 0 \quad \text{as $|z| \to \infty$ with $z \in \Sigma(\lambda_n)$}. 
\]
By $\tilde{w}(\cdot;\lambda_n) = 0$ on $\partial \Sigma(\lambda_n)$, 
there exists $q_n \in \Sigma'(\lambda_n)$ such that 
\[
\tilde{w} (q_n ; \lambda_n) = \inf_{ \Sigma'(\lambda_n) } \tilde{w} < 0, \quad |q_n - 0^{\lambda_n} | \geq 1.
\]
Notice that $|q_n| \to \infty$ holds due to $q_n \in \Sigma'(\lambda_n)$, which gives $v(q_n) \to 0$.

Next, we claim that $v(q_n^{\lambda_n}) \to 0$ holds. In fact, if $v(q_{n_k}^{\lambda_{n_k}}) \geq c_0 > 0$ holds for some $c_0$ and $\{n_k\}$, 
then 
\[
w (q_{n_k}) \geq c_0 - v(q_{n_k}) \to c_0 > 0 \quad \text{as $k \to \infty$},
\]
which contradicts $\tilde{w} ( q_{n_k} ; \lambda_{n_k} ) < 0$. 
Thus, $v(q_n^{\lambda_n}) \to 0$ and $|q_n^{\lambda_n}| \to \infty$ as $n \to \infty$.

If $|q_n|/|q_n^{\lambda_n}| \to \infty$, then since the definition of $v$ leads to 
$|z|^{N-2} v(z) \to u(0) > 0$ as $|z| \to \infty$, we have 
\[
0 > |q_n|^{N-2} w(q_n) = \ab( \frac{|q_n|}{|q_n^{\lambda_n}|} )^{N-2} |q_n^{\lambda_n} |^{N-2} v(q_n^{\lambda_n}) - |q_n|^{N-2} v(q_n) \to +
\infty,
\]
which is a contradiction. Thus, $|q_n| \leq C_1 |q_n^{\lambda_n}|$ holds for some $C_1>0$ and all $n$. 
In particular, \eqref{decay-v} implies 
\[
v(q_n^{\lambda_n}) + v(q_n) \leq C_2 |q_n|^{2-N}   \quad \text{for any $n$}. 
\]
This together with the definition of $\tilde{c}$ yields 
\[
\chi_{H} (q_n)\tilde{c}(q_n ; \lambda_n) \leq\chi_H(q_n) (p-1) C_3 |q_n|^{ - (N-2) (p-2) }.
\]
Thus, if $n$ is sufficiently large, then 
\[
\begin{aligned}
	c(q_n;\lambda_n) 
	&=  \frac{(N-2)^2}{4} |q_n|^{-2} - |q_n|^{-2N+p(N-2)} \chi_H(q_n) \tilde{c} (q_n;\lambda_n)
	\\
	&\geq \frac{(N-2)^2}{4} |q_n|^{-2} - (p-1)C_3 |q_n|^{-4}> 0.
\end{aligned}
\]
Recalling the strong maximum principle (\cite[Theorem 8.19]{GT}) and \eqref{diff-ineq-tildew}, we may find $r_0>0$ such that 
$\tilde{w} (\cdot ; \lambda_n) \equiv \tilde{w} (q_n ; \lambda_n) < 0$ in $B_{r_0}(q_n)$, 
however, this contradicts \eqref{diff-ineq-tildew} since we may assume $c(\cdot ; \lambda_n) > 0$ in $B_{r_0}(q_n)$. 
Thus, $\tilde{w} (\cdot ; \lambda) \geq 0$ in $\Sigma'(\lambda)$ provided $\lambda \ll -1$.

Let us consider 
\[
\bar{\mu} \coloneq \sup \Set{ \mu \leq 0 :
	\text{$\tilde{w}(\cdot ; \lambda) \geq 0$ in $\Sigma'(\lambda)$ for all $\lambda \in (-\infty,\mu]$} 
	} \in (-\infty,0].
\]
We shall prove $\bar{\mu} = 0$. If $\bar{\mu} < 0$, then 
it is easily seen from the definition of $\bar{\mu}$ and 
the continuity of $v$ that $\tilde{w} (\cdot; \bar{\mu} ) \geq 0$ in $\Sigma'(\bar{\mu})$. 
Moreover, there exists $\{\lambda_n\}$ such that 
\[
\lambda_n > \bar{\mu}, \quad \lambda_n \to \bar\mu, \quad \inf_{ \Sigma'(\lambda_n) } \tilde{w} (\cdot ; \lambda_n) < 0.
\]
If $\tilde{w} (\cdot ;\bar \mu) \not \equiv 0$, then \eqref{diff-ineq-tildew} yields 
\begin{equation}\label{diff-ineq-tildew-2}
	-\Delta \tilde{w} + \frac{N-2}{|z|^2} z \cdot \nabla \tilde{w} + \left(  c(z ; \bar \mu) \right)_+ \tilde{w} \geq  0 \quad \text{in} \ \Sigma'(\bar \mu).
\end{equation}
Hence, the strong maximum principle gives 
$\tilde{w} (\cdot ; \bar \mu) > 0$ in $\Sigma'(\bar \mu)$. 
From \eqref{diff-ineq-w}, 
it follows that $\tilde{c}(z;\bar \mu) > 0$ in $U \coloneq B_{ \abs{\bar \mu}/2 }( 0^{\bar \mu} ) \setminus\set{0^{\bar \mu}} \subset \Sigma' (\bar{\mu})  $ and 
$-\Delta w \geq 0$ in $U$. 
Again as in \cite[Lemma 4.4]{QuSo19}, we may prove $-\Delta w \geq 0$ in $\mathcal{D}'(U)$ and the maximum principle yields 
\[
w(\cdot ; \bar \mu) \geq \min_{ |z- 0^{\bar \mu} | = |\bar \mu|/2  } w(\cdot ; \bar \mu) \coloneq \eta_0 > 0 \quad \text{in} \ U. 
\]
If $z \in B_{ \abs{\bar \mu}/4 }(0^{\lambda_n} ) \setminus \set{ 0^{\lambda_n} } $, then by 
$\{ z-2(\lambda_n-\bar \mu) e_1 \}^{\bar \mu} = 2 \lambda_n e_1 - z = z^{\lambda_n} \neq 0$, 
$z - 2(\lambda_n - \bar \mu) e_1 \in U$ and 
\[
\begin{aligned}
	w(z;\lambda_n) 
	&= w \left( z - 2 (\lambda_n - \bar \mu) e_1 ; \bar \mu \right) + v \left( z - 2(\lambda_n - \bar \mu) e_1  \right) - v(z)
	\\
	&\geq \eta_0 + v \left( z - 2(\lambda_n - \bar \mu) e_1  \right) - v(z),
\end{aligned}
\]
for all sufficiently large $n$, the continuity of $v$ gives $w(z;\lambda_n) \geq 0$. 
Thus, there exists $q_n \in \Sigma'(\lambda_n)$ such that 
\begin{equation}\label{prop-qn}
	\abs{ q_n - 0^{\lambda_n} } \geq \frac{|\bar \mu|}{4}, \quad 
	\tilde{w} (q_n;\lambda_n) = \inf_{ \Sigma'(\lambda_n) } \tilde{w} (\cdot ; \lambda_n) < 0, \quad 
	\nabla \tilde{w} (q_n;\lambda_n) = 0.
\end{equation}

We claim that $\{q_n\}$ is bounded. 
If $|q_n| \to \infty$, then by $\lambda_n \to \bar \mu < 0$, we see that 
$|q_n^{\lambda_n}| \to \infty$ and $|q_n|/|q_n^{\lambda_n} | \to 1$ as $n \to \infty$. 
Since $v(q_n) + v(q_n^{\lambda_n}) \leq 2C_0 |q_n|^{2-N}$ holds due to \eqref{decay-v}, 
it follows that 
\[
c(q_n;\lambda_n) \geq \frac{1}{4} (N-2)^2 |q_n|^{-2} - \chi_H(q_n) (p-1) C |q_n|^{-4} >0. 
\]
Recalling \eqref{diff-ineq-tildew} and that $q_n$ is a global minimum point of $\tilde{w}(\cdot;\lambda_n)$, 
we observe from the strong maximum principle that for some $r_n>0$, 
\[
c(z;\lambda_n) > 0, \quad 
0 > \tilde{w} (z; \lambda_n) \equiv \tilde{w} (q_n;\lambda_n) \quad \text{for any $z \in B(q_n,r_n)$}.
\]
This contradicts \eqref{diff-ineq-tildew} and 
$(q_n)$ is bounded.

Let us assume $q_n \to \bar q \in \overline{\Sigma (\bar \mu)}$. 
By $|q_n - 0^{\lambda_n} | \geq |\bar \mu| /4$, we have $\bar q \neq 0^{\bar \mu}$. 
Since $\tilde w (\cdot ; \bar \mu) \geq 0$ in $\Sigma' (\bar \mu)$, \eqref{prop-qn} yields 
\[
\tilde{w} (\bar q ; \bar \mu) = 0, \quad \nabla \tilde{w} (\bar q ; \bar \mu) = 0.
\]
When $\bar q \in \Sigma'(\bar \mu)$, 
the strong maximum principle with \eqref{diff-ineq-tildew-2} leads to $\tilde{w} (\cdot ; \bar \mu) \equiv 0$ in $\Sigma'(\bar \mu)$, 
which contradicts $\tilde{w} (\cdot ; \bar \mu) > 0$ in $\Sigma'(\bar \mu)$. 
When $\bar q \in \partial \Sigma '(\bar \mu)$, 
since $\tilde{w} \in W^{2,r}_{\rm loc} ( \partial \Sigma'(\bar \mu) + B_\varepsilon(0) )$ for any $r < \infty$, 
the proof for Hopf's lemma still works (\cite[Proposition 52.1]{QuSo19} or \cite[Lemma 3.4]{GT}), 
and $\partial \tilde{w} (\bar q ; \bar \mu) / \partial x_1 < 0$, which contradicts $\nabla \tilde{w} (\bar q ; \bar \mu) = 0$. 
Thus, if $\bar \mu < 0$, then $\tilde{w} (\cdot ;\bar \mu) \equiv 0$ in $\Sigma'(\bar \mu)$, and 
$v$ is symmetric with respect to $\set{z_1 = \bar \mu}$.

In this case, since $v \in C^2(H)$, we have 
\[
-\frac{\Delta v(z)}{v^{{p-1}}(z)} = |z|^{-2N+p(N-2)}  \quad \text{for all $z \in H$}.
\]
The left-hand side is symmetric with respect to $\set{z_1 = \bar \mu}$, 
however, the right-hand side is not. 
Therefore, this is a contradiction and the case $\bar \mu < 0$ does not occur. 
Thus, $\bar \mu = 0$ holds.

From $\bar \mu = 0$, it follows that
\[
v(z) \leq v(z^0) = v(-z_1,z') \quad \text{for all $z \in \R^N$ with $z_1 < 0$}. 
\]
On the other hand, the Kelvin transform of $u(-x_1,x')$ is $v(-z_1,z')$ and 
the above argument can be applied to $v(-z_1,z')$ to obtain 
$v(-z_1,z') \leq v(z)$ for any $z \in \R^N$ with $z_1<0$. 
Hence, $v(-z_1,z') = v(z_1,z')$ holds for any $z \in \R^N$ with $z_1<0$.

For $i=2,3,\dots,N-1$, the above argument can be applied to 
\[
u(x_i, x_1,\dots, x_{i-1} , x_{i+1} , \dots, x_N )
\]
and the assertions in Step 1 hold.

\smallskip

\noindent
\textbf{Step 2:} \textsl{For each $x \in H$, $u(x) = u(x_N)$ holds.}

\smallskip

For $a \in \R$, set $u_a(x) \coloneq u(x-ae_1)$. The corresponding Kelvin transform is given by 
\[
v_a (z) = |z|^{2-N} u_a \ab( \frac{z}{|z|^2} ) 
= |z|^{2-N}  u \ab( \frac{z}{|z|^2} - a e_1 ) \quad \text{for each $z \in H$.}
\]
From Step 1 applied to $u_a$, it follows that 
\[
u \ab( \frac{z_1}{|z|^2} - a , \frac{z'}{|z|^2} ) = u \ab( - \frac{z_1}{|z|^2} - a, \frac{z'}{|z|^2}) 
\quad \text{for every $z \in H$ and $a \in \R$}. 
\]
Let $w' = (w_2,\dots, w_N) \in \R^{N-1}$ be arbitrary with $w_N >0$. We shall prove 
\[
u (z_1, w') = u(0,w') \quad \text{for all $z_1 \in \R$}. 
\]
To this end, put 
\[
I \coloneq \ab[ - \frac{1}{2|w'|} , \frac{1}{2|w'|} ], \quad 
t(s) \coloneq \frac{1}{2} \ab[ \frac{1}{|w'|} + \sqrt{ \frac{1}{|w'|^2} - 4s^2 } ] 
\quad \text{for $s \in I$}. 
\]
It is immediate to verify that 
\[
\frac{t(s)}{s^2 + (t(s))^2} = |w'| \quad \text{for any $s \in I$}.
\]
For $s \in I$, consider  
\[
z = \ab(  s, t(s) \frac{w'}{|w'|} ) \in H.
\]
Then 
\[
\frac{z}{|z|^2} = \ab( \frac{s}{s^2+(t(s))^2} , w' )
\]
and 
\begin{equation}\label{u-basic}
	u\left( F(s) - a , w' \right) = u \left( - F(s) - a , w' \right) = u \left( F(-s) - a , w' \right) 
	\quad \text{for each $s \in I$ and $a \in \R$},
\end{equation}
where $F(s) \coloneq s / (s^2 + (t(s))^2 )$. 
By $F(0) = 0$, $F(-s) = -F(s)$ and $\lim_{s \to 1/(2|w'|)} F(s) = |w'|$, 
it is easily seen that $[-|w'|,|w'|] \subset F(I)$ and 
for each $a \in [-|w'|,|w'|]$ there exists $s_a \in I$ such that $F(s_a) = a$. 
Hence, \eqref{u-basic} gives 
\[
u(0,w') = u\left( -2a, w' \right) \quad \text{for all $a \in [-|w'|,|w'|]$},
\]
which is equivalent to 
\begin{equation}\label{equiv-1st}
	u(0,w') = u(z_1,w') \quad \text{for every $z_1 \in \R$ with $|z_1| \leq 2|w'|$}. 
\end{equation}
Next, for each $|w'| \leq |a| \leq 2 |w'|$, there exists $s_{a,\pm} \in I$ such that 
$F(s_{a,+})  =a  - |w'|$ if $a >0$ and 
$F(s_{a,-}) = a + |w'|$ if $a < 0$. Thus, \eqref{u-basic} and \eqref{equiv-1st} lead to 
\[
\begin{aligned}
	&u(0,w') = u(-|w'|,w') = u( |w'| - 2a, w' ) & &\text{for all $a \in [|w'|,2|w'|]$},
	\\
	&u(0,w') = u(|w'|,w') = u( -2a - |w'| , w' ) & &\text{for all $a \in [-2|w'|,-|w'|]$}.
\end{aligned}
\]
Hence, 
\begin{equation*}
	u(0,w') = u(z_1,w') \quad \text{for all $z_1 \in [-3|w'|,3|w'|]$}. 
\end{equation*}
We repeat this procedure and assume that for some $k \in \N$, 
\[
u(0,w') = u(z_1,w') \quad \text{for any $z_1 \in [-k|w'|,k|w'|]$}. 
\]
For every $(k-1)|w'| \leq |a| \leq k |w'|$, choose $s_{a,\pm} \in I$ so that 
$F(s_{a,+}) = a - (k-1) |w'|$ if $a>0$ and $F(s_{a,-}) = a + (k-1) |w'|$ if $a<0$. 
Then \eqref{u-basic} and the assumption imply 
\[
\begin{aligned}
	&u(0,w') = u(- (k-1) |w'|,w') = u( (k-1)|w'| -2a , w' ) & &\text{for all $a \in [(k-1)|w'|,k|w'|]$},
	\\
	&u(0,w') = u( (k-1) |w'|,w') = u( -2a - (k-1) |w'|  , w' ) & &\text{for all $a \in [-k|w'|,-(k-1)|w'|]$},
\end{aligned}
\]
which gives
\[
u(0,w') = u(z_1,w') \quad \text{for each $z_1$ $\in [-(k+1) |w'|, (k+1) |w'|]$}.
\]
Thus, 
\[
u(0,w') = u(z_1,w') \quad \text{for all $z_1 \in \R$}. 
\]
Since $w' \in \R^{N-1}$ with $w_N>0$ is arbitrary, we have 
\[
u(z) = u(0,z') \quad \text{for each $z \in H$}. 
\]
Repeating this argument for other variables $z_2,\dots, z_{N-1}$, 
we obtain 
\[
u(z) = u(0,\dots, 0, z_N) \quad \text{for each $z \in H$}
\]
and we complete the proof. 
\end{proof}

\section{Extension to general bounded potentials}
\label{sec:generalV}
In this appendix, we treat \eqref{eqV} for general bounded potentials when $\Omega$ is bounded. 
Here we emphasize that $V$ is not necessarily periodic. 
Let
\[
E_V(u) \coloneq \frac{1}{2} \int_{\R^N} \vab{\nabla u}^2 + V(x) \vab{u}^2 \, dx - \int_{\R^N} \chi_\Omega(x) F(\vab{u}) \, dx.
\]
and consider the bottom of the spectrum of the operator $-\Delta + V(x)$: 
     \begin{equation*}
     \sigma_0 \coloneq 
     \inf \sigma (-\Delta+V)
     =
     \inf_{u \in H^1(\R^N)\backslash\{0\}}\frac{\int_{\R^N}\left(\abs{\nabla u}^2+ V(x)\abs{u}^2\right)\, dx}{\int_{\R^N}\abs{u}^2\, dx}.
     \end{equation*}
When $\sigma_0$ is an eigenvalue of $-\Delta + V$, 
then, by combining the argument in \cite{CaJe} with the blow-up analysis developed in \cref{sec:blow-up} and 
assuming $f(t) t > 0$ for each $t \in (0,+\infty)$ (which would play a crucial role in repeating the argument of \cite[Lemma 3.4]{CaJe}), 
it is natural to expect that there exists $\mu_0>0$ such that, for every $\mu\in(0,\mu_0)$, problem \eqref{eqV} admits two solutions $(u_1,\lambda_1)$ and $(u_2,\lambda_2)$ with $\lambda_1,\lambda_2>-\sigma_0$: $u_1$ is a local minimizer of $E_V$ constrained to $S_\mu$, while the other one $u_2$ is a positive solution at a mountain pass level of  $E_V|_{S_\mu}$. Moreover, $E_V(u_2)>E_V(u_1)$.

In what follows,
we focus on the complementary case where $\sigma_0$ may not be an eigenvalue of $-\Delta + V$. 
More precisely, we assume the following spectrum assumption: 
\begin{enumerate}[label=(A\arabic*),ref=A\arabic*,start=11]
  	\item \label{A9}  \( \sigma_0=\inf \sigma_{\text{ess}} (-\Delta+V)\).
\end{enumerate}
Notice that if \(\sigma_0\) is not an eigenvalue, then \eqref{A9} holds. 
Hence, if $V$ is periodic or $V \geq 0$ and $V(x) \to 0$ as $\vab{x} \to +\infty$, then we may verify \eqref{A9}. 
However, in general, the converse does not hold; \(\sigma_0\) may be a threshold eigenvalue even when \eqref{A9} holds. 
For related spectral theory, we refer to \cite{Bo20}.

Our main result in this appendix is the existence of solutions to \eqref{eqV} when $\mu>0$ is small: 

\begin{theorem}\label{th1'}
     Let $N \geq 1$, $V\in C(\R^N) \cap L^\infty(\R^N)$, 
     $\Omega\subset  \R^N$ be a (nonempty) bounded open set with smooth boundary $\partial \Omega$, \eqref{A1}--\eqref{A4} and \eqref{A9} hold. 
     Then, there exists $\mu_0>0$ such that, for every $\mu\in (0,\mu_0)$, problem \eqref{eqV} has a solution $(u,\lambda)\in H^1(\R^N) \times (-\sigma_0,+\infty)$.
\end{theorem}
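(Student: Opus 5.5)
We will follow the proof of \cref{th1} step by step, replacing $V$ by $V-\sigma_0$ so that $\sigma_0=0$. We then check which ingredients used periodicity and substitute for each of them. Periodicity entered in four places: the ground state $\psi_0$ behind the GN type inequality of \cref{lemgn}, the Liouville result \cref{lemliou}, the negative-direction construction of \cref{lem42}, and the blow-up analysis (which only needs $V\in L^\infty$).

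\textbf{Mountain pass geometry.} For small $\mu$ we will not use \cref{lemgn}. Instead we will use the standard inequality \eqref{stdGN} together with $\Vab{\nabla u}_2^2\le \abs{u}_V^2+\Vab{V}_\infty\mu$. On $B_{\alpha_0}$ this gives
\[
E_{V,\tau}(u)\ge \tfrac12\alpha_0-C\mu-C\mu^{(1-\beta_p)p/2}(\alpha_0+\Vab{V}_\infty\mu)^{\beta_pp/2}.
\]
This is still $\ge 3\alpha_0/8$ for $\mu$ small, so \cref{lemmps} holds verbatim. The point $w_1$ with small $\abs{w_1}_V$ exists because $\sigma_0=0$. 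The point $w_2$ is the concentrated bump supported in $\Omega$, exactly as before. Hence the monotonicity trick of \cite{BCJS} applies as in \cref{th51}. It produces bounded Palais--Smale sequences with $u_n\ge0$, $\tilde m_{\tau,\zeta_n}(u_n)\le1$, and $\lambda_n\to\lambda_\tau$.

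\textbf{Sign of $\lambda_\tau$ and compactness.} We replace \cref{lem42} by an argument based on \eqref{A9}. Since $0=\inf\sigma_{\rm ess}(-\Delta+V)$, Weyl sequences yield, for each $\delta>0$, functions $\varphi\in C_c^\infty(\R^N)$ with $\Vab{\varphi}_2=1$, $\abs{\varphi}_V^2<\delta/3$, and support outside any prescribed ball. Indeed, singular sequences can be cut off to lie outside compact sets: the essential spectrum is unchanged by compactly supported perturbations, so by Persson's characterization $\inf\sigma_{\rm ess}=\lim_{R\to\infty}\inf\{\abs{\varphi}_V^2/\Vab{\varphi}_2^2:\supp\varphi\cap B_R=\emptyset\}$. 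Choosing three such functions with disjoint supports outside $\Omega$ gives a three-dimensional negative space, so $\lambda_\tau\ge0$. Then \cref{lemurhoconverge} (which never used periodicity) gives $\abs{u_n-u_\tau}_V\to0$. Since $\abs{\cdot}_V$ may now fail to control $\Vab{\nabla\cdot}_2$, we will instead use the equation tested with $u_n-u_\tau$ together with the compactness of $u_n\to u_\tau$ in $L^q(\Omega)$. This yields convergence of $\Vab{\nabla(u_n-u_\tau)}_2^2+\int(V+\lambda_\tau)(u_n-u_\tau)^2$, which together with $\lambda_\tau>0$ gives $H^1$ convergence. The same computation shows $E_{V,\tau}(u_\tau)=c_\tau>0$, hence $u_\tau\not\equiv0$ and $u_\tau>0$.

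\textbf{Ruling out $\lambda_\tau=0$.} This is the main obstacle, since \cref{lemliou} relied on $\psi_0$. Here we will invoke \cref{T:Liouv-musmall} in the form of \cref{rem:generalV}, valid for any $W\in L^\infty$ and all $N\ge1$. Its proof uses only \eqref{A1}, \eqref{A3}, blow-up with \cite{GS} and \cref{lemnelambda+0}, and Moser-type iteration with $\Vab{V}_\infty$. By \eqref{lwb-mp} (which follows from our modified \cref{lemmps}, giving $c_\tau(\mu)\to\infty$), for $\mu<\mu_0$ there is no positive critical point of $E_{V,\tau}$ with $\Vab{u}_2^2\le\mu_0$ at level $\ge c_\tau$. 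Since $\Vab{u_\tau}_2^2\le\mu$ and $E_{V,\tau}(u_\tau)=c_\tau$, $\lambda_\tau=0$ is impossible. This gives the analogue of \cref{th51} for a.e.\ $\tau$, with $m_{\lambda_\tau,\tau}(u_\tau)\le2$.

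\textbf{Passing $\tau_n\to1^-$.} We repeat \cref{sec:pf-main}. If $\{u_{\tau_n}\}$ were unbounded, the Pohozaev-free estimate there forces $\lambda_n\to\infty$. Then \cref{lem61}, \cref{lem62}, \cref{prophk} and \cref{propbu} (using only $V\in L^\infty$, $\lambda_n\to\infty$, and the Morse bound) give $\lambda_n^{N/2-2/(p-2)}\mu\to$ finite, a contradiction. Boundedness then yields a bounded Palais--Smale sequence for $E_{V,1}$, and the previous steps produce the solution with $\lambda>0$, i.e.\ $\lambda>-\sigma_0$ after undoing the shift.
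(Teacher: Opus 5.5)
Your proposal is correct and follows essentially the same route as the paper. The common steps are: a uniform mountain pass geometry from the standard Gagliardo--Nirenberg inequality, the monotonicity trick of \cite{BCJS}, $\lambda_\tau\ge0$ from the Morse-index bound together with \eqref{A9}, exclusion of $\lambda_\tau=0$ via \cref{T:Liouv-musmall} and \cref{rem:generalV}, and the unchanged blow-up analysis as $\tau_n\to1^-$.

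There are only two local differences, and both are valid. First, you work on the sphere $\{\abs{u}_V^2=\alpha_0\}$ using $\Vab{\nabla u}_2^2\le\abs{u}_V^2+\Vab{V}_\infty\mu$, whereas the paper uses $\{\Vab{\nabla u}_2^2=\alpha\}$. Second, you obtain the three negative directions from Persson's characterization, with test functions supported away from $\Omega$ as in \cref{lem42}; the paper instead applies \cite[Corollary 7.13]{Bo20} to the linearization at $u_\tau$. Your version avoids having to pass $f_i'(u_n)\to f_i'(u_\tau)$ on $\Omega$. You should still write out the coercivity estimate $\abs{w}_V^2+\lambda_\tau\Vab{w}_2^2\ge\delta\Vab{\nabla w}_2^2+\frac{\lambda_\tau}{2}\Vab{w}_2^2$ for small $\delta>0$, which is what actually yields the $H^1$-convergence.
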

Without loss of generality, we may suppose 
\begin{equation}\label{inf-spect-g}
	\sigma_0 =0.
\end{equation}
Since the proof of \cref{th1'} is very similar to that of \cref{th1}, we only highlight the main differences.
Let
\[
E_{V,\tau} (u) \coloneq \frac{1}{2} \int_{\R^N} \vab{\nabla u}^2 + V(x) u^2 \, dx 
- \int_{\R^N} \chi_\Omega(x) \ab[ \tau F_1(u) + F_2(u) ] dx, 
\quad \tau \in \ab[\frac{1}{2}, 1],
\]
where $f_1,f_2$ are defined as in \eqref{eqdeff1f2}.

We begin from the uniform mountain pass structure for $E_{V,\tau}|_{S_\mu}$.
\begin{lemma}\label{lemmps-g}
Let $N \geq 1$ and suppose \eqref{A1} and \eqref{A3}. 
Then, for every $\alpha_0>0$, there exists $\mu^*>0$ such that, for any $\mu \in (0, \mu^*)$, 
there exist $w_1, w_2 \in S_\mu$ such that $w_1,w_2\geq 0$ and 
\[
    c_\tau = c_\tau(\mu) \coloneq 
    \inf_{\gamma \in \Gamma} \max_{t \in [0,1]} E_{V,\tau}(\gamma(t)) \geq \frac{3}{8} \alpha_0  > \max\{E_{V,\tau}(w_1), E_{V,\tau}(w_2), 0\} 
    \quad \text{for all $\tau \in \ab[ \frac{1}{2}, 1 ]$}, 
\]
where
\begin{equation*}
  \Gamma \coloneq \Set{\gamma \in C([0,1], S_\mu) : \gamma(0) = w_1, \, \gamma(1) = w_2 }.
\end{equation*}
Moreover, $[1/2, 1] \ni \tau \mapsto c_\tau \in (0,\infty)$ is nonincreasing. 
\end{lemma}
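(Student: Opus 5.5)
The plan is to repeat the proof of \cref{lemmps}. The only tool there that used periodicity is the Gagliardo--Nirenberg inequality of \cref{lemgn}, which was derived from the ground state $\psi_0$. For a general bounded $V$ with \eqref{inf-spect-g}, I would replace it by the standard inequality \eqref{stdGN} together with a crude bound on $\Vab{\nabla u}_2^2$. Monotonicity of $\tau\mapsto c_\tau$ again follows at once from \eqref{mono-Evtau}, which holds for any $V$.

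\textbf{Lower bound on a sphere.} Set $B_\alpha\coloneq\set{u\in S_\mu:\vab{u}_V^2=\alpha}$, where $\vab{u}_V^2\coloneq\int_{\R^N}\vab{\nabla u}^2+Vu^2\,dx\geq0$ by \eqref{inf-spect-g}. For $u\in B_\alpha$,
\[
\Vab{\nabla u}_2^2=\alpha-\int_{\R^N}Vu^2\,dx\leq \alpha+\Vab{V}_\infty\mu .
\]
Using \eqref{eqsubsec121} and \eqref{stdGN}, for every $\tau\in[\frac12,1]$ this gives
\[
E_{V,\tau}(u)\geq \frac{\alpha_0}{2}-\frac{\varepsilon_0}{2}\mu-C\mu^{(1-\beta_p)p/2}\ab(\alpha_0+\Vab{V}_\infty\mu)^{\beta_pp/2}
\quad\text{on } B_{\alpha_0}.
\]
Since $\mu^{(1-\beta_p)p/2}\to0$ as $\mu\to0$ while the other factor stays bounded, we can pick $\mu^*$ small enough that $E_{V,\tau}\geq 3\alpha_0/8$ on $B_{\alpha_0}$ for all $\mu\in(0,\mu^*)$ and all $\tau$.

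\textbf{Choice of $w_1$.} Because $\sigma_0=0$ is an infimum, for each fixed $\mu$ there is $w_1\in S_\mu$ with $w_1\geq0$ (replace it by $\vab{w_1}$ if necessary) and $\vab{w_1}_V^2<\alpha_0/8$. The same bound on $\Vab{\nabla w_1}_2$ as above, combined with \eqref{eqsubsec121}, gives
\[
E_{V,\tau}(w_1)\leq \frac{\alpha_0}{16}+\frac{\varepsilon_0\mu}{2}+C\mu^{(1-\beta_p)p/2}\ab(\alpha_0+\Vab{V}_\infty\mu)^{\beta_pp/2}\leq \frac{\alpha_0}{4},
\]
after possibly shrinking $\mu^*$ further, uniformly in $\tau$. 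Assumption \eqref{A9} is not needed here; only $\sigma_0=0$ is used.

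\textbf{Choice of $w_2$ and conclusion.} Take $w_2$ to be the concentrated bump $v_{s_0}$ supported in $\Omega$, exactly as in \cref{lemmps}. That computation used only $\Vab{V}_\infty<\infty$, so for large $s_0$ we get $\vab{w_2}_V^2>\alpha_0$ and $E_{V,\tau}(w_2)<0$. Any path in $\Gamma$ joins $\set{\vab{u}_V^2<\alpha_0}$ to $\set{\vab{u}_V^2>\alpha_0}$, and $u\mapsto\vab{u}_V^2$ is continuous on $S_\mu$, so every path crosses $B_{\alpha_0}$. Hence $c_\tau\geq 3\alpha_0/8>\alpha_0/4\geq\max\set{E_{V,\tau}(w_1),E_{V,\tau}(w_2),0}$.

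\textbf{Main obstacle.} The only delicate point is replacing \cref{lemgn}. The crude estimate $\Vab{\nabla u}_2^2\leq\vab{u}_V^2+\Vab{V}_\infty\mu$ introduces an additive term of size $\mu$, but this term vanishes as $\mu\to0$, so the smallness of $\mu$ absorbs it. I therefore expect no real difficulty.
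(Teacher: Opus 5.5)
Your proposal is correct and takes essentially the same route as the paper: the standard Gagliardo--Nirenberg inequality \eqref{stdGN} replaces \cref{lemgn}, the potential is absorbed through a term $\Vab{V}_\infty\mu$ that is killed by taking $\mu$ small, and $w_2$ is taken from \cref{lemmps}. The only difference is cosmetic: the paper defines the separating sphere by $\Vab{\nabla u}_2^2=\alpha$, so that $\frac12\int Vu^2\ge-\frac12\Vab{V}_\infty\mu$ enters the energy bound directly and $w_1$ only needs a small gradient (no use of $\sigma_0=0$), whereas you keep the level set $\vab{u}_V^2=\alpha$ and pass to the gradient via $\Vab{\nabla u}_2^2\le\vab{u}_V^2+\Vab{V}_\infty\mu$, which works equally well.
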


\begin{proof}
We only find $w_1,w_2$ and prove the inequality for $c_\tau$. Set $B_\alpha \coloneq \set{u \in S_\mu: \norm{\nabla u}_2^2=\alpha}$, and
fix $\alpha_0>0$ and $\varepsilon_0>0$. 
By \eqref{eqsubsec121} and the standard Gagliardo--Nirenberg inequality \eqref{stdGN}, 
for any $u \in B_{\alpha_0}$ and $\tau \in [\frac{1}{2},1]$, we obtain 
\[
E_{V,\tau}(u) 
\geq 
\frac{1}{2}\alpha_0-\frac{1}{2}\norm{V}_\infty \norm{u}_2^2-\frac{1}{2}\varepsilon_0\norm{u}_2^2-C\norm{u}_p^p 
\geq 
\frac{1}{2}\alpha_0-\frac{1}{2}(\varepsilon_0+\norm{V}_\infty)\mu-C C_{p,N}\mu^{(1 - \beta_p)p/2}\alpha_0^{\beta_p p/2}. 
\]
Thus, there exists  $\mu^*\in(0,  \min \{ \alpha_0/(4\varepsilon_0+4\norm{V}_\infty) , \alpha_0 / 8 \} )$ small enough such that
\begin{equation}\label{lweonBa-g}
	E_{V,\tau}(u) \geq \frac{3\alpha_0}{8} \quad \text{for every $\mu\in (0,\mu^*)$, $u \in B_{\alpha_0}$ and $\tau \in \ab[\frac{1}{2},1]$}. 
\end{equation}

Fix any $\mu\in (0,\mu^*)$. 
By 
\eqref{eqsubsec121} and $0<\mu < \mu^* < \min \{ \alpha_0 / (4\varepsilon_0+4\norm{V}_\infty), \alpha_0/8\} $,
there exists $w_1 \in S_\mu$ satisfying $w_1\geq 0$ and $\norm{\nabla w_1}_2^2\in (0,\alpha_0/8)$ small enough such that 
for each $\tau \in [\frac{1}{2},1]$, 
\[
\begin{aligned}
    E_{V,\tau}(w_1) &\leq \frac{1}{2}\norm{\nabla w_1}_2^2+ \frac{\varepsilon_0+\norm{V}_\infty}{2}\|w_1\|_{2}^2 +CC_{p,N} \|w_1\|_{2}^{(1 - \beta_p)p} \norm{\nabla w_1}_{2}^{\beta_p p}\\
    &\leq\frac{3}{16}\alpha_0 +CC_{p,N} \mu^{(1 - \beta_p)p/2} \norm{\nabla w_1}_{2}^{\beta_p p} \leq \frac{\alpha_0}{4}.
\end{aligned}
\]
By \eqref{A1} and \eqref{A3}, we find $R_1>R_0>0$ such that 
\[
\abs{F_2(t)} \leq C\abs{t}^2 \quad \text{for all $t \in \R$}
\]
and
\[
F_1(t) \geq \frac{a_0}{4p}\abs{t}^p-C\abs{t}^2 \quad \text{for every $t \in \R$ with $\abs{t} \geq R_1$}. 
\]
Let $v \in C_0^\infty(\R^N)\cap S_{\mu}$ satisfy $v \geq 0$. 
For any $s>0$, set $v_s(x) \coloneq s^{N/2}v(s(x-x_0))$ for some  $x_0 \in \Omega$. 
Then, by $p>2+\frac{4}{N}$ and $\tau \in [\frac{1}{2},1]$, 
for $w_2 \coloneq v_{s_0}$ with $s_0>0$ sufficiently large, 
we have $\supp w_2 \subset \Omega$, $\norm{\nabla w_2}^2_2> \alpha_0$ and
\[
\begin{aligned}
    E_{V,\tau}(w_2)&\leq \frac{1}{2}\norm{\nabla w_2}_2^2+\frac{1}{2}\norm{V}_\infty\norm{w_2}_2^2+C\norm{w_2}_2^2-\frac{a_0 }{8p}\norm{w_2}_p^p\\
    &=\frac{s_0^2}{2}\norm{\nabla v}_2^2+\frac{1}{2}\norm{V}_\infty\mu+C\mu-\frac{a_0s_0^{Np/2-N}}{8p}\norm{v}_p^p<0.
\end{aligned}
\]
Thus, by $\norm{\nabla w_1}^2_2\in (0,\alpha_0/8)$ and $\norm{\nabla w_2}^2_2> \alpha_0$, for all $\tau \in [\frac{1}{2},1]$, 
\eqref{lweonBa-g} gives
\[
c_\tau=\inf_{\gamma\in\Gamma}\max_{t\in[0,1]}E_{V,\tau}(\gamma(t)) \geq \frac{3\alpha_0}{8} > \frac{\alpha_0}{4} \geq\max\{E_{V,\tau}(w_{1}),E_{V,\tau}(w_{2})\}.
\]
This completes the proof. 
\end{proof}
\begin{proof}[Proof of \cref{th1'}]
By repeating the argument in the proofs of \cref{th51,th52}, for almost every $\tau \in [1/2,1]$, there exist sequences $\{u_n\}\subset S_\mu$ and $\zeta_n\to0^+$ such that, as $n \to +\infty$,
\begin{enumerate}[label=\rm(\roman*)]
    \item $E_{V,\tau}(u_n) \to c_\tau>0$;
    \item \label{bps2'}$\norm{E_{V,\tau}'(u_n) }_{ T_{u_n}^* S_{\mu} }\to 0$;
    \item \label{bps3'} $u_n \geq 0$ and $\{u_n\}$ is bounded in $H^1(\R^N)$;
\end{enumerate}
a sequence $\lambda_n \coloneq -\frac{1}{\mu}E'_{V,\tau}(u_n)u_n$ is bounded
 and, if  there exists a subspace $W_n \subset H^1(\R^N)$ such that 
\begin{equation}\label{eqineqmorse-g}
    E''_{V,\tau}(u_n)[w,w]+\lambda_n(w,w)_2<-\zeta_n \norm{w}^2_{H^1} \quad \text{for all }w \in W_n\setminus\{0\},
\end{equation}
then $\dim W_n \leq 2$ holds. Up to a subsequence, we may assume $u_n \rightharpoonup u_\tau$ weakly in $H^1(\R^N)$,  $u_n \to u_\tau$ in $L^r(\Omega)$ for all $r \in [1,2^*)$, and $\lambda_n \to \lambda_\tau \in \R$ without loss of generality. 

It is clear that $u_{\tau}\in H^{1}(\R^N)$ solves 
\begin{equation*}
    -\Delta u_\tau+V(x)u_\tau+\lambda_\tau u_\tau=\tau \chi_\Omega f_1(u_\tau)+\chi_\Omega f_2(u_\tau).
\end{equation*}
By elliptic regularity we have $ u_\tau \in L^\infty (\R^N)$, hence, $\tau \chi_\Omega f_1'(u_\tau)+\chi_\Omega f_2'(u_\tau) \in L^\infty (\R^N)$. 
Since $\Omega$ is bounded, by \cite[Corollary 7.13]{Bo20}, we have 
\begin{equation}\label{eqsptf1f2}
\inf \sigma_\text{ess}\ab\{-\Delta +V+\lambda_\tau - \chi_\Omega \ab( \tau f_1'(u_\tau) + f_2'(u_\tau) ) \}=\inf \sigma_\text{ess}(-\Delta +V+ \lambda_\tau)=\lambda_\tau.
\end{equation}

We now claim that $\lambda_\tau  \geq 0$. Indeed, if $\lambda_\tau < 0$ holds, then by \eqref{eqsptf1f2}, there exists a subspace $W\subset C_0^\infty(\R^N)$ with $\dim W=3$ such that 
\begin{equation*}
    E''_{V,\tau}(u_\tau)[w,w]+\lambda_\tau(w,w)_2< \frac{\lambda_\tau}{2}\norm{w}^2_{2} \quad \text{for all }w \in W\setminus\{0\}.
\end{equation*}
Since $W$ is of finite dimension, every norm on $W$ is equivalent and there exists $C>0$ such that 
\begin{equation*}
    E''_{V,\tau}(u_\tau)[w,w]+\lambda_\tau(w,w)_2<C \frac{\lambda_\tau}{2}\norm{w}^2_{H^1} \quad \text{for all }w \in W\setminus\{0\}.
\end{equation*}
For $i=1,2$, \eqref{A2} and H\"older's inequality with $u_n \to u_\tau$ in $L^r(\Omega)$ with $r \in [1,2^*)$ lead to 
\begin{equation*}
\begin{aligned}
	\max_{ w \in W, \norm{ w}_{H^1}^2 = 1 }
	\int_{\Omega} \abs{f_i'(u_n)-f_i'(u_\tau)}w^2\, dx
\leq \ab( \max_{w \in W, \norm{ w}_{H^1}^2 = 1  }\norm{w}_\infty^2 )\int_{\Omega}\abs{f_i'(u_n)-f_i'(u_\tau)}\, dx \to 0.
\end{aligned}
\end{equation*}
Therefore, for $n\in\mathbb{N}$ large enough,
\begin{equation*}
    E''_{V,\tau}(u_n)[w,w]+\lambda_n(w,w)_2<\frac{C\lambda_\tau}{4} \quad \text{for all }w \in W \text{ with } \norm{ w}_{H^1}^2 = 1, 
\end{equation*}
which contradicts \eqref{eqineqmorse-g}. Hence, $\lambda_\tau \geq 0$ holds. 

By repeating the argument in the proof of \cref{lemurhoconverge}, we obtain
\begin{equation}\label{eqconvergence-g}
\int_{\R^N}|\nabla (u_{n}-u_{\tau})|^{2}+ \left(V(x)+\lambda_{\tau}\right)|u_{n}-u_{\tau}|^{2}\,dx\to 0 \quad \text{as }n \to \infty.
\end{equation}
Hence, it follows from \eqref{inf-spect-g}, $u_n \rightharpoonup u_\tau$ weakly in $H^1(\R^N)$ and  $u_n \to u_\tau$ in $L^r(\Omega)$ for all $r \in [1,2^*)$ that 
\[
E_{V,\tau}(u_\tau)=\lim_{n\to+\infty}E_{V,\tau}(u_n)=c_\tau>0,
\]
which implies that $u_\tau\not \equiv 0$. 
According to \cref{rem:generalV}, there exists $\mu_0 \in (0,\mu^*)$ such that 
for each $\tau \in [1/2,1]$, there is no critical point $u$ of $E_{V,\tau}$ with $u>0$ in $\R^N$, 
$\Vab{u}_2^2 \leq \mu_0$ and $E_{V,\tau} (u) \geq \inf_{0<\mu \leq \mu_0}c_\tau( \mu )$. 
Let $\mu \in (0, \mu_0)$. Since Fatou's lemma leads to $\norm{u_\tau}_2^2\leq \mu < \mu_0$,  
we get $u_\tau>0$ in $\R^N$ and $\lambda_\tau>0$. Recalling \eqref{inf-spect-g}, 
for $0<\delta<\min\{1, \lambda_\tau/(2\norm{V}_\infty)\}$, we have
\[
\begin{aligned}
&\int_{\R^N}|\nabla (u_{n}-u_{\tau})|^{2}+ \left(V(x)+\lambda_{\tau}\right)|u_{n}-u_{\tau}|^{2}\,dx\\
&\qquad \geq \int_{\R^N}|\nabla (u_{n}-u_{\tau})|^{2}+ (1-\delta)V(x)|u_{n}-u_{\tau}|^{2}+\left(\delta V(x)+\lambda_{\tau}\right)|u_{n}-u_{\tau}|^{2}\,dx\\
&\qquad \geq \int_{\R^N}|\nabla (u_{n}-u_{\tau})|^{2}+ (1-\delta)V(x)|u_{n}-u_{\tau}|^{2}+\frac{\lambda_{\tau}}{2}|u_{n}-u_{\tau}|^{2}\,dx\\
&\qquad \geq \delta\int_{\R^N}|\nabla (u_{n}-u_{\tau})|^{2}+\frac{\lambda_{\tau}}{2}|u_{n}-u_{\tau}|^{2}\,dx.
\end{aligned}
\]
Combining this fact with \eqref{eqconvergence-g} implies $u_n \to u_\tau$ in $H^1(\R^N)$.

Finally, the blow-up analysis in \cref{sec:blow-up} remains valid for general potentials $V\in L^\infty(\R^N)$. Indeed, for the blow-up sequences arising there, with $\lambda_n\to+\infty$ and centers $P_n$, the rescaled potential term is
\[
\frac{1}{\lambda_n}V\bigl(P_n+\lambda_n^{-1/2}y\bigr),
\]
and, for every bounded set $K\subset\R^N$,
\[
\sup_{y\in K}\left|\frac{1}{\lambda_n}V\bigl(P_n+\lambda_n^{-1/2}y\bigr)\right|
\le \frac{\|V\|_\infty}{\lambda_n}\longrightarrow0.
\]
Thus the potential term disappears in the limiting equation, and the local blow-up and Morse-index arguments of \cref{sec:blow-up} are unchanged. Repeating the remaining arguments in the proofs of \cref{th1,th2}, we complete the proof of \cref{th1'}.
\end{proof}
\begin{remark} We note that \cite[Lemma 3.4]{CaJe} relies on \cite[(V2)]{CaJe}, 
	namely, $\sigma_0$ is an eigenvalue. While \cref{T:Liouv-musmall} holds for general potentials $V \in L^\infty (\R^N)$ (cf. \cref{rem:generalV}) 
	and the proof can be modified to treat the case $\Omega = \R^N$. 
	Indeed, as in the proof of \cref{T:Liouv-musmall}, let $\Omega = \R^N$ and $\{(u_n,\lambda_n)\}$ satisfy 
	\[
	E_{V,\tau_n}(u_n) \geq \varepsilon_0 > 0, \quad E_{V,\tau_n}'(u_n) = 0, \quad u_n > 0, \quad \Vab{u_n}_2 \to 0, 
	\quad \tau_n \to \tau_\infty. 
	\]
	Then we may prove $\max_{\R^N} u_n \to \infty$ and the proof of \cref{lem62} yields a contradiction thanks to \cite{GS}. 
	Thus, the proof of \cref{th1'} can also be adapted to the setting considered in \cite{CaJe} 
	by changing \cite[(A2)]{CaJe} to \eqref{A9}. 
	Therefore, our argument complements the discussion in \cite{CaJe} concerning radial potentials 
	and mountain pass type critical points.
\end{remark}
\medskip

\noindent
\textbf{Conflict of interest.}
On behalf of all authors, the corresponding author states that there is no conflict of interest.

\medskip 

\noindent
\textbf{Ethics approval.}
 Not applicable.

\medskip 

\noindent
\textbf{Data Availability Statements.}
Data sharing not applicable to this article as no datasets were generated or analysed during the current study.

\medskip 

\noindent
\textbf{AI Usage Statement.}

The authors acknowledge the use of Gemini 3.1 Pro for the availability of the argument in \cite{Eastham} at \cref{subsec:-rewr}, Gemini 3.6 Flash for the availability of the argument in \cite{EsLi82} at the proof of \cref{lem25}, ChatGPT 5.6 Sol for the availability of the argument in \cite{Bo20} at the proof of \cref{th1'}, and ChatGPT 6 Astra for assistance in developing and organizing the proofs of \cref{lemb1} and \cref{T:whole} \ref{T:whole-ii}. 
All the arguments were revised and checked independently by the authors. 
In the preparation of the manuscript, the AI is also exploited  to improve expositions. 
The authors take full responsibility for the content of the paper.

\medskip

\noindent
\textbf{Acknowledgements.}
N. Ikoma was supported by JSPS KAKENHI Grant Number JP24K06802. C. Ji was supported by the National Natural Science Foundation of China (No.12571117).

\end{document}